\newtheorem{theorem}{Theorem}[section]
\newtheorem{lemma}[theorem]{Lemma}
\newtheorem{proposition}[theorem]{Proposition}
\newtheorem{corollary}[theorem]{Corollary}
\newcommand{\thistheoremname}{}
\newtheorem*{genericthm*}{\thistheoremname}
\newenvironment{namedthm*}[1]
  {\renewcommand{\thistheoremname}{#1}%
   \begin{genericthm*}}
  {\end{genericthm*}}
\theoremstyle{definition}
\newtheorem{definition}[theorem]{Definition}
\newtheorem{example}[theorem]{Example}
\newtheorem{construction}[theorem]{Construction}
\newtheorem{recipe}[theorem]{Recipe}
\newtheorem{remark}[theorem]{Remark}
\theoremstyle{remark}
\numberwithin{equation}{section}
\newcommand\KK{{\mathbb K}}
\newcommand\TT{{\mathbb T}}
\newcommand\ZZ{{\mathbb Z}}
\newcommand\QQ{{\mathbb Q}}
\newcommand\PP{{\mathbb P}}
\newcommand\eins{\mathds{1}}
\newcommand\Eff{{\rm Eff}}
\newcommand\Mov{{\rm Mov}}
\newcommand\Ample{{\rm Ample}}
\newcommand\SAmple{{\rm SAmple}}
\newcommand\rlv{{\rm rlv}}
\renewcommand\div{{\rm div}}
\newcommand\Cl{\operatorname{Cl}}
\newcommand\Pic{\operatorname{Pic}}
\newcommand\cone{{\rm cone}}
\newcommand\Spec{{\rm Spec}}
\newcommand\quot{/\!\!/}
\newcommand\chquot{/\!\!/\!\!/}
\newcommand\tm{{\tau^-}}
\newcommand\tp{{\tau^+}}
\newcommand\tx{{\tau_X}}
\newcommand\Aut{\operatorname{Aut}}
\newcommand\GL{\mathrm{GL}}
\newcommand\PGL{\mathrm{PGL}}
\newcommand\Gr{\mathrm{Gr}}
\newcommand\grad{{\rm grad}}
\newcommand\lin{{\rm lin}}
\newcommand\pr{{\rm pr}}
\newcommand\rk{{\rm rk}\,}
\newcommand\Chi{{\mathbb X}}
\newcommand\im{{\rm im}}
\newcommand\Bl{{\mathrm{Bl}}}
\newcommand\bangle[1]{\langle #1 \rangle} 
\begin{document}
%Titel
\title[On torus actions of higher complexity]%
{On torus actions of higher complexity}
%
%
%
%Subject-Classification
\subjclass[2010]{14L30, 14M25, 14J45}
\author[J.~Hausen]{J\"urgen Hausen} 
\address{Mathematisches Institut, Universit\"at T\"ubingen,
Auf der Morgenstelle 10, 72076 T\"ubingen, Germany}
\email{juergen.hausen@uni-tuebingen.de}
\author[C.~Hische]{Christoff Hische} 
\address{Mathematisches Institut, Universit\"at T\"ubingen,
Auf der Morgenstelle 10, 72076 T\"ubingen, Germany}
\email{hische@math.uni-tuebingen.de}
\author[M.~Wrobel]{Milena Wrobel} 
\address{Max-Planck-Institut f\"ur Mathematik in den Naturwissenschaften, Inselstra\ss e~22, 04103 Leipzig, Germany}
\email{wrobel@mis.mpg.de}
\begin{abstract}
We systematically produce algebraic 
varieties with torus action by 
constructing them as suitably 
embedded subvarieties of toric 
varieties.
The resulting varieties admit 
an explicit treatment in terms 
of toric geometry and graded ring 
theory.
Our approach extends existing 
constructions of rational varieties 
with torus action of complexity one 
and delivers all Mori dream spaces 
with torus action.
We exhibit the example class of 
``general arrangement varieties'' 
and obtain classification results 
in the case of complexity two 
and Picard number at most two, 
extending former work in complexity 
one.
\end{abstract}

\maketitle

%\cleardoublepage

\section{Introduction}

This article contributes to the study of 
(algebraic) varieties with torus action.
Here, a torus is an algebraic group $\TT$ 
isomorphic to the $k$-fold direct product
$\TT^k$ of the multiplicative group $\KK^*$
of the ground field $\KK$,
which is assumed to be algebraically closed 
and of characteristic zero. 
The presence of a torus action on a variety
brings combinatorial aspects into the game,
as becomes most evident in the case of 
toric varieties, that means normal varieties
$Z$ containing a torus $\TT_Z$ as an open
subset such that the group structure of 
$\TT_Z$ extends to an action on~$Z$.
The fundamental correspondence between toric 
varieties and fans observed in 1970 by 
Demazure~\cite{Dem} is the starting point 
of toric geometry, a meanwhile highly 
developed theory and continuously active 
field of research.

Beyond toric geometry, there are the
$\TT$-varieties $X$ of higher complexity, 
where one requires~$X$ to be normal,
the $\TT$-action to be effective, meaning
that only the neutral element acts trivially,
and the complexity is the difference 
$\dim(X) - \dim(\TT)$.
Torus actions of complexity one are 
studied as well since the 1970s; 
we mention the combinatorial 
approaches~\cite{KKMS,Tim2}, 
the geometric 
work~\cite{OrWa1,OrWa2,OrWa3,Pi,FiKp1,FiKp2,FlZa} 
on $\KK^*$-surfaces and the more
algebraic point of view based on 
trinomial relations~\cite{HaHeSu,HaHe,HaWr,Mo}.
In arbitrary complexity, we have the general 
approach via polyhedral divisors~\cite{AlHa,AlHaSu},
unifying in particular aspects of~\cite{Tim2,FlZa}.
Torus actions serve also as a model case for 
actions of more general algebraic groups, 
for example reductive ones,
and the approaches just discussed  
reflect in this much more general theory; 
we restrict ourselves to refer to
the seminal work~\cite{LuVu} in complexity 
zero and~\cite{Tim1} as a landmark in 
complexity one.

\goodbreak

In the present article we consider 
 $\TT$-varieties of arbitrary complexity.
The aim is to provide an explicit approach
with close links to toric geometry, 
supporting, for instance, concrete, 
example-oriented work.
Besides basic algebraic geometry and graded
ring theory, only rudiments of toric geometry 
and Cox ring theory are needed; we refer to
Section~\ref{sec:background} for a brief 
summary.

Let us get into the matter by means of 
an example.
Consider the five-dimensional 
smooth projective quadric $X \subseteq \PP_6$.
The automorphism group of $X$ is the 
orthogonal group~$\mathrm{O}(7)$.
Fixing a maximal torus 
$\TT \subseteq \mathrm{O}(7)$, 
we turn $X$ into a $\TT$-variety.
Choosing suitable  
coordinates, we achieve that the 
quadric is given by
$$ 
X
\  = \
V(T_0^2 + T_1T_2 + T_3T_4 + T_5T_6) 
\ \subseteq \ 
\PP_6
$$
and that the elements 
$t = (t_1,t_2,t_3)$ of the (three-dimensional) torus 
$\TT = \TT^3$ act on the points $[z] = [z_0,\ldots,z_6]$ 
of~$\PP_6$ via
$$
t \cdot [z]
\ = \  
[z_0, \, t_1z_1, \, t_1^{-1}z_2, \, t_2z_3, \, 
t_2^{-1}z_4, \, t_3z_5, \, t_3^{-1}z_6].
$$
In particular, $\TT$ acts diagonally.
In order to link the situation 
in an optimal manner to toric geometry, 
we do a further step.
Consider the torus $\TT^6 \subseteq \PP_6$
consisting of all points with only 
nonzero homogeneous coordinates
and the splitting
$$
\TT^6 \ \to \  \TT^3 \times \TT^3,
\qquad
t \ \mapsto \ (t_1t_2, \, t_3t_4, \, t_5t_6, \, t_1, \, t_2 \, ,t_3).
$$ 
In terms of toric geometry, such a change of 
torus coordinates means passing to 
another describing fan of the projective 
space $\PP_6$.
More precisely, we switch over from
the fan~$\Delta_6$ with rays 
generated by the canonical basis vectors 
$e_1, \ldots, e_6 \in \QQ^6$ 
and $e_0 = - e_1 - \ldots - e_6$
to the fan $\Sigma$ with rays 
generated by
$$
e_0-e_1-e_2-e_3, 
\ 
e_1, 
\ 
e_1+e_4,  
\ 
e_2, 
\ 
e_2+e_5, 
\ 
e_3, 
\ e_3+e_6.
$$
In the new picture, our torus $\TT$ is the 
second factor of the splitting  
$\TT^6 = \TT^3 \times \TT^3$.
The projection $\TT^6 \to \TT^3$
onto the first factor mods out the $\TT$-action
and defines a $\TT$-invariant rational 
map $\pi \colon \PP_6 \dasharrow \PP_3$
which in terms of fans arises from
the projection $\ZZ^6 \to \ZZ^3$ onto the 
first factor mapping the rays of $\Sigma$ 
onto the rays of the fan $\Delta_3$ describing
$\PP_3$. 
Observe that the closure of the image $\pi(X)$ 
in $\PP_3$ is
$$ 
Y 
\ = \ 
V(U_0 + U_1 + U_2 + U_3) 
\ \subseteq \ 
\PP_3,
$$ 
the projective plane realized as a general 
hyperplane in the projective space.
The restriction $\pi \colon X \dasharrow Y$
encodes important information on the $\TT$-action
%on $X$ 
and is what we will call 
a ``maximal orbit quotient''.
Moreover, $X \subseteq \PP_6$ is %given as 
the zero set of a homogeneous quadratic 
polynomial~$g$ and thus we have a homogeneous 
coordinate ring 
$\KK[T_0,\ldots,T_6] / \bangle{g}$.
In case of the quadric $X$, 
the latter equals the \emph{Cox ring}
$$ 
\mathcal{R}(X)
\ =  \
\bigoplus_{\Cl(X)} \Gamma(X,\mathcal{O}(D)),
$$
where the grading via the divisor class group 
$\Cl(X)$ is just the classical $\ZZ$-grading 
of its homogeneous coordinate ring.
Also for $Y \subseteq \PP_3$ we observe that 
the homogeneous coordinate ring equals the Cox 
ring.

Our approach replaces the ambient projective 
spaces of the above example with ambient 
toric varieties.
Given a toric variety~$Z$, say a 
complete one, Cox's quotient presentation 
delivers $Z$ as a quotient of an affine space.
This allows us in particular, to associate
with a closed subvariety $X \subseteq Z$ a 
generalized homogeneous coordinate ring.
We call $X \subseteq Z$ an \emph{explicit variety}
if, roughly speaking, its generalized 
homogeneous coordinate ring equals its 
Cox ring.
In Construction~\ref{constr:t-mds},
we produce systematically 
\emph{explicit $\TT$-varieties}, 
where the idea is to
\emph{reverse the process} of the 
introductory example:
one starts with an explicit 
variety $Y \subseteq Z_Y$ and then 
builds up via an elementary game an 
explicit variety $X \subseteq Z_X$
such that a direct factor~$\TT$ of the torus 
$\TT_Z \subseteq Z$ leaves $X$ invariant and 
turns $X \subseteq Z$ into a
$\TT$-variety with maximal orbit 
quotient $X \dasharrow Y$.
So, in the above example, the projective 
plane $Y \subseteq \PP_3$ is the input 
and as output we obtain the quadric 
$X \subseteq \PP_6$.

Explicit $\TT$-varieties are designed to 
be directly accessible for concrete 
computation.
Their geometry is strongly related to that 
of the ambient toric variety and,
due to finite generation of their Cox ring, 
the combinatorial methods developed 
in~\cite[Sec.~3]{ArDeHaLa} apply;
see Remark~\ref{rem:fan2bunch} for the 
precise interface and 
Section~\ref{sec:taohcfirstprop} for 
a collection of basic geometric properties.
Let us say a few words about what kind 
of $\TT$-varieties we obtain.
First, Construction~\ref{constr:t-mds} 
generalizes the Cox ring based approach to 
rational $\TT$-varieties of complexity 
one developed in~\cites{HaHe,HaWr};
see Remark~\ref{rem:extv2cpl1}.
For a general statement, recall the 
\emph{Mori dream spaces}
introduced by Hu and Keel~\cite{HuKe}:
these varieties behave perfectly 
with respect to the minimal model 
programme and are characterized 
as the $\QQ$-factorial, projective 
varieties with finitely generated 
Cox ring.
As a special case of the more general
Theorem~\ref{thm:t-mds-2},
we obtain the following.

\goodbreak

\begin{theorem}
Every Mori dream space with an effective 
torus action admits a presentation as 
an explicit $\TT$-variety.
\end{theorem}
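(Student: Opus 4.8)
The plan is to show that any Mori dream space $X$ carrying an effective torus action $\TT$ arises as the output of Construction~\ref{constr:t-mds}. The starting point is the Cox ring: since $X$ is a Mori dream space, $\cox(X) = \bigoplus_{\Cl(X)} \Gamma(X,\garbO(D))$ is a finitely generated, normal, $\Cl(X)$-graded $\KK$-algebra, and the total coordinate space $\overline{X} = \Spec\,\cox(X)$ carries an action of the characteristic quasitorus $H = \Spec\,\KK[\Cl(X)]$ so that $X$ is the good quotient $\overline{X}\quot H$ over the set of semistable points determined by an ample class. First I would lift the $\TT$-action on $X$ to an action on $\cox(X)$: because $\TT$ is connected, a torus, and acts on the Cox ring by graded automorphisms, a standard argument (as in Cox ring theory, cf.\ the functoriality of Cox rings under automorphisms) gives a lift to a $\TT$-action on $\overline{X}$ commuting with $H$, so the combined torus $\widetilde{\TT} = \TT \times H$ (or an extension thereof) acts on $\overline{X}$.

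Next I would choose a system of $\Cl(X)$-homogeneous generators $f_1,\dots,f_r$ of $\cox(X)$ that are \emph{also} homogeneous with respect to the lifted $\TT$-action; this is possible because the two gradings are compatible, so one can simultaneously diagonalize. These generators give a closed $\widetilde{\TT}$-equivariant embedding $\overline{X} \hookrightarrow \KK^r$, and passing to the GIT quotient by $H$ embeds $X$ into a toric variety $Z$ whose Cox ring is the polynomial ring $\KK[T_1,\dots,T_r]$ with the $\Cl(X)$-grading induced by $\deg T_i = \deg f_i$. By construction the generalized homogeneous coordinate ring of $X \subseteq Z$ is exactly $\cox(X)$, so $X \subseteq Z$ is an explicit variety; and the residual $\TT$-action on $Z$ (coming from the $\TT$-weights of the $T_i$) restricts to the original action on $X$. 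What remains is to check that this embedded datum is precisely of the shape produced by Construction~\ref{constr:t-mds}: one must identify the maximal orbit quotient $Y \dashrightarrow X$ — concretely, $Y$ is the explicit variety cut out in the quotient-of-$\KK^r$-by-the-bigger-torus picture by the same equations, and the projection of fans realizing the quotient of the $\TT$-action matches the one in the Construction.

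The main obstacle I expect is the equivariance of the lift and the verification that the resulting configuration genuinely satisfies \emph{all} the combinatorial hypotheses of Construction~\ref{constr:t-mds} — in particular that the ambient toric variety $Z$, the $\TT$-invariant collection of torus factors, and the induced fan projection fit together to make $X \dashrightarrow Y$ a \emph{maximal} orbit quotient in the technical sense defined earlier, rather than merely some invariant rational map. This is where one must exploit that the $\TT$-action is effective and that $\cox(X)$ is factorially graded: effectiveness pins down the rank of the quotient torus, and the factoriality/normality of the Cox ring guarantees that the quotient construction produces the correct $Y$ with $\cox(Y)$ again of the required form. Once the dictionary between ``Mori dream space data plus lifted torus action'' and ``input data of Construction~\ref{constr:t-mds}'' is set up, the statement follows, and indeed it is subsumed by the sharper Theorem~\ref{thm:t-mds-2}, so I would ultimately phrase the argument as the specialization of that result to the $\QQ$-factorial projective case.
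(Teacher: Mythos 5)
Your overall strategy coincides with the paper's: the statement is deduced from the more general Theorem~\ref{thm:t-mds-2} (Mori dream spaces are $A_2$-maximal, projective, with finitely generated Cox ring by~\cite{HuKe}), and that theorem is proved exactly along your lines — lift the $\TT$-action to the total coordinate space, choose a doubly homogeneous generating system, embed equivariantly into $\KK^q$ and descend to a toric ambient variety. However, two of the points you defer as ``obstacles'' are genuine gaps rather than routine verifications. First, it is not enough to pick generators that are simultaneously $\Cl(X)$- and $\TT$-homogeneous: the generating system must in addition contain a canonical section of \emph{every} $\TT$-invariant prime divisor with non-trivial generic isotropy group. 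This extra requirement is what forces the columns of the resulting matrix $P$ lying over a column $u_i$ of $B$ to have the form $(l_{ij}u_i,d_{ij})$ with the $l_{ij}$ recording the isotropy orders, and it is what makes condition~(iii) of Definition~\ref{def:maxorbquot} hold; an arbitrary homogeneous generating system need not have this property, and then $X\dasharrow Y$ need not be a maximal orbit quotient. (Also, the lift of the $\TT$-action to $\bar X$ only satisfies $p(t\cdot x)=t^b\cdot p(x)$ for some positive integer $b$, i.e., one lifts an isogenous torus; this is harmless but not the plain commuting lift you assert.)

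Second, your closing claim that factoriality and normality of the Cox ring ``guarantee that the quotient construction produces the correct $Y$ with $\mathcal{R}(Y)$ of the required form'' is precisely the assertion that $Y\subseteq Z_\Delta$ is an explicit variety, i.e., that the canonical sections $1_{C_0},\ldots,1_{C_r}$ of the doubling divisors generate $\mathcal{R}(Y)$. This does not follow from factoriality alone. The paper proves it by realizing $\bar Y$ as the image of $\bar X$ under the monomial map $(z,w)\mapsto(z_0^{l_0},\ldots,z_r^{l_r})$, observing that this is a good quotient by the kernel quasitorus, checking that the coordinate functions stay pairwise non-associated on $\bar Y$, and then invoking the Cox ring recognition criteria of~\cite{ArDeHaLa}. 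Without this step you only know that $Y$ is some closed subvariety of $Z_\Delta$, not that the pair $Y\subseteq Z_\Delta$ with its embedding system is admissible input for Construction~\ref{constr:t-mds}.
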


As a first major example class,
we exhibit in Section~\ref{sec:hypPlaneAr} 
the \emph{general arrangement varieties}.
These are $\TT$-varieties of 
complexity~$c$ with maximal orbit quotient 
$\pi \colon X \dasharrow \PP_c$ such that 
the critical values of $\pi$ form a general 
hyperplane arrangement in the projective space 
$\PP_c$.
We have already seen an example.
The smooth projective quadric~$X$ with 
its maximal torus action discussed 
at the beginning is a general arrangement 
variety of complexity two.
The critical values of the maximal orbit 
quotient $\pi \colon X \dasharrow \PP_2$ 
are the points of the lines
$$
V(T_0), \qquad 
V(T_1), \qquad V(T_2), 
\qquad V(T_0+T_1+T_2).
$$
Theorem~\ref{thm:arrTvar} ensures that, 
for instance, all projective general 
arrangement varieties can be presented 
as explicit $\TT$-varieties.
In Section~\ref{sec:exfirstprops}
we use the methods on explicit 
$\TT$-varieties to investigate the 
geometry of general arrangement varieties.
For example, Proposition~\ref{prop:genarrcandiv} 
gives an explicit formula for the canonical 
divisor and Corollary~\ref{cor:genarrvarsmooth}
provides a purely combinatorial smoothness 
criterion.

Extending recent classification work
in complexity one~\cite{FaHaNi}, we 
take a closer look at smooth general arrangement 
varieties of Picard number at most two.
In Picard number one, we retrieve precisely 
the smooth projective quadrics,
see Proposition~\ref{prop:arrvar-rho-one}.
Similar to the case of complexity one,
the situation in Picard number two is 
much more ample. 
For the case of complexity two, we obtain 
the following explicit descriptions;
below, we say that a torus action on a
variety is of \emph{true} complexity $c$,
if the action is of complexity $c$ and 
the variety does not admit a torus action
of lower complexity.
Note that being Mori dream spaces, the 
varieties listed below are indeed determined 
by their Cox ring together with an ample 
class; see Remark~\ref{rem:Sigmau}.

\begin{theorem}
\label{theo:proj}
Every smooth projective general arrangement 
variety of true complexity two and Picard number two 
is isomorphic to precisely one of the following 
varieties $X$, specified by their Cox ring 
$\mathcal{R}(X)$, the matrix $[w_1, \dots, w_r]$ 
of generator degrees and an ample class
$u \in \Cl(X) = \ZZ^2$.

\medskip

{\centering
{\small
\setlength{\tabcolsep}{-1pt}
\begin{longtable}{ccccc}
No.
&
\small{$\mathcal{R}(X)$}
&
\small{$[w_1,\ldots, w_r]$}
&
\small{$u$}
&
\small{$\dim(X)$}
\\
%%%%%%%%%%%%%%%%%%%%%%%%%%%%%%%%%%%%%%%%%%%%%%%%%%%%%%%%%%%%%%%%%%%%%%%%
\toprule
1
&
$
\frac
{\KK[T_1, \ldots , T_9]}
{\langle T_{1}T_{2}T_{3}^2+T_{4}T_{5}+T_6T_7+T_8T_9 \rangle}
$
&
\tiny{
\setlength{\arraycolsep}{1pt}
$
\begin{array}{c}
\left[
\begin{array}{ccccccccc}
0 & 0 & 1 & 1 & 1 & 1 & 1 & 1 & 1
\\
1 & 1 & 0 & a_1 & a_1' & a_2 & a_2' & a_3 & a_3'
\end{array}
\right]
\\[1em]
1 \leq a_1 \leq a_2 \leq a_3,
\ a_i' = 2-a_i
\end{array}
$
}
&
\tiny{
\setlength{\arraycolsep}{1pt}
$
\left[
\begin{array}{c}
1
\\
a_3+1
\end{array}
\right]
$
}
&
\small{$6$}
\\
%%%%%%%%%%%%%%%%%%%%%%%%%%%%%%%%%%%%%%%%%%%%%%%%%%%%%%%%%%%%%%%%%%%%%%%%
\midrule
2
&
$
\frac
{\KK[T_1, \ldots , T_9]}
{\langle T_{1}T_{2}T_{3}+T_{4}T_{5}+T_6T_7 +T_8T_9\rangle}
$
&
\tiny{
\setlength{\arraycolsep}{1pt}
$
\left[
\begin{array}{ccccccccc}
0 & 0 & 1 & 1 & 0 & 1 & 0  & 1 & 0
\\ 
1 & 1 & 0 & 1 & 1 & 1 & 1 & 1 & 1
\end{array}
\right]
$
}
&
\tiny{
$
\left[
\begin{array}{c}
1
\\
2
\end{array}
\right]
$
}
&
\small{$6$} 
\\
%%%%%%%%%%%%%%%%%%%%%%%%%%%%%%%%%%%%%%%%%%%%%%%%%%%%%%%%%%%%%%%%%%%%%%%%
\midrule
3
&
$
\frac
{\KK[T_1, \ldots , T_8]}
{\langle T_{1}T_{2}T_{3}^2+T_{4}T_{5}+T_6T_7 +T_8^2\rangle}
$
&
\tiny{
\setlength{\arraycolsep}{1pt}
$
\begin{array}{c}
\left[
\begin{array}{cccccccc}
0 & 0 & 1 & 1 & 1 & 1 & 1 & 1
\\
1 & 1 & 0 & a_1 & a_1' & a_2 & a_2' & 1
\end{array}
\right]
\\[1em]
1 \leq a_1 \leq a_2,
\
a_i' = 2-a_i
\end{array}
$
}
&
\tiny{
\setlength{\arraycolsep}{1pt}
$
\left[
\begin{array}{c}
1
\\
a_2+1
\end{array}
\right]
$
}
&
\small{$5$} 
\\
%%%%%%%%%%%%%%%%%%%%%%%%%%%%%%%%%%%%%%%%%%%%%%%%%%%%%%%%%%%%%%%%%%%%%%%%
\midrule
4
&
$
\begin{array}{c}
\frac
{\KK[T_1, \ldots , T_8, S_1,\ldots,S_m]}
{\langle T_{1}T_{2}^{l_{2}}+T_{3}T_{4}^{l_{4}}+T_{5}T_{6}^{l_{6}}+T_{7}T_{8}^{l_{8}} \rangle}
\\
\scriptstyle m \geq 0
\end{array}
$
&
\tiny{
\setlength{\arraycolsep}{1pt}
$
\begin{array}{c}
\left[
\begin{array}{cccccccc|ccc}
0 & 1 & a_1 & 1 & a_2 & 1 & a_3 & 1 & d_1 & \ldots & d_m
\\
1 & 0 & 1 & 0 & 1 & 0 & 1 & 0 & 1 & \ldots & 1
\end{array}
\right]
\\[1em]
0 \leq a_1 \leq a_2 \leq a_3,
\
d_1 \leq \ldots \leq d_m,
\\
l_2 = a_1+l_4 = a_2+l_6 = a_3+l_8
\end{array}
$
}
&
\tiny{
\setlength{\arraycolsep}{1pt}
$
\begin{array}{c}
\left[
\begin{array}{c}
d
\\
1
\end{array}
\right]
\\[1em]
d \text{ max}
\\
\text{of } a_3,d_m
\end{array}
$
}
&
\small{$m+5$}
\\
%%%%%%%%%%%%%%%%%%%%%%%%%%%%%%%%%%%%%%%%%%%%%%%%%%%%%%%%%%%%%%%%%%%%%%%%

\midrule
5
&
$
\begin{array}{c}
\frac
{\KK[T_1, \ldots , T_8, S_1,\ldots,S_m]}
{\langle T_{1}T_{2}+T_{3}^2T_{4}+T_5^2T_{6}+T_7^2T_8 \rangle}
\\
\scriptstyle m \geq 0
\end{array}
$
&
\tiny{
\setlength{\arraycolsep}{1pt}
$
\begin{array}{c}
\left[
\begin{array}{cccccccc|ccc}
0 & 2a+1 & a & 1 & a & 1 & a & 1  & 1 & \ldots & 1
\\
1 & 1 & 1 & 0 & 1 & 0 & 1 & 0 & 0 & \ldots & 0
\end{array}
\right]
\\[1em]
a \geq 0
\end{array}
$
}
&
\tiny{
\setlength{\arraycolsep}{1pt}
$
\left[
\begin{array}{c}
2a+2
\\
1
\end{array}
\right]
$
}
&
\small{$m+5$}
\\
%%%%%%%%%%%%%%%%%%%%%%%%%%%%%%%%%%%%%%%%%%%%%%%%%%%%%%%%%%%%%%%%%%%%%%%%
\midrule
6
&
$
\begin{array}{c}
\frac
{\KK[T_1, \ldots , T_8, S_1,\ldots,S_m]}
{\langle T_{1}T_{2}+T_{3}T_{4}+T_5^2T_{6}+T_7^2T_8 \rangle}
\\
\scriptstyle m \geq 0
\end{array}
$
&
\tiny{
\setlength{\arraycolsep}{1pt}
$
\begin{array}{c}
\left[
\begin{array}{cccccccc|ccc}
0 & 2a_3+1 & a_1 & a_2 & a_3 & 1 & a_3 & 1  & 1 & \ldots & 1
\\
1 & 1 & 1 & 1 & 1 & 0 & 1 & 0 & 0 & \ldots & 0
\end{array}
\right]
\\[1em]
2a_3+1 = a_1+ a_2, \ 0 \leq a_1 \leq a_2
\end{array}
$
}
&
\tiny{
\setlength{\arraycolsep}{1pt}
$
\left[
\begin{array}{c}
2a_3+2
\\
1
\end{array}
\right]
$
}
&
\small{$m+5$}
\\
%%%%%%%%%%%%%%%%%%%%%%%%%%%%%%%%%%%%%%%%%%%%%%%%%%%%%%%%%%%%%%%%%%%%%%%%
\midrule
7
&
$
\begin{array}{c}
\frac
{\KK[T_1, \ldots , T_8, S_1,\ldots,S_m]}
{\langle T_{1}T_{2}+T_{3}T_{4}+T_5T_{6}+T_7^2T_8 \rangle}
\\
\scriptstyle m \geq 1
\end{array}
$
&
\tiny{
\setlength{\arraycolsep}{1pt}
$
\begin{array}{c}
\left[
\begin{array}{cccccccc|ccc}
0 & 2a_5+1 & a_1 & a_2 & a_3 & a_4 & a_5 & 1  & 1 & \ldots & 1
\\
1 & 1 & 1 & 1 & 1 & 1 & 1 & 0 & 0 & \ldots & 0
\end{array}
\right]
\\[1em]
2a_5+1 = a_1+ a_2 = a_3 + a_4,
\ a_i \ge 0
\end{array}
$
}
&
\tiny{
\setlength{\arraycolsep}{1pt}
$
\left[
\begin{array}{c}
2a_5+2
\\
1
\end{array}
\right]
$
}
&
\small{$m+5$}
\\
%%%%%%%%%%%%%%%%%%%%%%%%%%%%%%%%%%%%%%%%%%%%%%%%%%%%%%%%%%%%%%%%%%%%%%%%
\midrule
8
&
$
\begin{array}{c}
\frac
{\KK[T_1, \ldots , T_8, S_1,\ldots,S_m]}
{\langle T_{1}T_{2}+T_{3}T_{4}+T_5T_{6}+T_7T_8 \rangle}
\\
\scriptstyle m \geq 1
\end{array}
$
&
\tiny{
\setlength{\arraycolsep}{1pt}
$
\left[
\begin{array}{cccccccc|ccc}
0 &  0 & 0 & 0 & 0 & 0 & -1 & 1 & 1 & \ldots & 1
\\
1 & 1 & 1 & 1 & 1 & 1 & 1 & 1 & 0 & \ldots & 0
\end{array}
\right]
$
}
&
\tiny{
\setlength{\arraycolsep}{1pt}
$
\left[
\begin{array}{c}
1
\\
2
\end{array}
\right]
$
}
&
\small{$m+5$}
\\
%%%%%%%%%%%%%%%%%%%%%%%%%%%%%%%%%%%%%%%%%%%%%%%%%%%%%%%%%%%%%%%%%%%%%%%%
\midrule
9
&
$
\begin{array}{c}
\frac
{\KK[T_1, \ldots , T_8, S_1,\ldots,S_m]}
{\langle T_{1}T_{2}+T_{3}T_{4}+T_5T_{6}+T_7T_8 \rangle}
\\
\scriptstyle m \geq 2
\end{array}
$
&
\tiny{
\setlength{\arraycolsep}{1pt}
$
\begin{array}{c}
\left[
\begin{array}{cccccccc|ccc}
0 & a_1 & a_2 & a_3 & a_4 & a_5 & a_6 & a_7 & 1 & \ldots & 1
\\
1 & 1 & 1 & 1 & 1 & 1 & 1 & 1 & 0 & \ldots & 0
\end{array}
\right]
\\[1em]
a_1 =a_2+a_3=a_4+a_5=a_6+a_7
\\
a_i \geq 0
\end{array}
$
}
&
\tiny{
\setlength{\arraycolsep}{1pt}
$
\left[
\begin{array}{c}
a_1+1
\\
1
\end{array}
\right]
$
}
&
\small{$m+5$}
\\
%%%%%%%%%%%%%%%%%%%%%%%%%%%%%%%%%%%%%%%%%%%%%%%%%%%%%%%%%%%%%%%%%%%%%%%%
\midrule
10
&
$
\begin{array}{c}
\frac
{\KK[T_1, \ldots , T_8, S_1,\ldots,S_m]}
{\langle T_{1}T_{2}+T_{3}T_{4}+T_5T_{6}+T_7T_8 \rangle}
\\
\scriptstyle m \geq 2
\end{array}
$
&
\tiny{
\setlength{\arraycolsep}{1pt}
$
\begin{array}{c}
\left[
\begin{array}{cccccccc|cccc}
0 &  0 & 0 & 0 & 0 & 0 & 0 & 0 & 1 & 1 & \ldots & 1
\\
1 & 1 & 1 & 1 & 1 & 1 & 1 & 1 & 0 & d_2 & \ldots & d_m
\end{array}
\right]
\\[1em]
0 \leq d_2\leq\dots \leq d_m, \ d_m > 0
\end{array}
$
}
&
\tiny{
\setlength{\arraycolsep}{1pt}
$
\left[
\begin{array}{c}
1
\\
d_m +1
\end{array}
\right]
$
}
&
\small{$m+5$}
\\
%%%%%%%%%%%%%%%%%%%%%%%%%%%%%%%%%%%%%%%%%%%%%%%%%%%%%%%%%%%%%%%%%%%%%%%%
\midrule
11
&
$
\begin{array}{c}
\frac
{\KK[T_1, \ldots , T_7, S_1,\ldots,S_m]}
{\langle T_{1}T_{2}+T_{3}T_{4}+T_5T_{6}+T_7^2 \rangle}
\\
\scriptstyle m \geq 1
\end{array}
$
&
\tiny{
\setlength{\arraycolsep}{1pt}
$
\left[
\begin{array}{ccccccc|ccc}
-1 & 1 & 0 & 0 & 0 & 0 & 0 & 1 & \ldots & 1
\\
1 & 1 & 1 & 1 & 1 & 1 & 1 & 0& \ldots & 0
\end{array}
\right]
$
}
&
\tiny{
\setlength{\arraycolsep}{1pt}
$
\left[
\begin{array}{c}
1
\\
2
\end{array}
\right]
$
}
&
\small{$m+4$}
\\
%%%%%%%%%%%%%%%%%%%%%%%%%%%%%%%%%%%%%%%%%%%%%%%%%%%%%%%%%%%%%%%%%%%%%%%%
\midrule
12
&
$
\begin{array}{c}
\frac
{\KK[T_1, \ldots , T_7, S_1,\ldots,S_m]}
{\langle T_{1}T_{2}+T_{3}T_{4}+T_5T_{6}+T_7^2 \rangle}
\\
\scriptstyle m \geq 2
\end{array}
$
&
\tiny{
\setlength{\arraycolsep}{1pt}
$
\begin{array}{c}
\left[
\begin{array}{ccccccc|ccc}
0 & 2a_5 & a_1 & a_2 & a_3 & a_4 & a_5 &  1  & \ldots & 1
\\
1 & 1 & 1 & 1 & 1 & 1 & 1 & 0 & \ldots & 0
\end{array}
\right]
\\[1em]
a_1+a_2 = a_3+a_4 = 2a_5,
\ a_i \geq 0
\end{array}
$
}
&
\tiny{
\setlength{\arraycolsep}{1pt}
$
\left[
\begin{array}{c}
2a_5+1
\\
1
\end{array}
\right]
$
}
&
\small{$m+4$}
\\
%%%%%%%%%%%%%%%%%%%%%%%%%%%%%%%%%%%%%%%%%%%%%%%%%%%%%%%%%%%%%%%%%%%%%%%%
\midrule
13
&
$
\begin{array}{c}
\frac
{\KK[T_1, \ldots , T_7, S_1,\ldots,S_m]}
{\langle T_{1}T_{2}+T_{3}T_{4}+T_5T_{6}+T_7^2 \rangle}
\\
\scriptstyle m \geq 2
\end{array}
$
&
\tiny{
\setlength{\arraycolsep}{1pt}
$
\begin{array}{c}
\left[
\begin{array}{ccccccc|cccc}
0 & 0 & 0 & 0 & 0 &0 &0 &  1  & 1 & \ldots & 1
\\
1 & 1 & 1 & 1 & 1 & 1 & 1 & 0 &d_2 & \ldots & d_m
\end{array}
\right]
\\[1em]
0\leq d_2 \leq \ldots \leq d_m, 
\ d_m >0
\end{array}
$
}
&
\tiny{
\setlength{\arraycolsep}{1pt}
$
\left[
\begin{array}{c}
1
\\
d_m+1
\end{array}
\right]
$
}
&
\small{$m+4$}
\\
%%%%%%%%%%%%%%%%%%%%%%%%%%%%%%%%%%%%%%%%%%%%%%%%%%%%%%%%%%%%%%%%%%%%%%%%
\midrule
14
&
\qquad
$
\frac
{\KK[T_1, \ldots , T_{10}]}
{\left\langle\!\!\!\!
\tiny{
\begin{array}{c}
T_{1}T_{2}+T_{3}T_{4}+T_5T_{6}+T_7T_8, 
\\
\lambda_1T_{3}T_{4}+\lambda_2T_5T_{6}+T_7T_8+T_9T_{10} 
\end{array}
}
\!\!\!\! \right\rangle}

$
&
\tiny{
\setlength{\arraycolsep}{1pt}
$
\left[
\begin{array}{cccccccccc}
1 & 0 & 1 & 0 & 1 & 0 & 1 & 0 & 1 & 0
\\
0 & 1 & 0 & 1 & 0 & 1 & 0 & 1 & 0 & 1
\end{array}
\right]
$
}
&
\tiny{
\setlength{\arraycolsep}{1pt}
$
\left[
\begin{array}{c}
1
\\
1
\end{array}
\right]
$
}
&
\small{$6$}
\\
\bottomrule
\end{longtable}
}
}
\noindent
Moreover, each of the listed data defines 
a smooth projective general arrangement variety 
of true complexity two and Picard number two.
\end{theorem}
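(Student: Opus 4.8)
The plan is to specialize the general classification scheme for explicit $\TT$-varieties to the situation at hand. By Theorem~\ref{thm:arrTvar}, a projective general arrangement variety $X$ of complexity two arises from Construction~\ref{constr:t-mds}; in particular its Cox ring has the shape
\[
\cox(X)
\ = \
\KK[T_{ij},\,S_k]\,\big/\,\bangle{g_0,\ldots,g_s},
\qquad
g_t \ = \ \sum\nolimits_{i}\lambda_{ti}\,T_{i0}^{l_{i0}}\cdots T_{in_i}^{l_{in_i}},
\]
where the blocks of variables $T_{i0},\dots,T_{in_i}$ are indexed by the members $H_0,\dots,H_n$ of a general hyperplane arrangement in $\PP_2$, the $S_1,\dots,S_{m_0}$ are the remaining free generators, and $g_0,\dots,g_s$ are the ``spread-out'' linear relations of the arrangement (there are $n-2$ of them), the grading being by $\Cl(X)$. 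By Remark~\ref{rem:Sigmau}, $X$ being a Mori dream space, it is determined up to isomorphism by: the block sizes $n_i+1$, the exponents $l_{ij}$, the number $m_0$ of free generators, the degree matrix $[w_1,\dots,w_r]$ in $\Cl(X)$, and a choice of ample class $u$. One first reduces ``Picard number two'' to $\rk\Cl(X)=2$ and shows, along the way, that $\Cl(X)$ carries no torsion, so $\Cl(X)\cong\ZZ^2$.

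\textbf{Cutting the data down to a finite list.} Three families of conditions make this data finite. First, \emph{homogeneity}: each $g_t$ must be $\Cl(X)$-homogeneous, so the degrees of the monomials $T_{i0}^{l_{i0}}\cdots T_{in_i}^{l_{in_i}}$ agree over all blocks $i$ occurring in $g_t$; this is a linear system tying the $w_{ij}$ to the exponents. Second, \emph{true complexity two}: for the above shape the $\TT$-action has complexity exactly two by construction, so one only needs to exclude the degenerations that would enlarge the acting torus --- a $g_t$ collapsing to a binomial, two monomials inside some $g_t$ becoming proportional under the grading, or the system $g_0,\dots,g_s$ failing to be minimal --- which forces each $g_t$ to be a genuine four-term relation in a chain pattern. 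Third, \emph{smoothness}: Corollary~\ref{cor:genarrvarsmooth} turns into explicit restrictions on the exponents $l_{ij}$ (in particular at most one exponent per block may exceed one) and on the cones of the describing bunch; in Picard number two these cone conditions, read through Remark~\ref{rem:fan2bunch}, amount to $\Eff(X)\subseteq\QQ^2$ decomposing into precisely two chambers, with the weight vectors of the generators distributed in a smoothness-compatible way and $u$ lying in the ample chamber. Together these conditions bound the number of hyperplanes by five ($n\le 4$), the block sizes, the exponents and $m_0$, leaving finitely many combinatorial types.

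\textbf{Enumeration and normalization.} For each type one solves the homogeneity equations, the smoothness constraints on the weights, and nonemptiness of the ample chamber; the residual freedom in the degree matrix, the exponents and the $\lambda_{ti}$ is a low-dimensional integral parameter space. One then divides out the admissible transformations of the defining data: block permutations realizing symmetries of the arrangement (equivalently, the $\PGL_3$-action on the configuration of hyperplanes together with permutations of the $S_k$), the $\GL_2(\ZZ)$-action on $\Cl(X)=\ZZ^2$, and rescalings of the $T_{ij}$ and $S_k$. For four hyperplanes the configuration is unique up to $\PGL_3$, so the $\lambda_{ti}$ normalize to $1$ (rows~1--13); for five they reduce to the genuine moduli $\lambda_1,\lambda_2$ (row~14). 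Carrying this out yields exactly the fourteen families with the displayed Cox rings, degree matrices, parameter ranges and ample classes; by Remark~\ref{rem:Sigmau} the normalized data determine $X$, so the list is without repetition.

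\textbf{Converse and the main obstacle.} For the reverse implication one verifies, row by row, that the tabulated data satisfy the hypotheses of Construction~\ref{constr:t-mds} (so $X$ is an explicit $\TT$-variety with the stated Cox ring), that Corollary~\ref{cor:genarrvarsmooth} certifies smoothness, that $u$ lies in the ample chamber so $X$ is projective, that the relation structure admits no torus enlargement, so the $\TT$-action has true complexity two, and that the critical values of the maximal orbit quotient $X\dasharrow\PP_2$ form a general hyperplane arrangement --- automatic for the four-hyperplane rows and, for row~14, the statement that $\lambda_1,\lambda_2$ can be, and are, chosen in general position. The hard part is the enumeration step: making the smoothness criterion fully explicit in the Picard-number-two setting and interlocking it with the homogeneity and true-complexity conditions so that the case distinction is provably exhaustive, and checking that the normalization leaves no residual isomorphisms among the fourteen families. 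The obstacle is bookkeeping rather than conceptual, but it is substantial.
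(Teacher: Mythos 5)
Your overall route --- reduce to explicit general arrangement varieties via Theorem~\ref{thm:arrTvar}, establish $\Cl(X)=\ZZ^2$, bound the discrete data $(r;n_0,\ldots,n_r;m;l_{ij})$ by playing the combinatorial smoothness criterion against the position of the generator degrees relative to the decomposition $\Eff(X)=\tau^+\cup\tau_X\cup\tau^-$, then enumerate and normalize --- is the same as the paper's, and your assessment that the enumeration is heavy but conceptually routine is consistent with how the paper treats it (two cases are carried out in detail, the rest is delegated to~\cite{Wr}).

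The genuine gap is your treatment of \emph{true} complexity two. You propose to certify it by checking that no $g_t$ collapses to a binomial, that no two monomials of a $g_t$ become proportional under the grading, and that the relation system is minimal. These conditions only guarantee that the \emph{given presentation} exhibits a torus action of complexity exactly two; they say nothing about whether the abstract variety $X$ admits some \emph{other} effective torus action of complexity at most one, which is what ``true complexity two'' excludes. This cannot be read off from the shape of the displayed relations: one must rule out an isomorphism of $X$ with any smooth projective non-toric variety of Picard number two carrying a complexity-one action, i.e.\ with any entry of the classification in~\cite{FaHaNi}*{Thm.~1.1}. The paper does exactly this in Proposition~\ref{prop:truly}, by comparing invariants computable from the graded Cox ring --- dimension and codimension of the singular locus of the total coordinate space, the multiplicities $\dim(R_w)$ at the primitive generators of the effective, movable and semiample cones, divisibility properties of $-\mathcal{K}_X$, and determinants of Mori chambers --- against each candidate trinomial Cox ring. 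For variety No.~5 alone, five separate candidates from~\cite{FaHaNi} must be excluded, and some are only eliminated at the last step (a divisibility of $-\mathcal{K}_X$ by $3$, or a clash between two chamber determinants). Without an argument of this kind your proof shows only that the listed varieties \emph{admit} a complexity-two arrangement structure, not that they have no hidden larger torus; the same invariant-comparison machinery is also what ultimately underwrites the claim that the fourteen families are pairwise non-isomorphic, which your normalization step asserts but does not establish.
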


As a direct application, we can contribute 
to the study of smooth Fano varieties with 
torus action; 
note that by~\cite[Cor.~1.3.2]{BCHM} they all 
are Mori dream spaces and thus the methods of 
this article are applicable.
Existing classification work concerns
the toric case~\cite{Ba1,Ba2,KrNi} 
and the case of complexity one~\cite{FaHaNi}.
Our Theorem~\ref{theo:Fano} classifies in every 
dimension the (finitely many) smooth Fano 
general arrangement varieties of complexity 
two and Picard number two.
Moreover, we study in Section~\ref{sec:FanoClass}
the geometry of the Fano varieties 
listed in Theorem~\ref{theo:Fano},
meaning that we describe their elementary divisorial 
contractions 
and provide small degenerations to singular Fano 
varieties of true complexity one.
We obtain a similar finiteness feature 
as observed in~\cite{FaHaNi} in complexity one:
all varieties of Theorem~\ref{theo:Fano} arise via 
two elementary contractions and a series of 
isomorphisms in codimension one from a finite 
set of smooth projective general arrangement 
varieties of complexity two having dimension~5 to~8, 
see Remark~\ref{rem:duplication}. 
Finally, we list in Theorem~\ref{theo:AlFano} the smooth 
truly almost Fano general arrangement varieties 
of true complexity two and Picard number two, 
where truly almost Fano
means that the anticanonical divisor is semiample
but not ample.

\tableofcontents

\section{Background on toric varieties and Cox rings}
\label{sec:background}

We provide the necessary background and 
fix our notation on toric geometry and 
Cox rings.
Throughout the whole article, the ground 
field $\KK$ is algebraically closed 
and of characteristic zero.
Moreover, the word variety refers to an 
integral separated scheme of finite type 
over $\KK$.
In particular, we assume varieties to be 
irreducible.
By a point we mean a closed point.

When we speak of an \emph{action} of an algebraic
group~$G$ on a variety $X$, then we always
assume the action map $G \times X \to X$,
$(g,x) \mapsto g \cdot x$ to be a morphism 
of varieties.  
A \emph{torus} is an algebraic group~$\TT$ 
isomorphic to a \emph{standard torus} 
$\TT^n = (\KK^*)^n$ and a \emph{$\TT$-variety} 
is a normal variety $X$ with 
an effective torus action,
where effective means that only the neutral 
element $\eins \in \TT$ acts trivially.
The \emph{complexity} $c(X)$ of a $\TT$-variety~$X$ 
is the difference $\dim(X)-\dim(\TT)$. 

Toric geometry treats the case of complexity
zero.
More precisely, a \emph{toric variety} is a 
$\TT$-variety~$Z$ with a base point $z_0 \in Z$ 
such that the orbit map 
$t \mapsto t \cdot z_0$ 
yields an open embedding $\TT \to Z$;
we call $\TT_Z = \TT$ the \emph{acting torus} 
of~$Z$ and write $\TT_Z \subseteq Z$,
identifying $\mathds{1} \in \TT$
with $z_0 \in Z$ and~$\TT_Z$ with its orbit 
$\TT_Z \cdot z_0$.
Toric geometry originates in 
Demazure's work~\cite{Dem} in the 1970s
and connects combinatorics, 
represented by fans, with algebraic geometry,
represented by toric varieties.
As introductory references, we 
mention~\cite{Da, Od, Fu, CoLiSc}.
Here comes the fundamental construction,
which at the end yields a covariant equivalence 
between the categories 
of fans and toric varieties.

\begin{construction}
\label{constr:fan2tv}
A \emph{fan} in $\ZZ^n$ is
a finite collection $\Sigma$ 
of pointed, convex, polyhedral cones 
living in~$\QQ^n$ such 
that for any $\sigma \in \Sigma$ also 
every face $\tau \preccurlyeq \sigma$ 
belongs to~$\Sigma$ and for any two 
$\sigma, \sigma' \in \Sigma$ the intersection 
$\sigma \cap \sigma'$ is a face of both, 
$\sigma$ and $\sigma'$.
Given a fan~$\Sigma$ in~$\ZZ^n$,
the \emph{associated toric variety} $Z$
is built by equivariantly gluing the 
spectra~$Z_\sigma$ 
of the monoid algebras~$\KK[M_\sigma]$ 
of the monoids 
$M_\sigma := \sigma^\vee \cap \ZZ^n$ of 
lattice points inside the dual cones:
$$
Z 
\ = \ 
Z_\Sigma 
\ = \ 
\bigcup_{\sigma \in \Sigma} Z_\sigma,
\qquad
Z_\sigma 
\ = \ 
\Spec \, \KK[M_\sigma],
\qquad
\KK[M_\sigma]
\ = \ 
\bigoplus_{u \in M_\sigma} \KK \chi^u.
$$
The acting torus 
$\TT_Z = \TT^n = \Spec \, \KK[\ZZ^n]$
embeds via $\KK[M_\sigma] \subseteq \KK[\ZZ^n]$ 
canonically into each of the 
$Z_\sigma \subseteq Z$ and 
one takes the neutral element 
$\eins_n \in \TT_Z = \TT^n$ 
as base point $z_0 \in Z$.
The action of $\TT_Z$ on $Z$ then just extends 
the group structure of $\TT_Z \subseteq Z$.
Locally, on the affine open subsets 
$Z_\sigma \subseteq Z$, the $\TT_Z$-action 
is given by its comorphism 
$\chi^u \mapsto \chi^u \otimes \chi^u$.
\end{construction}

\begin{remark}
Let $\Sigma$ be a fan in $\ZZ^n$ and 
$Z$ the associated toric variety.
The cones of $\Sigma$ are in bijection
with the $\TT_Z$-orbits via 
$\sigma \mapsto \TT_Z \cdot z_\sigma$,
where~$z_\sigma$ denotes the common limit 
point for $t \to 0$ of all one-parameter 
groups $t \mapsto (t^{v_1}, \ldots, t^{v_n})$
of $\TT_Z$ with $v \in \ZZ^n$ 
taken from the relative 
interior $\sigma^\circ \subseteq \sigma$.
The dimension of  $\TT_Z \cdot z_\sigma$
equals $n- \dim(\sigma)$.
In particular, the rays
$\varrho_1,\ldots,\varrho_r$ of~$\Sigma$,
that means the one-dimensional cones, 
define the $\TT_Z$-invariant prime divisors
$D_i := \overline{\TT_Z \cdot z_{\varrho_i}}$
of $Z$.
\end{remark}

\emph{Cox's quotient presentation} generalizes 
the classical construction of the projective 
space $\PP_n$ as the quotient of 
$\KK^{n+1} \setminus \{0\}$
by $\KK^*$ acting via scalar multiplication.
It delivers, for instance, any complete
toric variety as a quotient of an open toric 
subset of some affine space 
by a \emph{quasitorus}, that means 
an algebraic group isomorphic to a direct 
product of a torus and a finite abelian group.

\begin{construction}
\label{constr:torcox}
See~\cite{Co}, also~\cite[Sec.~5]{CoLiSc} 
and~\cite[Sec.~2.1.3]{ArDeHaLa}.
Consider a fan~$\Sigma$ in~$\ZZ^n$ and let 
$\varrho_1, \ldots, \varrho_r$
denote its rays.
In each $\varrho_i$ sits a unique
primitive lattice vector~$v_i$, the
generator of the monoid $\varrho_i \cap \ZZ^n$.
The \emph{generator matrix of $\Sigma$}
is the $(n \times r)$-matrix
$$ 
P \ = \ [v_1,\ldots,v_r]
$$
having $v_1, \ldots, v_r$ as its columns,
numbered accordingly to 
$\varrho_1, \ldots, \varrho_r$.
We use the letter~$P$ as well to 
denote the associated linear maps $\ZZ^r \to \ZZ^n$
and $\QQ^r \to \QQ^n$.
As any integral $n \times r$ matrix, $P$ 
defines a homomorphism of tori
$$ 
p \colon \TT^r \ \to \ \TT^n, 
\qquad
t \ \mapsto \ (t^{P_{1*}}, \ldots, t^{P_{n*}}) 
$$
where $t^{P_{i*}} = t_1^{p_{i1}} \cdots t_r^{p_{ir}}$ 
has the $i$-th row of $P = (p_{ij})$ as its exponent 
vector.
Now assume that $v_1, \ldots, v_r$ generate~$\QQ^n$ 
as a vector space, meaning that 
the associated toric variety $Z$ has no 
torus factor.
Consider the orthant $\gamma = \QQ_{\ge 0}^r$ 
and the set
$$ 
\hat \Sigma
\ := \ 
\{\tau \preccurlyeq \gamma; \ 
P(\tau) \subseteq \sigma 
\text{ for some } 
\sigma \in \Sigma\}.
$$
Then $\hat \Sigma$ is a subfan of the fan $\bar \Sigma$ 
of faces of the orthant $\gamma \subseteq \QQ^r$.
Moreover, $P$ sends cones from $\hat \Sigma$ into
cones of $\Sigma$.
Thus, $p \colon \TT^r \to \TT^n$ extends to 
a morphism $p \colon \hat Z \to Z$
of the associated toric varieties. 
We arrive at the following picture
$$ 
\xymatrix{
{\hat Z}
\ar@{}[r]|\subseteq
\ar[d]^{\quot H}_p
&
{\bar Z}
\ar@{}[r]|{:=}
&
{\KK^r}
\\
Z
&
&
}
$$
where $\hat Z \subseteq \bar Z$ is an open 
$\TT^r$-invariant subvariety and 
$H \subseteq \TT^r$ is the kernel of the 
homomorphism $p \colon \TT^r \to \TT^n$ 
of the acting tori.
Being a closed subgroup of a torus, $H$ is 
a quasitorus.
For any cone $\sigma \in \Sigma$, we have
$$ 
p^{-1}(Z_\sigma) 
\ = \ 
\hat Z_{\hat \sigma},
\quad 
\hat \sigma \ := \ \cone(e_i; \ v_i = P(e_i) \in \sigma),
\qquad\quad
p^*\mathcal{O}(Z_{\sigma})
\ = \ 
\mathcal{O}(\hat Z_{\hat \sigma})^H,
$$
where $e_i \in \ZZ^r$ is the $i$-th canonical
basis vector.
Thus, $p \colon \hat Z \to Z$ is an affine
morphism and the pull back functions are 
precisely the $H$-invariants. 
In other words, $p$ is a \emph{good quotient} 
for the $H$-action, as indicated by~``$\quot H$''.
\end{construction}

Generalizing the idea of homogeneous 
coordinates on the projective space, 
one uses Cox's quotient presentation to 
obtain global coordinates on toric varieties.

\begin{remark}
\label{rem:coxcoord}
Let $Z$ be a toric variety with
quotient presentation
$p \colon \hat Z \to Z$ as 
in~\ref{constr:torcox}.
Then every $p$-fiber contains a unique 
closed $H$-orbit.
The presentation in \emph{Cox coordinates}
of a point $x \in Z$ is
$$ 
x \ = \ [z_1,\ldots, z_r],
\qquad
\text{where }
z = (z_1,\ldots, z_r) \in p^{-1}(x)
\text{ with } H \cdot z \subseteq \hat Z
\text{ closed}.
$$
Thus, $[z]$ and $[z']$ represent 
the same point $x \in Z$ if and only if 
$z$ and $z'$ lie in the same closed $H$-orbit 
of~$\hat Z$.
For instance, the points $z_\sigma \in Z$, 
where $\sigma \in \Sigma$, 
are given in Cox coordinates as
$$ 
z_\sigma 
\ = \ 
[\varepsilon_1,\ldots, \varepsilon_r],
\qquad
\varepsilon_i 
\ = \ 
\begin{cases}
0, & P(e_i) \in \sigma,
\\
1, & P(e_i) \not\in \sigma.
\\
\end{cases}
$$
\end{remark}

\begin{remark}
\label{rem:coxcoord2defeqn}
Let $Z$ be a toric variety with
quotient presentation
$p \colon \hat Z \to Z$ as 
in~\ref{constr:torcox}.
Then we obtain an injection
from the closed subvarieties of $X \subseteq Z$ 
to the $H$-invariant closed subvarieties of 
$\bar Z = \KK^r$ via
$$ 
X \ \mapsto \ \bar X \ := \ \overline{p^{-1}(X)} \subseteq \bar Z.
$$
The vanishing ideal 
$I(\bar X) \subseteq \KK[T_1, \ldots,T_r]$
is generated by polynomials 
$g_1, \ldots, g_s$ being $H$-homogeneous in the sense 
that $g_j(h \cdot z) = \chi_j(h)g_j(z)$ holds 
with characters $\chi_j \in \Chi(H)$. 
We call $g_1, \ldots, g_s$ 
\emph{defining equations}
in Cox coordinates for $X \subseteq Z$.
\end{remark}

We turn to Cox rings. Their history starts 
in the 1970s in a geometric setting, 
when Colliot-Th\'{e}l\`ene
and Sansuc introduced the universal 
torsors presenting smooth varieties in
a universal way as quotients~\cites{ColSa}.
In toric geometry, the quotient presentation 
and Cox rings popped up in the 1990s in work 
of Audin~\cite{Au}, Cox~\cite{Co} and others.
In~2000, Hu and Keel observed fundamental 
connections between Cox rings, Mori 
theory and geometric invariant theory~\cite{HuKe}.
As a general introductory reference 
on Cox rings, we mention~\cite{ArDeHaLa}.

We enter the subject. Consider a normal variety~$X$ 
with only constant invertible global functions 
and finitely generated divisor class group $\Cl(X)$.
For a Weil divisor~$D$ on~$X$, denote by 
$\mathcal{O}(D)$ the associated sheaf of sections.
Then the
\emph{Cox sheaf}~$\mathcal{R}$ and the 
\emph{Cox ring} $\mathcal{R}(X)$ of $X$ 
are defined as 
$$ 
\mathcal{R}
\ := \ 
\bigoplus_{[D] \in \Cl(X)}
\mathcal{O}(D),
\qquad\quad
\mathcal{R}(X)
\ := \ 
\Gamma(X,\mathcal{R})
\ = \ 
\bigoplus_{[D] \in \Cl(X)}
\Gamma(X,\mathcal{O}(D)).
$$
Observe that we grade $\mathcal{R}$ 
and $\mathcal{R}(X)$
by divisor classes whereas the homogeneous 
components are defined by divisors. 
If $\Cl(X)$ is torsion free, then this problem 
of well-definedness is solved by just 
regarding~$\mathcal{R}$ as the sheaf of 
multi-section algebras associated with any 
subgroup of the Weil divisor group 
mapping isomorphically onto $\Cl(X)$.
The case of torsion in $\Cl(X)$ requires 
more care, see~\cite{BerHa1,Ha3} 
and~\cite[Sec.~1.1.4]{ArDeHaLa}.

\begin{remark}
\label{ex:toriccr}
See~\cite{Co}, also~\cite[Sec.~5]{CoLiSc} 
and~\cite[Sec.~2.1.3]{ArDeHaLa}.
Let $Z$ be a toric variety without torus factor
and let $D_1, \ldots, D_r$ be the $\TT_Z$-invariant 
prime divisors of $Z$. 
Then the Cox ring of $Z$ and its $\Cl(Z)$-grading 
are given as
$$ 
\mathcal{R}(Z) 
\ = \ 
\KK[T_1,\ldots,T_r],
\qquad\qquad
\deg(T_i) \ =  [D_i] \ \in \ \Cl(Z).
$$
For a more explicit picture, let $Z$ 
arise from a fan $\Sigma$ in $\ZZ^n$. 
Then 
$D_i = \overline{\TT_Z \cdot z_{\varrho_i}} \subseteq Z$
holds with the rays $\varrho_1, \ldots, \varrho_r$ 
of $\Sigma$.
The divisor class group of $Z$ and the divisor 
classes of the $D_i$ are described by
$$ 
\Cl(Z) 
\ = \ 
K
\ := \ 
\ZZ^r / \im(P^*),
\qquad
\qquad
[D_i] \ = \ w_i \ := \ Q(e_i) \ \in \ K,
$$
where we denote
by  $P^*$ the transpose of the generator 
matrix $P$ of~$\Sigma$,
by $Q \colon \ZZ^r \to K$ the projection
and by $e_i \in \ZZ^r$ the $i$-th canonical basis
vector. 
\end{remark}

\begin{remark}
In Construction~\ref{constr:torcox},
the toric variety $Z$ is represented 
as a quotient of $\hat Z \subseteq \KK^r$
by the quasitorus 
$H = \ker(p) \subseteq \TT^r$.
With $K = \Cl(Z) = \ZZ^r / \im(P^*)$ 
from Remark~\ref{ex:toriccr}, 
we can view $H$ also as 
the spectrum of the associated 
group algebra:
$$
H 
\ = \ 
\Spec \, \KK[K],
\qquad \qquad 
\KK[K] 
\ = \ 
\bigoplus_{w \in K} \KK \chi^w.
$$ 
Here, the elements $\chi^w \in \KK[K]$ 
are the characters of $H$ and $w \mapsto \chi^w$ 
defines an isomorphism between $K$ and 
the character group $\Chi(H)$.
Setting $\chi_i := \chi^{w_i}$,
we retrieve the $H$-action from the  
$\Cl(Z)$-grading of the Cox ring $\mathcal{R}(Z)$ 
as
$$ 
h \cdot z 
\ = \ 
(\chi_1(h)z_1, \ldots, \chi_r(h)z_r).
$$  
\end{remark}

In general, the Cox ring $\mathcal{R}(X)$ 
is normal, integral and, as its main 
algebraic feature, it is 
$\Cl(X)$-factorial~\cite{Ar,Ha3}. 
Let us recall the meaning.
A ring $R = \oplus_K R_w$ graded 
by an abelian group~$K$ is \emph{$K$-integral} 
if it has no homogeneous zero divisors.
A nonzero homogeneous non-unit $f \in R$ is 
\emph{$K$-prime} if, whenever $f$ divides a 
product~$gh$ of two homogeneous $g,h \in R$,
then it divides $g$ or $h$.
The ring $R$ is called \emph{$K$-factorial} if
it is $K$-integral and every nonzero homogeneous 
non-unit of $R$ is a product of $K$-primes.
If $\Cl(X)$ is torsion free, then the Cox ring 
admits unique factorization in the usual 
sense, see~\cite[Prop.~8.4]{BerHa1}
and also~\cite[Cor.~1.2]{EKW}.

The bunched ring approach presented
in~\cites{BerHa2,Ha2,ArDeHaLa} uses 
Cox rings to encode algebraic varieties.
The central construction starts with 
a given $K$-factorial ring~$R$ and 
produces varieties $X$ having divisor 
class group $K$ and Cox ring~$R$.
In this article, we will work with 
the following variant being closer to 
toric geometry in the sense that it uses 
fans instead of the bunches of cones 
of~\cites{BerHa2,Ha2,ArDeHaLa}.
Let us fix the necessary notation.
By an~\emph{affine algebra} we mean 
a finitely generated reduced $\KK$-algebra.
If $K$ is an abelian 
group, then we denote by 
$K_\QQ = K \otimes_\ZZ \QQ$ the associated
rational vector space.
Given $w \in K$, we write as well $w$ 
for the element $w \otimes 1 \in K_\QQ$.
Moreover, if $Q \colon K \to K'$ is a 
homomorphism, we denote the associated 
linear map $K_\QQ \to K'_\QQ$ as well 
by~$Q$.

\begin{construction}
\label{constr:explvar}
Let $K$ be a finitely generated abelian group and
$R = \oplus_K R_w$ a $K$-factorial, normal, 
integral, affine $\KK$-algebra with only 
constant homogeneous units.
Suppose that $f_1,\ldots,f_r$ are pairwise 
non-associated $K$-prime generators of~$R$ 
such that any $r-1$ of the degrees 
$w_i := \deg(f_i)$ generate $K$ as a group
and for $\tau_i := \cone(w_j; \ j \ne i)$,
the intersection $\tau_1 \cap \ldots \cap \tau_r$ 
is of full dimension in $K_\QQ$.
Consider the closed embedding
$$ 
\xymatrix{
{\Spec \, R}
 =:  
{\bar X}
\
\ar[rrr]^{x \mapsto (f_1(x),\ldots,f_r(x))}
&&&
\
{\bar Z}
:= 
{\KK^r}.
}
$$
The quasitorus $H = \Spec \, \KK[K]$ acts 
on $\bar Z$ via 
$h \cdot z = (\chi_1(h)z_1, \ldots, \chi_r(h)z_r)$,
where $\chi_i \in \Chi(H)$ is the character 
corresponding to $w_i \in K$.
This action leaves the subvariety 
$\bar X \subseteq \bar Z$
invariant.
Now, consider the degree map
$$
Q \colon \ZZ^r \ \to \ K,
\qquad
e_i \ \mapsto \ w_i.
$$
Let $P$ be an integral $(n \times r)$-matrix, 
the rows of which generate $\ker(Q) \subseteq \ZZ^r$.
Then the assumptions on $f_1, \ldots, f_r$ ensure that 
the columns of $P$ are pairwise different and 
primitive, see~\cite{ArDeHaLa}*{Thm.~2.2.2.6 
and Lemma~2.1.4.1}.
Fix any fan $\Sigma$ having $P$ as generator 
matrix.
The associated toric variety $Z$
and $p \colon \hat Z \to Z$ from
Construction~\ref{constr:torcox}
fit into a commutative diagram
$$ 
\xymatrix{
{\hat X}
\ar@{}[r]|\subseteq
\ar[d]_{\quot H}^p
&
{\hat Z}
\ar[d]^{\quot H}_p
\\
X
\ar@{}[r]|\subseteq
&
Z}
$$
where we set $\hat X := \bar X \cap \hat Z$ and 
$X := \hat X \quot H$ is a normal,
closed subvariety of $Z$.
We speak of $X \subseteq Z$ as an \emph{explicit variety},
refer to $\alpha = (f_1,\ldots,f_r)$ as 
the \emph{embedding system} of $X \subseteq Z$ and 
call any system of $K$-homogeneous generators
$g_1, \ldots, g_s$ of the vanishing ideal
$I(\bar X) \subseteq \KK[T_1,\ldots,T_r]$ 
\emph{defining equations} 
for $X$. 
\end{construction}

\begin{remark}
If, in Construction~\ref{constr:explvar},
the ambient toric variety $Z$ 
is affine (complete, projective),
then the resulting $X$ is affine 
(complete, projective).
\end{remark}

The interface from explicit varieties 
to bunched rings relies on linear 
Gale duality.
Here comes how this concretely works 
in our situation. 

\begin{remark}
\label{rem:fan2bunch}
Notation as in~\ref{constr:torcox}, 
\ref{ex:toriccr} and~\ref{constr:explvar}.
To any explicit variety $X \subseteq Z$, 
we can apply the machinery of bunched 
rings~\cite[Chap.~3]{ArDeHaLa}.
The translation into the latter setting 
runs as follows.
Consider the homomorphisms
$$ 
P \colon \ZZ^r \to \ZZ^n, \ e_i \mapsto v_i,
\qquad \qquad
Q \colon \ZZ^r \to K, \ e_i \mapsto w_i,
$$
where $K = \ZZ^r / \mathrm{im}(P^*)$.
For $\sigma \in \Sigma$, the face
$\hat \sigma \preccurlyeq \gamma \subseteq \QQ^r$ 
of the orthant is generated by 
the $e_i$ with $v_i \in \sigma$.
The \emph{complementary face}
$\hat \sigma^* \preccurlyeq \gamma$ 
of 
$\hat \sigma \preccurlyeq \gamma$
is generated by the~$e_i$ with 
$e_i \not\in \hat \sigma$.
Set
$$ 
\Phi
\ := \ 
\{Q(\hat \sigma^*); 
\ 
\sigma \in \Sigma 
\text{ with } 
X \cap \TT_z \cdot z_\sigma \ne \emptyset
\}.
$$
Then $\Phi$ is a collection of cones in $K_\QQ$ 
with pairwise intersecting relative interiors.
With the system of generators 
$\mathfrak{F} := (f_1,\ldots,f_r)$,
we obtain a \emph{bunched ring}
$(R,\mathfrak{F},\Phi)$
in the sense of~\cite[Def.~3.2.1.2]{ArDeHaLa}.
We have an open inclusion
$$ 
X \ \subseteq \ X(R,\mathfrak{F},\Phi)
$$
into the variety associated with the 
bunched ring~\cite[Def.~3.2.1.3]{ArDeHaLa}
such that the complement of 
$X$ in $X(R,\mathfrak{F},\Phi)$ is of 
codimension at least two.
If $X$ is affine or complete, then the
above inclusion is even an equality.
\end{remark}

Based on this translation, we will 
import several statements on the 
geometry of explicit ($\TT$-)varieties 
in Section~\ref{sec:taohcfirstprop}.
For the moment, we just mention the 
following.

\begin{remark}
See~\cite[Thm.~3.2.1.4]{ArDeHaLa}.
For every explicit variety $X \subseteq Z$, 
the divisor class group and 
the Cox ring of $X$ are given as 
$$ 
\Cl(X) 
\ = \ 
K
\ = \ 
\Cl(Z),
\qquad \qquad 
\mathcal{R}(X) 
\ = \ 
R
\ = \ 
\mathcal{R}(Z) / I(\bar X).
$$
Moreover, $\hat X$ is the relative spectrum of 
the Cox sheaf on $X$, which in turn is given 
as the direct image $\mathcal{R} = p_* \mathcal{O}_{\hat X}$.
Finally, we have the prime divisors
$$
D_i^X
\ = \ 
X \cap D_i^Z
\ \subseteq \
X
$$
induced by the toric prime 
divisors~$D_1^Z, \ldots, D_r^Z$.
Here each $D_i^X$ is determined by the property
$p^*D_i^X = V_{\bar X}(T_i)$.
\end{remark}

Coming embedded into a toric variety,
every explicit variety $X \subseteq Z$ 
inherits the \emph{$A_2$-property}: any 
two points of $X$ admit a common affine 
neighborhood.
The normal $A_2$-varieties are precisely
the normal varieties that are embeddable into 
a toric variety, see~\cite{Wl}. 
An $A_2$-variety $Y$ is \emph{$A_2$-maximal}
if it does not allow open embeddings into 
$A_2$-varieties $Y'$ such that 
$Y' \setminus Y$ is non-empty of 
codimension at least two.
For example, affine and projective varieties
are $A_2$-maximal.

\begin{remark}
See~\cite[Thm.~3.2.1.9]{ArDeHaLa}.
Every $A_2$-maximal variety with only
constant invertible global functions,
finitely generated divisor class group
and finitely generated Cox ring 
can be represented as an explicit 
variety.
\end{remark}

In~\cite{HuKe}, Hu and Keel introduced 
the~\emph{Mori dream spaces} as 
$\QQ$-factorial projective varieties 
with a Mori chamber decomposition
satisfying suitable finiteness properties
which in particular guarantee an 
optimal behavior with respect 
to the minimal model programme.

\begin{remark}
According to~\cite{HuKe}*{Prop.~2.9},
the Mori dream spaces are precisely 
the $\QQ$-factorial projective varieties 
with a finitely generated Cox ring.
In particular, every Mori dream space 
can be represented as an explicit variety.
\end{remark}

\section{Explicit $\TT$-varieties}
\label{sec:expltvar}

We present our method of producing systematically 
explicit $\TT$-varieties,
see Construction~\ref{constr:t-mds}, 
and we formulate basic properties, see 
Proposition~\ref{thm:t-mds-1},
Theorem~\ref{thm:t-mds-2} and 
Proposition~\ref{thm:t-mds-3}.
The proofs of the latter results are given in 
the subsequent section.
We begin by indicating the ideas behind 
Construction~\ref{constr:t-mds}. 
First, take a glance at the following 
naive way to produce varieties with torus 
action sitting inside a given toric variety.

\begin{recipe}
\label{rem:emb2tvar}
Let $Z$ be a toric variety, 
$\TT_Z = \TT' \times \TT$ a splitting  
of the acting torus into closed subtori 
and $Y \subseteq \TT'$ a closed subvariety. 
Consider the closure
$$ 
X \ := \ \overline{Y \times \TT} \ \subseteq \ Z. 
$$
Then the variety $X \subseteq Z$ is invariant under 
the action of $\TT$ on $Z$ and thus 
we obtain an effective algebraic torus action 
$\TT \times X \to X$. 
By construction, we have 
$$ 
X \cap \TT_Z 
\ = \ 
Y \times \TT,
\qquad\qquad
\KK(X)^{\TT}
 \ = \ 
\KK(Y).
$$ 
In particular, $Y$ represents the field of 
$\TT$-invariant rational functions of $X$ 
and thus the projection $\TT_Z \to \TT'$ 
defines a  \emph{rational quotient} 
$X \dasharrow Y$ 
for the $\TT$-action on $X$.
\end{recipe}

So far, Recipe~\ref{rem:emb2tvar} provides no
specifically close relations between the geometry 
of~$X$ and that of its ambient toric variety $Z$.
Nevertheless, we know in advance the rational
quotient $Y$ and, stemming from a subtorus action 
on~$Z$, the $\TT$-action on~$X$ can be 
studied by toric methods.
Moreover, Recipe~\ref{rem:emb2tvar} produces
for instance all projective $\TT$-varieties, 
as we infer from the following.

\begin{remark}
Any $\TT$-variety $X$ that admits an equivariant
embedding into a toric variety $Z$ with $\TT$
acting as a subtorus of $\TT_Z$ can be 
represented as in Recipe~\ref{rem:emb2tvar}.
The techniques from~\cites{Ha1,Ha2} yield 
such equivariant embeddings for $\TT$-varieties 
$X$ with the $A_2$-property provided they 
are $\QQ$-factorial or, more generally, 
divisorial in the sense of~\cite{Bo}, 
or have a Cox sheaf of locally finite 
type.
\end{remark}

Our aim is to bring together the features of 
Recipe~\ref{rem:emb2tvar} with those of 
Construction~\ref{constr:explvar}.
Let us first look at a concrete example, 
indicating the main rules of the subsequent 
construction game and illustrating the 
notation used there.
The example we are going to treat is a well 
known $\KK^*$-surface,
occurring as an important step in resolving 
the $E_6$-singular cubic surface;
see~\cite[Sec.~4]{HaTschi} and, for links 
to various other aspects, also~\cite[p.~522]{ArDeHaLa}.

\begin{example}
\label{ex:recipe}
Our initial data is a projective 
line $Y \subseteq \PP_2$ 
given in homogeneous coordinates
by the following equation:
$$ 
Y \ = \ V(T_0+T_1+T_2) \ \subseteq \ \PP_2.
$$
We regard $\PP_2$ as the toric variety defined 
by the complete fan $\Delta$ with the 
generator matrix
$$ 
B 
\ = \
[u_0,u_1,u_2]
\ = \ 
\left[
\begin{array}{rrr}
-1 & 1 & 0
\\
-1 & 0 & 1
\end{array}
\right] .
$$
Now we start the game that builds up the 
generator matrix $P$ of the fan of the 
prospective ambient 
toric variety~$Z$ of our final $X$.
First produce a matrix
$$ 
P_0 
\ = \ 
[u_{01,},u_{02},u_{11},u_{21}]
\ = \ 
\left[
\begin{array}{rrrr}
-3 & -1 & 3 & 0 
\\
-3 & -1 & 0 & 2  
\end{array}
\right],
$$
the columns $u_{ij}$ of which are positive multiples 
of the columns $u_i$ of~$B$.
Then append a zero column to $P_0$, a block $d$ 
below~$P_0$ and a block~$d'$ below the zero 
column:
$$ 
P 
\ = \ 
[v_{01,},v_{02},v_{11},v_{21},v_1]
\ = \
\left[
\begin{array}{rrrrr}
-3 & -1 & 3 & 0 & 0 
\\
-3 & -1 & 0 & 2 & 0 
\\
-2 & -1 & 1 & 1 & 1
\end{array}
\right]  
\ = \ 
\left[
\begin{array}{cc}
P_0 & 0
\\
d & d'
\end{array}
\right].  
$$
Let $\Sigma$ be any complete fan in $\ZZ^3$
having $P$ as its generator matrix and let $Z$ 
be the associated toric variety.
Then the acting torus $\TT_Z$ of $Z$ splits as
$$ 
\TT_Z 
\ =  \
\TT^3 
\ =  \ 
\TT^2 \ \times \ \KK^*.
$$
Moreover, $Y \cap \TT^2 \subseteq \TT^2$
is the zero set of $1+T_1/T_0+T_2/T_0$.
Proceeding exactly as in Recipe~\ref{rem:emb2tvar}
yields a surface~$X$ coming with an effective 
$\KK^*$-action:
$$ 
X 
\ := \ 
\overline{(Y \cap \TT^2) \times \KK^*}
\ \subseteq \ 
Z.
$$
We have $X \cap \TT^3 = V(1+T_1/T_0+T_2/T_0)$.
Pulling back that equation via the 
homomorphism $p \colon \TT^5 \to \TT^3$ 
given by $P$ leads to the equation 
for $X$ in Cox coordinates:
$$ 
\bar X
\ =  \
V(T_{01}^3T_{02} + T_{11}^3 + T_{21}^2)
\ \subseteq \
\bar Z
\ = \ 
\KK^5,
$$  
where the variables $T_{ij}$ represent columns of 
$P_0$ and the index $ij$ tells us that we have the 
$j$-th repetition of the $i$-th column of $B$, 
scaled by the exponent $l_{ij}$ of~$T_{ij}$. 
\end{example}

\begin{remark}
In Example~\ref{ex:recipe}, we encountered
two explicit varieties in the sense of
Construction~\ref{constr:explvar}:
first, the projective line $Y \subseteq \PP_2$
and second, the $\KK^*$-surface 
$X \subseteq Z$.
In particular, divisor class group and 
Cox ring of $X$ are given as  
\begin{align*}
\Cl(X) 
& \ = \ K  \ = \ \ZZ^5/\mathrm{im}(P^*) \ = \ \ZZ^2,
\\
\mathcal{R}(X)
& \ = \ 
\KK[T_{01},T_{02},T_{11},T_{21},T_1] 
/
\langle 
T_{01}^3T_{02} + T_{11}^3 + T_{21}^2
\rangle.
\end{align*}
Observe that the manipulations on the 
matrix $B$ turned the redundant 
defining relation $T_0+T_1+T_2$ of~$Y$ 
into the serious relation
$T_{01}^3T_{02} + T_{11}^3 + T_{21}^2$,
defining the resulting~$X$.
\end{remark}

We come to the general construction
of explicit $\TT$-varieties.
It starts with a given explicit variety
$Y \subseteq Z_\Delta$ provided
by Construction~\ref{constr:explvar}
and delivers an explicit variety 
$X \subseteq Z_\Sigma$ which is 
invariant under a direct factor 
$\TT \subseteq \TT_\Sigma$ of the 
acting torus 
$\TT_\Sigma \subseteq Z_\Sigma$. 

\begin{construction}
\label{constr:t-mds}
Let $Y \subseteq Z_\Delta$ be an explicit variety
with embedding system $\alpha = (f_0, \ldots, f_r)$.
The defining fan~$\Delta$ of $Z_\Delta$ lives
in some $\ZZ^{t}$ and has a $t \times (r+1)$ 
generator matrix 
$$
B
\ = \ 
[u_{0}, \ldots, u_{r}].
$$
In particular, $\Cl(Y) = \Cl(Z_\Delta)$ equals 
$K_B := \ZZ^{r+1} / \mathrm{im}(B^{*})$
and the Cox ring of~$Y$ equals the 
$K_B$-factorial input ring~$R_Y$
of Construction~\ref{constr:explvar}.
We build up a new matrix from~$B$
and the following data
\begin{itemize}
\item
positive integers $n_{0},\ldots,n_{r}$
and non-negative integers $m,s$ 
with $t+s \le n+m$,
where $n := n_{0} + \ldots + n_{r}$,
\item
for any two $i,j$, where $i = 0, \ldots, r$ and 
$j = 1, \ldots, n_{i}$, a positive integer~$l_{ij}$ 
and a vector $d_{ij} \in \ZZ^{s}$,
\item
for any $k$, where $ 1 \le k \le m$,
a vector $d_{k}' \in \ZZ^{s}$,
\end{itemize}
where, with the multiples $u_{ij} := l_{ij}u_{i} \in \ZZ^{t}$
of the columns of $B$, we require that the vectors  
$$
v_{ij} \ = \ (u_{ij},d_{ij}) \ \in \ \ZZ^{t+s},
\qquad\qquad
v_{k} \ = \ (0,d_{k}') \ \in \ \ZZ^{t+s}
$$  
are all primitive, any two of them are distinct
and altogether they generate $\QQ^{t+s}$ as a 
vector space.  
Store the $v_{ij}$ and $v_k$ as columns in a 
$(t+s)\times(n+m)$ matrix
$$
P
\ = \
[v_{ij},v_k] 
\ = \ 
\begin{bmatrix}
u_{01} 
& 
\ldots 
& 
u_{0n_{0}} 
& 
\ldots 
&  u_{r1} 
& 
\ldots 
& 
u_{rn_{r}}
&
0
&
\ldots 
&
0
\\
d_{01} 
& 
\ldots 
& 
d_{0n_{0}} 
& 
\ldots &  
d_{r1} 
& 
\ldots 
& 
d_{rn_{r}}
&
d_{1}'
&
\ldots 
&
d_{m}'
\end{bmatrix}.
$$
Now, let $\Sigma$ be any fan in $\ZZ^{t+s}$ 
having $P$ as its generator matrix
and denote by~$Z_{\Sigma}$ 
the associated toric variety. 
Then we obtain a commutative diagram
$$ 
\xymatrix{
X 
\ar@{}[r]|\subseteq
\ar@{-->}[d]
&
Z_{\Sigma}
\ar@{-->}[d]
\\
Y
\ar@{}[r]|\subseteq 
&
Z_{\Delta}
}
$$ 
where the rational map $Z_{\Sigma} \dasharrow Z_{\Delta}$
is given by the projection $\TT^{t} \times \TT^{s} \to \TT^{t}$
of the respective acting tori $\TT_\Sigma = \TT^{t} \times \TT^{s}$ 
and $\TT_\Delta = \TT^{t}$ and we define
$$
X 
\ := \ 
X(\alpha,P,\Sigma) 
\ := \ 
\overline{(Y \cap \TT^{t}) \times \TT^{s}}
\ \subseteq \ 
Z_{\Sigma}.
$$ 
Then $X \subseteq Z_\Sigma$ is invariant under the action 
of the subtorus 
$\TT = \{\mathds{1}_t\} \times \TT^{s}$ of 
the acting torus $\TT_\Sigma = \TT^{t} \times \TT^{s}$
of $Z_\Sigma$.
Moreover, set
$$ 
T_{i}^{l_{i}} := T_{i1}^{l_{i1}} \cdots T_{in_{i}}^{l_{in_{i}}} \in \KK[T_{ij},S_k],
\qquad 
K_P
:= 
\ZZ^{n+m}/ \mathrm{im}(P^{*})
= 
\Cl(Z_\Sigma).
$$ 
Let $h_1, \ldots, h_q$ be defining 
equations of~$Y$, that means $K_B$-homogeneous
generators for the ideal of relations between 
$f_0, \ldots, f_r$.
Consider the factor ring 
$$ 
R(\alpha,P) 
\ := \
\KK[T_{ij},S_{k}] 
/ 
\bangle{
h_1(T_{0}^{l_{0}}, \ldots, T_{r}^{l_{r}}), 
\ldots, 
h_q(T_{0}^{l_{0}}, \ldots, T_{r}^{l_{r}})}
$$ 
and denote by $Q_P \colon \ZZ^{n+m} \to K_P$ the projection.
We turn $R(\alpha,P)$ into a $K_P$-graded algebra via 
$$
\deg(T_{ij}) \ := \ w_{ij} \ := \ Q_P(e_{ij}),
\qquad
\deg(T_k) \ := \ w_k \ := \ Q_P(e_k),
$$   
where $e_{ij},e_{k} \in \ZZ^{n+m}$ are the canonical basis vectors.
Observe that we have a unique homomorphism of graded algebras 
$R_Y \to R(\alpha,P)$ sending $f_{i}$ to $T_{i}^{l_{i}}$.
\end{construction}

\begin{remark}
\label{rem:expltx2explv}
For $t=n=0$, the above construction 
yields the usual construction of a 
toric variety from a fan.
Moreover, for $s = m = 0$ and $n = r+1$,
we arrive at Construction~\ref{constr:explvar}.
\end{remark}

We come to the first basic property of
Construction~\ref{constr:t-mds}.
Note that in concrete cases the assumptions 
of this proposition on $R(\alpha,P)$ and the 
$T_{ij}$ made below can be checked 
algorithmically via absolute factorization; 
see~\cite{HaKeLa1}*{Rem.~3.8}.

\begin{proposition}
\label{thm:t-mds-1}
Let $X = X(\alpha,P,\Sigma)$ 
arise from Construction~\ref{constr:t-mds}.
If $R(\alpha,P)$ is a $K_P$-integral affine 
algebra with only constant homogeneous 
units and the~$T_{ij}$ define 
pairwise non-associated 
$K_P$-primes in $R(\alpha,P)$,
then $X \subseteq Z_\Sigma$ is an
explicit variety.
\end{proposition}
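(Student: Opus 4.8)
The plan is to exhibit $X = X(\alpha,P,\Sigma)$ as an explicit variety in the sense of Construction~\ref{constr:explvar}, with grading group $K_P$, defining ring $R(\alpha,P)$, distinguished generators the $T_{ij}$ and $S_k$, and the given matrix $P$ and fan $\Sigma$. The first task is to identify the coordinate ring of $\bar X := \overline{p^{-1}(X)} \subseteq \bar Z = \KK^{n+m}$. By construction $X \cap \TT_\Sigma = (Y\cap\TT^{t})\times\TT^{s}$, and inspection of the first $t$ rows of $P$ shows that the composition $\TT^{n+m}\xrightarrow{P}\TT_\Sigma = \TT^{t}\times\TT^{s}\to\TT^{t}$ factors through the monomial map $\mu\colon\TT^{n+m}\to\TT^{r+1}$ attached to the exponents $l_{ij}$, followed by the Cox map $p_\Delta\colon\TT^{r+1}\to\TT_\Delta = \TT^{t}$. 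Since $\bar Y = \Spec R_Y$ has vanishing ideal $\bangle{h_1,\dots,h_q}$ and satisfies $\bar Y\cap\TT^{r+1} = p_\Delta^{-1}(Y\cap\TT^{t})$, a point $z\in\TT^{n+m}$ lies in $p^{-1}(X)$ if and only if $\mu(z)\in\bar Y$ (here the last $s$ torus coordinates are unconstrained), that is, if and only if all $h_j(T_0^{l_0},\dots,T_r^{l_r})$ vanish at $z$. Hence $\bar X$ is the closure of $V(h_1(T_0^{l_0},\dots,T_r^{l_r}),\dots,h_q(T_0^{l_0},\dots,T_r^{l_r}))\cap\TT^{n+m}$, so $I(\bar X)$ is the saturation of $\bangle{h_1(T_0^{l_0},\dots,T_r^{l_r}),\dots,h_q(T_0^{l_0},\dots,T_r^{l_r})}$ by the product of all coordinates. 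Now the hypothesis that $R(\alpha,P)$ is $K_P$-integral (and reduced, being an affine algebra) says precisely that this ideal is radical and that every homogeneous variable $T_{ij},S_k$ is a non-zero-divisor modulo it; therefore the ideal is already saturated, $I(\bar X) = \bangle{h_1(T_0^{l_0},\dots,T_r^{l_r}),\dots,h_q(T_0^{l_0},\dots,T_r^{l_r})}$, and the coordinate ring of $\bar X$ is $R(\alpha,P)$, tautologically embedded into $\KK^{n+m}$.

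Next I would check that the data $(K_P,R(\alpha,P),(T_{ij},S_k),P,\Sigma)$ satisfies the input conditions of Construction~\ref{constr:explvar}. The group $K_P$ is finitely generated abelian, the rows of $P$ generate $\ker(Q_P)$, and $\Sigma$ has $P$ as generator matrix, all directly from Construction~\ref{constr:t-mds}. The conditions that any $n+m-1$ of the degrees $w_{ij},w_k$ generate $K_P$ and that $\tau_1\cap\dots\cap\tau_{n+m}$ be of full dimension translate, via linear Gale duality exactly as in \cite{ArDeHaLa}*{Thm.~2.2.2.6 and Lemma~2.1.4.1}, into the requirement of Construction~\ref{constr:t-mds} that the columns $v_{ij},v_k$ of $P$ be primitive, pairwise distinct and span $\QQ^{t+s}$. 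Being an affine algebra, $R(\alpha,P)$ is reduced and finitely generated; together with $K_P$-integrality and the assumption of only constant homogeneous units this forces $R(\alpha,P)$ to be an integral domain, the hypothesis on the units being essential (it rules out rings such as $\KK[\ZZ/2]$). The ring is generated by the $T_{ij}$ and $S_k$; the $T_{ij}$ are pairwise non-associated $K_P$-primes by hypothesis, and each $S_k$ is $K_P$-prime as a free polynomial variable over the domain $\KK[T_{ij}]/\bangle{h_j(T_0^{l_0},\dots,T_r^{l_r})}$ and is non-associated to the other $S_{k'}$ and to the $T_{ij}$.

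What remains, and what I expect to be the main obstacle, is to show that $R(\alpha,P)$ is normal and $K_P$-factorial. Adjoining the free variables $S_k$ affects neither property, so one is reduced to the ring $\KK[T_{ij}]/\bangle{h_j(T_0^{l_0},\dots,T_r^{l_r})}$, which is obtained from the Cox ring $R_Y$ of $Y$ --- itself normal and $K_B$-factorial --- by the monomial substitution $f_i\mapsto T_i^{l_i}$. Here lies the real work: one argues that this substitution can introduce non-normality only along the coordinate loci $\{T_{ij}=0\}$, and that $K_P$-primality of the $T_{ij}$ forces any such locus into codimension at least two, whence normality by Serre's criterion; $K_P$-factoriality then follows formally from $R(\alpha,P)$ being a normal, $K_P$-integral affine algebra with only constant homogeneous units that is generated by pairwise non-associated $K_P$-primes, cf.~\cite{ArDeHaLa}*{Sec.~1.5.3}. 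This is also the step where the primitivity and distinctness conditions on the columns of $P$ and the primality hypothesis on the $T_{ij}$ enter essentially. (Should \cite{ArDeHaLa}*{Sec.~1.5.3} already yield normality and $K_P$-factoriality directly from the generation by $K_P$-primes, this step collapses to a citation.)

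With the input verified, Construction~\ref{constr:explvar} applied to $(K_P,R(\alpha,P),(T_{ij},S_k),P,\Sigma)$ produces an explicit variety $X'\subseteq Z_\Sigma$ whose associated affine variety $\overline{X'}$ is $\Spec R(\alpha,P)$ tautologically embedded into $\KK^{n+m}$, that is, exactly the $\bar X$ computed in the first step. Hence $\hat X' = \overline{X'}\cap\hat Z = \bar X\cap\hat Z = \hat X$ and $X' = \hat X\quot H$. Since $p$ restricts to a surjection of the dense subset $\bar X\cap\TT^{n+m}$ onto $X\cap\TT_\Sigma = (Y\cap\TT^{t})\times\TT^{s}$, passing to the good quotient identifies $X'$ with the closure $\overline{(Y\cap\TT^{t})\times\TT^{s}} = X(\alpha,P,\Sigma)$. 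Thus $X\subseteq Z_\Sigma$ is an explicit variety.
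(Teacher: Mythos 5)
Your overall plan (compute $\mathcal{O}(\bar X)=R(\alpha,P)$, then feed the data into Construction~\ref{constr:explvar}) is the right one, the saturation argument in your first step is fine, and you correctly locate the crux in normality and $K_P$-factoriality. But the argument you sketch for that crux does not work, and two of your intermediate claims are false as stated. First, reducedness, $K_P$-integrality and constancy of the homogeneous units do \emph{not} force $R(\alpha,P)$ to be a domain: the ring $\KK[x,y]/\bangle{x^2-y^2}$ with the $\ZZ/2\ZZ$-grading $\deg x=\bar 0$, $\deg y=\bar 1$ is reduced, affine, has no homogeneous zero divisors and only constant units, yet is not integral. Integrality genuinely requires normality (this is exactly how Proposition~\ref{prop:benjamin2} obtains it: normality forces the irreducible components to be disjoint, and only then does the transitive $H$-action produce a non-constant homogeneous unit). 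Second, your route to $K_P$-factoriality -- ``follows formally from $R(\alpha,P)$ being a normal, $K_P$-integral affine algebra with only constant homogeneous units generated by pairwise non-associated $K_P$-primes'' -- is not a theorem, is not what \cite{ArDeHaLa}*{Sec.~1.5.3} provides (that section concerns canonical sections on a variety whose Cox ring one already has), and, tellingly, it never uses the one decisive hypothesis of the construction: that $Y\subseteq Z_\Delta$ is an \emph{explicit} variety, i.e.\ that $R_Y$ is the Cox ring of $Y$ and hence $K_B$-factorial. If factoriality really followed formally from generation by non-associated primes, that hypothesis would be irrelevant. Your normality sketch (``non-normality only along the coordinate loci, hence codimension two, hence Serre'') is likewise not an argument: neither the confinement claim nor the $S_2$ condition is addressed, and your order of deduction (normality first, then factoriality) is the reverse of the one that actually closes.

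The missing idea is that $K_P$-factoriality is \emph{transported from $Y$}, and everything else follows from it. Since $R_Y=\mathcal{R}(Y)$ is $K_B$-factorial, Bechtold's criterion \cite{Be}*{Thm.~1.3} makes the degree zero part $\bigl((R_Y)_f\bigr)^{H_Y}$ of the localization at the product $f$ of the generators a UFD. The commutative square built from $a\colon\TT^{n+m}\to\TT^{r+1}$ and the two quasitorus quotients identifies this ring with $\bigl(\bigl(R(\alpha,P)_g\bigr)^{H_X}\bigr)^{\TT^{s}}$, so that $\bigl(R(\alpha,P)_g\bigr)^{H_X}$ is a Laurent polynomial extension of a UFD, hence a UFD; applying \cite{Be}*{Thm.~1.3} in the reverse direction, using $K_P$-integrality and $K_P$-primality of the $T_{ij}$, yields that $R(\alpha,P)$ is $K_P$-factorial. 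Only then does Proposition~\ref{prop:benjamin2} deliver normality and integrality -- via freeness of the $H$-action on the localizations $\bar X_{g_i}$, Luna's slice theorem and \'etale descent of normality on the complement-of-codimension-two subset $\hat X$, together with a $K_P$-factoriality argument showing $\mathcal{O}(\bar X)=\mathcal{O}(\hat X)$. Without some version of this descent-of-factoriality step your proof cannot be completed.
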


\begin{definition}
\label{def:expltvar}
By an \emph{explicit $\TT$-variety}
$X \subseteq Z$ 
we mean a variety $X = X(\alpha,P,\Sigma)$ 
in $Z = Z_\Sigma$ together with the 
action of
$\TT = \{\mathds{1}_t\} \times \TT^{s}$
arising from 
Construction~\ref{constr:t-mds}
such that the assumptions of 
Proposition~\ref{thm:t-mds-1}
are satisfied.
\end{definition}

\begin{corollary}
Let $X \subseteq Z$ be an explicit 
$\TT$-variety.
Then $X$ is a normal variety
with only constant invertible global 
functions.
Moreover, dimension, complexity, 
divisor class group and Cox ring of $X$ 
are given by
$$ 
\dim(X) = s + \dim(Y),
\quad
c(X) = \dim(Y),
\quad
\Cl(X) \ = \ K_P,
\quad
\mathcal{R}(X) \ = \ R(\alpha,P).
$$
\end{corollary}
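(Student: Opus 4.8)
The plan is to reduce the statement to the general structure theory of explicit varieties developed in \cite{ArDeHaLa}. By Definition~\ref{def:expltvar}, an explicit $\TT$-variety $X\subseteq Z$ satisfies the hypotheses of Proposition~\ref{thm:t-mds-1}; that proposition therefore tells us that $X\subseteq Z_\Sigma$ is an explicit variety in the sense of Construction~\ref{constr:explvar}, with input ring $R(\alpha,P)$ and embedding system provided by the images of the variables $T_{ij},S_k$. Granting this, normality of $X$ is immediate, as Construction~\ref{constr:explvar} produces $X$ as a normal closed subvariety of the toric variety $Z_\Sigma$. The identities $\Cl(X)=K_P=\Cl(Z_\Sigma)$ and $\mathcal{R}(X)=R(\alpha,P)$ are then precisely \cite[Thm.~3.2.1.4]{ArDeHaLa}, imported through the interface of Remark~\ref{rem:fan2bunch}.

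For the invertible global functions I would use that, by the very definition of the Cox ring, the degree-$0$ homogeneous component of $\mathcal{R}(X)$ is $\Gamma(X,\mathcal{O}_X)$; hence $\Gamma(X,\mathcal{O}_X)=R(\alpha,P)_{[0]}$ as rings, and every unit of this ring is a homogeneous unit of $R(\alpha,P)$, thus constant by the hypothesis of Proposition~\ref{thm:t-mds-1}. Alternatively one invokes Remark~\ref{rem:fan2bunch}: $X$ is an open subset of $X(R,\mathfrak{F},\Phi)$ whose complement has codimension at least two, removing such a set from a normal variety does not change the global functions, and $X(R,\mathfrak{F},\Phi)$ has only constant invertible global functions by \cite[Chap.~3]{ArDeHaLa}.

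It remains to read off dimension and complexity from Construction~\ref{constr:t-mds}, where $X$ is the closure in $Z_\Sigma$ of the irreducible locally closed set $(Y\cap\TT^{t})\times\TT^{s}$. Since $Y\subseteq Z_\Delta$ is itself an explicit variety, its intersection $Y\cap\TT^{t}$ with the acting torus $\TT_\Delta=\TT^{t}$ is a nonempty open, hence dense, subset of $Y$, so $\dim(Y\cap\TT^{t})=\dim(Y)$, and passing to the closure does not change the dimension; hence $\dim(X)=\dim(Y)+s$. The subtorus $\TT=\{\mathds{1}_t\}\times\TT^{s}$ has dimension $s$ and leaves $X$ invariant; its action is effective, since on the dense invariant subset $(Y\cap\TT^{t})\times\TT^{s}$ it is translation in the second coordinate, which is free. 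Consequently $X$ is a $\TT$-variety and $c(X)=\dim(X)-\dim(\TT)=\dim(Y)$. Collecting the four assertions completes the proof.

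The one genuinely substantial input is Proposition~\ref{thm:t-mds-1}, whose proof is postponed to the next section; once it is granted, the corollary is essentially bookkeeping. The only step in that bookkeeping that deserves a word is the density claim, namely that $Y\cap\TT^{t}$ is nonempty and dense in $Y$ — this holds because $\bar Y=\Spec R_Y$ is irreducible and the generators $f_0,\ldots,f_r$ are nonzero, so the locus in $\bar Y$ where all of them are nonzero is dense, and its image in $Y$ lies in $Y\cap\TT_\Delta$.
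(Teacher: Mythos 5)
Your proposal is correct and follows the route the paper intends: the corollary is stated without a separate proof precisely because it reduces to Proposition~\ref{thm:t-mds-1} together with the general facts on explicit varieties imported from~\cite{ArDeHaLa} in Section~\ref{sec:background}, plus the routine checks you carry out (density of $Y\cap\TT^t$ in $Y$ for the dimension count, and freeness of the $\TT^s$-translation on the dense invariant subset for effectiveness and hence the complexity formula). One small caveat: your first argument for $\Gamma(X,\mathcal{O}^*)=\KK^*$ via the degree-zero component of the Cox ring is circular, since the Cox ring is only defined once that property is already known — but the alternative you give via Remark~\ref{rem:fan2bunch} and the bunched-ring machinery is the correct way to obtain it.
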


We say that a $\TT$-variety $X'$ 
admits a \emph{presentation as an explicit 
$\TT$-variety} if there is a 
$\TT$-equivariant isomorphism $X' \to X$ 
with some explicit $\TT$-variety
$X \subseteq Z$.

\begin{theorem}
\label{thm:t-mds-2}
Let $X$ be an $A_2$-maximal 
$\TT$-variety having only constant 
invertible global functions, 
finitely generated divisor class group and 
finitely generated Cox ring.
Then $X$ admits a presentation as an
explicit $\TT$-variety.
\end{theorem}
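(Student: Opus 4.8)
The plan is to reduce the statement to an application of the known representation theorem for $A_2$-maximal varieties (the remark following Remark~\ref{rem:fan2bunch} above), and then to promote that representation to a $\TT$-equivariant one by carefully choosing coordinates that reflect the $\TT$-action. First I would invoke the cited result: since $X$ is $A_2$-maximal with only constant invertible global functions, finitely generated $\Cl(X)$ and finitely generated $\mathcal R(X)$, it can be written as an explicit variety $X \subseteq Z$ in the sense of Construction~\ref{constr:explvar}, with $\Cl(X) = K$ and Cox ring $R = \mathcal R(X)$. The point is then to arrange the embedding system $\mathfrak F = (f_0,\dots,f_r)$ and the fan so that the $\TT$-action on $X$ becomes the standard subtorus action $\{\mathds 1_t\}\times\TT^s$ on the ambient toric variety.

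The key tool is that an effective $\TT = \TT^s$-action on $X$ lifts to an action on $\hat X = \Spec \mathcal R(X)$ commuting with the $H = \Spec\KK[K]$-action, by the general lifting results of~\cites{Ha1,Ha2} (the Cox ring is functorial for the torus action, cf.\ the discussion around Recipe~\ref{rem:emb2tvar}). So $\mathcal R(X)$ carries a $K \times \ZZ^s$-grading refining its $K$-grading, and the complexity-$s$ statement $\KK(X)^{\TT} = \KK(Y)$ identifies the subalgebra generated by the $\TT$-invariant part. Next I would choose the $K$-prime generators $f_0,\dots,f_r$ of $\mathcal R(X)$ from the earlier representation to be \emph{homogeneous} with respect to this finer $K\times\ZZ^s$-grading; this is possible because the $\ZZ^s$-degree of a $K$-homogeneous element is well defined on each $K$-homogeneous component and one can split generators into homogeneous pieces. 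Reordering, one gets generators $T_{ij}$ (those with $\ZZ^s$-weight determined by the $d_{ij}$-rows of $P$) and $S_k$ (the $\TT$-invariant ones, with $u$-part zero), and the defining relations $h_\nu$ of $Y$ pull back to relations $h_\nu(T_0^{l_0},\dots,T_r^{l_r})$ because the invariant subring is exactly $R_Y$; this recovers precisely the shape $R(\alpha,P)$ of Construction~\ref{constr:t-mds}. The matrix $P$ is then read off: its first $t$ rows come from the generator matrix $B$ of the quotient $Y \subseteq Z_\Delta$ (which is the maximal orbit quotient, obtained by taking the Chow quotient or the GIT quotient of the $\TT$-action, again $A_2$-maximal hence explicit), and its last $s$ rows record the $\ZZ^s$-weights $d_{ij}, d_k'$. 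Finally one checks that the fan $\Sigma$ underlying the ambient toric variety $Z$ of the non-equivariant presentation has $P$ as generator matrix, so that $X = X(\alpha,P,\Sigma)$ and the assumptions of Proposition~\ref{thm:t-mds-1} hold (they transfer from $\mathcal R(X)$ being a genuine Cox ring: it is $K_P$-factorial, normal, integral with constant homogeneous units, and the $T_{ij}$ are $K_P$-prime).

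The main obstacle I expect is the \textbf{compatibility of the grading refinement with the combinatorial side-conditions} imposed in Construction~\ref{constr:t-mds}: one must verify that the multiples $u_{ij} = l_{ij}u_i$ really organize the $u$-parts of the generator degrees into positive multiples of the columns $u_0,\dots,u_r$ of $B$, i.e.\ that the generators of $\mathcal R(X)$ that are \emph{not} $\TT$-invariant fall into $r+1$ groups according to which ray of $\Delta$ their quotient image lies over, with the relation degrees matching up as $h_\nu(T_0^{l_0},\dots)$. Concretely this is the statement that the maximal orbit quotient map $X \dashrightarrow Y$, expressed in Cox coordinates, is a monomial map of the prescribed form — and that the exponents $l_{ij}$ are forced by the ramification of this map along the toric boundary divisors. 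Establishing this requires matching the description $D_i^X = X \cap D_i^Z$ of the invariant prime divisors with the pullback of the boundary divisors of $Z_\Delta$ under the quotient, and tracking how the $K$-grading and the $\ZZ^s$-grading interact through the exact sequence defining $K_P = \ZZ^{n+m}/\im(P^*)$. The remaining verifications — primitivity and distinctness of the columns of $P$, that they span $\QQ^{t+s}$, the inequality $t+s \le n+m$, and that any $r-1$ of the relevant degrees generate $K_B$ — are then routine consequences of the corresponding properties already guaranteed for the explicit-variety presentations of $X$ and of $Y$.
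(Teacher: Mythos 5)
Your overall strategy coincides with the paper's: lift the $\TT$-action to the total coordinate space so that the $\Cl(X)$-grading of $\mathcal{R}(X)$ refines to a $(\Chi(\TT)\times\Cl(X))$-grading, choose generators homogeneous for the finer grading, build a $(\TT\times H)$-equivariant embedding $\bar X\subseteq\KK^{q}$, and read off the matrices $B$ and $P$ from the projection of the ambient fan. Two of your intermediate steps, however, contain genuine gaps. First, you propose to make the given $K$-prime generators homogeneous for the refined grading by ``splitting generators into homogeneous pieces''; splitting a $K$-prime element into its $\Chi(\TT)$-homogeneous components in general destroys both primality and the non-associatedness of the system, so this does not produce an admissible embedding system. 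The paper avoids the issue entirely: since $M=\Chi(\TT)$ is torsion free, $(M\times K)$-primality coincides with $K$-primality (Bechtold), so every $K$-prime generator is \emph{automatically} $(M\times K)$-homogeneous and no splitting is needed. One does, however, have to choose the generating system so that every $\TT$-invariant prime divisor with non-trivial generic isotropy has a canonical section among the generators -- a condition you omit, and which is what forces the non-invariant generators to organize into the groups $T_{i1},\dots,T_{in_i}$ lying over the rays of $\Delta$ with the exponents $l_{ij}$ recorded in $P$.

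Second, your identification of the quotient $Y$ is problematic on two counts. The maximal orbit quotient is \emph{not} the Chow quotient in general -- the paper makes this point explicitly (for $\Gr(2,n)$ the Chow quotient $\overline{M}_{0,n}$ even fails to be a Mori dream space for large $n$) -- rather, $Y$ is taken to be the closure of the image of $X\cap\TT_Z$ under the toric projection $Z\dasharrow Z_\Delta$. And the claim that $Y$ is ``again $A_2$-maximal hence explicit'' does not give what is needed: being explicit is a statement about the \emph{specific} embedding $Y\subseteq Z_\Delta$, namely that the canonical sections $1_{C_0},\dots,1_{C_r}$ of the doubling divisors generate $\mathcal{R}(Y)$. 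This is a substantive step; the paper proves it by exhibiting the monomial map $a\colon\KK^{n+m}\to\KK^{r+1}$, $(z,w)\mapsto(z_0^{l_0},\dots,z_r^{l_r})$, as a good quotient carrying $\bar X$ onto $\bar Y$ and then invoking the Cox ring recognition lemmas. Your ``main obstacle'' paragraph correctly locates where the difficulty sits, but the mechanism that resolves it (the isotropy computation identifying the $\ZZ^t$-parts of the $v_{ij}$ as $l_{ij}u_i$, plus the good-quotient argument for $\bar Y$) is missing from the plan.
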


In the rest of the section, we  
discuss the geometry of the torus 
action of an explicit $\TT$-variety 
$X \subseteq Z$, aiming for 
a suitable quotient.
First, we continue 
Example~\ref{ex:recipe}.

\begin{example}
\label{ex:exprops2}
Consider again the explicit 
$\KK^*$-surface $X \subseteq Z$ 
from~\ref{ex:recipe}.
An important source of information is 
the location of the columns of~$P$ 
over those of~$B$ with respect to the 
projection $\pr \colon \ZZ^3 \to \ZZ^2$ 
onto the first two coordinates:
\begin{center}
\begin{tikzpicture}[scale=1.3]

%Referenzpunkte
\coordinate (v) at (0,0);
\coordinate (v01) at (-1.5,.5);
\coordinate (v02) at (-1,-.6);
\coordinate (v11) at (.75,-.75);
\coordinate (v11a) at (.75,-.76);
\coordinate (v21) at (1,.15);
\coordinate (v1) at (0,1);
\coordinate (u) at (0,-2);
\coordinate (u0) at (-1,-2);
\coordinate (u01) at (-1.5,-2);
\coordinate (u01+) at (-1.75,-2);
\coordinate (u1) at (.75,-2.5);
\coordinate (u2) at (1,-1.75);
\coordinate (s) at   ($(v)!.4!(v21)$);

%Kegel
\fill[gray!70,opacity=0.2] (v) -- (v1) -- (v01) -- (v02) -- cycle; 
\fill[gray!90,opacity=1.2] (v) -- (v1) -- (v11) -- cycle; 
\fill[gray!70,opacity=0.2] (v) -- (v1) -- (v21) -- cycle; 
\fill[gray!70,opacity=0.5] (v) -- (v02) -- (v11) -- cycle; 
\fill[gray!70,opacity=0.5] (v) -- (v11) -- (v21) -- cycle;

% Projektion
\draw[->,dashed,domain=-0.9:-1.7,variable=\x] plot ({0},{\x});

%Primitive Erzeuger von P
\path[fill, color=black] (v01) circle (0.3ex) node[left]{\tiny $v_{01} \!$};
\path[fill, color=black] (v02) circle (0.3ex) node[below left]{\tiny $v_{02} \!$};
\path[fill, color=black] (v11) circle (0.3ex) node[below left]{\tiny $v_{11} \!$};
\path[fill, color=black] (v21) circle (0.3ex) node[right]{\tiny $v_{21} \!$};
\path[fill, color=black] (v1) circle (0.3ex) node[above]{\tiny $v_1 \!$};
%Strahlen von Sigma
\draw[thick] (v)--(v01);
\draw[thick] (v)--(v02);
\draw[thick] (v)--(v11);
\draw[thick,opacity=0.2] (v)--(s);
\draw[thick] (s)--(v21);
\draw[thick] (v)--(v1);

%Primitive Erzeuger von B
\path[fill, color=black] (u0) circle (0.3ex) node[below]{\tiny $u_{0} \!$};
\path[fill, color=black] (u1) circle (0.3ex) node[below left]{\tiny $u_{1} \!$};
\path[fill, color=black] (u2) circle (0.3ex) node[right]{\tiny $u_{2} \!$};
%Strahlen von Delta
\draw[thick] (u)--(u01+);
\draw[thick] (u)--(u1);
\draw[thick] (u)--(u2);

%Regentropfen
\draw[dotted] (v01)--(u01);
\draw[dotted] (v02)--(u0);
\draw[dotted] (v11)--(u1);
\draw[dotted] (v21)--(u2);
 
\end{tikzpicture}   
\end{center}
Each column~$v_{ij}$ projects into the ray through $u_i$ 
and $v_1$ lies in the kernel of the projection.
The rays $\varrho_{ij}$ through $v_{ij}$ and $\varrho_1$ 
through $v_1$ define prime divisors $D_{ij}^Z$
and $E_{1}^Z$ of $Z$, respectively.
Cutting down to $X$ gives us prime divisors
$$ 
D_{ij}^X \ := \ X \cap D_{ij}^Z \ \subseteq \ X,
\qquad\qquad
E_1^X \ := \ X \cap E_1^Z \ \subseteq \ X,
$$
where the basic reason for primality is that the divisors
are given in Cox coordinates by $K$-prime ideals; 
for instance $D_{01}$ is defined by 
$\langle T_{01}, \, T_{11}^3+T_{21}^2 \rangle$.
We are interested in the isotropy groups.
Recall that $\KK^*$ acts on $X$ as the subtorus
$$ 
\TT
\ := \ 
\{\mathds{1}_2\} \times \KK^*
\ \subseteq \ 
\TT^3
\ = \ 
\TT_Z.
$$
In particular, the isotropy groups of the 
$\TT$-action are constant along the 
$\TT_Z$-orbits.
Consider the kernel 
$L = \{0\} \times \ZZ$ of
$\pr \colon \ZZ^3 \to \ZZ^2$.
Then~\cite[Prop.~2.1.4.2]{ArDeHaLa} yields
for any $\sigma \in \Sigma$ that the isotropy 
group of $\TT$ at $z_\sigma \in Z$ has character 
group
$$ 
\Chi(\TT_{z_\sigma}) 
\ \cong \ 
(L \cap \lin(\sigma))
\, \oplus \,
(\pr(\lin(\sigma)) \cap \ZZ^2) / (\pr(\lin(\sigma) \cap \ZZ^3),
$$
where $\lin(\sigma) \subseteq \QQ^3$ denotes the 
$\QQ$-linear hull.  
Looking at $\sigma = \varrho_{ij}$, we see 
that the isotropy group $\TT_x$ of the general 
point $x \in D_{ij}^X$ is cyclic of order 
$l_{ij}$, where $l_{ij}$ is the exponent
of $T_{ij}$ in the defining relation of $X$,
that means
$$ 
l_{01} = 3, \qquad
l_{02} = 1, \qquad
l_{11} = 3, \qquad
l_{21} = 2.
$$
Moreover, the curve $E_1^X$ consists of 
fixed points of the $\TT$-action and there
are two isolated fixed points, forming the 
intersections of $X$ with the toric orbits 
$\TT_Z \cdot z_\sigma$ 
for $\sigma = \cone(v_{01},v_{02})$ 
and $\sigma = \cone(v_{02},v_{11},v_{21})$,
respectively.
In particular, we see that
$$ 
X_0  
\ = \ 
X 
\cap 
\left( 
\TT_Z 
\cup 
\TT_Z \cdot z_{\varrho_{01}}
\cup 
\TT_Z \cdot z_{\varrho_{02}}
\cup 
\TT_Z \cdot z_{\varrho_{11}}
\cup 
\TT_Z \cdot z_{\varrho_{21}}
\right)
\ \subseteq \
X
$$
is the open subset of $X$ consisting of 
all points $x \in X$ having finite isotropy
group~$\TT_x$.
The projection $\pr \colon  \ZZ^3 \to \ZZ^2$
defines a rational quotient $Z \dasharrow \PP_2$ 
for the $\TT$-action
inducing a rational quotient $X \dasharrow Y$ 
which in turn is defined on $X_0 \subseteq X$
and gives a surjective morphism $X_0 \to Y$,
where $Y = \PP_1$.
\end{example}

Before entering the general case, 
let us give the precise definitions
of the necessary concepts of quotients.
For the moment, $X$ may be any variety 
with an action of an algebraic group 
$G$.
As already indicated, a
\emph{rational quotient} for the 
$G$-variety~$X$ is a dominant 
rational map $\pi \colon X \dasharrow Y$ 
such that $\pi^*\KK(Y) = \KK(X)^G$
holds.
A~\emph{representative} of a rational 
quotient $\pi \colon X \dasharrow Y$ 
is a surjective morphism $W \to V$ 
representing~$\pi$ 
on a non-empty open $G$-invariant 
subset $W \subseteq X$ and an open 
subset $V \subseteq Y$.
By results of Rosenlicht, rational
quotients always exist and admit a 
representative having $G$-orbits 
as its fibers~\cite{Ro}. 
  
Behind Construction~\ref{constr:t-mds} 
there is a specific rational quotient, 
the maximal orbit quotient.
Recall that a \emph{geometric quotient}
of a $\TT$-variety $X$ is a good quotient
$X \to Y$ having precisely the $\TT$-orbits
as its fibers. 
Moreover, for any $\TT$-variety $X$, we denote 
by $X_0 \subseteq X$ the open subset 
consisting of all points $x \in X$ 
with finite isotropy group.

\begin{definition}
\label{def:maxorbquot}
A \emph{maximal orbit quotient}
for a $\TT$-variety $X$ 
is a rational quotient 
$\pi \colon X \dasharrow Y$  
admitting a representative
$\psi \colon W \to V$
and prime divisors 
$C_0, \ldots, C_r$ on $Y$ 
such that the following 
properties are satisfied:
\begin{enumerate}
\item
one has $W \subseteq X_0$
and the complements 
$X_0 \setminus W \subseteq X_0$ 
and 
$Y \setminus V \subseteq Y$, 
both are of codimension at least two,
\item
for every $i = 0,\ldots, r$, the 
inverse image 
$\psi^{-1}(C_i) \subseteq W$ 
is a union of prime divisors 
$D_{i1}, \ldots, D_{in_i} \subseteq W$,
\item
all $\TT$-invariant prime divisors of $X_0$ 
with non-trivial generic isotropy group
occur among the $D_{ij}$,
\item 
every sequence $J = (j_0, \ldots, j_r)$
with $1 \le j_i \le n_i$
defines a geometric quotient $\psi \colon W_J \to V$ 
for the $\TT$-action,
where $W_J := W \setminus \cup_{j \ne j_i} D_{ij}$. 
\end{enumerate}
We call $C_0, \ldots, C_r \subseteq Y$ 
a collection of \emph{doubling divisors}
for $\pi \colon X \dasharrow Y$.
The closure of any~$D_{ij}$ in~$X$ 
is a $\TT$-invariant prime divisor 
of $X$, again denoted by~$D_{ij}$
and called a \emph{multiple divisor}.
Moreover, we denote by
$E_1, \ldots, E_m$ 
the prime divisors in 
the complement $X \setminus X_0$
and call them the \emph{boundary divisors}.
Finally, we call $\psi \colon W \to V$ 
a \emph{big representative} for 
$\pi \colon X \dasharrow Y$.
\end{definition}

\begin{example}
We continue~\ref{ex:recipe}
and~\ref{ex:exprops2}.
The rational quotient $X \dasharrow Y$ 
arising from the projection 
$\TT^3 \to \TT^2$ of tori is a maximal 
orbit quotient.
The intersection points 
$c_i$ of $Y \subseteq \PP_2$ with 
the coordinate axes $V(T_i)$
yield a collection of doubling divisors, 
the multiple divisors over $c_i$ are the 
$D_{ij}^X$ and the (only) 
boundary divisor is~$E_1^X$.
\end{example}

\begin{remark}
\label{rem:doubdiv}
Observe that Definition~\ref{def:maxorbquot}
leaves some freedom for choosing 
the doubling divisors $C_0, \ldots, C_r$. 
Some divisors necessarily appear:
the images of divisors with non-trivial 
finite generic isotropy group and the 
images of invariant divisors which cannot 
be separated by $\psi$.
Beyond those, we are free to choose 
further doubling divisors $C_i$, which
then means to insert $D_{i1}$ accordingly.
\end{remark}

\begin{remark}
\label{rem:maxorbquotunique}
Let $\pi \colon X \dasharrow Y$ and 
$\pi' \colon X \dasharrow Y'$ 
be maximal orbit quotients for a 
$\TT$-variety $X$. 
Then there are open subsets 
$U \subseteq Y$ and $U' \subseteq Y'$
having complements of codimension 
at least two and an isomorphism
$U \to U'$
which sends any collection of 
doubling divisors for $\pi$ 
to a collection of doubling 
divisors of~$\pi'$.
\end{remark}

\begin{proposition}
\label{thm:t-mds-3}
Let $X \subseteq Z_\Sigma$ be an explicit 
$\TT$-variety.
Let $Z_\Sigma^1 \subseteq Z_\Sigma$ be the 
union of $\TT_\Sigma$ and all toric orbits 
$\TT_\Sigma \cdot z_{\varrho_{ij}}$ 
and $Z_\Delta^1 \subseteq Z_\Delta$
the union of all toric orbits of
codimension at most one.
Then, for $X_1 := X \cap Z_\Sigma^1$
and $Y_1 := Y \cap Z_\Delta^1$, we have 
a commutative diagram
$$ 
\xymatrix{
X_1 
\ar@{}[r]|\subseteq
\ar[d]
&
Z_\Sigma^1
\ar[d]
\\
Y_1 
\ar@{}[r]|\subseteq
&
Z_\Delta^1
}
$$ 
where the downwards maps are maximal orbit
quotients for the action of $\TT$.
Denoting by $D_{ij}^\Sigma$ and 
$D_k^\Sigma$ the toric prime divisors of $Z_\Sigma$
corresponding to the rays 
$\varrho_{ij} = \cone(v_{ij})$
and $\varrho_k = \cone(v_k)$, 
we obtain the multiple divisors and the 
boundary divisors of $X$ as
$$ 
D_{ij}^X \ = \ X \cap D_{ij}^\Sigma,
\qquad\qquad
E_k^X \ = \ X \cap D_k^\Sigma.
$$
The generic isotropy group of $E_k^X$ is 
a one-dimensional torus and the generic 
isotropy group of $D_{ij}^X$ is finite 
of order $l_{ij}$.
The doubling divisors are the intersections 
of $C_i = Y \cap D_i^\Delta$ with the toric 
prime divisors of $D_i^\Delta \subseteq Z_\Delta$.
\end{proposition}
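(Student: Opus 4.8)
The plan is to establish the purely toric instance of the statement first and then transfer it to $X$ by intersecting everything with the embedded variety. Write $L := \{0\}\times\ZZ^s = \ker(\pr)\subseteq\ZZ^{t+s}$ for the cocharacter lattice of $\TT$, and let $\Sigma^1$ and $\Delta^1$ be the subfans whose cones are the origin together with the rays $\varrho_{ij}$, respectively the rays $\varrho_i$, so that $Z_\Sigma^1$ is the toric variety of $\Sigma^1$ and $Z_\Delta^1$ that of $\Delta^1$. Since $\pr(v_{ij}) = l_{ij}u_i$ lies in the relative interior of $\varrho_i$ and $\pr(v_k) = 0$, the projection $\pr$ maps every cone of $\Sigma^1$ into a cone of $\Delta^1$; hence the rational map $Z_\Sigma\dasharrow Z_\Delta$ of Construction~\ref{constr:t-mds} restricts to a toric morphism $\psi\colon Z_\Sigma^1\to Z_\Delta^1$, which on the big tori is the coordinate projection $\TT^t\times\TT^s\to\TT^t$, along which $\TT$ acts freely.

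First I would settle the isotropy groups. For a ray $\varrho$ of $\Sigma$, \cite[Prop.~2.1.4.2]{ArDeHaLa} gives
\[
\Chi(\TT_{z_\varrho})\ \cong\ (L\cap\lin(\varrho))\ \oplus\ \frac{\pr(\lin(\varrho))\cap\ZZ^t}{\pr(\lin(\varrho)\cap\ZZ^{t+s})}.
\]
For $\varrho = \varrho_{ij}$ this equals $0\oplus\ZZ u_i/l_{ij}\ZZ u_i\cong\ZZ/l_{ij}\ZZ$, and for $\varrho = \varrho_k$ it equals $\ZZ v_k\oplus 0\cong\ZZ$. As the $\TT$-isotropy group is constant along $\TT_\Sigma$-orbits, this identifies the generic isotropy along $\TT_\Sigma\cdot z_{\varrho_{ij}}$ as cyclic of order $l_{ij}$ and along $\TT_\Sigma\cdot z_{\varrho_k}$ as a one-dimensional torus. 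In particular $Z_\Sigma^1$ lies in the locus of finite $\TT$-isotropy, and since the orbits $\TT_\Sigma\cdot z_{\varrho_k}$ carry infinite isotropy, the part of that locus outside $Z_\Sigma^1$ is contained in the union of the closures of the toric orbits of codimension at least two.

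Next I would verify the conditions of Definition~\ref{def:maxorbquot} for the toric map, taking $\psi\colon Z_\Sigma^1\to Z_\Delta^1$ as big representative, $D_0^\Delta,\ldots,D_r^\Delta$ as doubling divisors, the $D_{ij}^\Sigma\cap Z_\Sigma^1 = \TT_\Sigma\cdot z_{\varrho_{ij}}$ as multiple divisors and the $D_k^\Sigma$ as boundary divisors. Condition~(i) is the previous paragraph together with $Z_\Delta\setminus Z_\Delta^1$ consisting of orbits of codimension at least two; condition~(ii) is read off the orbit decomposition, $\psi^{-1}(D_i^\Delta)\cap Z_\Sigma^1 = \bigcup_j(D_{ij}^\Sigma\cap Z_\Sigma^1)$ being a union of $n_i$ prime divisors; condition~(iii) holds because a $\TT$-invariant prime divisor of $Z_\Sigma^1$ with non-trivial generic isotropy cannot meet the free orbit $\TT_\Sigma$, so it lies in $Z_\Sigma^1\setminus\TT_\Sigma = \bigcup_{ij}(D_{ij}^\Sigma\cap Z_\Sigma^1)$ and, being irreducible, equals one of these. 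Condition~(iv) is the substantial point: for a sequence $J = (j_0,\ldots,j_r)$ the set $W_J = Z_\Sigma^1\setminus\bigcup_{j\ne j_i}D_{ij}^\Sigma$ is the toric variety of the fan $\{0\}\cup\{\varrho_{0j_0},\ldots,\varrho_{rj_r}\}$, which $\pr$ maps bijectively onto $\Delta^1$; comparing rings of invariants over the affine charts $Z_{\varrho_{ij_i}}$, where $\varrho_{ij_i}^\vee\cap(\ZZ^t\times\{0\}) = \pr^*(\varrho_i^\vee\cap\ZZ^t)$, shows that $\psi\colon W_J\to Z_\Delta^1$ is a good quotient for $\TT$ (this is the toric quotient construction of \cite[Sec.~2.1.4]{ArDeHaLa}). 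The point I expect to be the main obstacle is that this good quotient is in fact \emph{geometric}: over a point of $\TT_\Delta\cdot z_{\varrho_i}$ the fibre is a single $\TT_\Sigma$-orbit on which $\TT$ acts with image of cocharacter index $l_{ij_i}$ inside the pertinent one-dimensional ``kernel torus'', and since the $l_{ij_i}$-th power map on a torus is surjective over the algebraically closed field $\KK$, this image is everything and $\TT$ acts transitively on the fibre.

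Finally I would transfer to $X$. By Construction~\ref{constr:t-mds}, $X\cap\TT_\Sigma = (Y\cap\TT_\Delta)\times\TT^s$ and $X = \overline{X\cap\TT_\Sigma}$, so $\psi$ restricts to a surjection $X_1\to Y_1$ (its image is closed in $Z_\Delta^1$, contains the dense subset $Y\cap\TT_\Delta$ of $Y_1$, and lies in $Y_1$). Setting $W = X_1$, $V = Y_1$, $C_i = Y\cap D_i^\Delta$, $D_{ij} = D_{ij}^X\cap X_1$ with $D_{ij}^X := X\cap D_{ij}^\Sigma$, and $E_k = E_k^X := X\cap D_k^\Sigma$, the conditions of Definition~\ref{def:maxorbquot} follow from the toric case: in~(i) one additionally invokes that an explicit variety meets the toric orbits of codimension at least two in codimension at least two (via the bunched ring picture of Remark~\ref{rem:fan2bunch} and \cite[Chap.~3]{ArDeHaLa}); in~(ii) one uses $\psi^{-1}(C_i)\cap X_1 = X_1\cap\psi^{-1}(D_i^\Delta) = \bigcup_j(D_{ij}^X\cap X_1)$ and the primality of $D_{ij}^X$, which holds because $p^*D_{ij}^X = V_{\bar X}(T_{ij})$ and $T_{ij}$ is $K_P$-prime by the hypothesis that $X$ is an explicit $\TT$-variety; in~(iii) the same irreducibility argument works, the $E_k^X$ contributing nothing as all of their points have infinite isotropy; and~(iv) follows by restricting the toric geometric quotient $W_J\to Z_\Delta^1$ to the closed, $\TT$-invariant and hence saturated subvariety $X\cap W_J$. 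This presents $\psi\colon X_1\to Y_1$ as a big representative of a maximal orbit quotient $\pi\colon X\dasharrow Y$ with doubling divisors $C_i = Y\cap D_i^\Delta$; passing to closures identifies the multiple divisors as $D_{ij}^X = X\cap D_{ij}^\Sigma$ and the boundary divisors as $E_k^X = X\cap D_k^\Sigma$, and the generic isotropy assertions are the ones computed in the second step, each $D_{ij}^X$ and $E_k^X$ meeting the relevant open toric orbit densely because $X$ avoids the deeper toric strata in codimension at least two.
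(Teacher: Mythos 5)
Your proof is correct and follows the same route as the paper's (very terse) argument: first establish that $Z_\Sigma^1 \to Z_\Delta^1$ is a maximal orbit quotient for the toric $\TT$-action, compute the isotropy groups via \cite{ArDeHaLa}*{Prop.~2.1.4.2}, and then cut everything down to $X$, using $K_P$-primality of the $T_{ij}$, $S_k$ for primality of the divisors $D_{ij}^X$, $E_k^X$. You simply fill in the verification of conditions (i)--(iv) of Definition~\ref{def:maxorbquot} that the paper leaves implicit (the paper delegates (iv) to \cite{ArDeHaLa}*{Cor.~2.3.1.7}, where you give a direct torus-homomorphism argument), so there is no substantive difference.
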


We briefly discuss relations to 
polyhedral divisors.
First we have the following recipe 
to convert explicit $\TT$-varieties 
into the setting of polyhedral 
divisors.

\begin{remark}
\label{rem:expl2poldiv}
Given an explicit $\TT$-variety 
$X \subseteq Z_\Sigma$,
we indicate how to obtain a describing 
divisorial fan in the sense 
of~\cite{AlHa,AlHaSu}.
First follow~\cite{AlHa}*{Sec.~11}.
For every $\sigma \in \Sigma$, let 
$\Delta_\sigma$ be the fan in $\ZZ^t$
obtained as the coarsest common 
refinement of the projections 
$\pr(\tau) \subseteq \QQ^t$ of 
all faces 
$\tau \preccurlyeq \sigma \subseteq \QQ^{s+t}$.
The toric variety associated with~$\Delta_\sigma$ 
is the normalized Chow quotient 
$Z_\sigma \chquot \TT$, 
see~\cite{KaStZe,CrMl}.
Let $Y'_\sigma$ be the normalization 
of the closure of the image of $X \cap \TT_Z$ 
in~$Z_\sigma \chquot \TT$ and write $D_{\sigma,\varrho}$ 
for the pull back of the toric prime divisor 
$D_\varrho$ of $Z_\sigma \chquot \TT$ to~$Y'_\sigma$.
Then 
$$ 
\mathcal{D}_\sigma 
\ := \ 
\sum A_\varrho \otimes D_{\sigma,\varrho},
\qquad
\mathrm{A}_\varrho 
\ := \ 
\sigma \cap \pr^{-1}(v_\varrho)
\ \subseteq \
\QQ^{s+t}
$$
defines a polyhedral divisor on $Y'_\sigma$
describing the $\TT$-action on 
$X_\sigma := X \cap Z_\sigma$.
Now, follow the proof of~\cite{AlHaSu}*{Thm.~5.6}
to bring the local pictures together.
Choose projective closures 
$Y'_\sigma \subseteq Y_\sigma''$ and,
via resolving indeterminacies of the 
birational maps between the $Y_\sigma''$
induced by those between the $X_\sigma$,
construct a normal projective variety~$Y''$ 
dominating birationally all the $Y_\sigma''$.
Pulling back the $\mathcal{D}_\sigma$
to~$Y''$ yields the desired divisorial fan 
describing the $\TT$-action on $X$.
\end{remark}

\begin{remark}
In general, maximal orbit quotient and 
Chow quotient of a $\TT$-variety differ
from each other.
For example, let $\TT = \KK^*$ act
on~$X = \KK^4$ via
$$
t \cdot z \ = \ (t^{-1}z_1,t^{-1}z_2,tz_3,tz_4).
$$
Working for instance in terms of fans
we see that in this particular case
we obtain a maximal orbit quotient just 
by taking the good quotient
$$ 
\pi \colon X \ \to \ X \quot \TT, 
\qquad\qquad
z \ \mapsto \ (z_1z_3,z_1z_4,z_2z_3,z_2z_4),
$$
where $X \quot \TT = \{w \in \KK^4; \ w_1w_4=w_2w_3\}$,
and the canonical map $X \chquot \TT \to X \quot \TT$ 
from the Chow quotient onto the good quotient 
resolves the singularity $0 \in X \quot \TT$. 
\end{remark}

\section{Proofs to Section~\ref{sec:expltvar}}

Here we prove the statements made in
Construction~\ref{constr:t-mds}, 
Proposition~\ref{thm:t-mds-1},
Theorem~\ref{thm:t-mds-2} and 
Proposition~\ref{thm:t-mds-3}.
We will make use of Bechtold's normality 
criterion~\cite{Be2}*{Cor.~6}; for 
convenience we give a direct proof here.

\begin{proposition}
\label{prop:benjamin2}
Let $K$ be a finitely generated abelian 
group,
$R$ a $K$-factorial affine $\KK$-algebra 
with only constant $K$-homogeneous units
and $f_1, \ldots, f_r$ a system of 
pairwise non-associated $K$-prime 
generators for~$R$.
If any $r-1$ of the $\deg(f_i)$ generate 
$K$ as a group, then $R$ is integral 
and normal.
\end{proposition}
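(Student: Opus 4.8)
The plan is to reduce both assertions to the case of a grading by a \emph{torsion free} group, and the reduction to that case is exactly where the hypothesis on the degrees $w_i := \deg(f_i)$ gets used. Set $S := \KK[T_1,\dots,T_r]$ with the $K$-grading $\deg(T_i) := w_i$, and let $\pi\colon S \to R$, $T_i \mapsto f_i$, be the induced surjection of $K$-graded algebras, with $K$-homogeneous kernel $I$, so $R \cong S/I$. Then integrality of $R$ means primality of $I$, and, granting integrality, normality will follow from Serre's criterion $(R_1)+(S_2)$: for a grading by a torsion free group these two conditions, and indeed normality itself, can be tested at the $K$-homogeneous primes, and at a $K$-homogeneous prime of height one $K$-factoriality produces a $K$-prime generator.

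First I would prove that $R$ is a domain. Since $R$ is $K$-integral, every nonzero $K$-homogeneous element is a nonzerodivisor, so $R$ embeds into its homogeneous fraction ring $\mathcal Q$, obtained by inverting all nonzero homogeneous elements. In $\mathcal Q$ every nonzero homogeneous element is a unit, so $\mathcal Q_0$ is a field, each $\mathcal Q_w$ is a one-dimensional $\mathcal Q_0$-vector space, and $\mathcal Q$ is a cocycle-twisted group algebra $\mathcal Q_0^c[K]$ over the field $\mathcal Q_0$. It is enough to show that $\mathcal Q$ is a domain, and, writing $K = \ZZ^k \oplus K_{\mathrm{tors}}$, this comes down to showing that the finite $\mathcal Q_0$-algebra attached to $K_{\mathrm{tors}}$ is a field, i.e.\ that the twisting cocycle is non-degenerate on the torsion part. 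This is where the hypothesis enters: the monomials $f^\nu$ in the $K$-primes $f_i$ hit all classes of $K$, and since for \emph{every} index $i_0$ already the $w_j$ with $j \neq i_0$ generate $K$, each torsion class can be written through the $f_j$, $j \neq i_0$, in a way not involving $f_{i_0}$; playing two such descriptions against each other and using that the $f_j$ are pairwise non-associated $K$-primes with $R$ having only constant homogeneous units forces the required non-degeneracy. Hence $\mathcal Q$, and with it $R$, is a domain. I expect this torsion bookkeeping to be the main obstacle; it is essentially the content of Bechtold's criterion \cite{Be2}*{Cor.~6}.

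Finally, with $R$ now a Noetherian domain whose grading group may be treated as torsion free, normality is reached as follows. By the standard reduction just recalled it suffices to show $R = \bigcap_{\mathfrak p} R_{\mathfrak p}$, the intersection taken over the $K$-homogeneous primes $\mathfrak p$ of height one, with each $R_{\mathfrak p}$ a discrete valuation ring. A $K$-homogeneous height one prime is of the form $(f)$ for a $K$-prime $f$ — the ideal $(f)$ is genuinely prime because $R/(f)$ is $K$-integral and the grading group is torsion free — so $R_{\mathfrak p}$ is a discrete valuation ring; and the asserted intersection identity says precisely that a homogeneous element of $\operatorname{Frac}(R)$ without poles along any $V(f)$ lies in $R$, which is immediate from the unique factorisation of homogeneous elements into $K$-primes. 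Serre's criterion then gives that $R$ is normal, completing the proof.
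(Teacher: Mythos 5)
Your route is genuinely different from the paper's: you argue algebraically via the homogeneous fraction ring (a twisted group algebra over a field) for integrality, and via graded height-one localizations for normality, whereas the paper argues geometrically — it covers the big open set $\hat X = \bigcup \bar X_{g_i}$ with $g_i = \prod_{j\ne i} f_j$, shows each quotient $\bar X_{g_i}/H$ is factorial by Bechtold's theorem, uses the degree hypothesis to make the $H$-action on $\bar X_{g_i}$ \emph{free}, invokes Luna's slice theorem to get an \'etale principal bundle and thereby lifts normality to $\hat X$, extends functions over the small complement by $K$-factoriality, and only then deduces irreducibility from normality plus the constancy of homogeneous units. However, your proposal has a genuine gap in the normality half. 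Everything there rests on being allowed to ``treat the grading group as torsion free'': you need $(f)$ to be an honest prime ideal for each $K$-prime $f$, the localizations $R_{(f)}$ to be discrete valuation rings, and unique factorization of homogeneous elements into genuine primes. None of this is available for the given $K$, which may have torsion, and the reduction is never carried out. Coarsening the grading from $K$ to $K/K_{\mathrm{tors}}$ does not help: the coarser grading has strictly more homogeneous elements, so $K$-factoriality and $K$-primality of the $f_i$ do not pass to it, and knowing that $R$ is a domain does not make $R/(f)$ a domain when $K$ has torsion (it only has no \emph{homogeneous} zero divisors). Showing that the $K$-primes are actual primes is essentially the hard content of the proposition; it is precisely here that the paper uses the hypothesis that any $r-1$ of the $\deg(f_i)$ generate $K$, via the freeness of the quasitorus action, and your argument supplies no substitute for it.

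A secondary weakness: the crux of your integrality half — non-degeneracy of the twisting cocycle on $K_{\mathrm{tors}}$ — is asserted (``playing two such descriptions against each other \ldots forces the required non-degeneracy'') rather than proved. It can in fact be completed: if $u$ is homogeneous of degree $w$ of prime order $p$ and $u^p=v^p$ with $v$ of degree zero, write $u=\alpha/\beta$, $v=\gamma/\delta$ with homogeneous numerators and denominators, compare the $K$-prime factorizations of $\alpha^p\delta^p=\gamma^p\beta^p$, divide all multiplicities by $p$ to conclude that $\alpha\delta$ and $\gamma\beta$ are associated, and use that homogeneous units are constant to force $w=\deg\alpha-\deg\beta=\deg\gamma-\deg\delta=0$, a contradiction. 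Note that this completed argument uses only $K$-factoriality and the constancy of homogeneous units, so the hypothesis on $r-1$ of the degrees is not actually what drives this step, contrary to what you claim; its real role is in the normality argument, which is exactly the part you have not secured.
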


\begin{proof}
The $K$-grading of $R$ defines an action 
of the quasitorus $H := \Spec \, \KK[K]$ 
on $\bar X := \Spec \, R$
such that the homogeneous elements $f \in R$ 
of degree $w \in K$ are precisely the 
functions on $\bar X$ which are homogeneous 
with respect to $\chi^w \in \Chi(H)$.
Set $g_i := \prod_{j \ne i} f_j$
and consider the $H$-invariant open 
subset
$$
\qquad\qquad 
\hat X 
\ := \ 
\bar X_{g_1} \cup \ldots \cup \bar X_{g_r}
\ \subseteq \ 
\bar X.
$$
Since the $f_i$ are pairwise non-associated 
$K$-primes, $\hat X$ has complement 
of codimension at least two in $\bar X$.
According to~\cite{Be}*{Thm.~1.3},
each $\bar X_{g_i}/H$ is factorial and 
hence normal.
By~\cite[Prop.~1.2.2.8]{ArDeHaLa}, 
the $H$-action on $\bar X_{g_i}$ is free.
Thus, Luna's slice theorem~\cite{Lu}*{Thm.~III.1} 
tells us that the quotient map 
$\bar X_{g_i} \to \bar X_{g_i}/H$ 
is an \'{e}tale $H$-principal bundle.
As \'{e}tale morphisms preserve normality, 
see~\cite{Mil}*{Prop.~8.1},
we conclude that each $\bar X_{g_i}$ 
and hence $\hat X$ is normal.
Now, observe
$$ 
R 
\ = \
\mathcal{O}(\bar X)
\ \subseteq \
\mathcal{O}(\hat X).
$$ 
We claim that the last inclusion is in fact 
an equality.
Let $g \in \mathcal{O}(\hat X)$ be 
an $H$-homogeneous function.
Since $g$ is a regular homogeneous 
function on $\bar X_{g_1}$, we have 
$g = g'/g_1^{l}$ with a homogeneous function 
$g' \in R$. 
Using $K$-factoriality, we find 
pairwise non-associated $K$-primes $p_i, f_j \in R$,
where $f_2, \ldots, f_r$ are the generators fixed
before, such that
$$
g 
\  = \
\frac{p_1^{\nu_1} \cdots p_s^{\nu_s}}{f_2^{\mu_2} \cdots f_r^{\mu_r}}.
$$
Since $g$ is regular on the normal variety 
$\hat X$ and $\bar X \setminus \hat X$ 
is of codimension at least two in $\bar X$, 
we must have $\mu_2 = \ldots = \mu_r = 0$. 
Consequently, $g \in R$ holds.
Now, every regular function on $\hat X$ is a sum of 
$K$-homogeneous ones and thus extends to a regular 
function on $\bar X$.
In particular, $R = \mathcal{O}(\hat X)$ is 
normal.

To see that $R$ is integral, we have to 
show that $\bar X = \Spec \, R$ is 
irreducible. 
Due to normality, the irreducible components of 
$\bar X$ coincide with its 
connected components 
$\bar X_1, \ldots, \bar X_k$.
Indeed, if two distinct irreducible components
have a common point, then the corresponding 
local ring has zero divisors, contradicting normality.   
The assumption that $R$ is $K$-integral
means on the geometric side that~$H$ 
permutes transitively the $\bar X_i$.
So, we can choose $h_i \in H$ with 
$\bar X_i = h_i\bar X_1$ and  
a non-trivial character $\chi \in \mathbb{X}(H)$
vanishing along the stabilizer of $\bar X_1$. 
Then, setting $f(z) := \chi(h_i)$ 
for $z \in \bar X_i$ defines a  
homogeneous unit on $\bar X$,
which is non-constant as soon as $k > 1$ holds.
We conclude $k=1$ and thus $\bar X$ is 
irreducible. 
\end{proof}

\begin{proof}[Proof of Construction~\ref{constr:t-mds},
Proposition~\ref{thm:t-mds-1}
and Proposition~\ref{thm:t-mds-3}]
The generator matrix $B$ of the 
fan~$\Delta$ and the generator matrix~$P$ 
of the fan $\Sigma$ fit into the commutative 
diagram 
$$ 
\xymatrix{
{\ZZ^{n+m}}
\ar[r]^{P}
\ar[d]_{A}
&
{\ZZ^{t+s}}
\ar[d]
\\
{\ZZ^{r+1}}
\ar[r]_{B}
&
{\ZZ^{t}}
}
$$
where the lifting $A \colon \ZZ^{n+m} \to \ZZ^{r+1}$ 
of the projection $\ZZ^{t+s} \to \ZZ^{t}$
sends the canonical basis vectors 
$e_{ij} \in \ZZ^{n+m}$
to $l_{ij} e_{i} \in \ZZ^{r+1}$ 
and $e_{k} \in \ZZ^{n+m}$ 
to $0 \in \ZZ^{r+1}$.
Dualizing leads to a commutative ladder 
of abelian groups with exact rows
$$ 
\xymatrix{
0 
\ar@{<-}[r]
&
K_P\ar@{<-}[r]^{Q_P}
\ar@{<-}[d]_{\imath}
&
{\ZZ^{n+m}}
\ar@{<-}[r]^{P^{*}}
\ar@{<-}[d]_{A^{*}}
&
{\ZZ^{t+s}}
\ar@{<-}[r]
\ar@{<-}[d]
&
0
\\
0 
\ar@{<-}[r]
&
K_B
\ar@{<-}[r]_{Q_B}
&
{\ZZ^{r+1}}%E_{\Delta}
\ar@{<-}[r]_{B^{*}}
&
{\ZZ^{t}}%M_{\Delta}
\ar@{<-}[r]
&
0
}
$$

We validate Construction~\ref{constr:t-mds}.
According to Construction~\ref{constr:explvar}, 
the canonical basis vector $e_i \in \ZZ^{r+1}$ 
is sent by $Q_B$ to $\deg(f_i) \in K_B$.
Thus, the induced map 
$\imath \colon K_B \rightarrow K_P$
sends $\deg(f_{i}) \in K_B$ 
$Q_P(l_{i1}e_{i1} + \ldots + l_{in_{i}}e_{in_{i}}) \in K_P$.
Define a $K_B$-grading on the 
polynomial ring $\KK[F_0,\ldots,F_r]$
by $\deg(F_i) = \deg(f_i)$ and 
a $K_B$-grading on $\KK[T_{ij},S_k]$
by $\deg(T_{ij}) = Q_P(e_{ij})$ and $\deg(S_k) = Q_P(e_k)$.
Then the homomorphism
$$ 
\KK[F_0,\ldots,F_r] \ \to \ \KK[T_{ij},S_k],
\qquad 
f_i \ \mapsto \ T_{i}^{l_{i}}
$$
sends homogeneous elements of degree $w \in K_B$
to homogeneous elements of degree $\imath(w) \in K_P$.
In particular, the defining relations
$h_j(T_{0}^{l_{0}}, \ldots, T_{r}^{l_{r}})$ are 
$K_P$-homogeneous,
the $K_P$-grading of $R(\alpha,P)$ is well 
defined and, moreover, we have 
the induced homomorphism 
of the graded algebras
$R_Y \to R(\alpha,P)$ 
sending $f_{i}$ to~$T_{i}^{l_{i}}$
as desired.

We turn to Proposition~\ref{thm:t-mds-1}.
Let $\bar{Y} \subseteq \KK^{r+1}$ 
and $\bar X \subseteq \KK^{n+m}$ 
denote the closures of the inverse images 
of $Y \cap \TT^{t}$ 
and $X \cap \TT^{t+s}$ under the homomorphisms of tori
$b \colon \TT^{r+1} \to \TT^{t}$ and 
$p \colon \TT^{n+m} \to \TT^{t+s}$
defined by $B$ and $P$ respectively.
Observe that
$\bar Y = \Spec \, R_Y$
holds.
With the quasitori 
$H_{Y} := \Spec\, \KK[K_B]$
and $H_{X} := \Spec\, \KK[K_P]$
and the homomorphism of tori 
$a \colon  \TT^{n+m}  \to \TT^{r+1}$ defined 
by $A$, we have a commutative diagram
$$ 
\xymatrix{
\bar X \cap \TT^{n+m}
\ar[r]^{/H_{X}}_p
\ar[d]_a
&
X \cap \ \TT^{t+s}
\ar[d]^{/\TT^{s}}
\\
\bar{Y} \cap \TT^{r+1}
\ar[r]^{/H_{Y}}_b
&
Y \cap \TT^{t}.
}
$$
Consider the product $f \in R_Y$
over all the generators $f_i$ of $R_Y$ 
and the product $g \in R(\alpha,P)$ over all the 
generators 
$T_{ij}$ and $S_k$ of $R(\alpha,P)$. 
Then, using the above diagram, we see
$$ 
\left((R_Y)_f \right)^{H_Y}
\ \cong \ 
a^* \left((R_Y)_f \right)^{H_Y}
\ = \ 
\left(\left(R(\alpha,P)_g\right)^{H_X}\right)^{\TT^{s}}.
$$
Since the left hand side ring is factorial, also 
the right hand side ring is so.
By assumption, $R(\alpha,P)$ is 
$K_P$-integral and the generators 
$T_{ij}$ are $K_P$-prime. 
Using~\cite{Be}*{Thm.~1.3},
we see that $R(\alpha,P)$ is factorially 
$K_P$-graded
and Proposition~\ref{prop:benjamin2}
shows that $R(\alpha,P)$ is integral 
and normal.
Consequently, we are in the setting 
of Construction~\ref{constr:explvar}
which establishes
Proposition~\ref{thm:t-mds-1}.

Finally, we show Proposition~\ref{thm:t-mds-3}.
First note that  $Z_\Sigma^1 \to Z_\Delta^1$
defines a maximal orbit quotient of 
the $\TT^{s}$-action on $Z_\Sigma$.
The toric prime divisors of $Z_\Sigma^1$ 
cut down to the prime divisors 
$D_{ij}^X$ and $D_k^X$ of $X_1$ 
and those of~$Z_\Delta^1$ to the 
prime divisors $C_i$ of~$Y_1$. 
Thus, we can infer the statements on 
the isotropy groups 
from~\cite{ArDeHaLa}*{Prop.~2.1.4.2}
and conclude that $X_1 \to Y_1$ is a 
big representative of a maximal orbit 
quotient of the $\TT^{s}$-variety~$X$.
\end{proof}

We come to the proof of Theorem~\ref{thm:t-mds-2}.
The task is to provide for any abstractly 
given $A_2$-maximal $\TT$-variety $X$ with only 
constant invertible global functions, 
finitely generated divisor class group $\Cl(X)$ 
and finitely Cox ring $\mathcal{R}(X)$ 
a presentation as an explicit $\TT$-variety 
$X \subseteq Z$.
This runs via general Cox ring theory.
Let us recall the necessary background.
Mimicking Cox's quotient presentation~\ref{constr:torcox}, 
one looks at 
$$
\bar X \ = \ \Spec \, \mathcal{R}(X),
\qquad\qquad
H \ = \ \Spec \, \KK[\Cl(X)],
$$ 
the \emph{total coordinate space} and the 
\emph{characteristic quasitorus} of $X$.
Then $H$ acts on~$\bar X$, where 
this action is defined via its comorphism,
sending a homogeneous  element $f \in \mathcal{R}(X)$ 
of degree $[D]$ to the element 
$\chi^{[D]} \otimes f$ of $\KK[\Cl(X)] \otimes \mathcal{R}(X)$. 
Moreover, $X$ can be reconstructed as a good quotient
$$ 
\xymatrix{
{\Spec_X \, \mathcal{R}}
\ar@{}[r]|{\quad =}
&
{\hat X}
\ar@{}[r]|\subseteq
\ar[d]_{\quot H}^p
&
{\bar X}
\ar@{}[r]|{= \qquad}
&
\Spec \, \mathcal{R}(X)
\\
&
X
&
&
}
$$
Here the relative spectrum $\hat X$ of 
the Cox sheaf $\mathcal{R}$ is called 
the \emph{characteristic space} over $X$.
It is an open $H$-invariant subset of $\bar X$ 
and the complement $\bar X \setminus \hat X$ 
is small in the sense that it is 
of codimension at least two in $\bar X$;
see~\cite{ArDeHaLa}*{Sec.~1.6.1}.

We will deal with canonical sections, 
which in the context of Cox rings means 
the following.
For any effective representative~$D$ 
of a class $[D] \in \Cl(X)$, 
there is, up to scalars, a unique 
$f  \in \mathcal{R}(X)_{[D]}$ 
with $\div(f) = p^*D$ on $\hat X$.
In this situation, we call $f$ a 
\emph{canonical section} of $D$
and write $f = 1_D$.
A canonical section $1_D$ is a $\Cl(X)$-prime
element of $\mathcal{R}(X)$ if and only 
if $D$ is a prime divisor on $X$.
See~\cite{ArDeHaLa}*{Prop.~1.5.3.5 and Lemma~1.5.3.6} 
for the full details.

\begin{proof}[Proof of Theorem~\ref{thm:t-mds-2}]
Write for short $K := \Cl(X)$ and 
$R := \mathcal{R}(X)$.
In a first step, we lift the action of the 
torus $\TT$ to the total coordinate space 
$\bar X$.
Consider the characteristic space 
$p \colon \hat X \to X$ over $X$.
By~\cite{ArDeHaLa}*{Thm.~4.2.3.2}, 
there are a $\TT$-action on $\hat X$ and 
a positive integer~$b$ such that
for all $t \in \TT$, $h \in H$ and 
$x \in \hat X$, we have
$$
t \cdot h \cdot x \ = h \cdot t \cdot x,
\qquad\qquad
p(t \cdot x) \ = \ t^b \cdot p(x).
$$
Since $\hat X \subseteq \bar X$ has 
a small complement and~$\bar X$ is 
normal, the $\TT$-action on
$\hat X$ extends to~$\bar X$.
The fact that the actions of $\TT$ and 
$H$ commute means that we have an action 
of $\TT \times H$ on $\bar X$.
Thus, the $K$-grading of $R$ refines to 
a $(M \times K)$-grading for $M = \Chi(\TT)$.
As $M$ is torsion free, \cite{Be}*{Thm.~1.5}
yields that $R$ is $(M \times K)$-factorial
and $(M \times K)$-primality coincides with  
$K$-primality in $R$.

Now, let $\mathfrak{F} = (f_1, \ldots, f_q)$ 
be a system of pairwise non-associated 
$K$-prime generators of $R$ such that 
every $\TT$-invariant prime divisor of 
$X$ having non-trivial generic isotropy 
group has a canonical section among 
the~$f_i$.
Then, as mentioned before, the $f_i$ 
are $(M \times K)$-prime, and thus
in particular $(M \times K)$-homogeneous.
Similarly as in Construction~\ref{constr:explvar},
we obtain a $(\TT \times H)$-equivariant 
closed embedding
$$ 
\xymatrix{
{\Spec \, R}
 =:  
{\bar X}
\
\ar[rrr]^{x \mapsto (f_1(x),\ldots,f_q(x))}
&&&
\
{\bar Z}
:= 
{\KK^q}.
}
$$
Let $Q \colon \ZZ^q \to K$,  $e_i \mapsto \deg(f_i)$
be the degree map of the $K$-grading.
Then, in the language 
of~\cite{ArDeHaLa}*{Thm.~3.1.4.4},
we have a maximal bunch of orbit cones 
$$ 
\Phi 
\ = \ 
\{
Q(\gamma_x); \ 
x \in \hat X \text{ with } 
H \cdot x \subseteq \hat X \text{ closed}
\},
\qquad
\gamma_x 
\ = \ 
\cone(e_i; \ f_i(x) \ne 0).
$$
Moreover,~\cite{ArDeHaLa}*{Props.~3.2.2.2, 3.2.2.5}
ensure that we obtain a bunched ring 
$(R,\mathfrak{F},\Phi)$ in the sense 
of~\cite{ArDeHaLa}*{Def.~3.2.1.1}.
Now we reverse the translation performed
in Remark~\ref{rem:fan2bunch}.
Fix a $q' \times q$ matrix $P$, 
the rows of which form a 
lattice basis for $\ker(Q)$.
For a face $\gamma_0 \preccurlyeq \gamma$
of the orthant $\gamma = \QQ_{\ge 0}^q$,
let $\gamma_0^* \preccurlyeq \gamma$ be the 
complementary face. 
Then 
$$ 
\Sigma_X 
\ := \ 
\{
P(\gamma_x^*); \ 
x \in \hat X \text{ with } 
H \cdot x \subseteq \hat X \text{ closed}
\}
$$
is a set of cones intersecting in common
faces; see~\cite{ArDeHaLa}*{Thm.~2.2.1.14}.
Let $\Sigma$ be any fan in $\ZZ^{q'}$ 
such that $\Sigma_X \subseteq \Sigma$ holds.
Consider the associated toric variety  
$Z = Z_\Sigma$ and Cox's quotient 
presentation $\hat Z \to Z$.
We will build up the following commutative 
diagram
$$ 
\xymatrix{
{\hat X}
\ar@{}[r]|\subseteq
\ar[d]_{\quot H}^p
&
\hat{Z}
\ar[d]^{\quot H}
\\
X 
\ar[r]
\ar@{-->}[d]_\pi
&
Z
\ar@{-->}[d]
\\
Y
\ar[r]
&
Z_\Delta.
}
$$ 
By $A_2$-maximality of $X$ and the 
choice of $\Sigma$, we have  
$\hat X = \bar X \cap \hat Z$
and the induced morphism $X \to Z$ 
of quotient spaces is a closed 
embedding.
Moreover, $X \to Z$ is 
$\TT$-equivariant, where~$\TT$ 
acts on $Z$ as a subtorus of 
$\TT_Z \subseteq Z$.
Choose a splitting 
$\TT_Z = \TT^t \times \TT$. 
Accordingly, the lattice hosting 
the fan~$\Sigma$ splits as 
$\ZZ^{q'} = \ZZ^t \times \ZZ^s$.
Let~$\Delta$ be the fan in $\ZZ^t$ 
consisting of the zero cone and the 
projections of the rays of~$\Sigma$.
Then the projection $\TT_Z \to \TT^t$
of acting tori defines the rational
map $Z \dasharrow Z_\Delta$.
Defining $Y \subseteq Z_\Delta$ to 
be the closure of the image of 
$X \cap \TT_Z$, we complete the 
commutative diagram.

We investigate the shape of the 
generator matrices $B$ of $\Delta$
and $P$ of $\Sigma$.
Numbering its columns as 
$u_0, \ldots, u_r$,
we turn $B$ into a $t \times (r+1)$
matrix.
For every $i = 0, \ldots, r$, 
denote by $v_{i1}, \ldots, v_{in_i}$
the columns of $P$ such that the ray 
$\varrho_{ij} = \cone(v_{ij})$
projects onto $\cone(u_i)$.
Moreover, denote by $v_1, \ldots, v_m$ the 
columns of $P$ such that the ray 
$\varrho_k = \cone(v_k)$ 
lies in the kernel of the 
projection $\QQ^t \times \QQ^s \to \QQ^t$.
Then $P$ is a $(n+m) \times (t+s)$ 
matrix, where $n = n_0+\ldots+n_r$.
Consider the toric prime divisors 
$D_{ij}^Z \subseteq Z$ and  $E_{k}^Z \subseteq Z$
corresponding to the rays $\varrho_{ij}$ and 
$\varrho_k$ respectively. 
Computing the generic isotropy groups $\TT_x$ 
of these divisors according 
to~\cite{ArDeHaLa}*{Prop.~2.1.4.2}, 
we see that the $E_{k}^Z$ are the boundary divisors
of the $\TT$-action and 
that the~$v_{ij}$ have a non-trivial 
$\ZZ^t$-part being the $l_{ij}$-fold multiple 
of the primitive generator $u_i \in \ZZ^t$.
Thus, $B$ and $P$ look as in 
Construction~\ref{constr:t-mds}.

We claim that the dashed arrows are 
maximal orbit quotients for the 
$\TT$-actions on~$Z$ and $X$ respectively.
Consider the union $Z^1 \subseteq Z$ 
of $\TT_Z$ and all toric orbits 
$\TT_Z \cdot z_{\varrho_{ij}}$.
Then $Z^1 \subseteq Z_0$ is an open 
subset with complement of
codimension at least two in the 
set $Z_0 \subseteq Z$ consisting of 
all points $z \in Z$ with finite 
isotropy group $\TT_z$.
Let~$C_i$ be the prime divisor of $Z_\Delta$
corresponding to  $\cone(u_i)$,
where $i = 0, \ldots, r$.
Then $C_0, \ldots, C_r$ serve as doubling 
divisors and $Z^1 \to Z_\Delta$ is a 
big representative 
for the rational quotient $Z \dasharrow Z_\Delta$,
where Property~\ref{def:maxorbquot}~(iv)
is due to~\cite{ArDeHaLa}*{Cor.~2.3.1.7}.
Cutting down to $X$ gives an open 
subset $X^1 = X \cap Z^1$ of~$X_0$
and a morphism $\psi \colon X^1 \to Y$,
which inherits the properties of 
a big representative from $Z^1 \to Z_\Delta$.
In particular, $Y$ is normal.
A collection of doubling divisors 
is given by $C_i = Y \cap D_i$, 
where $D_0, \ldots, D_r \subseteq Z_\Delta$ 
are the invariant prime divisors of $Z_\Delta$.
Observe that each~$C_i$ is prime, 
because it is the image of 
$X \cap D$ for any $\TT_Z$-invariant 
prime divisor $D \subseteq Z$ 
lying over $D_i \subseteq Z_\Delta$.

To conclude the proof, we still have to show 
that $Y \subseteq Z_\Delta$ is an explicit variety,
that means that $1_{C_0}, \ldots, 1_{C_r}$ generate
the Cox ring $\mathcal{R}(Y)$.
Consider the commutative diagram 
$$ 
\xymatrix{
{\ZZ^{n+m}}
\ar[r]^{P}
\ar[d]_{A}
&
{\ZZ^{t+s}}
\ar[d]
\\
{\ZZ^{r+1}}
\ar[r]_{B}
&
{\ZZ^{t}}
}
$$
where the matrix $A \colon \ZZ^{n+m} \to \ZZ^{r+1}$ 
defines the homomorphism of 
$a \colon \TT^{n+m} \to \TT^{r+1}$
which in turn uniquely extends 
to the monomial map
$$ 
a \colon \KK^{n+m} \ \to \ \KK^{r+1},
\qquad\qquad
(z,w) \ \mapsto \ (z_0^{l_0}, \ldots, z_r^{l_r}).
$$
Note that $a$ is the good quotient for 
the action of the quasitorus $\ker(a)$ on 
$\KK^{n+m}$.
The total coordinate space 
$\bar X \subseteq \KK^{n+m}$ 
is invariant and thus maps
onto a closed normal subvariety 
$\bar Y \subseteq \KK^{r+1}$.
Moreover, $\bar Y$ inherits from $X$ 
the property that the coordinate
functions of $\KK^{r+1}$
define pairwise non-associated 
elements on $\mathcal{O}(\bar Y)$.
By construction, $\bar Y  \cap \TT^{r+1}$ 
dominates $Y \subseteq Z_\Delta$.
Thus, using \cite[Lemmas~3.4.1.7, 3.4.1.9 and 
Cor.~3.4.1.6]{ArDeHaLa}, we see that 
$\mathcal{O}(\bar Y)$ is the 
Cox ring of $Y$.
\end{proof}

As a consequence of the above proofs we 
retrieve~\cite[Thm.~1.2]{HaSu} 
for the special case of $\TT$-varieties 
with finitely generated Cox ring.

\begin{corollary}
\label{cor:RYinRX}
Let $X$ be a $\TT$-variety with finitely 
generated Cox ring $\mathcal{R}(X)$.
Then~$X$ admits a maximal orbit quotient 
$\pi \colon X \dasharrow Y$
and a collection $C_0, \ldots, C_r$ of doubling 
divisors such that we have an isomorphism
of $\Cl(X)$-graded rings
$$ 
\mathcal{R}(X) 
\ \cong \ 
\mathcal{R}(Y)[T_{ij},S_k] / \bangle{T_i^{l_i}-U_i; \ i = 0, \ldots, r},
$$
where $T_{ij}, S_k \in \mathcal{R}(X)$ and 
$U_i \in \mathcal{R}(Y)$ are canonical 
sections of the multiple divisors~$D_{ij}$, 
boundary divisors~$E_k$ and doubling divisors~$C_i$
respectively and the 
$\Cl(X)$-grading on the right hand side 
is given 
by 
$$ 
\deg(U_i) 
= 
[l_{i1}D_{i1} + \ldots + l_{in_i}D_{in_i}],
\qquad
\deg(D_{ij}) 
= 
[D_{ij}],
\qquad
\deg(S_k) 
= 
[E_k].
$$
Moreover, we have $\bar Y = \bar X \quot H_{X,Y}$,
where the quasitorus $H_{X,Y} \subseteq \TT^{n+m}$ 
is the kernel of the homomorphism of tori 
$\TT^{n+m} \to \TT^{r+1}$ sending $(t,s)$ 
to $(t_0^{l_0}, \ldots,t_r^{l_r})$.
\end{corollary}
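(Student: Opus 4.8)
The plan is to feed $X$ into the machinery behind Theorem~\ref{thm:t-mds-2} and then to read the claimed presentation of $\mathcal{R}(X)$ off directly from Construction~\ref{constr:t-mds}. First I would reduce to the $A_2$-maximal case: a $\TT$-variety with finitely generated Cox ring admits an open $\TT$-equivariant embedding into an $A_2$-maximal $\TT$-variety whose complement is of codimension at least two, and passing along such an embedding changes neither $\Cl(X)$ nor $\mathcal{R}(X)$, preserves the multiple, boundary and doubling divisors together with their classes, and, by Remark~\ref{rem:maxorbquotunique}, identifies maximal orbit quotients. Theorem~\ref{thm:t-mds-2} then applies, and its proof equips $X$ with a $\TT$-equivariant identification with an explicit $\TT$-variety $X(\alpha,P,\Sigma) \subseteq Z_\Sigma$ carrying all the accompanying data: the generator matrices $B$ of $\Delta$ and $P$ of $\Sigma$ in the shape of Construction~\ref{constr:t-mds}, the explicit variety $Y \subseteq Z_\Delta$, the maximal orbit quotient $\pi \colon X \dasharrow Y$, the doubling divisors $C_i = Y \cap D_i^\Delta$, the multiple divisors $D_{ij} = X \cap D_{ij}^\Sigma$ and the boundary divisors $E_k = X \cap D_k^\Sigma$. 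By the corollary following Proposition~\ref{thm:t-mds-1} I then have $\mathcal{R}(X) = R(\alpha,P)$, $\Cl(X) = K_P$, while $\mathcal{R}(Y) = R_Y$, $\Cl(Y) = K_B$.

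Next I would identify the distinguished sections. Applying the description of the induced prime divisors from Section~\ref{sec:background} to $X \subseteq Z_\Sigma$, the divisor $D_{ij}$ satisfies $p^*D_{ij} = V_{\bar X}(T_{ij})$; since $T_{ij}$ is $K_P$-prime this means $\div(T_{ij}) = p^*D_{ij}$ on $\hat X$, so $T_{ij}$ is a canonical section of $D_{ij}$, and likewise $S_k = 1_{E_k}$. Repeating the same for $Y \subseteq Z_\Delta$ shows that the embedding system element $f_i$ is a canonical section of $C_i$, and I would set $U_i := f_i \in \mathcal{R}(Y)$. The homomorphism $R_Y \to R(\alpha,P)$, $f_i \mapsto T_i^{l_i}$, from Construction~\ref{constr:t-mds} is precisely the comorphism of the quotient map $\bar X \to \bar Y$, hence the Cox ring level incarnation of $\pi$, so it sends $U_i$ to $T_i^{l_i}$. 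The dual ladder of Construction~\ref{constr:t-mds} supplies the degrees: $\deg(T_{ij}) = Q_P(e_{ij}) = [D_{ij}]$, $\deg(S_k) = [E_k]$, and $\deg(U_i) = \imath(\deg f_i) = Q_P(l_{i1}e_{i1}+\ldots+l_{in_i}e_{in_i}) = [l_{i1}D_{i1}+\ldots+l_{in_i}D_{in_i}]$ in $K_P = \Cl(X)$.

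The isomorphism itself is then just a re-presentation of $\mathcal{R}(X) = R(\alpha,P)$. Writing $\mathcal{R}(Y) = R_Y = \KK[F_0,\ldots,F_r]/\bangle{h_1,\ldots,h_q}$ with the defining equations $h_1,\ldots,h_q$ of $Y$ and $F_i \mapsto f_i = U_i$, and recalling $\mathcal{R}(X) = R(\alpha,P) = \KK[T_{ij},S_k]/\bangle{h_1(T_0^{l_0},\ldots,T_r^{l_r}),\ldots,h_q(T_0^{l_0},\ldots,T_r^{l_r})}$, I would observe that in $\KK[F_0,\ldots,F_r,T_{ij},S_k]$ dividing out $\bangle{h_1,\ldots,h_q}$ yields $\mathcal{R}(Y)[T_{ij},S_k]$, and dividing out in addition $\bangle{T_i^{l_i}-U_i;\ i=0,\ldots,r}$ eliminates the variables $F_i = T_i^{l_i}$ and leaves exactly $\KK[T_{ij},S_k]/\bangle{h_j(T_0^{l_0},\ldots,T_r^{l_r})} = \mathcal{R}(X)$, the resulting isomorphism sending $U_i \mapsto T_i^{l_i}$, $T_{ij}\mapsto T_{ij}$, $S_k \mapsto S_k$; by the degree computation above it respects the $\Cl(X)$-gradings. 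For the supplement I would recall from the proof of Theorem~\ref{thm:t-mds-2} that the monomial map $a \colon \KK^{n+m} \to \KK^{r+1}$, $(z,w) \mapsto (z_0^{l_0},\ldots,z_r^{l_r})$, is a good quotient for $\ker(a) = H_{X,Y}$; since $\bar X = \Spec\,\mathcal{R}(X) \subseteq \KK^{n+m}$ is closed and $H_{X,Y}$-invariant, $a$ restricts to a good quotient onto its image $\bar Y$, that is, $\bar Y = \bar X \quot H_{X,Y}$.

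Nearly all the substance here was already supplied by the proofs of Theorem~\ref{thm:t-mds-2} and Proposition~\ref{thm:t-mds-3}; what remains is matching the divisors and canonical sections with the combinatorial data of Construction~\ref{constr:t-mds} and a routine elimination of variables. The one step I expect to need genuine care is the initial reduction to the $A_2$-maximal case, i.e. verifying that the maximal orbit quotient, the Cox ring, and the classes of the multiple, boundary and doubling divisors are all unaffected by the codimension-two modification involved; once this is in place, the statement, and with it the retrieval of \cite[Thm.~1.2]{HaSu} for $\TT$-varieties with finitely generated Cox ring, follows immediately.
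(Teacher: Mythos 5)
Your proposal is correct and matches the paper's intended derivation: the paper gives no standalone proof of this corollary, presenting it as a direct consequence of the proofs of Construction~\ref{constr:t-mds}, Proposition~\ref{thm:t-mds-3} and Theorem~\ref{thm:t-mds-2}, and your write-up is precisely the natural unwinding of those proofs (presentation via Theorem~\ref{thm:t-mds-2}, identification of $T_{ij}$, $S_k$, $U_i$ with canonical sections via $p^*D_{ij}=V_{\bar X}(T_{ij})$, elimination of the variables $U_i=T_i^{l_i}$, and the good-quotient statement for the monomial map $a$). The $A_2$-maximalization step you flag is indeed the only point the paper glosses over; alternatively one can observe that the proof of Theorem~\ref{thm:t-mds-2} uses $A_2$-maximality only to get a closed embedding $X\to Z$, which is not needed for the Cox-ring-level conclusions here.
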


\section{First properties and examples}
\label{sec:taohcfirstprop}

We discuss basic geometric properties 
of explicit $\TT$-varieties.
First we provide a collection of general 
statements directly imported 
from~\cite{ArDeHaLa}*{Chap.~3}, 
concerning singularities, 
the Picard group and various cones of 
divisor classes.
Then we present more specific statements
involving the $\TT$-action.
The second part of the section is
devoted to examples.
We indicate how to apply the results 
in practice by means of a concrete 
(new) example, 
we show how the construction of rational
$\TT$-varieties of complexity one 
from~\cite{HaHe,HaWr}
fits into the framework of 
explicit $\TT$-varieties
and finally, we present the Grassmannian
$\Gr(2,n)$ with its maximal torus
action as an explicit $\TT$-variety.

When we speak about an explicit $\TT$-variety 
$X \subseteq Z$ or, more specifically,
about an explicit $\TT$-variety
$X(\alpha,P,\Sigma)$ in $Z_\Sigma$, 
then we allow ourselves to make free 
use of the notation introduced in 
Construction~\ref{constr:t-mds}.
Recall from Remark~\ref{rem:expltx2explv} 
that the case of a trivial 
$\TT$-action, that means the explicit 
varieties from Construction~\ref{constr:explvar},
is included via $s=m=0$ and $n=r+1$.

\begin{remark}
Let $X \subseteq Z$
be an explicit $\TT$-variety.
The total coordinate spaces
$\bar{X}$ and $\bar{Z}$, 
that means the spectra of the 
Cox rings~$\mathcal{R}(X)$ 
and~$\mathcal{R}(Z)$, 
are given as 
$$
\bar{X}
\ := \ 
\bar{X}(\alpha,P) 
\ := \
V(
h_1(T_{0}^{l_{0}}, \ldots, T_{r}^{l_{r}}), 
\ldots, 
h_q(T_{0}^{l_{0}}, \ldots, T_{r}^{l_{r}})) 
\ \subseteq \
\KK^{n+m}
\ =: \ 
\bar{Z}.
$$
The embedding $\bar X \subseteq \bar Z$ 
is equivariant with respect to
the actions of the characteristic 
quasitorus
$H = \Spec \, \KK[K_P]$
defined by the gradings of 
$\mathcal{R}(X)$ 
and $\mathcal{R}(Z)$ by 
$K_P = \Cl(X) = \Cl(Z)$.
Moreover, we have a commutative diagram
$$ 
\xymatrix{
{\hat{X}}
\ar@{}[r]|\subseteq
\ar[d]_{\quot H}
& 
{\hat{Z}}
\ar[d]^{\quot H}
\\
X
\ar@{}[r]|\subseteq
&
Z
}
$$
where $\hat{Z} \to Z$ is Cox's quotient
presentation~\ref{constr:torcox} 
and $\hat{X} = \bar{X} \cap \hat{Z}$
holds. 
The good quotients $\hat{X} \to X$
and $\hat{Z} \to Z$ are
the characteristic spaces over $X$ and $Z$,
respectively.
\end{remark}

Every explicit $\TT$-variety $X \subseteq Z$ 
inherits a decomposition into locally closed 
subsets by cutting down the toric orbit 
decomposition of~$Z$.
Generalizing well-known basic facts of toric 
geometry, one can express several geometric  
properties of~$X$ in terms of this inherited
decomposition.
Let us introduce the necessary notation
for precise statements.

\begin{definition}
\label{def:Xcone}
Let $X \subseteq Z$
be an explicit $\TT$-variety.
Set $\gamma := \QQ^{n+m}_{\ge 0}$.
An \emph{$\bar{X}$-face} is a face 
$\gamma_0 \preccurlyeq \QQ^{n+m}$ such that 
the complementary face 
$\gamma_0^* \preccurlyeq \gamma$
satisfies
$$
\KK^{n+m}
\ \supseteq \
\bar{X}(\gamma_0)
\ := \ 
\bar{X}
\ \cap \ 
\TT^{n+m} \cdot z_{\gamma_0^*} 
\ \ne \
\emptyset.
$$
For $\sigma \in \Sigma$ and the 
\emph{corresponding} face 
$\gamma_0 \preccurlyeq \gamma$,
that means the face 
with $P(\gamma_0^*) = \sigma$,
consider the intersection of 
$X$ and the associated toric 
orbit of $Z = Z_\Sigma$:
$$ 
X(\gamma_0) 
\ := \ 
X(\sigma) 
\ := \ 
X 
\cap 
\TT^{t+s} \cdot z_\sigma
\ \subseteq \
Z.
$$
We call $\sigma \in \Sigma$
an \emph{$X$-cone} and 
$\gamma_0 \preccurlyeq \gamma$ 
an \emph{$X$-face}
if $X(\gamma_0) = X(\sigma)$
is non-empty.
Moreover, we denote 
$$ 
\rlv(X)
\ := \ 
\{
\gamma_0 \preccurlyeq \gamma; 
\ \gamma \text{ is an $X$-face}
\}.
$$
Finally, we call the subsets 
$X(\gamma_0) \subseteq X$,
where $\gamma_0$ is an $X$-face, 
the \emph{pieces}
of the explicit $\TT$-variety 
$X \subseteq Z$.
\end{definition}

\begin{remark}
Let $X \subseteq Z$ be an explicit 
$\TT$-variety.
Then every piece 
$X(\gamma_0) \subseteq X$
is locally closed and 
$X$ is the disjoint union of 
its pieces:
$$ 
X 
\ = \ 
\bigsqcup_{\gamma_0 \in \rlv(X)} X(\gamma_0).
$$
Moreover, $\gamma_0 \preccurlyeq \gamma$
is an $X$-face if and only if it is
an $\bar X$-face and we have 
$P(\gamma_0^*) \in \Sigma$.
If $\gamma_0 \preccurlyeq \gamma$
is an $X$-face, then  $\bar{X}(\gamma_0)$
maps onto $X(\gamma_0)$.
\end{remark}

We describe basic local properties in terms 
of the pieces.
Consider for the moment any normal 
variety~$X$.
A point $x \in X$ is \emph{factorial} 
if every Weil divisor of $X$ is Cartier 
near~$x$.
Moreover, $x \in X$ is 
\emph{$\QQ$-factorial} if for every 
Weil divisor of $X$ some nonzero
multiple is Cartier near $x$.

\begin{proposition}
\label{prop:Qfactchar}
Let $X \subseteq Z$ be an explicit 
$\TT$-variety.
Consider an $X$-face $\gamma_0 \preccurlyeq \gamma$ 
and $\sigma = P(\gamma_0^*) \in \Sigma$.
Then the following statements 
are equivalent.
\begin{enumerate}
\item
The piece $X(\sigma)$ consists of 
$\QQ$-factorial points of $X$.
\item
The cone $\sigma$ is simplicial.
\item
The cone 
$Q(\gamma_0) \subseteq K_\QQ$
is of full dimension.
\end{enumerate}
\end{proposition}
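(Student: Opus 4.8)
The plan is to split the claim into the two equivalences (ii)$\Leftrightarrow$(iii) and (i)$\Leftrightarrow$(iii); the first is a piece of linear Gale duality, the second rests on the local divisor class groups of explicit varieties.

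For (ii)$\Leftrightarrow$(iii) I would work with the exact sequence
$$
0 \longrightarrow \ZZ^{t+s} \xrightarrow{\ P^{*}\ } \ZZ^{n+m} \xrightarrow{\ Q_{P}\ } K_{P} \longrightarrow 0
$$
defining $K_{P}$, tensored with $\QQ$; it is exact because the columns of $P$ generate $\QQ^{t+s}$. Write $I_{0}$ for the set of indices $\ell$, ranging over the pairs $(i,j)$ and the $k$, with $e_{\ell} \in \gamma_{0}$, and $I_{0}^{c}$ for its complement; by the definition of the face corresponding to $\sigma$ one has $\ell \in I_{0}^{c}$ exactly when the column $v_{\ell}$ of $P$ lies in $\sigma$. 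Now $Q_{P}(\gamma_{0}) = \cone(w_{\ell};\ \ell \in I_{0})$ is of full dimension in $K_\QQ$ if and only if the $w_{\ell}$ with $\ell \in I_{0}$ span $K_\QQ$, which by exactness amounts to surjectivity of the composition $\QQ^{t+s} \xrightarrow{P^{*}} \QQ^{n+m} \to \QQ^{I_{0}^{c}}$ with the coordinate projection; as the transpose of this composition sends $e_{\ell}$ to $v_{\ell}$, it is surjective precisely when the $v_{\ell}$, $\ell \in I_{0}^{c}$, are linearly independent. Finally, since $\Sigma$ is a fan, a column $v_{\ell}$ lies in $\sigma$ if and only if $\cone(v_{\ell})$ is a ray of $\sigma$; hence $\{v_{\ell};\ \ell \in I_{0}^{c}\}$ is precisely the set of primitive ray generators of $\sigma$, and its linear independence is equivalent to $\sigma$ being simplicial. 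This gives (ii)$\Leftrightarrow$(iii).

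For (i)$\Leftrightarrow$(iii) I would first pass, via Remark~\ref{rem:fan2bunch}, to the bunched ring $(R,\mathfrak{F},\Phi)$ associated with $X \subseteq Z$. Since $X$ is open in $X(R,\mathfrak{F},\Phi)$ with complement of codimension at least two while $X(\gamma_{0}) \subseteq X$, and since $\QQ$-factoriality of a point is an intrinsic local condition, it suffices to decide it along the piece $X(\gamma_{0})$ inside the bunched ring variety. There the general local theory of \cite{ArDeHaLa}*{Chap.~3} applies: for a relevant face $\gamma_{0} \in \rlv(X)$ and $x \in X(\gamma_{0})$ the local divisor class group is $\Cl(\mathcal{O}_{X,x}) \cong K_{P}/\langle w_{\ell};\ \ell \in I_{0}\rangle$, the quotient of $\Cl(X) = K_{P}$ by the classes of the invariant prime divisors not meeting $X(\gamma_{0})$. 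A point is $\QQ$-factorial exactly when its local class group is finite, that is, when $\langle w_{\ell};\ \ell \in I_{0}\rangle$ is of finite index in $K_{P}$, i.e.\ when the $w_{\ell}$, $\ell \in I_{0}$, span $K_\QQ$ --- which is condition (iii). In particular all points of $X(\gamma_{0})$ are $\QQ$-factorial or none is, matching the formulation of (i).

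The step I expect to be the real obstacle is the input from \cite{ArDeHaLa}*{Chap.~3} in the second part: one has to verify that the dictionary of Remark~\ref{rem:fan2bunch} transports the $X$-face $\gamma_{0}$, the point $x$ and the invariant prime divisors faithfully to the bunched ring side, and that the local class group formula is invoked in the form that remains valid when $K_{P}$ has torsion --- this is exactly where the characteristic quasitorus, rather than an honest torus, must be handled carefully. Apart from that, the argument is bookkeeping with the index conventions relating $\gamma_{0}$, $\gamma_{0}^{*}$ and $\sigma$.
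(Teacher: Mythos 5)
Your proposal is correct and follows essentially the same route as the paper, whose entire proof is the one-line instruction to translate via Remark~\ref{rem:fan2bunch} and apply \cite{ArDeHaLa}*{3.3.1.8 and 3.3.1.12}; your argument simply unpacks those citations, using the local class group formula $\Cl(\mathcal{O}_{X,x}) \cong K_P/\langle w_\ell;\ \ell \in I_0\rangle$ and finiteness for (i)$\Leftrightarrow$(iii), and linear Gale duality for (ii)$\Leftrightarrow$(iii). The only point to keep explicit is the convention $\gamma_0^* = \cone(e_\ell;\ v_\ell \in \sigma)$ from Remark~\ref{rem:fan2bunch}, which guarantees that $\{v_\ell;\ \ell\in I_0^c\}$ really is the full set of primitive ray generators of $\sigma$; with that fixed, your bookkeeping goes through.
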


\begin{proof}
Translate via Remark~\ref{rem:fan2bunch}  and 
apply~\cite{ArDeHaLa}*{3.3.1.8 and~3.3.1.12}.
\end{proof}

\begin{proposition}
\label{prop:smoothchar}
Let $X \subseteq Z$ be an explicit 
$\TT$-variety.
Consider an $X$-face $\gamma_0 \preccurlyeq \gamma$ 
and $\sigma = P(\gamma_0^*) \in \Sigma$.
Then the following statements are equivalent.
\begin{enumerate}
\item
The piece $X(\sigma)$ consists of 
factorial points of $X$.
\item
The cone $\sigma$ is regular.
\item
The set $Q(\gamma_0 \cap \ZZ^{n+m})$ 
generates $K$ as a group.
\end{enumerate}
Moreover, $X(\sigma)$ consists of smooth points of $X$ 
if and only if 
one of the above statements holds and
$\bar{X}(\gamma_0)$ consists of smooth 
points of $\bar{X}$.
\end{proposition}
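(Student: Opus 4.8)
The plan is to reduce everything to the language of bunched rings via Remark~\ref{rem:fan2bunch} and to invoke the local bunched-ring criteria of~\cite{ArDeHaLa}*{Sec.~3.3.1}, in complete analogy with the proof of Proposition~\ref{prop:Qfactchar}. Write $I \subseteq \{1,\ldots,n+m\}$ for the index set with $\gamma_0 = \cone(e_i;\ i \in I)$, so that $\gamma_0^* = \cone(e_i;\ i \notin I)$ and $\sigma = P(\gamma_0^*) = \cone(v_i;\ i \notin I)$. Then $\gamma_0 \cap \ZZ^{n+m}$ is the monoid generated by the $e_i$, $i \in I$, so that $Q(\gamma_0 \cap \ZZ^{n+m})$ generates $K = \Cl(X)$ as a group if and only if the classes $w_i = Q(e_i)$, $i \in I$, do. With this, the bunched-ring description of the local divisor class group of $X$ at a point of the piece $X(\gamma_0)$ yields the equivalence of~(i) and~(iii): the points of $X(\sigma)$ are factorial precisely when $\{w_i;\ i \in I\}$ generates $K$; see~\cite{ArDeHaLa}*{Prop.~3.3.1.8 and Cor.~3.3.1.12}.

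It remains to identify condition~(iii) with regularity of $\sigma$ in~(ii); this is the one step carrying actual content, and it is pure linear Gale duality. I would show that each of~(ii) and~(iii) is equivalent to surjectivity of the homomorphism $\ZZ^{t+s} \to \ZZ^{\{1,\ldots,n+m\}\setminus I}$, $a \mapsto (\langle v_i,a\rangle)_{i \notin I}$, obtained from $P^{*}\colon \ZZ^{t+s}\to\ZZ^{n+m}$ by forgetting the coordinates indexed by $I$. For~(iii): one has $Q(\sum_{i\in I}\ZZ e_i) = K$ if and only if $\ZZ^{n+m} = \big(\sum_{i\in I}\ZZ e_i\big) + \ker(Q) = \big(\sum_{i\in I}\ZZ e_i\big) + \mathrm{im}(P^{*})$, which in turn holds if and only if $\mathrm{im}(P^{*})$ surjects onto the quotient $\ZZ^{n+m}/\sum_{i\in I}\ZZ e_i = \ZZ^{\{1,\ldots,n+m\}\setminus I}$. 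For~(ii): the above map is surjective if and only if the vectors $v_i$, $i \notin I$, are linearly independent and span a saturated sublattice of $\ZZ^{t+s}$, equivalently extend to a $\ZZ$-basis, which is exactly the statement that $\sigma = \cone(v_i;\ i \notin I)$ is regular. Together with the first paragraph this settles the equivalence of~(i)--(iii).

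For the supplement on smooth points I would import the bunched-ring smoothness criterion from~\cite{ArDeHaLa}*{Sec.~3.3}, which characterizes the smooth points of $X$ as the factorial points over which the characteristic space $\hat X$ (equivalently, the total coordinate space $\bar X$, since $\hat X \subseteq \bar X$ is open) is regular. If none of~(i)--(iii) holds, then $X(\sigma)$ contains a non-factorial, hence non-smooth, point, and at the same time the claimed condition fails, so the asserted equivalence is true trivially. If~(i) holds, it remains to see that $\bar X$ is regular along $p^{-1}(X(\sigma))$ precisely when it is regular along $\bar X(\gamma_0) = \hat X \cap \TT^{n+m}\cdot z_{\gamma_0^*}$: by Definition~\ref{def:Xcone} and the remark following it, $\bar X(\gamma_0)$ is a nonempty $H$-invariant subset contained in $p^{-1}(X(\sigma))$ which surjects onto $X(\sigma)$, hence meets the (unique) closed $H$-orbit of every fibre over $X(\sigma)$; since the regular locus of $\bar X$ is open and $H$-invariant, regularity at a closed orbit of a good-quotient fibre forces regularity along the whole fibre. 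The only genuine obstacles are getting the Gale-duality identification~(ii)$\Leftrightarrow$(iii) right and matching the ADHL smoothness criterion to the stated formulation via this last reduction; everything else is bookkeeping within the dictionary of Remark~\ref{rem:fan2bunch}.
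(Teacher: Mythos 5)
Your proposal is correct and follows essentially the same route as the paper, whose proof consists of translating via Remark~\ref{rem:fan2bunch} and citing \cite{ArDeHaLa}*{3.3.1.8, 3.3.1.9 and~3.3.1.12}. The extra material you supply --- the Gale-duality identification of (ii) with (iii) and the reduction of smoothness along a quotient fibre to smoothness along its closed $H$-orbit, i.e.\ along $\bar X(\gamma_0)$ --- is sound and is precisely what those cited results encapsulate.
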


\begin{proof}
Translate via Remark~\ref{rem:fan2bunch} and 
apply~\cite{ArDeHaLa}*{3.3.1.8, 3.3.1.9 and~3.3.1.12}.
\end{proof}

We turn to the Picard group and the various cones 
of divisor classes.
For a subset $A \subseteq V$ of a $\QQ$-vector space,
we denote by $\lin(A) \subseteq V$ its $\QQ$-linear hull.

\begin{proposition}
\label{prop:PicandCones}
Let $X \subseteq Z$
be an explicit $\TT$-variety.
Then, in $K_P = \Cl(X)$, the Picard group of 
$X$ is given by 
$$ 
\Pic(X)
\ = \ 
\bigcap_{\gamma_0 \in \rlv(X)} Q(\lin(\gamma_0) \cap \ZZ^{n+m}).
$$
Moreover, in $(K_P)_\QQ = \Cl_\QQ(X)$, the cones of 
effective, movable, semiample and ample
divisor classes are given by
$$ 
\Eff(X) 
\ = \ 
Q(\gamma), 
\qquad\qquad
\Mov(X)
\ = \ 
\bigcap_{\gamma_0 \preccurlyeq \gamma \text{ facet}}
Q(\gamma_0),
$$
$$ 
\SAmple(X)
\ = \ 
\bigcap_{\gamma_0 \in \rlv(X)} Q(\gamma_0),
\qquad\qquad
\Ample(X)
\ = \ 
\bigcap_{\gamma_0 \in \rlv(X)} Q(\gamma_0)^\circ.
$$
\end{proposition}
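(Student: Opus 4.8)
The plan is to transport the statement into the language of bunched rings via Remark~\ref{rem:fan2bunch} and then to read off each of the five descriptions from the corresponding results in \cite{ArDeHaLa}*{Chap.~3}, proceeding exactly as in the proofs of Propositions~\ref{prop:Qfactchar} and~\ref{prop:smoothchar}. First I would fix the dictionary: the explicit $\TT$-variety $X \subseteq Z$ corresponds to the bunched ring $(R, \mathfrak{F}, \Phi)$ with $R = \mathcal{R}(X)$, with $\mathfrak{F}$ the system of generators $T_{ij}, S_k$, with degree map $Q = Q_P \colon \ZZ^{n+m} \to K_P$, and with $\Phi = \{Q(\hat{\sigma}^*);\ \sigma \in \Sigma,\ X \cap \TT_Z \cdot z_\sigma \ne \emptyset\}$. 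The bookkeeping point to verify here is that, under the correspondence $\gamma_0 \leftrightarrow \sigma = P(\gamma_0^*)$ recorded after Definition~\ref{def:Xcone}, the set $\rlv(X)$ of $X$-faces is exactly the set of relevant faces of $(R, \mathfrak{F}, \Phi)$ in the sense of \cite{ArDeHaLa}; this is immediate from the definition of $\Phi$ together with the fact that $\gamma_0$ is an $X$-face precisely when it is an $\bar X$-face with $P(\gamma_0^*) \in \Sigma$.

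Granted this dictionary, each equality is the bunched-ring formula for the respective invariant. For $\Eff(X) = Q(\gamma)$ one uses that $\mathcal{R}(X)$ is generated by the $K_P$-homogeneous elements $T_{ij}, S_k$ of degrees $w_{ij}, w_k$, so that the effective cone is spanned by these classes, i.e.\ equals $\cone(w_{ij}, w_k) = Q(\gamma)$. For $\Mov(X)$ one uses that the facets of the orthant $\gamma = \QQ^{n+m}_{\ge 0}$ are obtained by omitting one of the basis vectors $e_{ij}, e_k$, which turns the general moving-cone formula --- the intersection of the cones generated by all generator degrees but one --- into the stated intersection of the $Q(\gamma_0)$. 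The identities for $\Pic(X)$, $\SAmple(X)$ and $\Ample(X)$ are then the bunched-ring statements on the Picard group and on the semiample, resp.\ ample, cone, the intersections running over $\rlv(X)$, where for the last one $Q(\gamma_0)^\circ$ is the relative interior of $Q(\gamma_0)$ and the relevant faces share a common interior point.

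The one genuine subtlety, which I expect to be the part requiring care, is that Construction~\ref{constr:t-mds} permits the fan $\Sigma$ to be smaller than the one carved out by the maximal bunch, so that in general $X$ is only an open subvariety of $X(R, \mathfrak{F}, \Phi)$ with complement of codimension at least two. Hence I must check that each of $\Pic$, $\Eff$, $\Mov$, $\SAmple$, $\Ample$ is genuinely computed from $\rlv(X)$, and not from the possibly larger collection of relevant faces of the ambient bunched-ring variety. For $\Eff$ and $\Mov$ this is clear, since these cones depend only on $R$ and $K_P$ and not on $\Sigma$. For $\Pic$, $\SAmple$ and $\Ample$ it reflects that the cited formulas are local over the pieces: removing a piece $X(\gamma_0)$ from $X(R, \mathfrak{F}, \Phi)$ exactly deletes the index $\gamma_0$ from the respective intersection, so passing to the open subset $X$ restricts the index set to $\rlv(X)$. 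Apart from confirming this last point by tracing through \cite{ArDeHaLa}*{Chap.~3}, everything is a direct translation.
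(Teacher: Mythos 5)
Your proposal is correct and takes essentially the same route as the paper, whose entire proof reads: translate via Remark~\ref{rem:fan2bunch} and apply \cite{ArDeHaLa}*{Cor.~3.3.1.6 and Prop.~3.3.2.9}. Your additional care about $X$ being only an open subset of $X(R,\mathfrak{F},\Phi)$ with small complement, and about $\rlv(X)$ being the correct index set for the intersections, is a sensible elaboration of that same argument rather than a different approach.
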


\begin{proof}
Translate via Remark~\ref{rem:fan2bunch} and 
apply~\cite{ArDeHaLa}*{Cor.~3.3.1.6 and Prop.~3.3.2.9}.
\end{proof}

\begin{remark}
\label{rem:Sigmau}
Let $X = X(\alpha,P,\Sigma)$ in $Z = Z_\Sigma$
be an explicit $\TT$-variety.
If the fan~$\Sigma$ is the normal fan of a 
polytope in $\QQ^{t+s}$, then $Z$ and hence 
$X$ are projective.
Conversely, if $X$ is projective,
choose any class $u \in \Ample(X)$, 
an element $e \in \QQ^{n+m}$ with 
$Q(e) = u$ and consider the polytope
$$
B(u)
\ = \ 
(P^*)^{-1}(Q^{-1}(u) \cap \gamma) - e)
\ \subseteq \ 
\QQ^{t+s}.
$$
Then, with the normal fan $\Sigma(u)$ of 
$B(u)$, we have $X = X(\alpha,P,\Sigma(u))$,
whereas the toric ambient variety 
$Z(u)$ associated with $\Sigma(u)$ may 
differ from the original $Z = Z_\Sigma$.
Note that in terms of the faces 
$\gamma_0 \preccurlyeq \gamma$, the normal 
fan is given as 
$$
\Sigma(u)
\ = \ 
\left\{P(\gamma_0^*);
\ \gamma_0 \preccurlyeq \gamma 
\text{ with } 
u \in Q(\gamma_0)^\circ\right\}.
$$
\end{remark}

We indicate, in our setting, the fundamental 
connection between geometric invariant theory 
and Mori theory found by Hu and Keel~\cite[Thm.~2.3]{HuKe};
we refer to~\cite{ArDeHaLa}*{Sections~3.1.2 and~3.3.4} 
for additional background.

\begin{remark}
\label{rem:Moridecomp}
Let $X =  X(\alpha,P,\Sigma(u))$ in $Z = Z_\Sigma$ 
be a projective explicit $\TT$-variety.
For every $u \in \Eff(X)$, denote by
$\Gamma(u)$ the collection of $\bar X$-faces 
$\gamma_0 \preceq \gamma$ with 
$u \in Q(\gamma_0)$
and define a convex polyhedral cone 
$$ 
\lambda_u 
\ = \ 
\bigcap_{\gamma_0 \in \Gamma(u)} Q(\gamma_0)
\ \subseteq \ 
K_\QQ 
\ = \ 
\Cl_\QQ(X).
$$
The cones $\lambda_u$ form a fan 
$\Lambda$ subdividing $\Eff(X)$.
The fan $\Lambda$ maintains the 
geometric invariant theory of 
the characteristic quasitorus action
on~$\bar X$ in the sense that
the sets $X^{ss}(u)$ of semistable 
points associated with the 
characters 
$\chi^u \in \Chi(H)$ satisfy
$$ 
\bar X^{ss}(u) 
\ \subseteq \ 
\bar X^{ss}(u')
\quad \iff \quad
\lambda_u 
\ \succcurlyeq \ 
\lambda_{u'} .
$$
Observe that if one of the conditions 
holds, then we have an induced morphism 
$X_u \to X_{u'}$ of the associated quotients 
by~$H$.
Now, look at the $u \in \Mov(X)^\circ$.
These define projective explicit 
$\TT$-varieties.
More precisely, we have
$$
X_u \ = \ X(\alpha,P,\Sigma(u)),
\qquad\qquad
\hat X_u \ = \ \bar X^{ss}(u).
$$
Each $X_u$ has $\lambda_u$ as its 
semiample cone and is 
$\QQ$-factorial if and only if 
$\dim(\lambda_u)$ equals $\dim(K_\QQ)$.
Moreover, $\Lambda$ reflects 
the Mori equivalence:
the birational map $X_u \dasharrow X_u'$ 
is an isomorphism if and only if 
$u, u' \in \lambda^\circ$ holds 
for some $\lambda \in \Lambda$.
\end{remark}

Now we discuss more specific properties of 
explicit $\TT$-varieties $X \subseteq Z$ 
involving in particular the torus action. 
Recall that $\TT = \TT^s$, being a factor
of $\TT_Z = \TT^t \times \TT^s$, acts on
$Z$ and leaves $X \subseteq Z$ invariant.
Moreover, the projection $\TT_Z \to \TT^t$
defines the maximal orbit quotient 
$Z \dasharrow Z_\Delta$ for the $\TT$-action 
on $Z$ and by restricting we obtain a 
maximal orbit quotient $\pi \colon X \dasharrow Y$
for the $\TT$-action on $X$.

\begin{proposition}
\label{prop:isogroups}
Let $X \subseteq Z$ be an explicit 
$\TT$-variety.
Let $L \subseteq \ZZ^{t+s}$ be the kernel 
of the projection 
$\pr \colon \ZZ^{t+s} \to \ZZ^t$.
Then, for every $X$-cone $\sigma \in \Sigma$ 
and every $x \in X(\sigma)$, the 
isotropy group $\TT_x$ satisfies
$$ 
\Chi(\TT_x) 
\ \cong \ 
(L \cap \lin(\sigma))
\, \oplus \,
(\pr(\lin(\sigma)) \cap \ZZ^{t}) / (\pr(\lin(\sigma) \cap \ZZ^{t+s}).
$$
\end{proposition}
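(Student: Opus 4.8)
The plan is to reduce the statement to a purely toric computation at the distinguished point $z_\sigma$ and then to invoke the known description of isotropy groups of subtorus actions on toric varieties.

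First I would note that $\TT_x$ does not depend on the choice of $x \in X(\sigma)$ and in fact agrees with the isotropy group of the $\TT$-action on the ambient toric variety $Z$ at the point $z_\sigma$. Indeed, by definition $X(\sigma) = X \cap \TT_Z \cdot z_\sigma$, so any $x \in X(\sigma)$ has the form $x = g \cdot z_\sigma$ with $g \in \TT_Z$; since $\TT$ is a subtorus of the abelian group $\TT_Z$, conjugation by $g$ acts trivially and hence $\TT_x = g\,\TT_{z_\sigma}\,g^{-1} = \TT_{z_\sigma}$, where $\TT_{z_\sigma}$ denotes the $\TT$-isotropy of $z_\sigma$ in $Z$. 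Thus the left-hand side no longer involves $X$, and it suffices to compute the isotropy group at $z_\sigma$ of the subtorus $\TT = \{\mathds{1}_t\} \times \TT^s$ of $\TT_Z = \TT^t \times \TT^s$.

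For the second step I observe that $\TT$ is precisely the subtorus $\TT_L$ associated with the direct summand $L = \ker(\pr) = \{0\} \times \ZZ^s$ of $\ZZ^{t+s}$. Applying \cite{ArDeHaLa}*{Prop.~2.1.4.2} to this subtorus and the point $z_\sigma \in Z_\Sigma$ then yields exactly the asserted isomorphism; this is the same computation that was already carried out, in a special case, in Example~\ref{ex:exprops2}. Unwinding the reference, one uses that the isotropy group of $z_\sigma$ under the full torus $\TT_Z$ is the subtorus attached to the saturated sublattice $\ZZ^{t+s} \cap \lin(\sigma)$, intersects it with $\TT_L$ via the standard formula for intersections of diagonalizable subgroups of a torus, and performs a short diagram chase to identify the resulting character group with the free part $L \cap \lin(\sigma)$ together with the torsion part $(\pr(\lin(\sigma)) \cap \ZZ^t)/\pr(\lin(\sigma) \cap \ZZ^{t+s})$; the latter is finite, since both lattices have full rank in $\pr(\lin(\sigma))$, and measures the failure of the projection of the sublattice spanned by the columns of $P$ lying in $\sigma$ to be saturated in $\ZZ^t$.

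The only delicate point is the bookkeeping of saturations in that diagram chase, that is, keeping track of which of the occurring lattices are saturated and which are merely of finite index; but this is exactly what \cite{ArDeHaLa}*{Prop.~2.1.4.2} is designed to package. Consequently, once the reduction to $z_\sigma$ is in place, the argument amounts to that citation, and I would present the proof as the one-line reduction above followed by the reference, in parallel with Example~\ref{ex:exprops2}.
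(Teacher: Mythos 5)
Your proposal is correct and follows the paper's own proof essentially verbatim: the paper likewise observes that the $\TT$-isotropy is constant along the $\TT_Z$-orbit $\TT_Z\cdot z_\sigma$ containing $X(\sigma)$ and then reads off the formula at the distinguished point $z_\sigma$ from \cite{ArDeHaLa}*{Prop.~2.1.4.2}. The extra detail you give on unwinding the citation (saturations and the torsion quotient) is sound but not needed beyond the reference.
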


\begin{proof}
From~\cite[Prop.~2.1.4.2]{ArDeHaLa} we infer
the formula for the isotropy group 
of $\TT \subseteq \TT_Z$ at the point
$z_\sigma \in Z$.
Since the isotropy groups of the $\TT$-action 
are constant along the toric orbits,
this is all we need.
\end{proof}

\begin{proposition}
\label{prop:ci}
Let $X \subseteq Z$ be an explicit $\TT$-variety.
Suppose that the Cox ring presentation
$
\mathcal{R}(Y) = \KK[f_1,\ldots,f_r] 
/ 
\bangle{h_1 ,\ldots, h_q}
$ 
is a complete intersection.
Then, 
with $h_u' :=  h_u(T_0^{l_0}, \ldots, T_r^{l_r})$,
also the Cox ring presentation
$$
\mathcal{R}(X) 
\ = \ 
\KK[T_{ij},S_k] 
/ 
\bangle{h_1', \ldots, h_q'}
$$
is a complete intersection.
Moreover, in the latter case, 
the canonical divisor class of 
$X$ is given by 
$$ 
\mathcal{K}_X
\ = \ 
- 
\sum_{i=0}^r \sum_{j = 1}^{n_i} \deg(T_{ij}) 
- 
\sum_{k=1}^m \deg(S_k) 
+ 
\sum_{u=1}^q \deg(h_u')
\ \in \
K_P
\ = \ 
\Cl(X).
$$
In particular, with the canonical divisor 
class $\mathcal{K}_Y \in K_B = \Cl(Y)$
and the maximal orbit quotient 
$\pi \colon X \dasharrow Y$, we have 
$$ 
\mathcal{K}_X - \pi^*(\mathcal{K}_Y) 
\ = \ 
\sum_{i=0}^r \sum_{j = 1}^{n_i} (l_{ij}-1)\deg(T_{ij}) 
-
\sum_{k=1}^m \deg(S_k).
$$
\end{proposition}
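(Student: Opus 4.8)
The plan is to treat the three assertions in turn. For the complete intersection claim I would argue by a dimension count. By the structural results of Section~\ref{sec:expltvar}, the ring $\mathcal{R}(X) = R(\alpha,P) = \KK[T_{ij},S_k]/\langle h_1',\ldots,h_q'\rangle$ is integral with $\Cl(X) = K_P$ and $\dim(X) = s + \dim(Y)$, while $\mathcal{R}(Y) = R_Y = \KK[F_0,\ldots,F_r]/\langle h_1,\ldots,h_q\rangle$ with $\Cl(Y) = K_B$. As the columns of $B$ and of $P$ span $\QQ^{t}$ and $\QQ^{t+s}$ respectively, the transposes $B^*$ and $P^*$ are injective, giving $\rank(K_B) = r+1-t$ and $\rank(K_P) = n+m-t-s$. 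Since the characteristic spaces $\hat X$ and $\hat Y$ are torus bundles over big open subsets and sit with small complement in $\bar X = \Spec\,\mathcal{R}(X)$ resp.\ $\bar Y = \Spec\,\mathcal{R}(Y)$, we have $\dim\mathcal{R}(X) = \dim(X) + \rank(K_P)$ and $\dim\mathcal{R}(Y) = \dim(Y) + \rank(K_B)$. Feeding in the hypothesis $\dim\mathcal{R}(Y) = r+1-q$ yields $\dim\mathcal{R}(X) = n+m-q$, so the prime ideal $\langle h_1',\ldots,h_q'\rangle$ has height $q$ and is generated by $q$ elements. In the Cohen--Macaulay ring $\KK[T_{ij},S_k]$ this forces $h_1',\ldots,h_q'$ to be a regular sequence, i.e.\ the presentation of $\mathcal{R}(X)$ is a complete intersection.

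Next, for the canonical class of $X$ I would invoke the formula for the canonical class of a variety whose Cox ring is given by a complete intersection presentation: translating $X \subseteq Z$ into a bunched ring via Remark~\ref{rem:fan2bunch} and applying the corresponding statement of~\cite{ArDeHaLa}*{Sec.~3.3.3}, one obtains $\mathcal{K}_X = -\sum_{i,j}\deg(T_{ij}) - \sum_k\deg(S_k) + \sum_u\deg(h_u')$ in $K_P = \Cl(X)$, and the same formula applied to the explicit variety $Y \subseteq Z_\Delta$ --- a complete intersection by hypothesis --- gives $\mathcal{K}_Y = -\sum_{i=0}^r\deg(f_i) + \sum_u\deg(h_u)$ in $K_B = \Cl(Y)$. (Conceptually this is adjunction on the total coordinate space: the generating top differential form on $\KK^{n+m}$ is $H$-semi-invariant of weight $\sum_{i,j}\deg(T_{ij})+\sum_k\deg(S_k)$, so $\bar X$ carries a semi-invariant canonical form whose weight is that minus $\sum_u\deg(h_u')$, and this weight descends along the characteristic space $\hat X \to X$ to $\mathcal{K}_X$ under the identification $\Cl(X) = \Chi(H)$.)

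Finally, it remains to pull $\mathcal{K}_Y$ back along the maximal orbit quotient $\pi$. The inclusion $\mathcal{R}(Y) \hookrightarrow \mathcal{R}(X)$ produced in Construction~\ref{constr:t-mds} and Corollary~\ref{cor:RYinRX} is $\Cl$-graded and shifts degrees precisely by $\pi^* \colon K_B = \Cl(Y) \to \Cl(X) = K_P$, equivalently by the homomorphism $\imath$ of the commutative ladder in the proof of Construction~\ref{constr:t-mds}. Under it the canonical section $f_i$ of the doubling divisor $C_i$ maps to $T_i^{l_i}$, whence $\pi^*(\deg f_i) = \deg(T_i^{l_i}) = \sum_{j=1}^{n_i} l_{ij}\deg(T_{ij})$, and the monomial substitution sends $h_u$ to $h_u'$, whence $\pi^*(\deg h_u) = \deg(h_u')$ by the degree-shifting property recorded in that same proof. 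Substituting both into $\mathcal{K}_X - \pi^*\mathcal{K}_Y$, the contributions $\sum_u\deg(h_u')$ cancel and one is left with $\sum_{i,j}(l_{ij}-1)\deg(T_{ij}) - \sum_k\deg(S_k)$, which is the asserted formula. I expect the only genuine content to be the dimension count of the first step together with checking that the translation of Remark~\ref{rem:fan2bunch} places us exactly in the hypotheses of the complete intersection canonical class formula of~\cite{ArDeHaLa}; the remaining identifications are bookkeeping already packaged in Corollary~\ref{cor:RYinRX} and in the proof of Construction~\ref{constr:t-mds}.
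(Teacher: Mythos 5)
Your proposal is correct and follows essentially the same route as the paper: the identical dimension count $\dim\mathcal{R}(X)=\dim(X)+\rk(K_P)=\cdots=n+m-q$ for the complete intersection claim, the canonical class formula for complete intersection Cox rings from \cite{ArDeHaLa}*{Sec.~3.3.3} for the second assertion, and degree bookkeeping via $\imath\colon K_B\to K_P$ (i.e.\ $\pi^*[C_i]=\sum_j l_{ij}[D_{ij}]$, consistent with Corollary~\ref{cor:RYinRX}) for the third. The extra Cohen--Macaulay/regular-sequence remark and the adjunction aside are harmless elaborations of what the paper leaves implicit.
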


\begin{proof}
The second and third statement follow 
from~\cite{ArDeHaLa}*{Prop.~3.3.3.2}.
The first one is seen via a simple dimension computation:
\begin{eqnarray*}
\dim(\bar{X})
&  = &
\dim(X) + \rk(\Cl(X))
\\
& = &
s + \dim(Y) + \rk(\Cl(X))
\\
& = &
s + \dim(\bar{Y}) - \rk(\Cl(Y)) + \rk(\Cl(X))
\\
& = &
s + (r+1 - q) - (r+1 - t) + (n+m - t - s)
\\
& = & 
n+m - q.
\end{eqnarray*}
\end{proof}

For the next observation, note that 
in Construction~\ref{constr:t-mds},
we may remove successively all maximal 
cones from the fan $\Sigma$ that are 
not $X$-cones. 
The result is a minimal fan $\Sigma$ 
defining still the initial $X$.
We call $Z = Z_\Sigma$ in this case the
\emph{minimal ambient toric variety}
of $X$.

\begin{proposition}
\label{prop:genorbclos}
Let $X \subseteq Z$ be an explicit $\TT$-variety
and assume that $Z$ is the minimal toric 
ambient variety of~$X$.
Let $L \subseteq \ZZ^{t+s}$ be the kernel 
of the projection $\ZZ^{t+s} \to \ZZ^t$.
\begin{enumerate}
\item
The normalization of the general $\TT$-orbit 
closure of $X$ is the toric variety defined 
by the fan $\Sigma_L$ in $L$, where
$$ 
\Sigma_L
\ := \ 
\{\tau; \ \tau \preccurlyeq (\sigma \cap L_\QQ), \ \sigma \in \Sigma\}.
$$
\item
If the maximal orbit quotient $\pi \colon X \dasharrow Y$ 
is a morphism, then $\Sigma_L$ is a subfan of $\Sigma$.
\end{enumerate}
\end{proposition}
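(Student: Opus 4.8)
The plan is to treat the two statements in turn, both times reducing matters to the elementary behaviour of subtorus cosets inside a toric variety. Throughout write $N := \ZZ^{t+s}$, let $\TT_N = \TT^{t}\times\TT^{s}$ be the acting torus of $Z = Z_\Sigma$ and $\TT = \{\mathds{1}_t\}\times\TT^{s}$, so that $L$ is precisely the cocharacter lattice of the subtorus $\TT\subseteq\TT_N$. For part (i), a general point $x\in X$ lies in the dense subset $X\cap\TT_N = (Y\cap\TT^{t})\times\TT^{s}$ and has trivial $\TT$-isotropy, so its orbit is the $s$-dimensional coset $x\cdot\TT\subseteq\TT_N$; as $X$ is $\TT$-invariant and closed in $Z_\Sigma$, the orbit closure inside $X$ equals $F := \overline{x\cdot\TT}$ formed in $Z_\Sigma$. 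I would compute $F$ chart by chart. For $\sigma\in\Sigma$ the affine chart $Z_\sigma = \Spec\,\KK[\sigma^\vee\cap M]$, $M = N^\vee$, is open in $Z_\Sigma$ and contains $\TT_N\supseteq x\cdot\TT$, so $F\cap Z_\sigma = \overline{x\cdot\TT}^{\,Z_\sigma}$ is the spectrum of the image of $\KK[\sigma^\vee\cap M]$ under restriction to $x\cdot\TT$. Since all coordinates of $x$ are nonzero, identifying $x\cdot\TT$ with $\TT = \Spec\,\KK[\Chi(\TT)]$ turns this image into the monoid algebra $\KK[S]$, where $S$ is the image of $\sigma^\vee\cap M$ in $\Chi(\TT)$; by the cone identity $(\sigma\cap L_\QQ)^\vee = \sigma^\vee + \pr^{*}(\QQ^{t})$ the saturation of $S$ is the lattice-point monoid of the cone dual to $\sigma\cap L_\QQ$, and hence the normalization of $F\cap Z_\sigma$ is the affine toric variety of the cone $\sigma\cap L_\QQ$ in $L$. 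Gluing these normalizations over $\sigma\in\Sigma$ -- consistently, since everything takes place inside $F$ -- shows that the normalization of $F$ is the toric variety of the fan obtained from $\{\sigma\cap L_\QQ;\ \sigma\in\Sigma\}$ by adjoining all faces, which is exactly $\Sigma_L$. In passing one checks that $\Sigma_L$ is a fan: $(\sigma\cap L_\QQ)\cap(\sigma'\cap L_\QQ) = (\sigma\cap\sigma')\cap L_\QQ$ with $\sigma\cap\sigma'\in\Sigma$, and for $\tau\preccurlyeq\sigma$ the set $\tau\cap(\sigma\cap L_\QQ)$ is a face of $\sigma\cap L_\QQ$. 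As nothing depended on the chosen general point, this settles (i).

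For part (ii), the heart of the matter is to show that if $\pi\colon X\dasharrow Y$ is a morphism, then for every $X$-cone $\sigma\in\Sigma$ the image $\pr(\sigma) = \cone(u_i;\ \varrho_{ij}\preccurlyeq\sigma\ \text{for some}\ j)$ is contained in a cone of $\Delta$. For this I would lift to the characteristic spaces. By Corollary~\ref{cor:RYinRX} the rational map $\pi$ is induced by the everywhere defined monomial map $\bar X\to\bar Y$, $(z,w)\mapsto(z_0^{l_0},\dots,z_r^{l_r})$, and the domain of definition of $\pi$ pulls back, under $\hat X\to X$, to the locus of $\hat X$ mapped into the characteristic space $\hat Y\subseteq\bar Y$ over $Y$. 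Picking a point of $\hat X$ over the (nonempty) piece $X(\sigma)$ -- its coordinates vanish exactly in the positions $ij$ with $\varrho_{ij}\preccurlyeq\sigma$ and $k$ with $\varrho_k\preccurlyeq\sigma$ -- its image under the monomial map vanishes exactly in the positions $i$ with $\varrho_{ij}\preccurlyeq\sigma$ for some $j$; so membership in $\hat Y\subseteq\hat Z_\Delta$ forces $\cone(u_i;\ \varrho_{ij}\preccurlyeq\sigma) = \pr(\sigma)$ to lie in a cone of $\Delta$.

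From here the conclusion is combinatorial. A subcone of a pointed cone is pointed, so each such $\pr(\sigma)$ is pointed; and for a linear map $\phi$ the image $\phi(\sigma)$ of a polyhedral cone is pointed if and only if $\sigma\cap\ker(\phi)$ is a face of $\sigma$. Applied to $\pr$, this yields that $\sigma\cap L_\QQ$ is a face of $\sigma$ for every $X$-cone $\sigma$. Now minimality of $Z$ enters: every $\sigma\in\Sigma$ is a face of some maximal cone $\sigma_{\max}\in\Sigma$, and in the minimal ambient fan every maximal cone is an $X$-cone; hence $\sigma_{\max}\cap L_\QQ$ is a face of $\sigma_{\max}$, so $\sigma_{\max}\cap L_\QQ\in\Sigma$, and therefore $\sigma\cap L_\QQ = \sigma\cap(\sigma_{\max}\cap L_\QQ)$ is a face of $\sigma_{\max}\cap L_\QQ$, in particular a cone of $\Sigma$. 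As $\Sigma$ is closed under passing to faces, every cone of $\Sigma_L$ -- being a face of some $\sigma\cap L_\QQ$ -- lies in $\Sigma$; that is, $\Sigma_L$ is a subfan of $\Sigma$.

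Part (i) is essentially the classical description of the closure of a subtorus coset in a toric variety, so there the only real work is the cone-and-lattice bookkeeping and the verification of the gluing. The genuinely delicate point is the combinatorial criterion underlying part (ii) -- relating ``$\pi$ is a morphism'' to ``$\pr(\sigma)$ lies in a cone of $\Delta$'' through the Cox-coordinate picture of the maximal orbit quotient in Corollary~\ref{cor:RYinRX} -- and this is the step I would spell out most carefully, together with the routine but necessary use of minimality of $Z$ to pass from $X$-cones to all cones of $\Sigma$.
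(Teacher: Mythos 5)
Your proposal is correct, but it is organized quite differently from the paper's proof, which consists of two sentences: since $Z$ is the minimal toric ambient variety, the general $\TT$-orbit closure of $X$ coincides with the general $\TT$-orbit closure of $Z$ (i.e.\ with the closure of a generic coset of $\TT$ in $Z_\Sigma$), and the statement then "reduces to standard toric geometry". Your part~(i) is precisely that standard toric computation written out -- the chart-by-chart description of $\overline{x\cdot\TT}\cap Z_\sigma$ via the image of $\KK[\sigma^\vee\cap M]$ in $\KK[\Chi(\TT)]$ and the identity $(\sigma\cap L_\QQ)^\vee=\sigma^\vee+L_\QQ^\perp$ -- so there you are simply supplying the details the paper suppresses, and the lattice bookkeeping, the verification that $\Sigma_L$ is a fan, and the gluing are all in order. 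For part~(ii) your route is genuinely different and, in my view, more honest: the paper's reduction does not explain how the hypothesis "$\pi\colon X\dasharrow Y$ is a morphism" (a statement about $X$ and $Y$) gets converted into the toric statement that every $\pr(\sigma)$ lands in a cone of $\Delta$, whereas you address exactly this via the Cox-coordinate lift and then close the argument with the correct criterion "$\phi(\sigma)$ pointed $\iff$ $\sigma\cap\ker(\phi)\preccurlyeq\sigma$" and the correct use of minimality (every maximal cone of the minimal fan is an $X$-cone).

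The one step you assert rather than prove is the claim that the domain of definition of $\pi$ pulls back under $p\colon\hat X\to X$ to the locus of $\hat X$ mapped by $a$ into $\hat Y$. The direction you actually use -- $\pi$ defined at $x=p(\hat x)$ implies $a(\hat x)\in\hat Y$ -- is true but does not follow formally from Corollary~\ref{cor:RYinRX}; it needs the divisorial input that each multiple divisor $D_{ij}$ is contained in $\pi^{-1}(C_i)$. Concretely: if $\pi(x)=y$ lies in the affine chart $Y_\delta=Y\setminus\bigcup_{u_i\notin\delta}C_i$, then $x\notin D_{ij}$ for every $i$ with $u_i\notin\delta$, hence $T_{ij}(\hat x)\ne0$ for those $i,j$, hence $a(\hat x)_i=\prod_j T_{ij}(\hat x)^{l_{ij}}\ne0$ for all $i$ with $u_i\notin\delta$, which is exactly the condition $a(\hat x)\in\hat Z_{\Delta,\hat\delta}\cap\bar Y\subseteq\hat Y$; the inclusion $D_{ij}\subseteq\pi^{-1}(C_i)$ itself follows by continuity from $\psi^{-1}(C_i)=D_{i1}\cup\dots\cup D_{in_i}$ on a big representative. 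With that insertion your argument is complete; since you explicitly flagged this as the step to spell out most carefully, I would call it an incomplete justification rather than a gap in the strategy.
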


\begin{proof}
As $Z$ is the minimal toric embedding,
the general $\TT$-orbit closure of $X$ 
equals the general $\TT$-orbit closure of $Z$.
This reduces the problem to standard toric geometry. 
\end{proof}

\begin{corollary}
\label{cor:upperrhobound1}
Assumptions as in Proposition~\ref{prop:genorbclos}.
If $X$ is complete and $\Sigma_L$ is a subfan 
of $\Sigma$, then we have 
$$
\rk(\Cl(X)) - \rk(\Cl(Y))
\ > \ 
n-r-1.
$$
\end{corollary}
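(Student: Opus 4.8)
The plan is to rewrite the asserted inequality as the elementary bound $m > s$ and then to deduce the latter from the completeness of $X$ together with the hypothesis that $\Sigma_L$ is a subfan of~$\Sigma$.

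First I would compute the two ranks. By the assumptions in Construction~\ref{constr:t-mds} the columns of $P$ generate $\QQ^{t+s}$, so $Z_\Sigma$ has no torus factor and $\rk(\Cl(X)) = \rk(K_P) = (n+m) - (t+s)$; similarly, since $Z_\Delta$ has no torus factor, $\rk(\Cl(Y)) = \rk(K_B) = (r+1) - t$. Subtracting gives
$$
\rk(\Cl(X)) - \rk(\Cl(Y))
\ = \
(n+m-t-s) - (r+1-t)
\ = \
n+m-s-r-1,
$$
so the claim $\rk(\Cl(X)) - \rk(\Cl(Y)) > n-r-1$ is equivalent to $m > s$. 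As $s = \dim(\TT) \ge 1$, it suffices to prove $m \ge s+1$.

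Now I would bring in completeness. Since $X$ is complete, the closure of a general $\TT$-orbit in~$X$ is a complete subvariety, hence so is its normalization; by Proposition~\ref{prop:genorbclos}~(i) the latter is the toric variety with fan $\Sigma_L$ in $L \cong \ZZ^s$, so $\Sigma_L$ is a complete fan in $L_\QQ \cong \QQ^s$. A complete fan in $\QQ^s$ has at least $s+1$ rays, since its rays positively span $\QQ^s$. On the other hand, using that $\Sigma_L$ is a subfan of~$\Sigma$, every ray of $\Sigma_L$ is a ray of~$\Sigma$ contained in $L_\QQ$; by Construction~\ref{constr:t-mds} the rays of~$\Sigma$ are generated by the columns $v_{ij}$ and $v_k$ of~$P$, and $v_{ij} = (l_{ij}u_i,d_{ij})$ never lies in $L_\QQ = \{0\}\times\QQ^s$ because $l_{ij}u_i \ne 0$. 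Hence the rays of~$\Sigma$ lying in $L_\QQ$ are exactly $\cone(v_1),\ldots,\cone(v_m)$, there are precisely $m$ of them, and the rays of $\Sigma_L$ form a subset; therefore $m \ge s+1$, which completes the argument.

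The only step needing a little care will be the implication ``$X$ complete $\Rightarrow$ $\Sigma_L$ a complete fan'': here one uses that the closure of the general orbit is closed in~$X$ (so complete), that normalization preserves completeness, and the toric dictionary turning completeness of the orbit closure into $|\Sigma_L| = L_\QQ$. The remaining steps are bookkeeping with the block shape of~$P$ and the rank formulas for the divisor class groups, and I expect no genuine obstacle there.
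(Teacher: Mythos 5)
Your proposal is correct and follows essentially the same route as the paper: both reduce the inequality to $m>s$ via the rank formulas $\rk(\Cl(X))=n+m-t-s$ and $\rk(\Cl(Y))=r+1-t$, and both obtain $m>s$ from the fact that, by Proposition~\ref{prop:genorbclos}, the normalized general $\TT$-orbit closure is the complete toric variety of the fan $\Sigma_L$ in $L_\QQ\cong\QQ^s$, whose rays are exactly the $m$ rays $\cone(v_k)$. The paper phrases the key positivity as ``divisor class group of rank $m-s>0$'' while you phrase it as ``a complete fan in $\QQ^s$ has at least $s+1$ rays'', which is the same fact.
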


\begin{proof}
According to Proposition~\ref{prop:genorbclos},
the general $\TT$-orbit closure of $X$
has divisor class group of rank $m-s > 0$. 
Thus, the assertion follows from 
$$
\rk(\Cl(X))
\ = \ 
n+m-t-s,
\qquad\qquad
\rk(\Cl(Y))
\ = \ 
r +1-t. 
$$
\end{proof}

We come to the announced example discussions.
First, we use Construction~\ref{constr:t-mds}
to produce a concrete example of a 
$\QQ$-factorial Fano variety with torus action of 
complexity two and maximal orbit quotient 
$X \dasharrow \PP_1 \times \PP_1$.

\begin{example}
\label{ex:genconstrex}
Consider the surface $Y := \PP_1 \times \PP_1$.
Then we have $\Cl(Y) = \ZZ^2$ and the 
Cox ring of $Y$ is the polynomial ring 
$\KK[T_0,T_1,T_2,T_3]$, where the 
$\ZZ^2$-grading is given by
$$ 
\deg(T_0) = \deg(T_1) = (1,0),
\qquad
\deg(T_2) = \deg(T_3) = (0,1).
$$
Consider the redundant system 
$\alpha = (f_0, \ldots, f_5)$ of 
generators for $\mathcal{R}(Y)$ 
consisting of $f_i := T_i$ for 
$i = 0, \ldots, 3$ and
the defining equations of the 
diagonals
$$
f_4 \ := \ T_0T_3-T_1T_2,
\qquad\qquad
f_5 \ := \ T_0T_2-T_1T_3,
$$
both being of degree $(1,1)$.
A matrix $B$ of relations between 
the degrees of generators $f_0, \ldots, f_5$
is given by
$$ 
B
\ := \ 
\left[
\begin{array}{rrrrrr}
-1 & 1 & 0 & 0 & 0 & 0
\\
0 & 0 & -1 & 1 & 0 & 0
\\
-1 & 0 & -1 & 0 & 1 & 0 
\\
-1 & 0 & -1 & 0 & 0 & 1
\end{array}
\right].
$$
Then $Y$ is embedded into the toric variety
$Z_\Delta$, the fan $\Delta$ of which lives in
$\ZZ^4$ and has the following four cones 
as its maximal ones
$$
\cone(v_i,v_j,v_k,v_4,v_5),
\qquad
0 \le i < j < k \le 3,
$$
where $v_i$ denotes the $i$-th column of $B$.
Note that $Y$ is given in Cox coordinates 
by the equation $f_4 = f_0f_3-f_1f_2$
and $f_5 = f_0f_2-f_1f_3$.
To build the variety $X$, consider the 
matrix
$$ 
P
\ := \ 
\left[
\begin{array}{rrrrrrrr}
-1 & 1 & 0 & 0 & 0 & 0 & 0 & 0
\\
0 & 0 & -1 & 1 & 0 & 0 & 0 & 0
\\
-1 & 0 & -1 & 0 & 2 & 0 & 0 & 0
\\
-1 & 0 & -1 & 0 & 0 & 1 & 2 & 0
\\
%\hline
-1 & -1 & 1 & 1 & -1 & 1 & -1 & -1
\\
0 & 1 & 0 & 1 & 0 & 1 & 2 & -1
\end{array}
\right]
$$
obtained from $B$ by 
firstly doubling the last column,
then multiplying
its last and third last columns with $2$,
adding a zero column
and, after that, adding two new rows
as $d,d'$ part.
We gain polynomials by modifying
the variables of the describing 
relations of $Y \subseteq Z_\Delta$ 
accordingly to the column
modifications:
$$
g_1 := T_{41}^2 -T_{01}T_{31}+T_{11}T_ {21},
\qquad\qquad
g_2 := T_{51}T_{52}^2 -T_{01}T_{21}+T_{11}T_{31}.
$$
By construction, the polynomials $g_i$ 
are homogeneous with respect to the grading 
of $\KK[T_ {ij},S_1]$
given by 
$$ 
\deg(T_{ij}) 
\ := \ 
Q(e_{ij})
\ \in \
K,
\qquad\qquad
\deg(S_1) 
\ := \ 
Q(e_{1})
\ \in \ 
K,
$$
where $Q \colon \ZZ^8 \to K := \ZZ^8 / \im(P^*) \cong \ZZ^2$,
is the projection 
and $e_{ij},e_1 \in \ZZ^8$
are the canonical basis vectors,
numbered according to the variables
$T_{ij}$ and $S_1$.
Let $\Sigma = \Sigma(u)$ in $\ZZ^6$ be the 
normal fan of the polytope
$$
(P^*)^{-1}(Q^{-1}(u) \cap \gamma) - e)
\ \subseteq \ 
\QQ^{6},
$$
where $u := (8,-4) \in K$ and 
$e \in \ZZ^8$ is any point with 
$Q(e) = u$.
Then $\Sigma$ has the columns of $P$ 
as its primitive  generators.
Moreover, the projection $\ZZ^8 \to \ZZ^6$
onto the first six coordinates
sends the rays of $\Sigma$ into
the rays of $\Delta$.
This gives a rational toric map 
$\pi \colon Z_\Sigma \dasharrow Z_\Delta$.
Now, define a variety
$$ 
X 
\ = \ 
X(\alpha,P,\Sigma)
\ := \ 
\overline{ \pi^{-1} (Y \cap \TT^4) }
\ \subseteq \ 
Z_{\Sigma}.
$$
Then $X$ is invariant under the 
action of the subtorus $\TT := \{\mathds{1}_4\} \times \TT^2$ 
of the acting torus $\TT^6$ of $Z$.
The $\TT$-variety $X$ is normal, of dimension 
four with divisor class group and Cox ring given 
by
$$ 
\Cl(X) 
\ = \ 
\ZZ^2,
\qquad\qquad
\mathcal{R}(X)
\ = \ 
\KK[T_{ij},S_1] / \bangle{g_1,g_2},
$$
where the grading of the Cox ring is the 
one given above.
This involves application of 
Proposition~\ref{thm:t-mds-1};
the necessary assumptions 
are directly verified.
Now, applying
Propositions~\ref{prop:Qfactchar},
\ref{prop:PicandCones}, \ref{prop:ci}
and their implementation in~\cite{HaKe},
we see that $X$ is a $\QQ$-factorial
Fano variety of Gorenstein index 30.
\end{example}

\begin{remark}
\label{rem:extv2cpl1}
The Cox ring based approach of~\cites{HaHe,HaWr}
produces all $A_2$-maximal rational $\TT$-varieties 
$X$ of complexity one with only constant invertible 
global functions via a construction having a 
pair of matrices and a bunch of cones as input data.
Let us see how to retrieve these $X$
via Construction~\ref{constr:t-mds}.
Two types of $\TT$-varieties are distinguished: 
the first admits non-constant $\TT$-invariant 
functions, the second does not.

\medskip
\noindent
\emph{Type~1.}
The starting variety is $Y = \KK$ in 
$Z_\Delta = \KK^{r+1}$,
where the generator matrix of~$\Delta$ 
is $B = \mathds{E}_{r+1}$,
we embed via the system $\alpha = (f_0, \ldots,f_r)$
given by $f_i = T - a_i$ with pairwise different 
$a_i \in \KK$ and the defining relations for $Y$ are
$$
h_i 
\ = \ U_i-U_{i+1} - (a_i+a_{i+1}) 
\ \in \ 
\KK[U_0, \ldots, U_r],
\quad
i = 0, \ldots, r-1.
$$

\medskip
\noindent
\emph{Type~2.}
The starting variety 
is $Y = \PP_1$ in $Z_\Delta = \PP_r$,
where~$\Delta$ has generator matrix 
$B = [-\mathds{1}_r,\mathds{E}_r]$,
the embedding system 
$\alpha = (f_0, \ldots,f_r)$ with
$f_i:=a_{i,1}T_1 +a_{i,2}T_2$ such that 
$[a_{i,1},a_{i,2}] \in \PP_1$ are
pairwise different 
and the defining relations for $Y$ are
$$
h_i 
\ := \
\det
\left[
\begin{array}{ccc}
a_{i,1} & a_{i+1,1} & a_{i+2,1}
\\
a_{i,2} & a_{i+1,2} & a_{i+2,2}
\\
U_i & U_{i+1} & U_{i+2}
\end{array}
\right]
\ \in \ 
\KK[U_0, \ldots, U_r],
\quad
i = 0, \ldots, r-1.
$$

\medskip
\noindent
Now run Construction~\ref{constr:t-mds}
for both types.
The assumptions of Proposition~\ref{thm:t-mds-1}
are satisfied by~\cite{HaHe}*{Thm.~10.4} 
and~\cite{HaWr}*{Thm.~1.5}.
Thus, Theorem~\ref{thm:t-mds-2}
gives the desired result.
The input data $A$, $P$ 
and $\Phi$ of~\cite{HaHe,HaWr} are
recovered as follows:
the matrix $A$ is $[a_0,\ldots, a_r]$, 
where for Type~1 the $a_i$ are as above 
and for Type~2 we set
$a_i = (a_{i,1},a_{i,2})$,
the matrix $P$ is the one produced 
by Construction~\ref{constr:t-mds}
and the bunch of cones $\Phi$ is 
related to the fan $\Sigma$ 
via Gale duality as outlined in 
Remark~\ref{rem:fan2bunch}.
\end{remark}

\begin{example}
\label{ex:gr2n}
Fix $n \ge 5$ and let $X_{2,n} = \Gr(2,n)$ 
be the Grassmannian of two-dimensional 
vector subspaces of $\KK^n$.
Then $X_{2,n}$ has the projective
linear group $\PGL(n)$ as its 
automorphism group~\cite{Chow}. 
We will pick a maximal torus 
$\TT \subseteq \PGL(n)$ and show
how to obtain the $\TT$-variety $X_{2,n}$ 
via Construction~\ref{constr:t-mds}.
Set 
$$ 
r \ := \ {n \choose 2}.
$$
Identify the Pl\"ucker coordinate space 
$\KK^n \wedge \KK^n$ with $\KK^r$ such that 
the basis $(e_i \wedge e_j)$ 
of $\KK^n \wedge \KK^n$ corresponds to 
the basis $(e_{ij})$ of $\KK^r$, where $e_{ij} \in \KK^r$ 
has  $ij$-th Pl\"ucker coordinate equal to 
one and all others zero.  
We order the bases $(e_i \wedge e_j)$ 
and~$(e_{ij})$ lexicographically.
Accordingly, we have the Pl\"ucker ideal 
and the affine cone 
$$ 
I_{2,n} 
\ \subseteq \
\KK[T_{ij}; \ 1 \le i < j \le n],
\qquad\qquad
\bar X_{2,n} 
\ = \ 
V(I_{2,n})
\ \subseteq \
\KK^r.
$$
Look at the largest diagonal torus 
$\bar \TT \subseteq \GL(r)$
leaving~$\bar X_{2,n}$ invariant.
With $t := r - n$,
we obtain $\bar \TT$ as the kernel of the 
homomorphism $b \colon \TT^r \to \TT^t$
defined by the following $t \times r$ matrix,
the first $n$ columns of which are 
defined by the remaining ones as 
indicated:
$$
B
 = 
\left[
v_{12}, \ldots, v_{n-1 \, n}
\right]
 =  
\left[
v_{12}, \ldots, v_{1n},  \mathds{E}_t,  -\mathds{1}_t  
\right],
\quad
v_{1i} 
 :=  
{\textstyle
\sum_{\left\{j, k\right\} \cap \left\{1,i\right\} = \emptyset} v_{jk}.
}
$$
Observe that $\bar \TT \subseteq \GL(r)$ 
is of dimension $n$.
The corresponding torus $\TT \in \PGL(r)$ 
is of dimension $n-1$.
Moreover, $\TT$ acts effectively on 
$X_{2,n} \subseteq \PP_{r-1}$ and thus 
$\TT$ defines a maximal torus of the 
automorphism group $\PGL(n)$.
Now, look at the $(r-1) \times r$
stack matrix 
$$
P
\ := \
\left[
\begin{array}{c}
     B  \\
     d 
\end{array}
\right]
=
\left[
\begin{array}{ccl}
v_{12}, \ldots, v_{1n}  &  \mathds{E}_t &  -\mathds{1}_t  
\\
 \mathds{E}_{n-1}   & 0 &  -\mathds{1}_{n-1}  
\end{array}
\right].
$$
Observe that the kernel of $P$ is generated by 
the vector $\mathds{1}_r \in \ZZ^r$.
In particular,~$P$ differs from 
$[\mathds{E}_r,-\mathds{1}_r]$ 
by multiplication with a unimodular 
matrix from the left.
Let~$\Sigma$ be the unique complete 
fan in $\ZZ^r$ having $P$ as generator matrix
and let $\Delta$ be the fan in~$\ZZ^t$
having the rays through the columns of
$B$ as its maximal cones.
Then we obtain a commutative diagram
$$ 
\xymatrix{
{\bar X_{2,n}} 
\ar@{}[r]|\subseteq
\ar@{-->}[d]^p
\ar@{-->}@/_2pc/[dd]_{b}
& 
{\KK^r}
\ar@{-->}[d]_p
\ar@{-->}@/^2pc/[dd]^{b}
\\
X_{2,n}
\ar@{}[r]|\subseteq
\ar@{-->}[d]^{\quot \TT}
&
{Z_\Sigma}
\ar@{-->}[d]_{\quot \TT}
\\
Y_{2,n}
\ar@{}[r]|\subseteq
&
Z_{\Delta}
}
$$
where $Y_{2,n} \subseteq Z_\Delta$ is 
the closure of the image 
$b(\bar X_{2,n} \cap \TT^r)$. 
We have $Z_\Sigma = \PP_{r-1}$ and 
$X_{2,n} \subseteq Z_\Sigma$ is the Pl\"ucker 
embedding.
Moreover, $R_{2,n} = \KK[T_{ij}]/I_{2,n}$ 
is a unique 
factorization domain~\cite[Prop.~8.5]{Samuel} and the 
variables $T_{ij}$ define prime elements.
Thus, $Y_{2,n} \subseteq Z_\Delta$ is 
an explicit variety and the $\TT$-variety
$X_{2,n} \subseteq Z_\Sigma$ is an 
explicit $\TT$-variety.
Moreover, the $\TT$-action has maximal 
orbit quotient 
$$ 
\pi \colon X_{2,n} \ \dasharrow \ Y_{2,n}.
$$ 
We claim that, up to codimension
two, $Y_{2,n}$ equals the
blowing up $\Bl_{n-1}(\PP_{n-3})$ 
of $\PP_{n-3}$ at $n-1$ points in 
general position.
Indeed, we may first blow up 
the $n-2$ toric fixed points of 
$\PP_{n-3}$ and then the base point 
of the resulting toric variety.
Doing the latter 
via~\cite[Alg.~5.7]{HaKeLa1},
one directly checks that the 
procedure terminates after the first 
step and delivers $R_{2,n}$ as 
Cox ring of $\Bl_{n-1}(\PP_{n-3})$.
Thus, we may also take 
$X_{2,n} \dasharrow \Bl_{n-1}(\PP_{n-3})$ 
as a maximal orbit quotient for the 
$\TT$-action. 
\end{example}

\begin{remark}
\label{rem:tschauchow}
In order to describe a Mori dream space with 
torus action via divisorial fans~\cites{AlHa,AlHaSu}, 
it happens that one has to start with 
a non Mori dream space as prospective
Chow quotient. 
For example, the maximal torus 
action on the Grassmannian $\Gr(2,n)$ has the
moduli space $\overline{M}_{0,n}$ as its Chow 
quotient~\cite{Kap} and for $n \ge 10$, 
it is known that $\overline{M}_{0,n}$ and hence 
all its blow ups have a non-finitely 
generated Cox ring~\cites{CaTe,GoKa,HaKeLa}. 
Note that the Chow quotient $\overline{M}_{0,n}$
starts differing  at $n = 6$ from the maximal 
orbit quotient discussed just before.
Altmann and Hein gave in~\cite{AlHe} a description 
of the maximal torus action on $\Gr(2,n)$ 
by means of a divisorial fan living on 
$\overline{M}_{0,n}$.
\end{remark}

\section{Arrangement varieties}
\label{sec:hypPlaneAr}

We introduce general arrangement varieties
as certain $\TT$-varieties with a projective 
space as maximal orbit quotient, naturally
generalizing the rational projective 
$\TT$-varieties of complexity one.
Construction~\ref{constr:R(A,P)} produces 
examples which are explicit in the sense of 
Construction~\ref{constr:t-mds}
and Theorem~\ref{thm:arrTvar}
shows that we obtain all $A_2$-maximal 
general arrangement varieties in this 
way.

For the precise definition, let us recall 
the necessary notions on
projective hyperplane arrangements.
A \emph{hyperplane} $H$ in the projective 
space $\PP_n$ is the zero set 
of a nonzero homogeneous polynomial 
of degree one.
A \emph{hyperplane arrangement} in $\PP_n$
is a finite collection $H_1, \ldots, H_r$
of hyperplanes in $\PP_n$.
A hyperplane arrangement in~$\PP_n$ is called
\emph{general} if for any 
$1 \le i_1 < \ldots < i_k \le r$,
the intersection $H_{i_1} \cap \ldots \cap H_{i_k}$
is of dimension $(n-k)$.

\begin{definition}
\label{def:genarrvardef}
A \emph{(general) arrangement variety}
of complexity $c$ is a $\TT$-variety~$X$ 
with maximal orbit quotient $X \dasharrow \PP_c$
such that the doubling divisors 
$C_0, \ldots, C_r$ form a (general) hyperplane 
arrangement in $\PP_c$. 
\end{definition}

\begin{remark}
The projective general arrangement 
varieties of complexity $c=1$ are 
precisely the rational projective 
$\TT$-varieties of complexity one.
Indeed, any rational projective 
$\TT$-variety $X$ of complexity one
has maximal orbit quotient 
$\pi \colon X \dasharrow \PP_1$.
The doubling divisors form a point 
configuration in $\PP_1$, which trivially 
satisfies the conditions of a general 
hyperplane arrangement.
\end{remark}

We enter the construction of general 
arrangement varieties.
As in the case of complexity 
one~\cites{HaHeSu,HaHe,ArDeHaLa},
we first write down  the prospective 
Cox rings in terms of generators and 
relations, then investigate their 
algebraic properties and after all 
that construct the varieties we are 
aiming for.

\begin{construction}
\label{constr:R(A,P_0)}
Fix integers $r \ge c > 0$
and $n_0, \ldots, n_r > 0$ 
as well as $m \ge 0$. 
Set $n := n_0 + \ldots + n_r$.
The input data is a pair $(A,P_0)$, 
where 
\begin{itemize}
\item 
$A$ is a $(c+1) \times (r+1)$ matrix over $\KK$ 
such that any $c+1$ of its columns 
$a_0, \ldots, a_r$ are linearly independent,
\item
$P_0$ is an integral $r \times (n+m)$ matrix 
built from tuples of positive integers 
$l_i = (l_{i1},\dots,l_{in_i})$, where $i = 0, \ldots, r$,
as follows
$$
P_{0}
\ := \
\left[
\begin{array}{ccccccc}
-l_{0} & l_{1} &  & 0 & 0  &  \ldots & 0
\\
\vdots & \vdots & \ddots & \vdots & \vdots &  & \vdots
\\
-l_{0} & 0 &  & l_{r} & 0  &  \ldots & 0
\end{array}
\right].
$$
\end{itemize}
Write  $\KK[T_{ij},S_k]$ for the polynomial ring 
in the variables $T_{ij}$, where $i = 0, \ldots, r$, 
$j = 1, \ldots, n_i$, 
and $S_k$, where $k = 1, \ldots, m$.
Every $l_i$ defines a monomial 
$$
T_i^{l_i}
\ := \ 
T_{i1}^{l_{i1}} \cdots T_{in_i}^{l_{in_i}}
\ \in \ 
\KK[T_{ij},S_k].
$$
Moreover, for every $t = 1, \ldots, r-c$, 
we obtain a polynomial $g_t$ by computing 
the following  $(c+2) \times (c+2)$ 
determinant 
$$ 
g_t 
\ := \
\det
\left[
\begin{array}{cccc}
a_0 & \ldots & a_c & a_{c+t}
\\
T_0^{l_0} & \ldots & T_c^{l_{c}} & T_{c+t}^{l_{c+t}}
\end{array}
\right]
\ \in \ 
\KK[T_{ij},S_k].
$$
Now, let $e_{ij} \in \ZZ^{n}$ 
and $e_k \in \ZZ^{m}$ denote the 
canonical basis vectors
and consider the projection 
$$
Q_0 \colon \ZZ^{n+m} 
\ \to \ 
K_0 := \ZZ^{n+m} / \im(P_0^*)
$$ 
onto the factor group
by the row lattice of $P_0$.
Then the 
\emph{$K_0$-graded $\KK$-algebra
associated with $(A,P_0)$} 
is defined by
$$ 
R(A,P_0)
\ := \ 
\KK[T_{ij},S_k] / \bangle{g_1,\ldots,g_{r-c}},
$$
$$
\deg(T_{ij}) :=  Q_0(e_{ij}),
\qquad
\deg(S_k) :=  Q_0(e_k).
$$
\end{construction}

\begin{example}
\label{ex:R(A,P_0)}
Let us take $c=2$ and $r=3$. 
Thus, we will work with a $3 \times 4$ matrix~$A$.
Moreover, let $n_0 = 2$ and $n_1=n_2=n_3=1$ 
and fix $m=0$.
This amounts to $n=5$ and a $3 \times 5$ matrix $P_0$.
We choose
$$ 
A 
\ =  \
\left[
\begin{array}{rrrr}
1 & 0 & 0 & -1
\\
0 & 1 & 0 & -1
\\
0 & 0 & 1 & -1
\end{array}
\right],
\qquad\qquad
P_0 
\ =  \
\left[
\begin{array}{rrrrr}
-1 & -2 & 2 & 0 & 0 
\\
-1 & -2 & 0 & 2 & 0
\\
-1 & -2 & 0 & 0 & 4
\end{array}
\right].
$$
So, the exponent vectors $l_i$ are $l_0 = (1,2)$, 
$l_1 = l_2 = (2)$ and $l_3 = (4)$.
Accordingly, we obtain the four monomials
$$
T_0^{l_0} \ = \ T_{01}T_{02}^2,
\qquad
T_1^{l_1} \ = \ T_{11}^2,
\qquad
T_2^{l_2} \ = \ T_{21}^2,
\qquad
T_3^{l_3} \ = \ T_{31}^4.
$$
We arrive at $r-c = 1$ relation 
$g_1 \in \KK[T_{01},T_{02},T_{11},T_{21},T_{31}]$, 
obtained by computing the following $4 \times 4$ 
determinant
$$ 
g_1
\ = \ 
\det
\left[
\begin{array}{cccc}
1 & 0 & 0 & -1
\\
0 & 1 & 0 & -1
\\
0 & 0 & 1 & -1
\\
T_{01}T_{02}^2 & T_{11}^2 & T_{21}^2 & T_{31}^4
\end{array}
\right]
\ = \ 
T_{01}T_{02}^2 + T_{11}^2 + T_{21}^2 + T_{31}^4.
$$
The canonical basis vectors 
of the row space of $P_0$ are 
indexed in accordance with 
the variables $T_{ij}$, that means
that we write
$$
e_{01},e_{02},e_{11},e_{21},e_{31} 
\ \in \ \ZZ^5 
\ = \ 
\ZZ^{n_0+n_1+n_2+n_3}.
$$
We have $K_0 = \ZZ^5 / \im(P_0^*) = 
\ZZ^2 \oplus \ZZ/2\ZZ \oplus \ZZ/2\ZZ$.
The projection $Q_0 \colon \ZZ^5 \to K_0$
sending $e_{ij}$ to its class in $K_0$ 
is made concrete by the degree matrix
$$ 
Q_0 
\ =  \
[Q_0(e_{ij})]
\ =  \
\left[
\begin{array}{ccccc}
2 & 1 & 2 & 2 & 1
\\
0 & 2 & 2 & 2 & 1
\\
\bar 0 & \bar 0 & \bar 1 & \bar 1 & \bar 0 
\\
\bar 0 & \bar 0 & \bar 1 & \bar 0 & \bar 0 
\end{array}
\right].
$$
Consequently, for the initial data $A$ and $P_0$ 
of this example, the resulting $K_0$-graded 
algebra $R(A,P_0)$ is given by
$$
\KK[T_{01},T_{02},T_{11},T_{21},T_{31}]
\ / \ 
\langle 
T_{01}T_{02}^2 + T_{11}^2 + T_{21}^2 + T_{31}^4 
\rangle,
\qquad 
\deg(T_{ij}) \  =  \ Q_0(e_{ij}). 
$$
\end{example}

We present the basic properties of the 
graded algebra $R(A,P_0)$. 
Recall that a grading of a $\KK$-algebra 
$R = \oplus_K R_w$ by a finitely generated 
abelian group is \emph{effective} if 
the weights $w \in K$ with $R_w \ne \{0\}$ 
generate $K$ as a group and \emph{pointed}, 
if $R_0 = \KK$ holds and $R_w \ne \{0\} \ne R_{-w}$ 
is only possible for torsion elements $w \in \KK$.
Finally, we say that an effective grading is of 
\emph{complexity $c$} if $\dim(R) - \rk(K) = c$
holds.

\begin{theorem}
\label{thm:R(A,P_0)}
Let $R(A,P_0)$ be a $K_0$-graded $\KK$-algebra 
arising from Construction~\ref{constr:R(A,P_0)}.
Then $R(A,P_0)$ is an integral, normal, complete intersection 
ring satisfying 
$$ 
\dim(R(A,P_0)) \ = \ n+m-r+c,
\qquad 
R(A,P_0)^* \ = \ \KK^*.
$$
The $K_0$-grading of $R(A,P_0)$ 
is effective, pointed, factorial and of complexity~$c$.
The variables $T_{ij}$, $S_k$ define pairwise 
non-associated $K_0$-primes in $R(A,P_0)$,
and for $c\geq 2$, they define even primes.
\end{theorem}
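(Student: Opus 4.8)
The plan is to establish the properties in a logical order, starting from the structure of the relations $g_1, \ldots, g_{r-c}$ and the fact that they form a regular sequence. First I would observe that, after a linear change of the monomial "coordinates" $T_i^{l_i}$, the matrix $A$ being in general position (any $c+1$ columns independent) lets us bring the relations into a convenient triangular form: up to elementary row operations on the determinantal expressions, $g_t$ can be taken to involve $T_{c+t}^{l_{c+t}}$ linearly with a nonzero scalar coefficient, say $g_t = T_{c+t}^{l_{c+t}} + (\text{$\KK$-linear combination of } T_0^{l_0}, \ldots, T_c^{l_c})$. This immediately exhibits $\KK[T_{ij},S_k]/\langle g_1,\ldots,g_{r-c}\rangle$ as a polynomial ring in the variables other than $T_{c+1\, 1}, \ldots, T_{r\, 1}$ modulo nothing — more precisely it shows $g_1, \ldots, g_{r-c}$ is a regular sequence (each $g_t$ is a nonzerodivisor modulo the previous ones, since it is monic in a fresh variable), so $R(A,P_0)$ is a complete intersection of the stated dimension $n+m-(r-c)$. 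Pointedness and $R(A,P_0)^* = \KK^*$ then follow from the explicit grading: the weights $\deg(T_{ij}), \deg(S_k)$ generate $K_0$ by construction (so the grading is effective), and one checks the cone they span is pointed, e.g. by exhibiting a linear functional positive on all of them, which also forces the only homogeneous units to be constants. The complexity statement $\dim(R) - \rk(K_0) = c$ is a direct computation: $\rk(K_0) = n+m-r$ and $\dim(R) = n+m-r+c$.

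Next I would prove integrality, normality, and factoriality of the $K_0$-grading together, using Proposition~\ref{prop:benjamin2} (Bechtold's criterion). For this I must verify its hypotheses: that the $T_{ij}, S_k$ are pairwise non-associated $K_0$-primes in $R(A,P_0)$, and that any $r' - 1$ of their degrees generate $K_0$, where $r'$ is the number of generators. The generation condition is the combinatorial heart and should follow from the shape of $P_0$ and the general-position hypothesis on $A$ — deleting one column of the degree matrix still leaves a generating set because of the built-in redundancy in the $-l_0, l_1, \ldots, l_r$ block structure together with the extra freely-graded $S_k$. The $K_0$-primality of the variables is where the real work lies: I would argue that $R(A,P_0)/\langle T_{ij}\rangle$ (or $/\langle S_k\rangle$) is again $K_0$-integral. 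Setting a variable $T_{ij}$ to zero kills the corresponding monomial factor, and after this substitution the remaining relations still form a regular sequence of the analogous determinantal type (with one $l_i$-tuple shortened or one block of variables removed), so the quotient is again a complete-intersection domain after checking it has no homogeneous zerodivisors; the general-position of $A$ is exactly what prevents the truncated system of relations from acquiring extra components. This reduces to an inductive argument on $n$ and $m$, with the base case being the irreducibility of a single general determinantal relation.

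Once non-association and $K_0$-primality of the generators is in hand, Proposition~\ref{prop:benjamin2} yields at once that $R(A,P_0)$ is integral and normal, and the $K_0$-grading is factorial (via \cite{Be}*{Thm.~1.3}, as used in the proof of Proposition~\ref{thm:t-mds-1}). The final, sharper claim — that for $c \ge 2$ the variables $T_{ij}, S_k$ are not merely $K_0$-prime but honestly prime — I would handle by passing to the quotient by a single variable as above and showing that quotient has \emph{trivial} torsion obstruction to unique factorization in the ordinary sense, or more directly by invoking that a $K$-prime element is prime once the localization $R_{[T_{ij}]}$ has trivial (or suitably controlled) Picard/class group; the hypothesis $c \ge 2$ guarantees the fiber of the relevant quotient map has dimension $\ge 2$, so that codimension-one phenomena cannot separate $K_0$-primality from primality. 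Concretely I would show $R(A,P_0)/\langle T_{ij}\rangle$ is integral and that no nonconstant homogeneous unit appears, which upgrades $K_0$-primality to primality.

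\textbf{Main obstacle.} The crux is the inductive $K_0$-primality of the generators, i.e.\ showing that setting one $T_{ij}$ or $S_k$ to zero preserves $K_0$-integrality of the complete-intersection ring. This is delicate because truncating an exponent tuple $l_i$ (when $n_i$ is cut down) changes the combinatorial type of the relations, and one must confirm the general-position property of $A$ is inherited in a form strong enough to keep the truncated determinantal ideal prime. I expect the bookkeeping here — organizing the induction so the hypotheses of Construction~\ref{constr:R(A,P_0)} are genuinely reproduced at each step — to be the longest and most error-prone part of the argument, while everything downstream is a formal consequence of Proposition~\ref{prop:benjamin2} and the regular-sequence observation.
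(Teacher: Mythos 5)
Your proposal has a genuine gap at its foundation: you never establish integrality (or even $K_0$-integrality) and normality of $R(A,P_0)$ by a route that works. You propose to extract both from Proposition~\ref{prop:benjamin2}, but that proposition requires as \emph{hypotheses} that $R(A,P_0)$ is already $K_0$-factorial (hence $K_0$-integral) and that any $r-1$ of the generator degrees generate $K_0$. The second hypothesis is simply false for these rings in general: in Example~\ref{ex:R(A,P_0)} one has $K_0 = \ZZ^2\oplus\ZZ/2\ZZ\oplus\ZZ/2\ZZ$ with five generators, and dropping the degree $w_{11}=(2,2,\bar 1,\bar 1)$ leaves four degrees whose last coordinates all vanish, so they generate a proper subgroup of $K_0$. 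Hence Proposition~\ref{prop:benjamin2} cannot be applied to $R(A,P_0)$ with this generating system, and your assertion that the generation condition ``should follow from the shape of $P_0$'' does not hold. Moreover, your only candidate source of integrality independent of that proposition --- the claim that the triangular form of the $g_t$ exhibits the quotient as ``a polynomial ring \dots modulo nothing'' --- is incorrect: each $g_t$ is monic in the fresh \emph{monomial} $T_{c+t}^{l_{c+t}}$, not in a fresh variable, so no elimination is possible (indeed $\bar X$ is singular in general). The regular-sequence and complete-intersection conclusions survive this slip, but integrality does not come for free.

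The paper's proof supplies exactly the two geometric inputs your sketch omits. Normality is obtained from Serre's criterion together with Lemma~\ref{lem:singloc}, which bounds the codimension of the non-smooth locus of $\bar X$ from below by $c+1\ge 2$; this is the real work of the proof and has no counterpart in your proposal. Integrality then follows from normality plus connectedness of $\bar X$, which the paper gets by writing down an explicit one-parameter subgroup of $H_0$ acting on $\bar X$ with the origin as attractive fixed point (your ``linear functional positive on all weights'' is the same device, but you deploy it only for pointedness, not for connectedness). With integrality and normality in hand, $K_0$-factoriality follows by localizing at the product of all generators and applying Bechtold's theorem to the polynomial degree-zero part, and primality of the $T_{ij}$ for $c\ge 2$ follows from Lemma~\ref{lem:RAPreductions}: the quotient $R(A,P_0)/\bangle{T_{ij}}$ is again a ring $R(A',P_0')$ with $c'=c-1\ge 1$, hence integral. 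Your inductive scheme for primality of the variables is essentially this last step and is fine; what is missing is the entry point of the whole argument, namely integrality and normality of $R(A,P_0)$ itself.
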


The following auxiliary statements for the proof
of this theorem are also used later.
We begin with discussing the specific nature 
of the matrix $A$ and its impact on the ideal 
of relations of $R(A,P)$.

\begin{remark}
\label{rem:kerA}
Situation as in 
Construction~\ref{constr:R(A,P_0)}.
For any tuple $I = (i_1, \ldots, i_{c+2})$
of strictly increasing integers 
from $[0,r]$, consider the matrix 
$$
A(I)
\ := \ 
\left[ a_{i_1}, \ldots , a_{i_{c+2}} \right],
$$
Let $w(I) \in \KK^{c+2}$ denote 
the cross product of the rows of 
$A(I)$ and define a vector
$v(I) \in  \KK^{r+1}$
by putting the entries of 
$w(I)$ at the right places:
$$ 
v(I)_i
\ := \ 
\begin{cases}
w(I)_j, & \quad i = i_j \text{ occurs in } I = (i_1, \ldots, i_{c+2}),
\\
0, & \quad \text{else}.
\end{cases}
$$
Then any linearly independent choice of vectors 
$v(I_1), \ldots, v(I_{r-c})$ is a basis for 
$\ker(A)$.
Note that any nonzero $v \in \ker(A)$ 
has at least $c+2$ nonzero coordinates.
\end{remark}

\begin{remark}
\label{rem:kerA2rel}
Situation as in 
Construction~\ref{constr:R(A,P_0)}.
Every vector $v \in \ker(A) \subseteq \KK^{r+1}$
defines a polynomial 
$$
g_v
\ := \
v_0 T_0^{l_0} + \ldots + v_r T_r^{l_r}
\ \in \ 
\bangle{g_1,\ldots,g_{r-c}}.
$$
Moreover, if a subset $B \subseteq \ker(A)$ 
generates $\ker(A)$ as a vector space, 
then the polynomials $g_v$, $v \in B$,
generate the ideal $\bangle{g_1,\ldots,g_{r-c}}$.
In particular, we have 
$$ 
\bangle{g_1,\ldots,g_{r-c}}
\ = \ 
\bangle{g_{v(I)}; \ 
I = (i_1, \ldots, i_{c+2}), \ 
0 \le i_1 < \ldots < i_{c+2} \le r},
$$
with the tuples $I$ from Remark~\ref{rem:kerA2rel}.
Observe that each $g_v$, $0 \ne v \in \ker(A)$,
has at least $c+2$ of the monomials $T_i^{l_i}$
and all the $g_v$ share the same $K_0$-degree.
\end{remark}

\begin{lemma}
\label{lem:RAPreductions}
Let $R(A,P_0)$ be a graded algebra 
arising from 
Construction~\ref{constr:R(A,P_0)}.
\begin{enumerate}
\item
If we have $l_{i1} + \ldots + l_{in_i} = 1$
for some $i$, then $R(A,P_0)$ 
is isomorphic to a ring $R(A',P_0')$ 
with data $r' = r-1$ and $c' = c$.
\item
For any generator $T_{ij}$, the 
factor ring $R(A,P_0)/\bangle{T_{ij}}$ 
is isomorphic to a ring $R(A',P_0')$ 
with data $r' = r-1$ and $c' = c-1$.
\end{enumerate}
\end{lemma}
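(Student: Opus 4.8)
The plan is to handle the two parts separately, in each case building the asserted isomorphism explicitly on generators and relations and then checking compatibility with the gradings. For (i), observe first that $l_{i1}+\ldots+l_{in_i}=1$ forces $n_i=1$ and $l_{i1}=1$, so $T_i^{l_i}=T_{i1}$ is a single variable entering linearly. After permuting the columns of $A$ together with the corresponding blocks of $P_0$ — which only relabels the generators and preserves the hypothesis that any $c+1$ columns of $A$ are independent — I may assume $i=r$, and I assume $r>c$ (for $r=c$ there are no relations). Then $a_0,\ldots,a_{r-1}$ already span $\KK^{c+1}$, so $\ker(A)$ is not contained in $\{v_r=0\}$ and I can choose a basis $v^{(1)},\ldots,v^{(r-c)}$ of $\ker(A)$ with $v^{(1)}_r=1$ and $v^{(t)}_r=0$ for $t\ge 2$. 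By Remark~\ref{rem:kerA2rel} the ideal of $R(A,P_0)$ is generated by $g_{v^{(1)}},\ldots,g_{v^{(r-c)}}$, of which only $g_{v^{(1)}}=T_{r1}+\sum_{j<r}v^{(1)}_jT_j^{l_j}$ involves $T_{r1}$, and linearly; using it to eliminate $T_{r1}$ identifies $R(A,P_0)$ with $\KK[T_{ij},S_k;\,i<r]/\bangle{g_{v^{(2)}},\ldots,g_{v^{(r-c)}}}$. The restrictions of $v^{(2)},\ldots,v^{(r-c)}$ to the coordinates $0,\ldots,r-1$ form a basis of $\ker(A')$ for $A':=[a_0,\ldots,a_{r-1}]$, which again has any $c+1$ columns independent, so this ring is $R(A',P_0')$ with $r'=r-1$ and $c'=c$.

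For (ii), permuting again I reduce to killing $T_{01}$. Whether or not $n_0=1$, one has $T_0^{l_0}\equiv 0$ modulo $T_{01}$, so $g_v=v_0T_0^{l_0}+\ldots+v_rT_r^{l_r}$ passes to $\bar g_v=\sum_{j=1}^r v_jT_j^{l_j}$, which depends only on $(v_1,\ldots,v_r)$. Since $a_0\ne 0$, the map $v\mapsto(v_1,\ldots,v_r)$ restricts to a linear isomorphism of $\ker(A)$ onto $\ker(\bar A)$, where $\bar A=[\bar a_1,\ldots,\bar a_r]$ is the matrix of the composite $\KK^r\to\KK^{c+1}\to\KK^{c+1}/\KK a_0\cong\KK^c$; any $c$ columns of $\bar A$ are independent, since a relation among them together with $a_0$ would be a dependence among $c+1$ columns of $A$. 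Hence $R(A,P_0)/\bangle{T_{01}}$ is a polynomial ring in $T_{02},\ldots,T_{0n_0}$ over $\KK[T_{ij},S_k;\,i\ge 1]/\bangle{\bar g_v;\,v\in\ker(A)}$, and these extra variables appear in no relation. Treating them as further $S$-variables and relabelling the blocks $1,\ldots,r$ as $0,\ldots,r-1$ exhibits this ring as $R(A',P_0')$ with $A'=\bar A$, $r'=r-1$, $c'=c-1$ and $m'=m+n_0-1$; so this reduction needs $c\ge 2$.

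What remains in both cases — and where I expect the real care to be needed, rather than in the elimination — is matching the $K_0$-gradings; this is bookkeeping with $K_0=\ZZ^{n+m}/\im(P_0^*)$, using that the rows of $P_0$ impose exactly the relations $\deg(T_0^{l_0})=\deg(T_k^{l_k})$, $k=1,\ldots,r$. In (i) the $r$-th of these reads $\deg(T_{r1})=\sum_j l_{0j}\deg(T_{0j})$, so discarding the coordinate $e_{r1}$ identifies $K_0$ with $\ZZ^{n-1+m}/\im((P_0')^*)=K_0'$ compatibly with all surviving generator degrees. In (ii) the block-$0$ contribution to every row of $P_0$ forces any row-combination that avoids $e_{01}$ to have balanced coefficient vector, so killing $T_{01}$ again leaves $\ZZ^{n-1+m}/\im((P_0')^*)=K_0'$, now generated by the surviving degrees subject to $\deg(T_1^{l_1})=\deg(T_j^{l_j})$ for $j\ge 2$. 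Once this identification is set up, the isomorphisms built above are isomorphisms of graded rings, which completes the proof.
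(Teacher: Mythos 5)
Your proof is correct and follows essentially the same route as the paper's: part~(i) deletes the $i$-th column of $A$ and eliminates the linearly occurring variable, and part~(ii) row-reduces $A$ (your passage to $\KK^{c+1}/\KK a_0$ is the coordinate-free version of this) and deletes the corresponding row and column, with Remarks~\ref{rem:kerA} and~\ref{rem:kerA2rel} identifying the resulting ideals. You supply more detail than the paper does -- notably the basis of $\ker(A)$ adapted to the elimination, the bookkeeping that matches $K_0$ with $K_0'$ (as a genuine isomorphism in~(i) and as the subgroup generated by the surviving degrees in~(ii)), and the caveats about the degenerate cases $r=c$ and $c=1$ -- but the underlying argument is the same.
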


\begin{proof}
To obtain~(i), let $A'$ be the matrix obtained 
by deleting the $i$-th column from $A$.
Then the respective ideals defined by $A$ 
and $A'$ produce isomorphic rings.
Adapting the matrix $P_0$ accordingly,
gives the desired~$P_0'$.

We show~(ii).
As elementary row operations on $A$ neither 
change the required properties of $A$ 
nor the defining ideal of $R(A,P)$,
we may assume that $a_{i1} \ne 0$ holds
and all other entries of the $i$-th column
of $A$ equal zero.
Then the matrix $A'$ obtained 
by deleting the first row and the $i$-th 
column from~$A$ satisfies the assumptions 
of Construction~\ref{constr:R(A,P_0)}
with $r' = r-1$ and $c' = c-1$.
Using Remarks~\ref{rem:kerA} 
and~\ref{rem:kerA2rel}, we see that 
the ideal defined by $A'$ corresponds 
to the defining ideal of 
$R(A,P_0)/\bangle{T_{ij}}$.
Again, adapting the matrix $P_0$ accordingly,
gives the desired~$P_0'$. 
\end{proof}

\begin{definition}
\label{def:bigleaftype}
Situation as in Construction~\ref{constr:R(A,P_0)}.
We say that a point $z \in \KK^{n+m}$ with coordinates 
$z_{ij}$, $z_k$ is of 
\begin{enumerate}
\item
\emph{big type}, if for every $i = 0, \ldots, r$,
there is an index $1 \le j_i \le n_i$
such that $z_{ij_i} = 0$ holds,
\item
\emph{leaf type}, if there is a set
$I_z = \{i_1,\ldots,i_c\}$ of indices  
$0 \le i_1 < \ldots < i_c \le r$,
such that for all $i$, $j$, we have 
$z_{ij} = 0 \Rightarrow i \in I_z$.
\end{enumerate}
\end{definition}

\begin{remark}
Situation as in Construction~\ref{constr:R(A,P_0)}.
Consider $\gamma = \QQ^{n+m}$, a face 
$\gamma_0 \preccurlyeq \gamma$ and the 
complementary face $\gamma_0^* \preccurlyeq \gamma$.
Then any coordinate $z_{ij}$, $z_{k}$ 
of $z = z_{\gamma_0^*} \in \KK^{n+m}$ 
equals zero or one and we have 
$$ 
z_{ij} =  0 \ \iff \ e_{ij} \in \gamma_0,
\qquad\qquad
z_k  =  0 \ \iff \ e_k \in \gamma_0.
$$
In particular, there is a point of 
big (leaf) type in
$\TT^{n+m} \cdot z_{\gamma_0^*} \subseteq \KK^{n+m}$
if and only if all points of this toric
orbit are of big (leaf) type.
Moreover, in terms of the cone~$P_0(\gamma_0^*)$
with~$P_0$ from Construction~\ref{constr:R(A,P_0)},
we obtain the following characterizations:
\begin{enumerate}
\item
$z_{\gamma_0^*}$ is of big type if and only if  
$P_0(\gamma_0^*) = \QQ^r$ holds,
\item
$z_{\gamma_0^*}$ is of leaf type if and only if  
$P_0(\gamma_0^*) \ne \QQ^r$ and 
$\dim(P_0(\gamma_0^*)) \le c$.
\end{enumerate}
Observe that if one of the conditions of~(ii) holds, 
then the image cone $P_0(\gamma_0^*)$
is generated by at most $r$ vectors from 
$-\mathds{1}_r,e_1\ldots,e_r \in \ZZ^r$ and thus 
is pointed.
\end{remark}

\begin{lemma}
\label{lem:pointsofbarX}
For $\bar{X} = V(g_1,\ldots, g_{r-c}) \subseteq \KK^{n+m}$
from Construction~\ref{constr:R(A,P_0)},
we have the following statements. 
\begin{enumerate}
\item
Every point $z \in \bar{X}$ is either of big type or 
it is of leaf type.
\item
Every $z \in \KK^{n+m}$ of big type is contained in 
$\bar{X}$.
\item
For every $z \in \KK^{n+m}$ of leaf type,
there is a $t \in \TT^{n+m}$ with 
$t \cdot z \in \bar{X}$.
\end{enumerate}
\end{lemma}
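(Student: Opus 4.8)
The plan is to rephrase membership in $\bar X$ as a linear condition. For $z\in\KK^{n+m}$, write $\tau(z):=(T_0^{l_0}(z),\dots,T_r^{l_r}(z))\in\KK^{r+1}$ and set $V:=\im(A^{*})\subseteq\KK^{r+1}$, the row space of $A$; since any $c+1$ columns of $A$ are linearly independent, $A$ has rank $c+1$, so $\dim V=c+1$. By Remark~\ref{rem:kerA2rel} the ideal $\bangle{g_1,\dots,g_{r-c}}$ is generated by the $g_v$, $v\in\ker(A)$, and $g_v(z)=\langle v,\tau(z)\rangle$; hence $z\in\bar X$ if and only if $\tau(z)\perp\ker(A)$, that is, $\tau(z)\in V$. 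Note also that $\tau(z)_i=0$ precisely when $z_{ij}=0$ for some $j$, and --- a fact I would record first --- every nonzero $w=A^{*}\lambda\in V$ vanishes in at most $c$ coordinates, since vanishing in $c+1$ positions would make $\lambda$ orthogonal to $c+1$ linearly independent columns of $A$ and hence zero.

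Part~(ii) is then immediate: if $z$ is of big type then $\tau(z)=0\in V$, so $z\in\bar X$. For part~(i), take $z\in\bar X$, so $\tau(z)\in V$. If $\tau(z)=0$ then every $T_i^{l_i}$ vanishes at $z$, so $z$ is of big type. Otherwise $\tau(z)$ has at most $c$ zero entries by the fact above, so $\{i:\ z_{ij}=0\text{ for some }j\}=\{i:\ \tau(z)_i=0\}$ lies in a set of size $c$, i.e.\ $z$ is of leaf type.

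Part~(iii) is the crux. Let $z$ be of leaf type, put $N:=\{i:\ \tau(z)_i=0\}$, so $|N|\le c$, and $J:=\{0,\dots,r\}\setminus N$. The torus acts coordinatewise by rescaling, so $T_i^{l_i}(t\cdot z)=s_i\,T_i^{l_i}(z)$ with $s_i:=\prod_j t_{ij}^{l_{ij}}\in\KK^{*}$ arbitrary; thus $\tau(t\cdot z)$ necessarily vanishes exactly on $N$, and $t\cdot z\in\bar X$ comes down to finding $w\in V$ with $\supp(w)=J$ and then choosing the $t_{ij}$ so that $\tau(t\cdot z)=w$. To produce $w$, consider $V_J:=\{w\in V:\ w_i=0\text{ for }i\in N\}$; as $|N|\le c<\dim V$ we get $\dim V_J\ge 1$. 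For each $i\in J$, pick a set $S\supseteq N\cup\{i\}$ with $|S|=c+1$; then the submatrix $A_S$ of columns indexed by $S$ is invertible, so the coordinate projection $V\to\KK^{S}$ is a linear isomorphism, which provides an element of $V$ vanishing on $N$ but not at $i$, i.e.\ $V_J\not\subseteq\{w_i=0\}$. Since $\KK$ is infinite, the nonzero space $V_J$ is not contained in the union of the finitely many coordinate hyperplanes $\{w_i=0\}$, $i\in J$, so there is $w\in V_J$ with $\supp(w)=J$. Finally, choose $t\in\TT^{n+m}$ with $\prod_j t_{ij}^{l_{ij}}=w_i/T_i^{l_i}(z)$ for $i\in J$ and arbitrary entries otherwise; then $\tau(t\cdot z)=w\in V$, so $t\cdot z\in\bar X$.

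The only delicate step is the construction of $w$ with prescribed support in part~(iii); it relies on the ``dual'' incidence property of $A$ --- every projection of $V$ onto $c+1$ coordinates is a linear isomorphism --- together with the elementary fact that a nonzero vector space over an infinite field is not a finite union of proper subspaces. Everything else reduces to the translation $z\in\bar X\Leftrightarrow\tau(z)\in\im(A^{*})$.
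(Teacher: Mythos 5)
Your proof is correct, but it takes a genuinely different route from the paper's. You work throughout on the \emph{image} side: membership $z\in\bar X$ becomes $\tau(z)\in\im(A^{*})$, and the two structural inputs are that a nonzero row-space vector vanishes in at most $c$ coordinates and that the row space contains, for any coindex set $N$ with $|N|\le c$, a vector supported exactly on the complement of $N$. The paper instead works on the \emph{kernel} side, with the relations $g_v$, $v\in\ker(A)$: part~(i) is proved by contradiction using the existence (via the cross-product vectors $v(I)$ of Remark~\ref{rem:kerA}) of a relation whose monomials are supported on a prescribed set of $c+2$ indices, and part~(iii) is proved by a sequential scaling argument --- first for $I_z=\{1,\dots,c\}$, exploiting that $g_t$ is the only standard generator containing $T_{c+t}^{l_{c+t}}$, then for general $I_z$ by passing to an adapted generating system. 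Your two key facts are precisely Gale-dual to the paper's observation that nonzero kernel vectors have at least $c+2$ nonzero coordinates, so the underlying general-position input is the same; what your formulation buys is a one-shot, case-free argument for~(iii) (prescribed support via the fact that a nonzero space over an infinite field is not a finite union of proper subspaces), whereas the paper's version stays closer to the explicit generators $g_1,\dots,g_{r-c}$, which is the form in which they are reused in the Jacobian computations of Lemma~\ref{lem:singloc}. All steps in your argument check out, including the reduction $t\cdot z\in\bar X\Leftrightarrow\tau(t\cdot z)\in\im(A^{*})$ and the solvability of $\prod_j t_{ij}^{l_{ij}}=w_i/T_i^{l_i}(z)$ over the algebraically closed field $\KK$.
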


\begin{proof}
To obtain~(i), we have to show that 
any $z \in \bar{X}$ which is not 
of big type must be of leaf type.
Otherwise, there are 
indices~$i_1 < \ldots < i_{c+1}$ 
and associated~$j_q$ 
with $z_{i_qj_q} = 0$.
As $z$ is not of big type, there is
at least one index $i_0$ with $z_{i_0j} \ne 0$ 
for all $j = 1, \ldots, n_{i_0}$.
Remarks~\ref{rem:kerA} and~\ref{rem:kerA2rel}
provide us with a relation 
$g \in \bangle{g_1,\ldots, g_{r-c}}$ 
involving precisely the monomials 
$T_i^{l_i}$ for $i = i_0, i_1, \ldots, i_{c+1}$.
Then $g(z) = 0$ implies $z_{i_0j} = 0$ 
for some $j = 1, \ldots, n_{i_0}$;
a contradiction.

We verify~(ii) and~(iii).
Let $z \in \KK^{n+m}$. If $z$ is of 
big type, then we obviously have 
$g_i(z) = 0$ for $i = 1, \ldots, r-c$.
Thus, $z \in \bar{X}$.
Now, assume that $z$ is of leaf type.
First consider the case $I_z = \{1,\ldots,c\}$.
Then, suitably scaling $z_{c+1, 1}$,
we achieve $g_1(z) = 0$. 
Next we scale $z_{c+2, 1}$ to 
ensure $g_2(z) = 0$, and so on,
until we have also $g_{r-c}(z) = 0$.
Then we have found our $t \in \TT^{n+m}$ 
with $t \cdot z \in \bar{X}$.
Given an arbitrary~$I_z$, 
Remarks~\ref{rem:kerA} and~\ref{rem:kerA2rel}
yield a suitable system 
$g_1', \ldots, g_{r-c}'$ 
of ideal generators that allows us 
to argue analogously.
\end{proof}

\begin{lemma}
\label{lem:singloc}
Situation as in Construction~\ref{constr:R(A,P_0)}.
Let $\bar{X} = V(g_1,\ldots,g_{r-c}) \subseteq \KK^{n+m}$
and denote by $J$ the Jacobian of $g_1,\ldots,g_{r-c}$.
Then, for any $z \in \bar{X}$, the following 
statements are equivalent:
\begin{enumerate}
\item 
The Jacobian $J(z)$ is not of full rank, 
i.e., we have $\rk(J(z)) < r-c$.
\item
The point $z \in \bar{X}$ is of big type 
and there are $i_1 < \ldots < i_{c+2}$ such that 
each of these $i_q$ fulfills one of the subsequent 
two conditions:
\begin{itemize}
\item
$z_{i_qj_q} = 0$ and $l_{i_qj_q} \ge 2$ hold
for at least one $1 \le j_q \le n_{i_q}$,
\item
$z_{i_qj} = 0$ and $l_{i_qj} = 1$ hold for
at least two $1 \le j \le n_{i_q}$.
\end{itemize}
\end{enumerate}
In particular, the set of points $z \in \bar{X}$
with~$J(z)$ not of full rank is of codimension at least 
$c+1$ in $\bar{X}$.
\end{lemma}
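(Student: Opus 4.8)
The plan is to compute $\rk J(z)$ directly, exploiting the block structure of the relations together with the linear algebra of $\ker(A)$. Since none of the $g_t$ involves an $S_k$, the corresponding columns of $J$ vanish and may be discarded, so the relevant part of $J(z)$ has its columns indexed by the $T_{ij}$. For $v\in\ker(A)$ put $g_v=\sum_{i=0}^r v_iT_i^{l_i}$, as in Remark~\ref{rem:kerA2rel}. Then $\nabla g_v(z)=\sum_{i=0}^r v_i\,\nabla_i(z)$, where $\nabla_i(z)$ is the gradient of the monomial $T_i^{l_i}$ at $z$, supported on the block-$i$ variables $T_{i1},\dots,T_{in_i}$, and these supports are pairwise disjoint. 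The assignment $v\mapsto g_v$ is an injective linear map whose image has dimension $r-c=\dim\ker(A)$ and contains the linearly independent polynomials $g_1,\dots,g_{r-c}$ (Remarks~\ref{rem:kerA} and~\ref{rem:kerA2rel}), so the $g_t$ form a basis of that image. Hence $\rk J(z)$ equals the dimension of the image of the linear map $\Phi_z\colon\ker(A)\to\KK^{n+m}$, $v\mapsto\nabla g_v(z)$, and therefore $\rk J(z)=(r-c)-\dim\ker\Phi_z$; in particular $J(z)$ fails to be of full rank precisely when $\ker\Phi_z\neq 0$.

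The second step is to determine $\ker\Phi_z$. By disjointness of the supports of the $\nabla_i(z)$, the equation $\sum_i v_i\nabla_i(z)=0$ holds if and only if $v_i=0$ for every index $i$ with $\nabla_i(z)\neq 0$. Call $i$ \emph{inactive at $z$} if $\nabla_i(z)=0$, and let $N$ be the set of inactive indices; then $\ker\Phi_z=\{v\in\ker(A)\colon\supp(v)\subseteq N\}$, that is, the kernel of the submatrix $A_N$ of $A$ formed by the columns $a_i$ with $i\in N$. A short calculation with the partial derivatives of $T_i^{l_i}$ shows that $i$ is inactive at $z$ exactly when the set $Z_i=\{j\colon z_{ij}=0\}$ has at least two elements, or consists of a single index $j_0$ with $l_{ij_0}\geq 2$; comparing with Definition~\ref{def:bigleaftype}, this is precisely the condition that index $i$ satisfies one of the two bulleted alternatives in~(2). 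Since $A$ has $c+1$ rows and any $c+1$ of its columns are linearly independent, $A_N$ has a nonzero kernel if and only if $|N|\geq c+2$. Putting the two steps together, $J(z)$ is not of full rank if and only if there are indices $i_1<\dots<i_{c+2}$ each satisfying one of the two alternatives in~(2).

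It then remains to account for the ``big type'' clause in~(2). For $z\in\bar X$, Lemma~\ref{lem:pointsofbarX}(i) leaves only the possibilities of big type and leaf type. If $z$ is of leaf type, then each inactive index $i$ has $Z_i\neq\emptyset$ and therefore lies in the $c$-element set $I_z$, so $|N|\leq c<c+2$; hence, whenever $|N|\geq c+2$, the point $z$ must be of big type. This gives the implication (1)$\Rightarrow$(2), while (2)$\Rightarrow$(1) is immediate from the second step. For the final codimension bound, I would observe that the locus of $z\in\bar X$ at which $J$ is not of full rank is contained in the big-type locus of $\bar X$, which by Lemma~\ref{lem:pointsofbarX}(ii) equals the union of the linear subspaces $V(T_{0j_0},\dots,T_{rj_r})$ over all choices $1\leq j_i\leq n_i$; each such subspace has dimension $n+m-r-1$, and since $\bar X$ is irreducible of dimension $n+m-r+c$ by Theorem~\ref{thm:R(A,P_0)}, this locus has codimension at least $c+1$ in $\bar X$.

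The only point requiring some care is the reduction in the first step: one must be sure that the rank of the Jacobian of the \emph{particular} generators $g_1,\dots,g_{r-c}$ coincides with the dimension of the span of $\nabla g_v(z)$ over \emph{all} $v\in\ker(A)$, which rests on identifying $\ker(A)$ with the linear span of the $g_t$ via $v\mapsto g_v$. The remaining ingredients — disjointness of the block supports, the elementary case analysis for inactivity of a block, and the observation that a matrix with $c+1$ rows all of whose $(c+1)$-column subsets are independent acquires a nonzero kernel exactly once it has $c+2$ columns — are routine.
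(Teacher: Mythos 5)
Your argument is correct and is essentially the paper's own proof, just organized more systematically: the paper likewise converts a linear relation among the rows of $J(z)$ into a polynomial $g_v$ with $0 \ne v\in\ker(A)$, uses the disjointness of the block supports to force the gradient of each of the at least $c+2$ monomials of $g_v$ to vanish at $z$, reads off the two bulleted alternatives from that vanishing, and invokes Lemma~\ref{lem:pointsofbarX} to get the big-type clause and the codimension bound. One small caveat: for the supplement you cite Theorem~\ref{thm:R(A,P_0)} for $\dim(\bar X)=n+m-r+c$, but the proof of that theorem relies on this lemma; the circularity is harmless because the estimate only needs the lower bound $\dim \ge n+m-(r-c)$ on every component of $V(g_1,\ldots,g_{r-c})$, which follows directly from Krull's principal ideal theorem.
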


\begin{proof}
Assertion~(ii) directly implies the supplement 
and, by a simple computation, also~(i).
We are left with proving ``(i)$\Rightarrow$(ii)''.
So, let $z \in \bar{X}$ be a point such 
that~$J(z)$ is not of full rank. Then there is a 
non-trivial linear combination annulating 
the lines of $J(z)$:
$$ 
\eta_1 \grad(g_1)(z) + \ldots + \eta_{r-c} \grad(g_{r-c})(z) 
\ = \ 
0.
$$
The corresponding
$g := \eta_1 g_1 + \ldots + \eta_{r-c} g_{r-c}$
satisfies $\grad(g)(z) = 0$ and 
is of the form $g = g_v$ with a nonzero
$v \in \ker(A)$ as in Remark~\ref{rem:kerA2rel}.
The condition $\grad(g)(z) = 0$ implies
$z_{ij_i} = 0$ for some $1 \le j_i \le n_i$ 
whenever the monomial~$T_i^{l_i}$ shows up 
in $g$.
As observed in Remark~\ref{rem:kerA2rel},
the polynomial $g$ has at least $c+2$ monomials.
Thus, we have $z_{ij_i} = 0$ for at least 
$c+2$ different $i$.
By Lemma~\ref{lem:pointsofbarX},
the point $z \in \bar{X}$ is of 
big type.
Moreover, the two conditions of~(ii)
reflect the fact $\grad(g)(z) = 0$. 
\end{proof}

\begin{proof}[Proof of Theorem~\ref{thm:R(A,P_0)}]
For $c = 1$, the statement is proven 
in~\cite{HaHe}*{Thm.~10.1 and Prop.~10.7}. 
So, assume $c \ge 2$.
First we show that 
$\bar{X} = V(g_1,\ldots,g_{r-c}) \subseteq \KK^{n+m}$
is connected.
By construction, the quasitorus 
$H_0 \subseteq \TT^{n+m}$
is the kernel of the homomorphism  
$\TT^{n+m} \to \TT^{r}$ defined by 
$P_0$.
Consider the multiplicative 
one-parameter subgroup
$\KK^* \to H_0$, $t \mapsto (t^\zeta,t^\xi)$,
where 
$$
\zeta 
\ = \ 
\left( 
\frac{n_0 \cdots n_r l_{01} \cdots l_{rn_r}}{n_0 l_{01}}, 
\ldots, 
\frac{n_0 \cdots n_r l_{01} \cdots l_{rn_r}}{n_r l_{rn_r}}
\right)
\ \in \
\TT^n,
\quad
\xi 
\ =  \
(1, \ldots, 1)
\ \in \ 
\TT^m.
$$
This gives rise to a $\KK^*$-action on $\bar{X}$
having the origin as an attractive fixed point.
Consequently, $\bar{X}$ is connected.
Moreover, we can conclude that all
invertible functions as well as all $H_0$-invariant 
functions are constant on $\bar{X}$.

Now, Lemma~\ref{lem:singloc} allows us to 
apply Serre's criterion and thus we obtain 
that $R(A,P_0)$ is an integral, normal, 
complete intersection.
By construction, the $K_0$-grading is effective
and as seen above, it is pointed.
To obtain factoriality of the $K_0$-grading,
localize $R(A,P_0)$ by the product over all 
generators $T_{ij}$, $S_k$, observe that the 
degree zero part of the resulting ring is 
a polynomial ring and apply~\cite{Be}*{Thm.~1.1}.
Finally, primality of the generators $T_{ij}$ 
follows from Lemma~\ref{lem:RAPreductions}~(i). 
\end{proof}

Now we use the algebras $R(A,P_0)$ obtained 
by Construction~\ref{constr:R(A,P_0)}
to produce general arrangement varieties.
The basic idea is to turn $R(A,P_0)$ into 
a prospective Cox ring via coarsening the 
grading by $K_0 = \ZZ^{n+m}/\im(P_0^*)$ to 
a grading by $K = \ZZ^{n+m}/\im(P^*)$,
where $P$ arises from $P_0$ by 
adding suitable further rows.

\begin{construction}
\label{constr:R(A,P)}
Let $A$ and $P_0$ be input data as in 
Construction~\ref{constr:R(A,P_0)}.
Moreover, fix $1 \le s \le n+m-r$ 
and let $d$ be an integral $s \times (n+m)$ 
matrix such that the columns $v_{ij}$, $v_k$
of the  $(r + s) \times (n+m)$ 
stack matrix 
$$
P
\ := \ 
\left[
\begin{array}{c}
P_0
\\
d
\end{array}
\right]
$$
are pairwise different, primitive and  
generate $\QQ^{r+s}$ as a vector space.
Consider the factor group $K := \ZZ^{n+m}/\im(P^*)$.
Then the projection $Q\colon \ZZ^{n+m} \to K$
factors through $Q_0$ and we obtain 
the \emph{$K$-graded $\KK$-algebra
associated with $(A,P)$}: 
$$ 
R(A,P)
\ := \ 
\KK[T_{ij},S_k] / \bangle{g_1,\ldots,g_{r-c}},
$$
$$
\deg(T_{ij}) :=  w_{ij} := Q(e_{ij}),
\qquad
\deg(S_k) :=  w_k \ := \ Q(e_k).
$$ 
Now, let $\Sigma$ be any fan in $\ZZ^{r+s}$ having
precisely the rays through the columns of~$P$ as 
its one-dimensional cones and let $Z$ be the 
associated toric variety.
Then we have a commutative diagram
$$
\xymatrix{
V(g_1,\ldots,g_{r-c})
\ar@{}[r]|{\qquad =}
&
{\bar{X}}
\ar@{}[r]|\subseteq
\ar@{}[d]|{\rotatebox[origin=c]{90}{$\scriptstyle\subseteq$}}
&
{\bar{Z}}
\ar@{}[r]|=
\ar@{}[d]|{\rotatebox[origin=c]{90}{$\scriptstyle\subseteq$}}
&
{\ZZ^{n+m}}
\\
&
{\hat{X}} 
\ar@{}[r]|\subseteq
\ar[d]_{\quot H}
& 
{\hat{Z}} 
\ar[d]^{\quot H}
&
\\
&
X
\ar@{}[r]|\subseteq
\ar@{-->}[d]
&
Z
\ar@{-->}[d]
\\
&
\PP_{c}
\ar[r]
&
\PP_{r}
&
}
$$ 
with the quasitorus $H = \Spec \, \KK[K]$,
Cox's quotient presentation 
$\hat{Z} \to Z$ and the induced
quotient $\hat{X} \to X$,
where $\hat{Z} := \bar{X} \cap \hat{Z}$.
The resulting variety
$X = X(A,P,\Sigma)$ is normal with 
dimension, invertible functions, divisor class group
and Cox ring given by 
$$ 
\dim(X) = s + c,
\qquad
\Gamma(X,\mathcal{O}^*) = \KK^*,
\qquad
\Cl(X) = K,
\qquad
\mathcal{R}(X) = R(A,P). 
$$
The acting torus $\TT_Z \subseteq Z$ 
splits as $\TT_Z = \TT^r \times \TT^s$
and the factor $\TT  = \{\mathds{1}_r\} \times \TT^s$ 
leaves $X \subseteq Z$ invariant.
The induced $\TT$-action on $X$
is effective and of complexity
$$
c(X) \ = \ c.
$$
Finally, the dashed arrows indicate the maximal 
orbit quotients for the $\TT$-actions on $X$ and 
$Z$ respectively and $\PP_c \subseteq \PP_r$ is 
the linear subspace given by 
$$ 
\PP_c 
\ = \ 
V(h_1,\ldots, h_{r-c}),
\qquad
h_t
\ := \
\det
\left[
\begin{array}{cccc}
a_0 & \ldots & a_c & a_{c+t}
\\
U_0 & \ldots & U_c & U_{c+t}
\end{array}
\right]
\ \in \ 
\KK[U_0, \ldots, U_r].
$$
A collection of doubling divisors 
for the maximal orbit quotient 
$X \dasharrow \PP_c$ 
is given by the intersections of $\PP_c$ 
with the coordinate hyperplanes of $\PP_r$
which form the general hyperplane 
arrangement 
$$ 
H_0, \ldots, H_r
\ \subseteq \ 
\PP_c,
\qquad
H_i 
\ := \ 
\{z \in \PP_c; \ a_{i0}z_0 + \ldots + a_{ic}z_c = 0\}.
$$
\end{construction}

\begin{proposition}
\label{rem:gen2hyp}
Let $X = X(A,P,\Sigma)$ arise from 
Construction~\ref{constr:R(A,P)}.
Consider the explicit variety 
$Y = \PP_c$ in $Z_\Delta = \PP_r$
with embedding system 
$\alpha = (f_0,\ldots,f_r)$,
where $\Delta$ is the complete fan 
in $\ZZ^r$ with generator matrix 
$B = [-\mathds{1}_r,\mathds{E}_r]$ and
$$ 
f_i 
\ = \ 
a_{i0}U_0 + \ldots + a_{ic}U_c
\ \in \
\KK[U_0, \ldots, U_c] 
\ = \
\mathcal{R}(\PP_c).
$$
Then the variety $X(A,P,\Sigma) \subseteq Z_\Sigma$ 
equals the variety 
$X(\alpha,P,\Sigma) \subseteq Z_\Sigma$ 
arising from Construction~\ref{constr:t-mds}.
In particular, $X(A,P,\Sigma) \subseteq Z_\Sigma$
is an explicit $\TT$-variety.
\end{proposition}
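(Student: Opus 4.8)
The plan is to show that, with the right choice of input, Construction~\ref{constr:t-mds} reproduces verbatim the data of Construction~\ref{constr:R(A,P)}, so that the proposition becomes a matter of matching notation. First I would check that $Y = \PP_c$, embedded via $\alpha = (f_0,\dots,f_r)$ into $Z_\Delta = \PP_r$, is an explicit variety in the sense of Construction~\ref{constr:explvar}. The genericity assumption on $A$ (any $c+1$ of its columns are linearly independent) ensures that $f_0,\dots,f_r$ are pairwise non-proportional linear forms, that $f_0,\dots,f_c$ form a coordinate system, and hence that the $f_i$ are pairwise non-associated prime generators of $\mathcal{R}(\PP_c) = \KK[U_0,\dots,U_c]$; the remaining degree and cone conditions are immediate because $\Cl(\PP_c) = \Cl(\PP_r) = \ZZ$, all $\deg(f_i) = 1$, and $B = [-\mathds{1}_r,\mathds{E}_r]$ is a generator matrix of $\Delta$.

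Next I would determine the defining equations of $Y$. The relations among $f_0,\dots,f_r$ are exactly the linear forms $\sum_i v_i U_i$ with $v \in \ker(A)$, and by Remarks~\ref{rem:kerA} and~\ref{rem:kerA2rel}, applied to the tuples $I = (0,1,\dots,c,c+t)$, a basis of these is given precisely by the determinants $h_1,\dots,h_{r-c}$ of Construction~\ref{constr:R(A,P)} (linear independence of the $v(I)$ following from $\det[a_0,\dots,a_c]\neq 0$). Thus the defining equations of $Y$ are $h_1,\dots,h_{r-c}$ and $q = r-c$. Now I would run Construction~\ref{constr:t-mds} on $Y \subseteq Z_\Delta$, reading off $n_0,\dots,n_r$ and the exponents $l_{ij}$ from the block structure of $P_0$, and the vectors $d_{ij}, d'_k$ from the lower block $d$ of $P$. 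Since the $i$-th column of $B$ equals $-\mathds{1}_r$ for $i=0$ and $e_i$ for $i\geq 1$, the multiples $u_{ij} = l_{ij}u_i$ recover exactly the columns of $P_0$, so the matrix assembled by Construction~\ref{constr:t-mds} is the stack matrix with upper block $P_0$ and lower block $d$, that is, the matrix $P$ of Construction~\ref{constr:R(A,P)}; both constructions then take the same fan $\Sigma$ having these columns as ray generators.

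It remains to compare the total coordinate spaces. For the explicit $\TT$-variety coming from Construction~\ref{constr:t-mds} one has $\bar X = V\big(h_1(T_0^{l_0},\dots,T_r^{l_r}),\dots,h_q(T_0^{l_0},\dots,T_r^{l_r})\big)$ and $X = (\bar X \cap \hat Z)\quot H$. Substituting $T_i^{l_i}$ for $U_i$ in the determinant defining $h_t$ yields exactly the polynomial $g_t$ of Construction~\ref{constr:R(A,P_0)}, so $\bar X = V(g_1,\dots,g_{r-c})$, which is the total coordinate space used to define $X(A,P,\Sigma)$. Since $\hat X = \bar X \cap \hat Z$ and $X = \hat X \quot H$ in both constructions, it follows that $X(A,P,\Sigma) = X(\alpha,P,\Sigma)$ as subvarieties of $Z_\Sigma$, carrying the same action of $\TT = \{\mathds{1}_r\}\times\TT^s$. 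The \emph{in particular} clause then reduces to verifying the hypotheses of Proposition~\ref{thm:t-mds-1} for $R(A,P)$: $K$-integrality, only constant homogeneous units, and the $T_{ij}$ being pairwise non-associated $K$-primes. All of this is provided by Theorem~\ref{thm:R(A,P_0)}; for $c\geq 2$ the $T_{ij}$ are even prime, and for $c=1$ one falls back on the complexity-one input underlying that theorem, as in Remark~\ref{rem:extv2cpl1}.

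I do not expect a serious obstacle here: the content is \emph{bookkeeping}. The two places deserving care are the identification of the defining equations of $\PP_c$ in the first step, which is handled by Remarks~\ref{rem:kerA} and~\ref{rem:kerA2rel}, and, for the final clause, the passage from $K_0$-primality to $K$-primality of the variables in the case $c=1$, where one relies on Theorem~\ref{thm:R(A,P_0)} rather than on a direct argument.
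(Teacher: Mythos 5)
Your proposal is correct and follows essentially the same route as the paper: the identification of the two constructions is the bookkeeping the paper dismisses as ``clear by construction'', and the \emph{in particular} clause is settled, exactly as in the paper, by feeding Theorem~\ref{thm:R(A,P_0)} into Proposition~\ref{thm:t-mds-1}, with the same case split ($T_{ij}$ prime for $c\ge 2$, hence $K$-prime; the external complexity-one result for $c=1$). You correctly flag the only delicate point, namely that $K_0$-primality does not formally transfer to the coarser $K$-grading when $c=1$, which is precisely why the paper cites the complexity-one theorem there.
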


\begin{proof}[Proof of Construction~\ref{constr:R(A,P)} 
and Proposition~\ref{rem:gen2hyp}]
The fact that $X(A,P,\Sigma) \subseteq Z_\Sigma$ 
equals 
$X(\alpha,P,\Sigma) \subseteq Z_\Sigma$
is clear by construction.
Observe that, forgetting for the moment 
about the gradings, we have 
$R(A,P) = R(A,P_0)$.
Thus, Theorem~\ref{thm:R(A,P_0)}
ensures that $R(A,P)$ is normal, integral
with only constant homogeneous units.
Moreover, for $c \ge 2$, the generators
$T_{ij}$ and $S_k$ of $R(A,P)$ are 
pairwise non-associated.
Being prime and $K$-homogeneous, 
they are also $K$-prime.
For $c=1$, we infer $K$-primality of the 
generators from~\cite[Thm.~10.4]{HaHe}. 
So, $X(A,P,\Sigma) \subseteq Z_\Sigma$
satisfies the conditions of 
Definition~\ref{def:expltvar}
and hence is an explicit $\TT$-variety.
This yields in particular the statements on the
divisor class group and the Cox ring.
The statement on the maximal orbit 
quotient is due to Proposition~\ref{thm:t-mds-3}.
\end{proof}

\begin{remark}
\label{rem:irredundant}
According to Lemma~\ref{lem:RAPreductions}~(i),
we may always assume that the defining data $P$ 
of Construction~\ref{constr:R(A,P)} 
is~\emph{irredundant} in the sense that 
$l_{i0} + \ldots + l_{in_i} \ge 2$ 
holds for every $i = 0,\ldots, r$.
In this case, we also say that $X(A,P,\Sigma)$ 
is \emph{irredundant}.
\end{remark}

\begin{definition}
By an \emph{explicit general arrangement variety}
we mean a $\TT$-variety $X = X(A,P,\Sigma)$ 
in $Z = Z_\Sigma$ arising from 
Construction~\ref{constr:R(A,P)}.
\end{definition}

\begin{example}
\label{ex:R(A,P)}
Let $A$ and $P_0$ be as in Example~\ref{ex:R(A,P_0)}.
We enhance $P_0$ by an $1 \times 5$ block as follows
$$
P 
\ =  \
\left[
\begin{array}{c}
P_0
\\
d
\end{array}
\right]
\ = \ 
\left[
\begin{array}{rrrrr}
-1 & -2 & 2 & 0 & 0 
\\
-1 & -2 & 0 & 2 & 0
\\
-1 & -2 & 0 & 0 & 4
\\
-1 & -3 & 1 & 1 & 1 
\end{array}
\right].
$$
So, we chose $s=1$.
We have
$K = \ZZ^5 / \im(P^*) = \ZZ \oplus \ZZ / 2\ZZ \oplus \ZZ / 2\ZZ$
and $Q \colon \ZZ^5 \to K$ is represented 
by the degree matrix, having 
$w_{ij} = Q(e_{ij}) \in K$ as its columns: 
$$ 
Q
\ =  \
[w_{01},w_{02},w_{11},w_{21},w_{31}]
\ =  \
\left[
\begin{array}{ccccc}
2 & 1 & 2 & 2 & 1
\\
\bar 0 & \bar 0 & \bar 1 & \bar 1 & \bar 0 
\\
\bar 0 & \bar 1 & \bar 0 & \bar 1 & \bar 0 
\end{array}
\right].
$$
Let $\Sigma$ be the unique complete fan in 
$\ZZ^4$ with $P$ as its generator matrix.
Then we arrive at a projective explicit 
arrangement variety 
$X = X(A,P,\Sigma)$ in $Z = Z_\Sigma$
with 
$$ 
\dim(X) \ = \ 3, 
\qquad
c(X) \ = \ 2,
\qquad
\Cl(X)
\ = \ 
\ZZ \oplus \ZZ / 2\ZZ \oplus \ZZ / 2\ZZ.
$$
Moreover, assigning to each generator $T_{ij}$ 
the divisor class $Q(e_{ij})$, we obtain 
a representation of the Cox ring by homogeneous
generators and relations: 
$$
\mathcal{R}(X)
\ = \
\KK[T_{01},T_{02},T_{11},T_{21},T_{31}]
\ / \ 
\langle 
T_{01}T_{02}^2 + T_{11}^2 + T_{21}^2 + T_{31}^4 
\rangle.
$$
As a maximal orbit quotient, we have 
$\pi \colon X \dasharrow \PP_2$ 
and the doubling divisors form the general 
line configuration in $\PP_2$ given by 
$$ 
V(T_0), 
\quad
V(T_1), 
\quad
V(T_2), 
\quad
V(T_0+T_1+T_2).
$$
\end{example}

\begin{theorem}
\label{thm:arrTvar}
Let $X$ be an $A_2$-maximal general 
arrangement variety. 
Then $X$ admits a presentation as an
explicit general arrangement variety.
\end{theorem}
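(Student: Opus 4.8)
The plan is to deduce the statement from Theorem~\ref{thm:t-mds-2} by recognizing that, for a general arrangement variety, the presentation data produced there is precisely of the shape used in Construction~\ref{constr:R(A,P)}. First I would check that $X$ satisfies the hypotheses of Theorem~\ref{thm:t-mds-2}. The variety $X$ is $A_2$-maximal by assumption. For the remaining conditions I invoke \cite[Thm.~1.2]{HaSu}: writing $\pi \colon X \dasharrow \PP_c$ for the maximal orbit quotient, $C_0,\dots,C_r$ for the doubling divisors forming the general hyperplane arrangement, $D_{ij}$ for the multiple divisors over $C_i$ with multiplicities $l_{ij}$ and $E_k$ for the boundary divisors, one obtains an isomorphism of $\Cl(X)$-graded rings
$$
\mathcal{R}(X)
\ \cong \
\mathcal{R}(\PP_c)[T_{ij},S_k]
\, / \,
\bangle{T_i^{l_i}-U_i; \ i = 0,\dots,r},
$$
with $U_i = 1_{C_i}$, $T_{ij} = 1_{D_{ij}}$, $S_k = 1_{E_k}$. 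Since $\mathcal{R}(\PP_c) = \KK[U_0,\dots,U_c]$ is finitely generated, so is $\mathcal{R}(X)$, whence $\Cl(X)$ is finitely generated as well, and as the displayed ring has only constant homogeneous units we get $\Gamma(X,\mathcal{O}^*) = \KK^*$. Theorem~\ref{thm:t-mds-2} then presents $X$ as an explicit $\TT$-variety $X = X(\alpha,P,\Sigma)$ in $Z_\Sigma$ with an explicit variety $Y \subseteq Z_\Delta$ as maximal orbit quotient and embedding system $\alpha = (f_0,\dots,f_r)$.

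Next I would bring this presentation into the standard form of Construction~\ref{constr:R(A,P)}. By Remark~\ref{rem:maxorbquotunique} the maximal orbit quotient $Y$ agrees with $\PP_c$ outside a subset of codimension at least two; hence $\mathcal{R}(Y) = \mathcal{R}(\PP_c)$ and the embedding system $f_0,\dots,f_r$ of $Y$ is the system of canonical sections $1_{C_0},\dots,1_{C_r}$ of the doubling divisors. Each $C_i$ being a hyperplane, its canonical section is a linear form, so after fixing homogeneous coordinates $U_0,\dots,U_c$ on $\PP_c$ we have $f_i = a_{i0}U_0 + \dots + a_{ic}U_c$; collecting the $a_{ij}$ into a $(c+1)\times(r+1)$ matrix $A$, the general position of $C_0,\dots,C_r$ translates into the requirement that any $c+1$ columns of $A$ are linearly independent, so $A$ is admissible for Construction~\ref{constr:R(A,P_0)}. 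Since every hyperplane has class $1$ in $\Cl(\PP_c) = \ZZ$, the degree map $Q_B$ is the sum of coordinates, so we may take $\Delta$ to be the standard fan of $\PP_r$, that is $B = [-\mathds{1}_r,\mathds{E}_r]$; then $Y \subseteq Z_\Delta = \PP_r$ is the linear subspace cut out by the determinantal relations $h_t$. Finally, following the proof of Theorem~\ref{thm:t-mds-2} one sees that the generator matrix $P$ of $\Sigma$ has the block form $\begin{bmatrix} P_0 \\ d\end{bmatrix}$ of Construction~\ref{constr:R(A,P)}: the $\ZZ^r$-parts of the columns $v_{ij}$ of $P$ are the $l_{ij}$-fold multiples of the primitive generators $u_i$ of $\Delta$, which assemble into $P_0$, and $\Cl(X)$-homogeneity of the relations $g_t = h_t(T_0^{l_0},\dots,T_r^{l_r})$ forces $\im(P_0^*) \subseteq \ker(Q_P)$, which is what allows $P_0$ to sit on top of $P$.

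With these identifications, $X = X(\alpha,P,\Sigma)$ coincides, by Proposition~\ref{rem:gen2hyp}, with the explicit general arrangement variety $X(A,P,\Sigma)$ of Construction~\ref{constr:R(A,P)}, proving the theorem. I expect the main obstacle to lie in the middle step: making precise, after admissible changes of the bases and fans entering Theorem~\ref{thm:t-mds-2}, that the abstractly obtained maximal orbit quotient $Y \subseteq Z_\Delta$ really takes the standard form $\PP_c \subseteq \PP_r$ with embedding system given by linear forms, and that the completion of $P_0$ to $P$ can be arranged so that $\Cl(X) = \ZZ^{n+m}/\im(P^*)$ holds on the nose (so that $\dim(X)=c+s$ yields an admissible $1\le s\le n+m-r$). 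The essential inputs there are the uniqueness of the maximal orbit quotient, the fact that the doubling divisors are hyperplanes — so their canonical sections are linear and any $c+1$ of them are linearly independent — together with the homogeneity bookkeeping linking $\Cl(X)$ with $\Cl(\PP_c) = \ZZ$.
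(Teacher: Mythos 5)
Your proposal follows essentially the same route as the paper: pass to the Cox ring presentation of Corollary~\ref{cor:RYinRX} (the paper's packaging of \cite[Thm.~1.2]{HaSu}), rerun the proof of Theorem~\ref{thm:t-mds-2} with a generating system adapted to the canonical sections $1_{C_0},\ldots,1_{C_r}$, identify the resulting data with Construction~\ref{constr:R(A,P)}, and conclude via Proposition~\ref{rem:gen2hyp}. One small but real point is missing: you take for granted that the canonical sections $1_{C_0},\ldots,1_{C_r}$ of the given doubling divisors already form an embedding system for $Y=\PP_c$, i.e.\ generate $\mathcal{R}(\PP_c)=\KK[U_0,\ldots,U_c]$. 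Definition~\ref{def:maxorbquot} only forces certain divisors to appear among the $C_i$, so a priori one may have $r<c$, in which case the linear forms do not span, the matrix $A$ cannot satisfy the condition of Construction~\ref{constr:R(A,P_0)} (which requires $r\ge c$), and your identification breaks down. The paper fixes this in one line by ``suitably enhancing'' the arrangement, using the freedom of Remark~\ref{rem:doubdiv} to add further doubling divisors (hence further hyperplanes in general position) until the $1_{C_i}$ generate $\mathcal{R}(\PP_c)$; with that amendment your argument is the paper's proof.
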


\begin{proof}
According to Definition~\ref{def:genarrvardef}, 
there is a maximal orbit quotient 
$\pi \colon X \dasharrow Y$ with $Y = \PP_c$ 
admitting a general hyperplane arrangement 
$C_0, \ldots, C_r$ as a collection of 
doubling divisors. 
Then the canonical sections
$1_{C_0}, \ldots, 1_{C_r}$ are of 
degree one in the Cox ring 
$
\mathcal{R}(Y) 
= 
\KK[U_0,\ldots,U_c].
$
Suitably enhancing the general 
hyperplane arrangement
$C_0, \ldots, C_r$, we achieve 
that $1_{C_0}, \ldots, 1_{C_r}$ 
generate $\mathcal{R}(Y)$.
Regard $\mathcal{R}(Y)$ 
as a graded subalgebra of 
$\mathcal{R}(X)$ as in 
Corollary~\ref{cor:RYinRX}
and let $\alpha = (f_0,\ldots,f_q)$
be pairwise non-associated 
$\Cl(X)$-prime generators 
of the Cox ring $\mathcal{R}(X)$ 
such that the $f_i$ lying in 
$\mathcal{R}(Y)$ are precisely 
$1_{C_0}, \ldots, 1_{C_r}$.
Then, following the lines of 
the proof of Theorem~\ref{thm:t-mds-2}, 
one reproduces~$X$ as an explicit 
$\TT$-variety 
$X(\alpha,P,\Sigma)$ in $Z = Z_\Sigma$.
Thus, Proposition~\ref{rem:gen2hyp} 
gives the assertion.
\end{proof}

\begin{remark}
Let $X$ be a general arrangement variety of complexity~$c$.
Then the torus action of $X$ has $\PP_c$ as Chow quotient; 
use~\cite{BaHaKe}*{Props.~2.4 and~2.5} for a proof.
Using the conversions Remark~\ref{rem:expl2poldiv}
and~\cite[Thm.~4.8]{HaSu}, we see that the general 
arrangement varieties are precisely the $\TT$-varieties 
arising from a divisorial fan $\Xi$ on a projective 
space $\PP_c$ in the 
sense of~\cite{AlHaSu} such that the prime divisors 
$D \subseteq \PP_c$ with non-trivial slices $\Xi_D$
form a general hyperplane arrangement in~$\PP_c$.
\end{remark}

\section{Examples and first properties}
\label{sec:exfirstprops}

We discuss examples and study basic structural
properties of general arrangement varieties.
For instance, we investigate torsion in the 
divisor class group, describe the canonical
class and give a combinatorial characterization 
of the $X$-cones which in turn leads to the
combinatorial smoothness criterion provided
by Corollary~\ref{cor:genarrvarsmooth}.
Moreover, we specify constraints on 
the defining data of an explicit general 
arrangement variety imposed by conditions 
on the singularities, preparing the 
classification performed in 
Section~\ref{sec:FanoClass}.
As a first concrete application, we prove 
at the end of this section that the smooth 
projective general arrangement varieties 
of Picard number one are just the classical 
smooth projective quadrics; see 
Proposition~\ref{prop:arrvar-rho-one}.

We begin with the examples.
The first one shows how to realize
intrinsic quadrics as explicit general 
arrangement varieties.
Recall from~\cite{BerHa2} that an
\emph{intrinsic quadric} is a normal 
projective variety with a Cox ring 
defined by a single quadratic relation.
The intrinsic quadrics form a 
playground immediately adjacent to 
the one given by the projective toric 
varieties, which have a polynomial 
ring as Cox ring.
We mention Bourqui's work~\cite{Bou} proving 
Manin's conjecture for the full intrinsic 
quadrics and the classification results 
on smooth (Fano) intrinsic quadrics 
of low Picard number in~\cite{FaHa}
as examples for research in this field.

\begin{example}
\label{ex:intrquad}
The normal form for graded quadrics
provided by~\cite{FaHa}*{Prop.~2.1}
shows that we can represent 
every intrinsic quadric as an explicit 
general arrangement variety $X \subseteq Z$ 
with defining matrix $P$ having left 
upper block 
$$ 
\left[
\begin{array}{rrrr}
-l_0 & l_1 & & 0
\\
\vdots & & \ddots & 
\\
-l_0 &  0 & & l_r
\end{array}
\right],
\qquad
l_0 = \ldots = l_q = (1,1),
\quad
l_{q+1} = \ldots = l_r = (2),
$$ 
where $-1 \le q \le r$ and the 
variables $T_{i1}$ with $i = q+1, \ldots, r$
have pairwise distinct $K$-degrees.
Moreover, for the dimension of $X$, 
the rank of the divisor class group 
and the complexity of the torus action on~$X$,
we have
$$ 
\dim(X) \ = \ r-1+s,
\qquad
\rk(\Cl(X)) \ = \ m+q+2-s,
\qquad
c(X) \ = \  r-1.
$$
\end{example}

In the second example we exhibit a series 
of general arrangement varieties producing 
many Fano examples.
We pick up these varieties again in 
Example~\ref{ex:PrtimesPr2}, 
when the necessary methods 
are available to figure out the smooth 
Fano varieties.

\begin{example}
\label{ex:PrtimesPr}
Fix integers $r > c \ge 1$. 
Consider the product $Z = \PP_{r} \times \PP_{r}$
and the intersection 
$X = V(g_1) \cap \ldots \cap V(g_{r-c}) \subseteq Z$ 
of the $r-c$ divisors of bidegree $(a,b)$ 
in $Z$ given by
\begin{eqnarray*}
g_1  
& = & 
\lambda_{1,0}T_{01}^a T_{02}^b+\lambda_{1,1}T_{11}^aT_{12}^b+ \ldots + \lambda_{1,c}T_{c1}^aT_{c2}^b + T_{c+1,1}^aT_{c+1,2}^b,
\\
& \vdots &
\\
g_{r-c}  
& = & 
\lambda_{r-c,0}T_{01}^aT_{02}^b+ \lambda_{r-c,1} T_{11}^aT_{12}^b + \ldots + \lambda_{r-c,c}T_{c1}^aT_{c2}^b +T_{r1}^aT_{r2}^b,
\end{eqnarray*}
where $a,b > 0$ are coprime integers
and any $c+1$ of the vectors 
$\lambda_i = (\lambda_{i,0}, \ldots, \lambda_{i,c})$
are linearly independent.
Observe that for $r > c+1$,
the divisors 
$V(g_i) \subseteq Z$ are singular.
We realize $X \subseteq Z$ as an
explicit general arrangement variety.
Let~$P$ be the stack matrix
with upper and lower blocks
\begin{align*}
P_0 
\ &= \  
\left[
\begin{array}{rrrr}
-l_0 & l_1 & & 0
\\
\vdots & & \ddots & 
\\
-l_0 &  0 & & l_r
\end{array}
\right],
\qquad
l_0 = \ldots = l_r = (a,b),
\\
d 
\ &= \  
\left[
\begin{array}{rrrr}
-d_0 & d_1 & & 0
\\
\vdots & & \ddots & 
\\
-d_0 &  0 & & d_r
\end{array}
\right],
\qquad
d_0 = \ldots = d_r = (v,u),
\end{align*}
where $u$ and $v$ are integers with 
$ua - vb = 1$.
We claim that there is precisely one 
complete fan $\Sigma$ with generator 
matrix $P$ and the associated 
toric variety $Z = Z_\Sigma$ is the 
product $\PP_r \times \PP_r$.
Indeed, consider the matrices
$$
\left[
\begin{array}{rr}
u\cdot \mathds{E}_r & - b\cdot \mathds{E}_r
\\
- v\cdot \mathds{E}_r & a \cdot \mathds{E}_r
\end{array}
\right],
\qquad\qquad
\left[
\begin{array}{rrrr}
- \mathds{1}_r & \mathds{E}_r & 0 & 0
\\
0 & 0 & - \mathds{1}_r & \mathds{E}_r
\end{array}
\right].
$$
The first one is unimodular and
multiplying it from the left to 
$P$ yields, after suitably renumbering 
columns, the second one.
Now, choosing a suitable $(c+1) \times (r+1)$
matrix $A$, we obtain the above relations 
as the output of Construction~\ref{constr:R(A,P)}.
Thus, $X = X(A,P,\Sigma)$ is of dimension $r+c$ and 
comes with an effective $r$-torus action.
\end{example}

We enter the study of structural properties
of explicit general arrangement varieties
$X \subseteq Z$ as provided by
Construction~\ref{constr:R(A,P)}.
We will freely use the notation fixed there.
Our first observation is that there may occur 
unavoidable torsion in the divisor class 
group.

\begin{proposition}
\label{prop:zwangstors}
Let $X \subseteq Z$ be an
explicit general arrangement
variety.
Then $\ZZ^r / \mathrm{im}(P_0)$ 
is a finite subgroup of the divisor 
class group $\Cl(X)$. 
\end{proposition}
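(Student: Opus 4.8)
The plan is to exhibit an explicit inclusion $\ZZ^r/\mathrm{im}(P_0) \hookrightarrow \Cl(X) = K = \ZZ^{n+m}/\mathrm{im}(P^*)$ coming from the block structure $P = \left[\begin{smallmatrix} P_0 \\ d \end{smallmatrix}\right]$, and then to observe that the source is finite. First I would set up the comparison between the two quotient groups. Recall from Construction~\ref{constr:R(A,P)} that $K_0 = \ZZ^{n+m}/\mathrm{im}(P_0^*)$ surjects onto $K = \ZZ^{n+m}/\mathrm{im}(P^*)$, since $\mathrm{im}(P_0^*) \subseteq \mathrm{im}(P^*)$; but this does not directly give a subgroup of $K$, so instead I would work with the transposed picture. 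The key point is that $P_0$ is an $r \times (n+m)$ matrix all of whose rows are of the form $(-l_0,\ldots,0,l_i,0,\ldots,0)$ with the columns of the left block summing in a prescribed way; the rank of $P_0$ over $\QQ$ is exactly $r$, because Construction~\ref{constr:R(A,P_0)} requires the $l_i$ to be positive tuples and the block shape makes the rows $\QQ$-linearly independent. Hence $\mathrm{im}(P_0) \subseteq \ZZ^r$ has finite index, so $\ZZ^r/\mathrm{im}(P_0)$ is a finite abelian group; this takes care of the finiteness part of the assertion.

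For the subgroup claim, I would argue as follows. Consider the map $\ZZ^r \to \ZZ^{n+m}$ given by $P_0^*$ (transpose of the upper block), and recall $\ker(Q) = \mathrm{im}(P^*)$ where $Q\colon \ZZ^{n+m}\to K$. A class in $\ZZ^r/\mathrm{im}(P_0)$ is represented by some $a\in\ZZ^r$; I want to attach to it a well-defined element of $K$. The natural candidate arises from linear Gale duality: since the rows of $P$ span $\ker(Q)$, a vector $b\in\ZZ^r\oplus\ZZ^s$ gives $P^*b\in\mathrm{im}(P^*)=\ker Q$, i.e. it dies in $K$; conversely, the first-$r$-coordinates projection of such relations recovers $\mathrm{im}(P_0)$ up to the contribution of $d$. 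The cleanest route is to use the exact sequence coming from the stacking: dualizing the inclusion $\ZZ^r \hookrightarrow \ZZ^{r+s}$ (as the first factor), one gets a commutative diagram relating $\ZZ^{n+m}/\mathrm{im}(P^*)$ and $\ZZ^{n+m}/\mathrm{im}(P_0^*)$, and the snake lemma (or a direct diagram chase) produces an injection of the cokernel of $\ZZ^r \xrightarrow{P_0} \ZZ^r$ — that is, $\ZZ^r/\mathrm{im}(P_0)$ — into $\Cl(X) = K$. I would spell out this diagram: the bottom row has $\ZZ^{t}\xrightarrow{B^*}\ZZ^{r+1}\to K_B$ as in the proof of Theorem~\ref{thm:t-mds-2}, and the top row the corresponding sequence for $P$; the torsion one sees is precisely $\ZZ^r/\mathrm{im}(P_0)$ sitting inside $K$ as the kernel of the rational projection $K_\QQ \to K_{B,\QQ}$ restricted to $K$.

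The main obstacle I anticipate is the bookkeeping in identifying the torsion subgroup correctly: one must be careful that the map $\ZZ^r/\mathrm{im}(P_0) \to K$ is genuinely injective and not merely well-defined. This amounts to checking that if $P_0^*a = P^* b$ for some $b = (b',b'')\in\ZZ^r\oplus\ZZ^s$, then already $a \in \mathrm{im}(P_0)$ (with $b''$ forced to interact only through the $d$-block, whose image is annihilated when we project to the $\ZZ^r$-part). Concretely, $P^*b = P_0^* b' + d^* b''$, so $P_0^*a = P_0^* b' + d^* b''$; applying the projection $\ZZ^{n+m}\to\ZZ^{n+m}/\mathrm{im}(d^*)$ and using that $P_0^*$ has a left inverse over $\QQ$ after tensoring (because $\rank P_0 = r$), one deduces $a - b' \in$ (a suitable lattice), and a short computation identifies this lattice as $\mathrm{im}(P_0)$. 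Once this injectivity is nailed down, combining it with the finiteness of $\ZZ^r/\mathrm{im}(P_0)$ established above completes the proof.
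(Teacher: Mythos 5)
Your overall skeleton is the same as the paper's: finiteness of $\ZZ^r/\mathrm{im}(P_0)$ from the fact that $P_0$ has full row rank $r$, plus a snake-lemma comparison of the cokernels of $P_0^*$ and $P^*$ coming from the stack structure of $P$. The finiteness part is fine. However, the step where you actually embed $\ZZ^r/\mathrm{im}(P_0)$ into $K = \ZZ^{n+m}/\mathrm{im}(P^*)$ is broken. The map whose injectivity you propose to check sends $a \in \ZZ^r$ to the class of $P_0^*a$ in $K$; but $P_0^*a = P^*(a,0) \in \mathrm{im}(P^*)$, so this map is identically zero. Your test condition ``$P_0^*a = P^*b$ for some $b=(b',b'')$'' is therefore satisfied by \emph{every} $a$ (take $b=(a,0)$), and the desired conclusion $a \in \mathrm{im}(P_0)$ cannot follow. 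There is no natural nonzero map $\ZZ^r/\mathrm{im}(P_0) \to K$ induced by $P_0^*$, so the ``short computation'' you anticipate cannot be carried out.

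The missing idea is an intermediate identification. First, $\ZZ^r/\mathrm{im}(P_0)$ is isomorphic to the torsion subgroup of $K_0 = \ZZ^{n+m}/\mathrm{im}(P_0^*)$: both are governed by the elementary divisors of $P_0$ (Smith normal form), which are all nonzero since $\mathrm{rank}(P_0)=r$. Second, apply the snake lemma to the ladder with exact rows $0 \to \ZZ^r \to \ZZ^{r+s} \to \ZZ^s \to 0$ and $0 \to \ZZ^{n+m} \to \ZZ^{n+m} \to 0 \to 0$ and vertical maps $P_0^*$, $P^*$, $0$; since $P^*$ is injective, this yields the exact sequence $0 \to \ZZ^s \to K_0 \to K \to 0$. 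Hence the kernel of the surjection $K_0 \to K$ is free, so the torsion subgroup $K_0^{\mathrm{tors}} \cong \ZZ^r/\mathrm{im}(P_0)$ maps injectively into $K = \Cl(X)$. This is exactly the paper's argument; your snake-lemma paragraph gestures at it but never states the identification $\ZZ^r/\mathrm{im}(P_0) \cong K_0^{\mathrm{tors}}$, and your concluding injectivity check replaces it with a map that is zero. (Also, the objects $K_B$ and $B^*$ you invoke belong to the setting of Construction~\ref{constr:t-mds} and Theorem~\ref{thm:t-mds-2}, where $B$ describes the quotient $Y$; they play no role in this statement.)
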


\begin{proof}
The divisor class group of $X$ equals 
$K = \ZZ^{n+m} /  \mathrm{im}(P^*)$. 
Moreover, $\ZZ^r / \mathrm{im}(P_0)$ 
is the torsion part $K_0^{\mathrm{tors}}$
of the factor group $K_0 = \ZZ^{n+m} /  \mathrm{im}(P_0^*)$. 
Applying the snake Lemma to the exact 
sequences arising from $P_0^*$ and $P^*$ 
yields that the kernel of $K_0 \to K$ 
injects into $\ZZ^s$.
Consequently, the torsion part $K_0^{\mathrm{tors}}$
maps injectively into $K$.
\end{proof}

In Remark~\ref{rem:kerA2rel}, we observed 
that $R(A,P)$ is a complete intersection
ring.
Thus, we can apply Proposition~\ref{prop:ci}
and obtain the following description 
of the canonical class.

\begin{proposition}
\label{prop:genarrcandiv}
Let $X \subseteq Z$ be an explicit general arrangement 
variety of complexity $c(X) = c$. 
Then the canonical class of $X$ is given in 
terms of the generator degrees $w_{ij} = \deg(T_{ij})$ 
and $w_k = \deg(S_k)$ as 
$$ 
\mathcal{K}_X
\ = \
-
\sum_{i = 0}^r \sum_{j = 1}^{n_i} w_{ij} 
\, - \,
\sum_{k = 0}^r w_k
\, + \,
(r-c) \sum_{j=1}^{n_0} l_{0j}w_{0j}
\ \in \ 
K
\ = \ 
\Cl(X).
$$
\end{proposition}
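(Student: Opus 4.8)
The plan is to deduce the formula directly from Proposition~\ref{prop:ci}. First I would use Proposition~\ref{rem:gen2hyp} to view $X = X(A,P,\Sigma)$ as the explicit $\TT$-variety $X(\alpha,P,\Sigma)$ whose base is $Y = \PP_c$, with embedding system $\alpha = (f_0,\ldots,f_r)$ given by the linear forms $f_i = a_{i0}U_0+\ldots+a_{ic}U_c$ and with maximal orbit quotient $\pi\colon X\dasharrow\PP_c$. The point to verify before invoking Proposition~\ref{prop:ci} is that the presentation $\mathcal{R}(Y)=\KK[f_0,\ldots,f_r]/\bangle{h_1,\ldots,h_{r-c}}$ of $\mathcal{R}(\PP_c)=\KK[U_0,\ldots,U_c]$ used in Construction~\ref{constr:R(A,P)} is a complete intersection. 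This holds: the presentation has $r+1$ generators, the $r-c$ linear relations $h_t$, and Krull dimension $c+1 = (r+1)-(r-c)$, so the $h_t$ form a regular sequence. Equivalently, one may simply cite Theorem~\ref{thm:R(A,P_0)}, which already records that $\mathcal{R}(X)=R(A,P)$ is a complete intersection with defining relations $g_1,\ldots,g_{r-c}$.

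Next I would feed this into Proposition~\ref{prop:ci}. With $h_t' := h_t(T_0^{l_0},\ldots,T_r^{l_r}) = g_t$ and $q = r-c$, that proposition yields $\mathcal{K}_X = -\sum_{i=0}^r\sum_{j=1}^{n_i}w_{ij} - \sum_{k=1}^m w_k + \sum_{t=1}^{r-c}\deg(g_t)$ in $K = \Cl(X)$, so everything reduces to computing $\deg(g_t)$. Expanding the determinant defining $g_t$ along its bottom row shows that the coefficient of the monomial $T_0^{l_0}$ equals $\pm\det[a_1,\ldots,a_c,a_{c+t}]$, which is nonzero because any $c+1$ of the columns $a_0,\ldots,a_r$ of $A$ are linearly independent. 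Hence $T_0^{l_0}$ actually occurs in $g_t$, and since $g_t$ is $K$-homogeneous we obtain $\deg(g_t) = \deg(T_0^{l_0}) = \sum_{j=1}^{n_0} l_{0j}w_{0j}$, a value independent of $t$ (indeed all monomials $T_i^{l_i}$ appearing in $g_t$ share this degree).

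Summing over $t$ turns the correction term into $(r-c)\sum_{j=1}^{n_0}l_{0j}w_{0j}$, and substituting into the expression coming from Proposition~\ref{prop:ci} gives the asserted formula for $\mathcal{K}_X$. I do not expect a genuine obstacle here: the argument is essentially bookkeeping, and the only mild subtleties are matching the indexing of Proposition~\ref{prop:ci} to that of Construction~\ref{constr:R(A,P)} and confirming the complete-intersection hypothesis for the redundant linear presentation of $\mathcal{R}(\PP_c)$, both of which are handled above.
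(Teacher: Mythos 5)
Your proposal is correct and follows exactly the route the paper takes: the paper's proof consists of observing (via Remark~\ref{rem:kerA2rel} and Theorem~\ref{thm:R(A,P_0)}) that the Cox ring presentation is a complete intersection and then invoking Proposition~\ref{prop:ci}, with the correction term $(r-c)\sum l_{0j}w_{0j}$ coming from the common degree $\deg(g_t)=\deg(T_0^{l_0})$ of the $r-c$ relations. You merely spell out details the paper leaves implicit, such as why $T_0^{l_0}$ genuinely occurs in each $g_t$.
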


\begin{example}
\label{ex:PrtimesPr2}
Consider again the explicit general 
arrangement varieties $X \subseteq Z$ 
discussed in Example~\ref{ex:PrtimesPr}.
The degree matrix $Q$ is
$$ 
Q
\ = \
[w_{01},w_{02}, \ldots, w_{r1},w_{r2}]
\ = \
[\mathds{E}_2, \ldots,\mathds{E}_2]. 
$$
Thus, Proposition~\ref{prop:PicandCones}
tells us $\Eff(X) = \SAmple(X) = \cone(e_1,e_2)$.
Moreover, the anticanonical class of $X$ is 
given by
$$
-\mathcal{K}_X 
\ = \ 
(r+1 - (r-c)a, \, r+1 - (r-c)b) 
\ \in \ 
\Cl(X) 
\ = \
\ZZ^2.
$$
as we infer from Proposition~\ref{prop:genarrcandiv}.
In particular, $X$ is a Fano variety
if and only if the following two conditions 
are satisfied
$$
a 
\ < \
\frac{r+1}{r-c},
\qquad\qquad
b
\ < \ 
\frac{r+1}{r-c}.
$$ 
\end{example}

Recall that in Definition~\ref{def:Xcone}
we introduced for any explicit variety 
$X \subseteq Z$ the $X$-cones as those 
cones $\sigma \in \Sigma$ of the defining 
fan of $Z$ such that $X$ intersects the 
corresponding orbit $\TT_Z \cdot z_\sigma$ 
non-trivially.
For explicit general arrangement 
varieties $X \subseteq Z$, we may 
determine the $X$-cones in a simple 
purely combinatorial way.

\begin{definition}
Consider the setting of
Construction~\ref{constr:R(A,P)}
and let $\sigma\in \Sigma$.
We say that the cone $\sigma$ is
\begin{enumerate}
\item
\emph{big (elementary big)} 
if $\sigma$ contains at least 
(precisely) one column $v_{ij}$ 
of~$P$ for every $i = 0, \dots, r$,
\item
a \emph{leaf cone}
if there is a set $I_\sigma = \{i_1, \ldots, i_c\}$ 
of indices $0 \le i_1 < \ldots < i_c \le r$ 
such that for any $i$, we have 
$v_{ij} \in \sigma \Rightarrow i \in I_\sigma$.
\end{enumerate}
\end{definition}

\begin{remark}
\label{rem:big2bigtype}
Situation as in Construction~\ref{constr:R(A,P)}.
Given $\sigma \in \Sigma$, let 
$\gamma_0 \preccurlyeq \gamma$ be the 
corresponding face, that means that
$\sigma = P(\gamma_0^*)$ holds.
Then $\sigma$ is a big (leaf) cone 
if and only if the toric orbit
$\TT^{n+m} \cdot z_{\gamma_0^*} \subseteq \KK^{n+m}$ 
consists of points of big (leaf) type 
in the sense of Definition~\ref{def:bigleaftype}.
\end{remark}

\begin{proposition}
\label{prop:bigleaf2rel}
Let $X \subseteq Z$ be an
explicit general arrangement
variety.
Then, for every $\sigma \in \Sigma$, 
the following statements are 
equivalent.
\begin{enumerate}
\item
The cone $\sigma$ is an $X$-cone.
\item
The cone $\sigma$ is big or a 
leaf cone.
\end{enumerate}
\end{proposition}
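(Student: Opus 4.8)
The plan is to translate the statement about $X$-cones into the combinatorial language of $\bar X$-faces developed in Definition~\ref{def:Xcone} and then invoke the description of points of $\bar X$ given in Lemma~\ref{lem:pointsofbarX}. Recall that $\sigma \in \Sigma$ is an $X$-cone precisely when the corresponding face $\gamma_0 \preccurlyeq \gamma = \QQ^{n+m}_{\ge 0}$, characterized by $P(\gamma_0^*) = \sigma$, is an $\bar X$-face, that means $\bar X(\gamma_0) = \bar X \cap \TT^{n+m}\cdot z_{\gamma_0^*}$ is non-empty; here we use that $\sigma \in \Sigma$ holds by hypothesis, so the only condition to check is that $\bar X$ meets the toric orbit $\TT^{n+m}\cdot z_{\gamma_0^*}$. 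By Remark~\ref{rem:big2bigtype}, the cone $\sigma$ is big, respectively a leaf cone, exactly when every point of $\TT^{n+m}\cdot z_{\gamma_0^*}$ is of big type, respectively of leaf type, in the sense of Definition~\ref{def:bigleaftype}. So the proposition reduces to the purely combinatorial assertion: the toric orbit $\TT^{n+m}\cdot z_{\gamma_0^*}$ meets $\bar X$ if and only if it consists of points of big type or of leaf type.

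First I would prove the implication (ii)$\Rightarrow$(i). If $\sigma$ is big, then by Remark~\ref{rem:big2bigtype} the point $z := z_{\gamma_0^*}$ is of big type, so by Lemma~\ref{lem:pointsofbarX}(ii) we have $z \in \bar X$, hence $z \in \bar X(\gamma_0)$, and $\sigma$ is an $X$-cone. If $\sigma$ is a leaf cone, then $z$ is of leaf type, so by Lemma~\ref{lem:pointsofbarX}(iii) there is $t \in \TT^{n+m}$ with $t \cdot z \in \bar X$; since $t \cdot z$ again lies in the orbit $\TT^{n+m}\cdot z_{\gamma_0^*}$, this shows $\bar X(\gamma_0) \ne \emptyset$, so again $\sigma$ is an $X$-cone.

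For the converse (i)$\Rightarrow$(ii), suppose $\sigma$ is an $X$-cone, so there is a point $z \in \bar X \cap \TT^{n+m}\cdot z_{\gamma_0^*}$. By Lemma~\ref{lem:pointsofbarX}(i), every point of $\bar X$ is of big type or of leaf type, so this particular $z$ is of big type or of leaf type. Since being of big type (respectively leaf type) is a property of the whole toric orbit $\TT^{n+m}\cdot z_{\gamma_0^*}$ — it depends only on which coordinates of a point in the orbit vanish, and these are exactly the coordinates indexed by the generators of $\gamma_0$ — the entire orbit consists of points of big type or of leaf type accordingly. By Remark~\ref{rem:big2bigtype} this means $\sigma$ is big or a leaf cone, as desired.

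The main point requiring care, and the step I would present explicitly, is the orbit-invariance of the big/leaf dichotomy: one must check that whether a point $z' \in \TT^{n+m}\cdot z_{\gamma_0^*}$ is of big type (resp.\ leaf type) depends only on the face $\gamma_0$, not on the choice of $z'$ in the orbit. This follows because all points of $\TT^{n+m}\cdot z_{\gamma_0^*}$ have exactly the same zero-coordinate pattern — precisely the coordinates $z'_{ij}$ with $e_{ij} \in \gamma_0$ and $z'_k$ with $e_k \in \gamma_0$ vanish, the rest being nonzero — and the conditions of big type and leaf type in Definition~\ref{def:bigleaftype} are stated purely in terms of this vanishing pattern. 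This is also exactly the content underlying Remark~\ref{rem:big2bigtype}, so in fact the proposition follows by combining Remark~\ref{rem:big2bigtype} with Lemma~\ref{lem:pointsofbarX} and the translation of $X$-cones into $\bar X$-faces; no genuinely new obstacle arises beyond assembling these pieces correctly.
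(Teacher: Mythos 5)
Your proposal is correct and follows essentially the same route as the paper's proof: pass to the face $\gamma_0$ with $P(\gamma_0^*)=\sigma$, identify big/leaf cones with orbits of big/leaf type via Remark~\ref{rem:big2bigtype}, and conclude with the three parts of Lemma~\ref{lem:pointsofbarX}. You merely spell out the details (including the orbit-invariance of the type, which is the content of the remark preceding Lemma~\ref{lem:pointsofbarX}) that the paper leaves implicit.
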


\begin{proof}
Consider the face $\gamma_0 \preccurlyeq \gamma$ 
with $P(\gamma_0^*) = \sigma$.
By Remark~\ref{rem:big2bigtype}, our $\sigma$
is a big (leaf) cone if and only if 
$\bar{X}(\gamma_0)$ consists of points
of big (leaf) type.  
The assertion thus follows from 
Lemma~\ref{lem:pointsofbarX}.
\end{proof}

\begin{example}
\label{ex:gorensteinfano}
We look again at $X = X(A,P,\Sigma)$ in 
$Z = Z_\Sigma$ from 
Examples~\ref{ex:R(A,P_0)} and~\ref{ex:R(A,P)}.
Recall that we have 
$$
P 
\ =  \
[v_{01},v_{02},v_{11},v_{21},v_{31}]
\ = \ 
\left[
\begin{array}{rrrrr}
-1 & -2 & 2 & 0 & 0 
\\
-1 & -2 & 0 & 2 & 0
\\
-1 & -2 & 0 & 0 & 4
\\
-1 & -3 & 1 & 1 & 1 
\end{array}
\right].
$$
Except $\cone(v_{01},v_{02},v_{11},v_{21},v_{31})$,
every cone generated by some of the $v_{ij}$ 
occurs in the fan $\Sigma$. 
In particular, $\Sigma$ has two big cones
$$ 
\sigma_1
\ = \
\cone(v_{01},v_{11},v_{21},v_{31}),
\qquad\qquad
\sigma_2
\ = \
\cone(v_{02},v_{11},v_{21},v_{31}),
$$
and six maximal leaf cones
$$ 
\tau_1
\ = \ 
\cone(v_{01},v_{02},v_{11}),
\quad
\tau_2
\ = \ 
\cone(v_{01},v_{02},v_{21}),
\quad
\tau_3
\ = \ 
\cone(v_{01},v_{02},v_{31}),
$$
$$
\tau_4
\ = \ 
\cone(v_{11},v_{21}),
\qquad
\tau_5
\ = \ 
\cone(v_{11},v_{31}),
\qquad
\tau_6
\ = \ 
\cone(v_{21},v_{31}).
$$
Thus, by Proposition~\ref{prop:bigleaf2rel}
the $X$-cones of $\Sigma$ are $\sigma_1, \sigma_2$ 
and the faces of $\tau_1,\ldots,\tau_6$.
This allows us to determine the Picard group
$\Pic(X)$. 
Recall the degree matrix
$$ 
Q
\ =  \
[w_{01},w_{02},w_{11},w_{21},w_{31}]
\ =  \
\left[
\begin{array}{ccccc}
2 & 1 & 2 & 2 & 1
\\
\bar 0 & \bar 0 & \bar 1 & \bar 1 & \bar 0 
\\
\bar 0 & \bar 1 & \bar 0 & \bar 1 & \bar 0 
\end{array}
\right],
$$
having the generator degrees 
$w_{ij} = Q(e_{ij}) 
\in 
K = \ZZ \times \ZZ/2\ZZ \times \ZZ/2\ZZ$ 
as its columns.
The $X$-faces corresponding to the $X$-cones 
$\sigma_1,\sigma_2,\tau_1,\tau_2,\tau_3$
are
$$ 
\gamma_1  \ = \ \cone(e_{02}),
\qquad
\gamma_2 \ = \ \cone(e_{02}),
$$
$$
\eta_1 \ = \ \cone(e_{11},e_{21}),
\quad
\eta_2 \ = \ \cone(e_{11},e_{21}),
\quad
\eta_3 \ = \ \cone(e_{21},e_{31}).
$$
Observe that these are precisely the minimal 
ones among all $X$-faces 
$\gamma_0 \preccurlyeq \gamma = \QQ_{\ge 0}^5$. 
Thus, Proposition~\ref{prop:PicandCones} yields
$$ 
\Pic(X) 
\ = \
\bigcap_{i=1}^2 Q(\lin_\QQ(\gamma_i) \cap \ZZ^4)
\, \cap \, 
\bigcap_{i=1}^3 Q(\lin_\QQ(\eta_i) \cap \ZZ^4)
\ = \ 
\ZZ \cdot (4, \bar 0, \bar 0)
\ \subseteq \
\Cl(X).
$$
Using Proposition~\ref{prop:genarrcandiv},
we see that $(4, \bar 0, \bar 0)$ equals 
the anticanonical class~$-\mathcal{K}_X$.
In particular, $X$ is a Gorenstein Fano 
threefold.
\end{example}

Big and leaf cones admit 
also simple characterizations in terms
of the geometry of the defining fan
of the ambient toric variety.

\begin{remark}
\label{rem:bigchar}
Consider the setting of
Construction~\ref{constr:R(A,P)}
and let $L \subseteq \ZZ^{r+s}$ 
be the kernel of the projection
$\pr \colon \ZZ^{r+s} \to \ZZ^{s}$.
Then, for any $\sigma \in \Sigma$,
the following statements 
are equivalent.
\begin{enumerate}
\item 
The cone $\sigma$ is big.
\item
We have $\pr(\sigma) = \QQ^{r}$.
\item
We have $\sigma \not \subseteq L_\QQ$ 
and $\sigma^\circ \cap L_\QQ \ne \emptyset$.
\end{enumerate}
Moreover, $\sigma \in \Sigma$ is a leaf cone 
if and only if its image $\pr(\sigma) \subseteq \QQ^{r}$
is a pointed cone of dimension at most~$c$.
\end{remark}

\begin{proposition}
Situation as in
Construction~\ref{constr:R(A,P)}.
Let $L \subseteq \ZZ^{r+s}$ 
be the kernel of the projection
$\pr \colon \ZZ^{r+s} \to \ZZ^{s}$
and $\Sigma_L$ the fan in 
$\ZZ^{r+s}$ consisting of all the
faces of the cones $\sigma \cap L_\QQ$,
where $\sigma \in \Sigma$.
Then the following statements 
are equivalent.
\begin{enumerate}
\item
$\Sigma_L$ is a subfan of $\Sigma$.
\item
$\Sigma$ contains no big cone.
\item
$\Sigma$ consists of leaf cones.
\end{enumerate}
\end{proposition}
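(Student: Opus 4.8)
The plan is to prove the cycle (iii) $\Rightarrow$ (ii) $\Rightarrow$ (i) $\Rightarrow$ (iii), using Remark~\ref{rem:bigchar} to translate between the combinatorial notions of big and leaf cones and the geometry of the quotient projection $\pr$ with kernel $L_\QQ$, and Proposition~\ref{prop:bigleaf2rel} for the fact that every cone of $\Sigma$, being an $X$-cone, is big or a leaf cone. The implication (iii) $\Rightarrow$ (ii) is immediate: for a leaf cone $\sigma$ the set of indices $i$ with $v_{ij} \in \sigma$ for some $j$ has at most $c \le r$ elements, so $\sigma$ cannot contain a column $v_{ij}$ for every $i = 0, \ldots, r$ and hence is not big.

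For (ii) $\Rightarrow$ (i) I fix $\sigma \in \Sigma$ and show that $\sigma \cap L_\QQ$ is a face of $\sigma$; since the faces of a cone of $\Sigma$ again belong to $\Sigma$, running this over all $\sigma$ yields $\Sigma_L \subseteq \Sigma$. By hypothesis $\sigma$ is not big, so by Remark~\ref{rem:bigchar} either $\sigma \subseteq L_\QQ$, in which case $\sigma \cap L_\QQ = \sigma$ is already a cone of $\Sigma$, or $\sigma^\circ \cap L_\QQ = \emptyset$. In the latter case $\pr(\sigma)$ is, after rescaling generators, generated by a proper subset of $\{e_1, \ldots, e_r, -\mathds{1}_r\}$, the block of some index $i_0$ being absent from $\sigma$, and any such proper subcone of $\QQ^r$ is pointed. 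Choosing a linear functional $\ell$ on $\QQ^r$ strictly positive on $\pr(\sigma) \setminus \{0\}$, the composite $\ell \circ \pr$ is a supporting functional of $\sigma$ whose vanishing locus meets $\sigma$ precisely in $\sigma \cap \ker(\pr) = \sigma \cap L_\QQ$; hence this intersection is a face of $\sigma$.

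For (i) $\Rightarrow$ (iii) I argue contrapositively. Suppose some $\sigma \in \Sigma$ is not a leaf cone. Being an $X$-cone, $\sigma$ is then big by Proposition~\ref{prop:bigleaf2rel}, so by Remark~\ref{rem:bigchar} we have $\sigma \not\subseteq L_\QQ$ while $\sigma^\circ \cap L_\QQ \ne \emptyset$. If $\sigma \cap L_\QQ$ were a cone of $\Sigma$, then, being contained in $\sigma$, it would be a face of $\sigma$ by the fan axioms; but a face of $\sigma$ that meets the relative interior $\sigma^\circ$ must equal $\sigma$, forcing $\sigma = \sigma \cap L_\QQ \subseteq L_\QQ$, a contradiction. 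Hence $\sigma \cap L_\QQ \notin \Sigma$, so $\Sigma_L$ is not a subfan of $\Sigma$.

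I expect the main technical point to be the pointedness of $\pr(\sigma)$ for non-big $\sigma$ used in (ii) $\Rightarrow$ (i): this is the geometric converse underlying Remark~\ref{rem:bigchar}\,(iii), and it is exactly what makes $\sigma \cap L_\QQ$ a genuine face of $\sigma$ rather than merely a cone section which need not belong to the fan. A secondary subtlety is that (i) $\Rightarrow$ (iii) relies on the cones of $\Sigma$ being $X$-cones so that the dichotomy of Proposition~\ref{prop:bigleaf2rel} applies; this is the standing situation of Construction~\ref{constr:R(A,P)} once cones disjoint from $X$ have been discarded.
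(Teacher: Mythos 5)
Your proof is correct and rests on the same combinatorial dichotomy as the paper's, but two of the three steps are executed differently. The paper proves (i)$\Leftrightarrow$(ii) and declares (ii)$\Leftrightarrow$(iii) clear, whereas you close the cycle (iii)$\Rightarrow$(ii)$\Rightarrow$(i)$\Rightarrow$(iii); your (i)$\Rightarrow$(iii) and the paper's (i)$\Rightarrow$(ii) are essentially the same argument (a face of $\sigma$ meeting $\sigma^\circ$ must equal $\sigma$), with your version inserting the extra step ``not leaf $\Rightarrow$ big''. The genuine divergence is in (ii)$\Rightarrow$(i): the paper takes the minimal face $\tau \preccurlyeq \sigma$ containing $\sigma \cap L_\QQ$, observes $\tau^\circ \cap L_\QQ \ne \emptyset$ and applies Remark~\ref{rem:bigchar}~(iii) to the non-big cone $\tau$ to conclude $\tau \subseteq L_\QQ$ and hence $\sigma \cap L_\QQ = \tau$; you instead exhibit $\sigma \cap L_\QQ$ directly as the face cut out by the supporting functional $\ell \circ \pr$, using that $\pr(\sigma)$ is pointed whenever $\sigma$ is not big. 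Your route is more explicit and makes visible the pointedness that underlies the remark, at the cost of a short computation; the paper's is slicker but leaves to the reader the (easy) fact that the minimal face containing $\sigma \cap L_\QQ$ meets $L_\QQ$ in its relative interior. Finally, note that any implication landing in statement~(iii) needs every cone of $\Sigma$ to be big or a leaf cone, i.e.\ an $X$-cone --- Proposition~\ref{prop:bigleaf2rel} guarantees this dichotomy only for $X$-cones, and Construction~\ref{constr:R(A,P)} does not formally exclude cones of $\Sigma$ disjoint from $X$. You are right to flag this; the paper's ``(ii)$\Leftrightarrow$(iii) is clear'' tacitly makes the same assumption, so this is a shared (and in the paper's applications harmless) imprecision rather than a defect of your argument.
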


\begin{proof}
The equivalence of~(ii) and~(iii) 
is clear. 
We prove ``(i)$\Rightarrow$(ii)''.
Assume that there is a big cone
$\sigma \in \Sigma$.
Then $\sigma \cap L_\QQ$ belongs 
to $\Sigma_L$ but not to $\Sigma$
according to~\ref{rem:bigchar}~(iii);
a contradiction.
We turn to  ``(ii)$\Rightarrow$(i)''.
The task is to show that for 
every cone $\sigma \in \Sigma$, the
intersection $\sigma \cap L_\QQ$
is a face of $\sigma$.
Let $\tau \preccurlyeq \sigma$ be 
the minimal face containing 
$\sigma \cap L_\QQ$.
Then $\tau^\circ \cap L_\QQ$ is 
non-empty.
Since $\tau \in \Sigma$ is not big, 
we can use~\ref{rem:bigchar}~(iii)
to conclude $\tau \subseteq L_\QQ$.
This means $\sigma \cap L_\QQ = \tau$.
\end{proof}

We use the the concrete description 
of $X$-cones as big cones and leaf 
cones to study (quasi)smoothness 
properties of explicit general 
arrangement varieties $X \subseteq Z$.
First, let us define quasismoothness.

\begin{definition}
Let $X \subseteq Z$ be an explicit 
$\TT$-variety. 
We say that $x \in X$ is a \emph{quasismooth}
point of $X$ if the fiber $p^{-1}(x) \subseteq \hat X$ 
consists of smooth points of $\bar X$.
\end{definition}

\begin{remark}
\label{rem:quasismoothchar}
Let $X \subseteq Z$ be an explicit 
$\TT$-variety, $\sigma \in \Sigma$ 
an $X$-cone and 
$\gamma_0 \preccurlyeq \gamma$ the face 
with $P(\gamma_0^*) = \sigma$. 
Then, for $x \in X(\sigma)$, 
the intersection 
$p^{-1}(x) \cap \bar X(\gamma_0)$ 
equals the closed orbit 
$H \cdot z$ of $p^{-1}(x)$.
In particular, $x \in X$ is 
quasismooth if and only if
$z \in \bar X$ is smooth.
Moreover, $X(\sigma)$ consists 
of quasismooth points of $X$ 
if and only if $\bar X(\gamma_0)$
consists of smooth points of $\bar X$.
\end{remark}

\begin{proposition}
\label{prop:genarrvarqsmooth}
Let $X \subseteq Z$ be an explicit general 
arrangement variety.
\begin{enumerate}
\item
For every big cone $\sigma \in \Sigma$,
the following statements are equivalent.
\begin{enumerate}
\item
There is a quasismooth point of $X$ in the piece 
$X(\sigma) \subseteq X$. 
\item
The piece $X(\sigma) \subseteq X$ 
consists of quasismooth points of $X$.
\item
Every sequence $0 \le i_1 < \ldots < i_{c+2} \le r$
admits $1 \le q \le c+2$ and $1 \le j \le n_{i_q}$
such that
$v_{i_qj} \in \sigma$, $l_{i_qj} = 1$
and $v_{i_qk} \not\in \sigma$
for all $k \ne j$.
\end{enumerate}
\item
For every leaf cone $\sigma \in \Sigma$,
the piece $X(\sigma) \subseteq X$ 
consists of quasismooth points of $X$.
\end{enumerate}
\end{proposition}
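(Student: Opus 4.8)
The plan is to reduce both parts to the Jacobian criterion for $\bar X = V(g_1,\dots,g_{r-c})$, exploiting that quasismoothness of the piece $X(\sigma)$ is governed by smoothness of $\bar X$ along $\bar X(\gamma_0)$ as in Remark~\ref{rem:quasismoothchar}, and that the latter depends only on the vanishing pattern of the points involved, which in turn is dictated by $\sigma$.

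First I would fix a big or leaf cone $\sigma \in \Sigma$. By Proposition~\ref{prop:bigleaf2rel} it is an $X$-cone, so $X(\sigma)$ is nonempty and, writing $\gamma_0 \preccurlyeq \gamma$ for the face with $P(\gamma_0^*)=\sigma$, Remark~\ref{rem:quasismoothchar} gives: $X(\sigma)$ consists of quasismooth points iff $\bar X(\gamma_0)$ consists of smooth points of $\bar X$, and there is a quasismooth point in $X(\sigma)$ iff $\bar X(\gamma_0)$ contains a smooth point of $\bar X$. Now every $z \in \bar X(\gamma_0)$ satisfies $z_{ij}=0 \Leftrightarrow v_{ij}\in\sigma$ and $z_k = 0 \Leftrightarrow v_k \in \sigma$; under this dictionary bigness of $\sigma$ is precisely big type of $z$ and the leaf condition on $\sigma$ is precisely leaf type of $z$, cf. Remark~\ref{rem:big2bigtype}. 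Since $\bar X$ is a complete intersection by Theorem~\ref{thm:R(A,P_0)}, smoothness of $z \in \bar X$ is equivalent to $J(z)$ having full rank $r-c$, and by Lemma~\ref{lem:singloc} this depends only on the vanishing pattern of $z$; hence it is constant along $\bar X(\gamma_0)$. This already yields (i)(a)$\Leftrightarrow$(i)(b) and reduces both (i) and (ii) to a single point $z \in \bar X(\gamma_0)$.

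For part (i), $\sigma$ being big forces $z$ to be of big type, so Lemma~\ref{lem:singloc}(ii) says $J(z)$ drops rank exactly when there are $i_1 < \dots < i_{c+2}$ with each $i_q$ satisfying one of the two bullet alternatives. I would negate this literally: $z$ is smooth iff for every such sequence some index $i_q$ has the property that every vanishing coordinate $z_{i_q j}$ has exponent $l_{i_q j} = 1$ and at most one coordinate $z_{i_q j}$ vanishes. The big-type hypothesis turns "at most one" into "exactly one", say the one with index $j = j_q$, which then carries $l_{i_q j_q} = 1$; rewriting $z_{i_q j} = 0$ as $v_{i_q j} \in \sigma$ turns this into condition (i)(c), closing the cycle (a)$\Leftrightarrow$(b)$\Leftrightarrow$(c).

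For part (ii), with $\sigma$ a leaf cone the point $z$ is of leaf type, with leaf set $I_z$ of size $c$. Since $c \le r < r+1$, some index $i_0 \in \{0,\dots,r\}$ lies outside $I_z$; were $z$ of big type, some coordinate $z_{i_0 j}$ would vanish, contradicting leaf type, so $z$ is not of big type. By Lemma~\ref{lem:singloc} a point of $\bar X$ with rank-deficient Jacobian is of big type, hence $J(z)$ has full rank and $z$ is smooth; thus all of $\bar X(\gamma_0)$ is smooth and $X(\sigma)$ consists of quasismooth points by Remark~\ref{rem:quasismoothchar}. The only delicate step is the bookkeeping in part (i): negating the two bullet alternatives of Lemma~\ref{lem:singloc}(ii) and recognizing that big type is exactly what collapses this into the "precisely one column of $\sigma$, with exponent one" shape of (i)(c); everything else is an assembly of results already in place.
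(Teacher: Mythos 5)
Your proposal is correct and follows essentially the same route as the paper's proof: reduce via Remark~\ref{rem:quasismoothchar} to smoothness along $\bar X(\gamma_0)$, observe that the rank condition of Lemma~\ref{lem:singloc} depends only on the vanishing pattern (hence is constant on the piece, giving (a)$\Leftrightarrow$(b)), dispose of leaf cones because their points are not of big type, and obtain (c) by negating Lemma~\ref{lem:singloc}~(ii) for a big-type point. Your explicit working-out of the negation step is accurate and matches what the paper leaves implicit.
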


\begin{proof}
According to Remark~\ref{rem:quasismoothchar},
we just have to care about smoothness of the
points of~$\bar X(\gamma_0)$.
By Remark~\ref{rem:kerA2rel}, 
a point $z \in \bar X(\gamma_0)$ 
is smooth if and only if the 
Jacobian $J(z)$ of $g_1,\ldots, g_{r-c}$ 
is of full rank.
The latter is characterized via
Lemma~\ref{lem:singloc}~(ii).
In particular, we see that 
in the case of a leaf cone $\sigma$,
all points of $\bar X(\gamma_0)$ 
are smooth, proving~(ii).
To show~(i), let $\sigma$ be big.
By the nature of 
Condition~\ref{lem:singloc}~(ii),
there is a smooth point of $\bar X$ 
in $\bar X(\gamma_0)$ if and only if 
every point of $\bar X(\gamma_0)$ 
is smooth in~$\bar X$.
This establishes the equivalence 
of~(a) and~(b).
The equivalence of~(a) and~(c)
is obtained by negating 
Condition~\ref{lem:singloc}~(ii)
for a point $z$ of big type.
\end{proof}

\begin{corollary}
\label{prop:niliji}
Let $X \subseteq Z$ be a quasismooth 
explicit general arrangement variety.
Assume that~$P$ is irredundant
and let $\sigma=\cone(v_{0j_0}+\ldots+ v_{rj_r})$
be an elementary big cone of 
$\Sigma$.
\begin{enumerate}
\item
We have $l_{ij_i} \ge 2$ for at most $c+1$ 
different $i=0,\dots,r$.
\item
We have $n_i = 1$ for at most  $c+1$ 
different $i=0,\dots,r$.
\end{enumerate}
\end{corollary}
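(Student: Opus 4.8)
The plan is to obtain both assertions from the quasismoothness criterion for big cones, Proposition~\ref{prop:genarrvarqsmooth}~(i), applied to the given cone~$\sigma$. First I would note that, since $\sigma$ is big and lies in~$\Sigma$, it is an $X$-cone by Proposition~\ref{prop:bigleaf2rel}, so the piece $X(\sigma)$ is non-empty; as $X$ is quasismooth, $X(\sigma)$ consists of quasismooth points, and hence condition~(c) of Proposition~\ref{prop:genarrvarqsmooth}~(i) holds for~$\sigma$.

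The next step is to unravel that condition using that $\sigma$ is \emph{elementary} big. By definition, among all columns $v_{ij}$ of~$P$, the cone $\sigma$ contains precisely the $r+1$ columns $v_{0j_0},\ldots,v_{rj_r}$, exactly one for each index $i=0,\ldots,r$. Consequently, for any index $i_q$ the only $j$ with $v_{i_qj}\in\sigma$ is $j=j_{i_q}$, and the clause ``$v_{i_qk}\notin\sigma$ for all $k\ne j$'' in condition~(c) is automatically satisfied. Thus, for elementary big~$\sigma$, condition~(c) reads: for every strictly increasing sequence $0\le i_1<\ldots<i_{c+2}\le r$ there is some $q$ with $l_{i_qj_{i_q}}=1$. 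Negating this, there is no $(c+2)$-element subset of $\{0,\ldots,r\}$ on which $l_{ij_i}\ge 2$ throughout; equivalently, $\#\{\,i : l_{ij_i}\ge 2\,\}\le c+1$. This is exactly assertion~(i).

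For assertion~(ii) I would combine~(i) with irredundancy of~$P$. By Remark~\ref{rem:irredundant} we have $l_{i1}+\ldots+l_{in_i}\ge 2$ for every~$i$. If $n_i=1$, then necessarily $j_i=1$, and $l_{ij_i}=l_{i1}=l_{i1}+\ldots+l_{in_i}\ge 2$. Hence $\{\,i : n_i=1\,\}\subseteq\{\,i : l_{ij_i}\ge 2\,\}$, and since the right-hand set has at most $c+1$ elements by~(i), so does the left-hand one, which is~(ii).

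I do not anticipate a real obstacle: the one delicate point is the faithful reading of condition~(c) of Proposition~\ref{prop:genarrvarqsmooth}~(i) for an \emph{elementary} big cone, namely observing that the ``no further column of that block lies in $\sigma$'' requirement is vacuous and that $v_{i_qj}\in\sigma$ forces $j=j_{i_q}$. After that, both statements are short counting arguments.
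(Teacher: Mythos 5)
Your proposal is correct and follows exactly the route the paper intends: the corollary is stated without proof as an immediate consequence of Proposition~\ref{prop:genarrvarqsmooth}~(i)~(c), and your unravelling of that condition for an elementary big cone (where each block contributes exactly one column, so the exclusion clause is vacuous) together with the irredundancy argument for~(ii) is precisely the missing derivation.
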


Combining Proposition~\ref{prop:genarrvarqsmooth},
Remark~\ref{rem:quasismoothchar} and 
Proposition~\ref{prop:smoothchar}
leads to the following purely combinatorial
smoothness criterion for explicit general 
arrangement varieties.

\begin{corollary}
\label{cor:genarrvarsmooth}
Let $X \subseteq Z$ be an explicit general 
arrangement variety.
\begin{enumerate}
\item
Let $\sigma \in \Sigma$ be a big cone and 
$\gamma_0 \preccurlyeq \gamma$ 
the corresponding $X$-face.
Then the following statement are 
equivalent.
\begin{enumerate}
\item 
There is a smooth point of $X$ in the piece 
$X(\sigma) = X(\gamma_0) \subseteq X$. 
\item
The piece $X(\sigma) = X(\gamma_0) \subseteq X$ 
consists of smooth points of $X$.
\item
The cone $\sigma$ is regular 
and~\ref{prop:genarrvarqsmooth}~(i)~(c) holds.
\item
We have $K = Q(\lin_\QQ(\gamma_0) \cap \ZZ^{n+m})$
and~\ref{prop:genarrvarqsmooth}~(i)~(c) holds.
\end{enumerate}
\item
Let $\sigma \in \Sigma$ be a leaf cone and 
$\gamma_0 \preccurlyeq \gamma$ 
the corresponding $X$-face.
Then the following statements are 
equivalent.
\begin{enumerate}
\item 
There is a smooth point of $X$ in the piece 
$X(\sigma) = X(\gamma_0) \subseteq X$. 
\item
The piece $X(\sigma) = X(\gamma_0) \subseteq X$ 
consists of smooth points of $X$.
\item
The cone $\sigma$ is regular.
\item
We have $K = Q(\lin_\QQ(\gamma_0) \cap \ZZ^{n+m})$.
\end{enumerate}
\end{enumerate}
\end{corollary}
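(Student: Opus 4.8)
The plan is to deduce both equivalences by assembling Proposition~\ref{prop:smoothchar}, Remark~\ref{rem:quasismoothchar} and Proposition~\ref{prop:genarrvarqsmooth}. Throughout, $\gamma_0 \preccurlyeq \gamma = \QQ_{\ge 0}^{n+m}$ denotes the $X$-face with $P(\gamma_0^*) = \sigma$. One first records that a big cone or a leaf cone is automatically an $X$-cone by Proposition~\ref{prop:bigleaf2rel}, so that the piece $X(\sigma) = X(\gamma_0)$ is non-empty; this already gives the trivial implications ``(b)~$\Rightarrow$~(a)'' in both parts.

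I would next settle ``(c)~$\Leftrightarrow$~(d)''. Since $\gamma_0$ is a face of the orthant, it is spanned by a subset of the canonical basis vectors, so the submonoid $\gamma_0 \cap \ZZ^{n+m}$ generates the sublattice $\lin_\QQ(\gamma_0) \cap \ZZ^{n+m}$; hence $Q(\gamma_0 \cap \ZZ^{n+m})$ generates $K$ as a group if and only if $K = Q(\lin_\QQ(\gamma_0) \cap \ZZ^{n+m})$, and by the equivalence of statements~(2) and~(3) in Proposition~\ref{prop:smoothchar} both are equivalent to regularity of $\sigma$. As in part~(i) both (c) and (d) carry the identical extra clause \ref{prop:genarrvarqsmooth}~(i)~(c), this yields ``(c)~$\Leftrightarrow$~(d)'' in both parts. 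Then I would treat ``(b)~$\Leftrightarrow$~(c)'': by the ``moreover'' part of Proposition~\ref{prop:smoothchar}, $X(\sigma)$ consists of smooth points of $X$ exactly when $\sigma$ is regular and $\bar X(\gamma_0)$ consists of smooth points of $\bar X$, and by Remark~\ref{rem:quasismoothchar} the latter says precisely that $X(\sigma)$ consists of quasismooth points of $X$. For a big cone, Proposition~\ref{prop:genarrvarqsmooth}~(i) rewrites this quasismoothness as the condition \ref{prop:genarrvarqsmooth}~(i)~(c), giving ``(b)~$\Leftrightarrow$~(c)''; for a leaf cone, Proposition~\ref{prop:genarrvarqsmooth}~(ii) makes quasismoothness automatic, so (b) collapses to ``$\sigma$ regular'', which is (c).

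What remains is ``(a)~$\Rightarrow$~(b)'', and this is where I expect the only genuine work. The mechanism is that smoothness of points of $X$ is constant along each piece $X(\sigma)$: on the one hand, a point $x \in X(\sigma)$ is factorial if and only if its local divisor class group — which in the bunched-ring picture of Remark~\ref{rem:fan2bunch} is $K / \bangle{Q(e);\ e \in \gamma_0 \cap \ZZ^{n+m}}$, cf.\ \cite{ArDeHaLa} — vanishes, a condition depending only on $\gamma_0$ and, by Proposition~\ref{prop:smoothchar}, equivalent to regularity of $\sigma$; on the other hand, $x$ is quasismooth if and only if the Jacobian $J(z)$ of $g_1,\dots,g_{r-c}$ at the closed $H$-orbit point $z$ over $x$ has full rank, which by Lemma~\ref{lem:singloc}~(ii) is determined purely by the zero pattern of $z$, hence again only by $\gamma_0$ (this is exactly the input behind Proposition~\ref{prop:genarrvarqsmooth}). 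Consequently, if $X(\sigma)$ contains a single smooth point $x$, then $x$ is factorial, so $\sigma$ is regular; for regular $\sigma$ the characteristic quasitorus $H$ acts freely along the toric orbit through $z_{\gamma_0^*}$ (its isotropy character group being $K/\bangle{w_i;\ e_i \in \gamma_0}$, which is trivial precisely under condition~(3)), so $p$ is an étale $H$-principal bundle near $z$ and smoothness of $x$ transfers to smoothness of $z$ in $\bar X$, i.e.\ $x$ is quasismooth; by the piecewise constancy just discussed every point of $X(\sigma)$ is then quasismooth, and the ``moreover'' clause of Proposition~\ref{prop:smoothchar} forces all of $X(\sigma)$ to consist of smooth points, which is (b). Everything apart from this piecewise-constancy argument is a formal combination of the cited statements.
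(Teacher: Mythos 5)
Your proof is correct and takes essentially the same route as the paper, which gives no written argument beyond the remark that the corollary is obtained by combining Proposition~\ref{prop:genarrvarqsmooth}, Remark~\ref{rem:quasismoothchar} and Proposition~\ref{prop:smoothchar}; your write-up is exactly that combination with the details supplied. In particular, you correctly isolate the only non-formal step, namely that a single smooth point forces smoothness of the whole piece, and reduce it to the constancy along a piece of the local divisor class group and of the Jacobian rank condition from Lemma~\ref{lem:singloc}, which is precisely the mechanism already underlying Propositions~\ref{prop:smoothchar} and~\ref{prop:genarrvarqsmooth}.
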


\begin{example}
\label{ex:gorensteinfano2}
Let us continue the discussion 
of $X \subseteq Z$  
from~\ref{ex:R(A,P_0)} and~\ref{ex:R(A,P)}.
In~\ref{ex:gorensteinfano}, 
we determined the maximal $X$-cones:
there are two big cones 
$\sigma_1$, $\sigma_2$ 
and six maximal leaf cones 
$\tau_1, \ldots, \tau_6$.
Using Corollary~\ref{cor:genarrvarsmooth},
we see that the associated pieces are 
precisely those consisting of singular 
points of~$X$.
Note that 
$$
\overline{X(\tau_i)}  
\ = \ 
X(\sigma_1) \ \cup X(\tau_i) \cup \ X(\sigma_2),
\qquad i = 1,2,3,
$$ 
are curves, each being the 
closure of the $\TT$-orbit $X(\tau_i)$;
use Proposition~\ref{prop:isogroups}.
The union over these $\overline{X(\tau_i)}$ 
is a connected component of the singular 
locus of $X$. 
In addition, we have $X(\tau_i)$ for $i = 4,5,6$,
each consisting of an isolated singularity.
\end{example}

\begin{example}
\label{ex:PrtimesPr2}
We continue Example~\ref{ex:PrtimesPr}.
Suitably renumbering the variables 
we achieve $a \geq b$. 
We claim that $X$ is smooth if and only if 
one of the following conditions is satisfied:
\begin{enumerate}
\item 
$r= c+1$, $a \geq 1$ and $b = 1$,
\item 
$r=c+2$ and $a = b = 1$.
\end{enumerate}
Indeed, $r \le c+2$ holds, because 
otherwise the big cone $\sigma \in \Sigma$ 
generated by all $v_{ij}$ with $i \le r-2$
and $v_{r-1 \, 1}$, $v_{r2}$ yields a 
singular point in $X(\sigma)$.
Now, the elementary big cones 
of $\Sigma$ are precisely 
$\cone(v_{0j_0}, \ldots,  v_{rj_r})$,
where $\{j_0, \ldots,  j_r\}$ equals 
$\{1,2\}$, and the claim follows
from Corollaries~\ref{prop:niliji}
and~\ref{cor:genarrvarsmooth}.
In particular we obtain smooth Fano
varieties in the cases  
\begin{enumerate}
\item[(iii)] 
$r= c+1$, $1 \le a \le r$ and $b = 1$,
\item[(iv)] 
$r=c+2$ and $a = b = 1$.
\end{enumerate}
\end{example}

Finally, we observe constraints on the defining data 
of an explicit general arrangement variety 
arising from local factoriality and $\QQ$-factoriality.

\begin{proposition}
\label{prop:leafy2ni>2}
Let $X \subseteq Z$ be a
locally factorial explicit 
general arrangement variety,
where~$P$ is irredundant.
Assume that $\Sigma$ consists 
of leaf cones
and each of the sets $cone(v_{i1}) + L_\QQ$ 
is covered by cones of $\Sigma$.
Then $n_i \ge 2$ holds for 
$i = 0, \ldots, r$.
\end{proposition}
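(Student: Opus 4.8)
The plan is to argue by contradiction. Assume $n_i=1$ for some $i\in\{0,\dots,r\}$; then by irredundancy (Remark~\ref{rem:irredundant}) the single exponent attached to the index $i$ satisfies $l_{i1}\ge 2$. I will produce a cone $\sigma\in\Sigma$ which is regular, has $v_{i1}$ among its rays, and meets $L_\QQ$ in a face of dimension $s=\dim L_\QQ$. Passing to the quotient lattice $\ZZ^{r+s}/(L_\QQ\cap\ZZ^{r+s})\cong\ZZ^r$ will then force $\pr(v_{i1})$ to be a primitive vector; but $\pr(v_{i1})$ equals $l_{i1}e_i$ for $i\ge 1$ and $-l_{01}\mathds{1}_r$ for $i=0$, so primitivity yields $l_{i1}=1$, a contradiction.

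To find $\sigma$ I first locate an $s$-dimensional cone $\tau_0\in\Sigma$ inside $L_\QQ$. Since $L_\QQ\subseteq\cone(v_{i1})+L_\QQ$ and the latter is, by hypothesis, covered by cones of $\Sigma$, the space $L_\QQ$ is covered; being a finite union of the cones $\sigma'\cap L_\QQ$, one of these, say for $\sigma'\in\Sigma$, has dimension $s$. As $\Sigma$ consists of leaf cones, $\pr(\sigma')$ is pointed (Remark~\ref{rem:bigchar}), hence $\sigma'\cap L_\QQ=\sigma'\cap\ker(\pr)$ is a face of $\sigma'$ and thus itself a cone $\tau_0\in\Sigma$ with $\dim\tau_0=s$, so $\lin(\tau_0)=L_\QQ$. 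Now fix $\ell_0$ in the relative interior of $\tau_0$ and, for small $\varepsilon>0$, consider $w_\varepsilon:=\varepsilon v_{i1}+\ell_0\in\cone(v_{i1})+L_\QQ\subseteq|\Sigma|$. Since $w_\varepsilon\to\ell_0$ and $\ell_0$ lies in the complement of all cones of $\Sigma$ not having $\tau_0$ as a face, which is relatively open in $|\Sigma|$, for $\varepsilon$ small enough $w_\varepsilon$ lies in some cone $\sigma\in\Sigma$ with $\tau_0\preccurlyeq\sigma$. Then $s=\dim\tau_0\le\dim(\sigma\cap L_\QQ)\le\dim L_\QQ=s$, and since $\tau_0$ and $\sigma\cap L_\QQ$ are both faces of $\sigma$ (the latter again because $\pr(\sigma)$ is pointed), of the same dimension, with $\tau_0\subseteq\sigma\cap L_\QQ$, they coincide; so $\sigma\cap L_\QQ=\tau_0$ is an $s$-dimensional face of $\sigma$. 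Writing $w_\varepsilon$ as a non-negative combination of the rays of $\sigma$ and applying $\pr$ (which kills the rays lying in $L_\QQ$, namely the rays of $\tau_0$), a sign comparison in the $i$-th coordinate — in every coordinate, when $i=0$ — together with $n_i=1$ shows that $v_{i1}$ is itself a ray of $\sigma$.

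Finally I invoke regularity. The cone $\sigma$ is a leaf cone, hence an $X$-cone by Proposition~\ref{prop:bigleaf2rel}, so local factoriality of $X$ forces $\sigma$ to be regular by Proposition~\ref{prop:smoothchar}. Then $\sigma$ is simplicial, its face $\tau_0$ is regular with $\lin(\tau_0)=L_\QQ$, and the rays of $\tau_0$ — which are exactly the rays of $\sigma$ lying in $L_\QQ$, hence necessarily of the form $v_k$, the columns $v_{i'j'}$ having nonzero image under $\pr$ — form a $\ZZ$-basis of $L_\QQ\cap\ZZ^{r+s}=\{0\}^r\times\ZZ^s$. Completing the rays of $\sigma$ to a $\ZZ$-basis of $\ZZ^{r+s}$ and reducing modulo $L_\QQ\cap\ZZ^{r+s}$, the images of the remaining rays of $\sigma$ are part of a $\ZZ$-basis of $\ZZ^{r+s}/(L_\QQ\cap\ZZ^{r+s})\cong\ZZ^r$; in particular $\pr(v_{i1})$ is primitive. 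As noted, $\pr(v_{i1})$ is $l_{i1}e_i$ for $i\ge 1$ and $-l_{01}\mathds{1}_r$ for $i=0$, so its primitivity yields $l_{i1}=1$, contradicting $l_{i1}\ge 2$. Hence $n_i\ge 2$ for all $i$.

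The step I expect to be the crux is the extraction of $\sigma$ in the second paragraph: one has to combine the covering hypothesis, the leaf-cone structure (via pointedness of $\pr(\sigma)$ and the resulting face property $\sigma\cap L_\QQ\preccurlyeq\sigma$) and a limiting argument near $\tau_0$ to obtain a single cone that simultaneously has $v_{i1}$ as a ray and meets $L_\QQ$ in a maximal-dimensional face. Once this is in place, regularity and the passage to the quotient lattice are routine.
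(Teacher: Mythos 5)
Your proof is correct and follows essentially the same route as the paper's: both extract from the hypotheses a regular leaf cone of $\Sigma$ having $v_{i1}$ as a ray together with an $s$-dimensional face inside $L_\QQ$, and then read off primitivity of $\pr(v_{i1})=l_{i1}e_i$ (resp.\ $-l_{01}\mathds{1}_r$) to contradict irredundancy. The only difference is in how that cone is extracted — the paper shows $\cone(v_{i1})+L_\QQ$ is a union of cones of $\Sigma$ of the form $\cone(v_{i1})+\sigma_L$ and picks a top-dimensional one, whereas you perturb off a relative interior point of an $s$-dimensional cone of $\Sigma$ lying in $L_\QQ$ — but both hinge on the same face property of leaf cones under $\pr$ and use the hypotheses in identical roles.
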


\begin{proof}
Assume that $n_i = 1$ holds for some $i$.
Let $\varrho$ denote the ray through 
$v_{i1}$ and consider the cone 
$\tau := \varrho + L_\QQ$.
We claim that for every $\sigma \in \Sigma$,
the intersection $\tau \cap \sigma$ is a 
face of $\sigma$.
Indeed, as $\Sigma$ consists of leaf cones, 
the image of $\pr(\sigma)$ under the projection 
$\pr \colon \QQ^{r+s} \to \QQ^{r}$ is a pointed
cone, having $\pr(\varrho)$ as an extremal ray.
Thus, $\tau = \pr^{-1}(\pr(\varrho))$ cuts out 
a face from $\sigma$.

By our assumptions, the above claim implies that 
$\tau = \varrho + L_\QQ$ is a union of cones of~$\Sigma$.
Any cone of~$\Sigma \setminus \Sigma_L$ 
contained in $\tau$ is necessarily of the form 
$ \varrho + \sigma_L \in \Sigma$
with $\sigma_L \in \Sigma_L$.
We conclude that in particular all the cones 
$\sigma = \varrho + \sigma_L$, where 
$\dim(\sigma_L) = s$, must belong to $\Sigma$.
As $\sigma$ and $\sigma_L$ are leaf cones,
they are $X$-cones by 
Proposition~\ref{prop:bigleaf2rel}.
Thus, Proposition~\ref{prop:smoothchar}
yields that $\sigma$ and $\sigma_L$
are regular.
This implies $l_{i1} = 1$; a contradiction 
to the assumption that $P$ is irredundant.
\end{proof}

\begin{corollary}
\label{cor:leafy2rho}
Let $X \subseteq Z$ be a non-toric, 
projective, locally factorial 
explicit general arrangement
variety.
If $\Sigma$ consists of leaf cones,
then the Picard number and the 
complexity of~$X$ satisfy
$$
\rho(X) 
\ \ge \ 
 r+3
\ \ge \ 
c(X) + 4.
$$
\end{corollary}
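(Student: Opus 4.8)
The plan is to combine the lower bound $n_i \ge 2$ forced by local factoriality (Proposition~\ref{prop:leafy2ni>2}) with the rank estimate for $\Cl(X)$ coming from the general $\TT$-orbit closure (Corollary~\ref{cor:upperrhobound1}). As a preliminary I would make two normalizations: by Remark~\ref{rem:irredundant} I assume $P$ irredundant, and by the discussion preceding Proposition~\ref{prop:genorbclos} I pass to the minimal ambient toric variety, so that $Z = Z_\Sigma$ is minimal; these affect neither local factoriality, nor projectivity, nor the property that $\Sigma$ consists of leaf cones. The inequality $r+3 \ge c+4$, that is $r \ge c+1$, is then immediate: if $r=c$ there are no relations $g_t$, hence $R(A,P)$ is a polynomial ring, $\bar X = \KK^{n+m}$ and $X = Z$ is toric, contrary to the hypothesis.

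For the main inequality $\rho(X) \ge r+3$ I would argue as follows. Since $\Sigma$ consists of leaf cones it contains no big cone, and therefore $\Sigma_L$ is a subfan of $\Sigma$. Next I would use projectivity --- equivalently completeness --- of $X$ to conclude that, for the minimal $Z$, each set $\cone(v_{i1}) + L_\QQ$ is swept out by cones of $\Sigma$ (heuristically, otherwise a fibre of the orbit quotient over a point of $\PP_c$ lying on a single doubling divisor would fail to be complete). These being exactly the hypotheses of Proposition~\ref{prop:leafy2ni>2}, we obtain $n_i \ge 2$ for all $i = 0,\dots,r$, and hence
$$ n \ = \ n_0 + \dots + n_r \ \ge \ 2(r+1). $$
On the other hand, with $X$ complete, $Z$ minimal and $\Sigma_L$ a subfan, Corollary~\ref{cor:upperrhobound1} applies and gives $\rk(\Cl(X)) - \rk(\Cl(\PP_c)) > n - r - 1$; since $\rk(\Cl(\PP_c)) = 1$ this reads $\rk(\Cl(X)) \ge n - r + 1$. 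Finally, local factoriality yields $\Pic(X) = \Cl(X)$, so $\rho(X) = \rk(\Cl(X)) \ge n - r + 1 \ge 2(r+1) - r + 1 = r+3$, which together with the first paragraph proves the claim.

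The step I expect to be the genuine obstacle is the second one: making rigorous that Proposition~\ref{prop:leafy2ni>2} is applicable after passing to the minimal $Z$, i.e.\ that each $\cone(v_{i1}) + L_\QQ$ is covered by cones of $\Sigma$. This is precisely where projectivity of $X$ has to be exploited, presumably through properness of a big representative of the maximal orbit quotient $X \dasharrow \PP_c$ (which is available because $\Sigma_L \subseteq \Sigma$). Everything else is routine rank bookkeeping, using $\Cl(X) = \ZZ^{n+m}/\im(P^*)$ and $\Cl(\PP_c) = \ZZ$.
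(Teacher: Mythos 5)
Your proposal follows the paper's proof almost exactly: the same two ingredients (Proposition~\ref{prop:leafy2ni>2} giving $n_i \ge 2$, hence $n \ge 2r+2$, and Corollary~\ref{cor:upperrhobound1} giving $\rk\Cl(X) \ge n-r+1$), the same use of local factoriality to identify $\rho(X)$ with $\rk\Cl(X)$, and the same observation that $r=c$ would force $X$ to be toric. The one place you diverge -- and the step you flag as the ``genuine obstacle'' -- is the verification that each set $\cone(v_{i1}) + L_\QQ$ is covered by cones of $\Sigma$. Here the paper's route is much simpler than the one you sketch: since $X$ is projective, one may replace $\Sigma$ by the complete fan $\Sigma(u)$ of Remark~\ref{rem:Sigmau}, and for a complete fan the covering condition is vacuous, so Proposition~\ref{prop:leafy2ni>2} applies immediately; there is no need to pass to the minimal ambient toric variety or to invoke properness of a big representative of $X \dasharrow \PP_c$. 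Note also that the conclusion $n_i \ge 2$ is a statement about the matrix $P$ alone, so once established with one choice of fan it feeds into the rank bookkeeping of Corollary~\ref{cor:upperrhobound1} regardless of which admissible fan you work with (and the quantities $n,m,r,s,t$ entering that computation are likewise determined by $P$). With that substitution your argument is complete and coincides with the paper's.
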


\begin{proof}
Since $X$ is non-toric, we may assume 
that $P$ is irredundant with $r > c$.
Moreover, as $X$ is projective, 
we may assume that $\Sigma$ is complete.
Thus, Proposition~\ref{prop:leafy2ni>2}
applies and we obtain $n \ge 2r +2$.
Then Corollary~\ref{cor:upperrhobound1}
yields the desired estimate.
\end{proof}

\begin{proposition}
\label{prop:big2elbig}
Let $X \subseteq Z$ be a $\QQ$-factorial
explicit general arrangement variety.
If $\Sigma$ admits a big cone, then
it admits an elementary big cone.
\end{proposition}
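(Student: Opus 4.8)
The plan is to exploit the fact that $\QQ$-factoriality forces $X$-cones to be simplicial, together with the observation that each one-dimensional cone $\cone(v_{ij})$ is itself a ray of the fan $\Sigma$, so that any column of $P$ contained in a cone of $\Sigma$ is automatically an extremal ray of that cone. Granting this, an elementary big cone is extracted from a big one simply by passing to a suitable face.

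Concretely, I would argue as follows. Fix a big cone $\sigma \in \Sigma$. By Proposition~\ref{prop:bigleaf2rel} the cone $\sigma$ is an $X$-cone, and since $X$ is $\QQ$-factorial, Proposition~\ref{prop:Qfactchar} shows that $\sigma$ is simplicial; let $\varrho_1, \ldots, \varrho_d$ be its rays, where $d = \dim\sigma$, so that every subset of $\{\varrho_1, \ldots, \varrho_d\}$ spans a face of $\sigma$ and thus a cone of $\Sigma$. The key point is that whenever a column $v_{ij}$ of $P$ satisfies $v_{ij} \in \sigma$, the cone $\cone(v_{ij})$ is one of the $\varrho_\ell$: indeed $\cone(v_{ij})$ is a one-dimensional cone of $\Sigma$ by construction, hence by the fan axiom $\cone(v_{ij}) = \cone(v_{ij}) \cap \sigma$ is a face of $\sigma$, i.e.\ an extremal ray. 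Since $\sigma$ is big, it follows that for each $i = 0, \ldots, r$ there is at least one $j$ with $\cone(v_{ij}) \in \{\varrho_1, \ldots, \varrho_d\}$.

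Now choose, for every $i = 0, \ldots, r$, exactly one index $j_i$ with $\cone(v_{ij_i})$ a ray of $\sigma$, and put $\tau := \cone(v_{0j_0}, \ldots, v_{rj_r})$. Being generated by a subset of the rays of the simplicial cone $\sigma$, the cone $\tau$ is a face of $\sigma$, so $\tau \in \Sigma$, and its set of rays is exactly $\{\cone(v_{0j_0}), \ldots, \cone(v_{rj_r})\}$. To see that $\tau$ is elementary big, suppose $v_{ij} \in \tau$ for some $i, j$; by the same argument as above $\cone(v_{ij})$ is a ray of $\tau$, hence equals $\cone(v_{i'j_{i'}})$ for some $i'$, and comparing primitive generators together with the fact that the columns of $P$ are pairwise distinct forces $i = i'$ and $j = j_i$. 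Thus for each $i$ the cone $\tau$ contains precisely one of the columns $v_{i1}, \ldots, v_{in_i}$, namely $v_{ij_i}$, so $\tau$ is an elementary big cone of $\Sigma$.

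The argument is short, and the only step requiring genuine care is the extremal-ray observation: that a column of $P$ sitting inside a cone of $\Sigma$ must be an extremal ray of that cone. This is where the structure of $\Sigma$ as a fan whose rays are exactly the $\cone(v_{ij})$ and $\cone(v_k)$ is used; everything else is bookkeeping with the definitions of big and elementary big cones.
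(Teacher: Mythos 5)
Your proof is correct and follows essentially the same route as the paper: use Proposition~\ref{prop:bigleaf2rel} to see that a big cone is an $X$-cone, invoke $\QQ$-factoriality via Proposition~\ref{prop:Qfactchar} to get simpliciality, and then pass to an elementary big face. The paper's proof simply asserts that ``any elementary big face of $\sigma$ is as wanted''; your extremal-ray observation and the explicit choice of one column $v_{ij_i}$ per index $i$ correctly fill in why such a face exists and lies in $\Sigma$.
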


\begin{proof}
Let $\sigma \in \Sigma$ be a big cone.
Then $\sigma$ is an $X$-cone according 
to Proposition~\ref{prop:bigleaf2rel}.
Proposition~\ref{prop:Qfactchar} tells 
us that $\sigma$ is simplicial.
Now, any elementary big face of $\sigma$ 
is as wanted.
\end{proof}

\begin{corollary}
\label{cor:ebc}
Let $X \subseteq Z$ be a non-toric, 
projective, locally factorial 
explicit general arrangement variety.
If $X$ is of Picard number $\rho(X) \le c+3$,
then $\Sigma$ admits an elementary big cone.
\end{corollary}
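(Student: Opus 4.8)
The plan is a short argument by contraposition, chaining Proposition~\ref{prop:big2elbig} with Corollary~\ref{cor:leafy2rho}. So I would suppose that $\Sigma$ admits no elementary big cone. Since $X$ is locally factorial, it is in particular $\QQ$-factorial, and hence Proposition~\ref{prop:big2elbig} applies in its contrapositive form: $\Sigma$ admits no big cone at all. The point is then to turn ``no big cone'' into ``consists of leaf cones'', and feed this into Corollary~\ref{cor:leafy2rho}, whose conclusion $\rho(X) \ge c(X)+4$ clashes with the standing hypothesis $\rho(X) \le c+3$.

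For the step ``no big cone $\Rightarrow$ consists of leaf cones'' one may invoke directly the equivalence recorded in the proposition following Remark~\ref{rem:bigchar}. To be safe about cones of $\Sigma$ that are neither big nor leaf cones, I would first pass, if necessary, to the minimal ambient toric variety of $X$; this only deletes cones from $\Sigma$, so it neither creates an elementary big cone nor a big cone, and by Proposition~\ref{prop:bigleaf2rel} every big or leaf cone, being an $X$-cone, survives the reduction. Thus one may assume that every maximal cone of $\Sigma$ is an $X$-cone, hence, by Proposition~\ref{prop:bigleaf2rel}, big or a leaf cone; as there are no big cones, every maximal cone is a leaf cone, and since faces of leaf cones are again leaf cones, the whole fan $\Sigma$ consists of leaf cones.

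Now Corollary~\ref{cor:leafy2rho} applies verbatim: $X$ is non-toric, projective, locally factorial, an explicit general arrangement variety, and $\Sigma$ consists of leaf cones. It yields $\rho(X) \ge c(X) + 4$, which contradicts $\rho(X) \le c + 3 = c(X) + 3$. Hence $\Sigma$ must admit an elementary big cone.

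I do not expect a genuine obstacle here: the real content — the bound $n_i \ge 2$ coming from Proposition~\ref{prop:leafy2ni>2} together with the rank estimate of Corollary~\ref{cor:upperrhobound1} — has already been packaged into Corollary~\ref{cor:leafy2rho}. The only point requiring a little care is to check that the hypotheses line up, namely that local factoriality serves simultaneously as $\QQ$-factoriality for Proposition~\ref{prop:big2elbig} and as the hypothesis of Corollary~\ref{cor:leafy2rho}, and that ``$\rho(X)$'' is to be read as $\rk(\Cl(X))$, which is legitimate since $\Pic(X) = \Cl(X)$ holds on a locally factorial variety.
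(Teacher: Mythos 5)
Your proof is correct and is precisely the argument the paper intends: Corollary~\ref{cor:ebc} is stated without proof as the immediate combination of Proposition~\ref{prop:big2elbig} (applicable since locally factorial implies $\QQ$-factorial), the implication ``no big cone $\Rightarrow$ $\Sigma$ consists of leaf cones'', and Corollary~\ref{cor:leafy2rho}, whose bound $\rho(X)\ge c(X)+4$ contradicts $\rho(X)\le c+3$. Your detour through the minimal ambient toric variety is a sensible safeguard for cones that are neither big nor leaf cones, a point the paper simply declares clear in the proposition following Remark~\ref{rem:bigchar}.
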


We conclude the section with a closer look 
at smooth projective general arrangement 
varieties of Picard number one.

\begin{proposition}
\label{prop:arrvar-rho-one}
Let~$X$ be a non-toric, smooth, 
projective general arrangement variety 
of Picard number one.
Then $X$ is a quadric
$V(T_0^2 + \ldots + T_r^2) \subseteq \PP_r$.
\end{proposition}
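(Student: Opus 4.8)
The plan is to exploit the full strength of the structure theory of explicit general arrangement varieties developed in this section, reducing everything to a rigid numerical classification. By Theorem~\ref{thm:arrTvar}, since $X$ is projective (hence $A_2$-maximal), we may assume $X = X(A,P,\Sigma) \subseteq Z_\Sigma$ is an explicit general arrangement variety, and by Remark~\ref{rem:irredundant} we may take $P$ irredundant, so that $l_{i1}+\ldots+l_{in_i} \ge 2$ for all $i$. Smoothness forces $K = \Cl(X) = \ZZ$, so the degree matrix $Q$ has a single row of positive integers $w_{ij}, w_k$. Being non-toric means $r > c \ge 1$. The first step is to locate big cones: since $\rho(X) = 1 \le c+3$, Corollary~\ref{cor:ebc} guarantees that $\Sigma$ admits an elementary big cone $\sigma = \cone(v_{0j_0}, \ldots, v_{rj_r})$. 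Applying Corollary~\ref{prop:niliji} to this $\sigma$, we learn that $n_i = 1$ for at most $c+1$ indices $i$ and $l_{ij_i} \ge 2$ for at most $c+1$ indices $i$.

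The second step is to pin down the dimensions $n_i$ and the exponents $l_{ij}$ using smoothness more aggressively, via the combinatorial criterion Corollary~\ref{cor:genarrvarsmooth}. Because $K = \ZZ$, regularity of a cone $\sigma$ (equivalently $K = Q(\lin_\QQ(\gamma_0) \cap \ZZ^{n+m})$) is an extremely restrictive condition: it essentially forces many of the generator degrees $w_{ij}$ to equal $\pm 1$. Combining the constraint from the big cone (criterion \ref{prop:genarrvarqsmooth}~(i)~(c): for every $(c+2)$-subset of the indices, some $v_{i_q j}$ in $\sigma$ has $l_{i_q j} = 1$ and is the unique column of the $i_q$-th block in $\sigma$) with the regularity of $\sigma$ and of the maximal leaf cones, I expect to be driven to the configuration $n_i = 1$ and $l_{i1} = 1$ for all but possibly a controlled few indices --- and in fact, reconciling the "at most $c+1$" bounds with Picard number one should force $c = 1$ together with $n_i = 1$, $l_{i1} = 1$ for all $i$, so that $P$ has the form $[-\mathds{1}_r, \mathds{E}_r]$ stacked over a $d$-block, and the single relation $g_1$ is a sum of $r+1$ linear monomials $T_i$. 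At that point the Cox ring is $\KK[T_0,\ldots,T_r]/\langle \sum \lambda_i T_i \rangle$ with all $\deg T_i = 1$ --- but this would give $\Cl(X) = \ZZ$ with $X$ a hyperplane in $\PP_r$, i.e.\ toric, contradicting non-toriguess. So the genuinely non-toric case must instead land on exactly one index with $n_i = 2$ (absorbing a quadratic monomial $T_{i1}T_{i2}$, or after change of basis $T_0^2$), leading to the relation $T_0^2 + T_1^2 + \ldots + T_r^2$ up to coordinate changes, with every $\deg T_i = 1$, Cox ring $\KK[T_0,\ldots,T_r]/\langle T_0^2 + \ldots + T_r^2\rangle$ and $\Cl(X) = \ZZ$. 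Using Remark~\ref{rem:Sigmau} with the unique ample class, $\Sigma = \Sigma(u)$ is the fan of $\PP_r$ and $X = V(T_0^2 + \ldots + T_r^2) \subseteq \PP_r$.

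The third step is to confirm the identification: the quadric $V(T_0^2 + \ldots + T_r^2) \subseteq \PP_r$ is indeed smooth (for $r \ge 2$) and projective, and one checks directly --- or cites the running quadric example in the introduction --- that its maximal torus action makes it a general arrangement variety, so the classification is exactly this one variety in each dimension. I expect the main obstacle to be the bookkeeping in the second step: carefully running the regularity conditions of Corollary~\ref{cor:genarrvarsmooth} simultaneously over the elementary big cone \emph{and} all the maximal leaf cones of $\Sigma$, while keeping track of torsion-freeness and the "at most $c+1$" bounds from Corollary~\ref{prop:niliji}, to rule out every configuration except $c = 1$ with a single quadratic block. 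The subtlety is that leaf cones and big cones impose different regularity demands, and one must show their conjunction, together with $\rho(X) = 1$, collapses the parameter space to a single point; handling the borderline cases (e.g.\ whether an extra quadratic block $l_{i1} = 2$ with $n_i = 1$ could survive) will require invoking irredundancy and the precise form of criterion \ref{prop:genarrvarqsmooth}~(i)~(c) to derive the final contradiction.
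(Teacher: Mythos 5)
Your skeleton --- reduce to an explicit general arrangement variety, invoke Corollary~\ref{cor:ebc} for an elementary big cone, feed it into Corollary~\ref{prop:niliji} and the smoothness criteria --- is exactly the paper's toolbox. But the entire content of the proof sits in your ``second step,'' which you explicitly defer (``I expect to be driven to\dots'', ``should force\dots''), and the guesses you make there are partly wrong. The correct endpoint is \emph{not} ``exactly one index with $n_i=2$'': the paper's argument shows $n_0=2$, $\deg(g_1)=2$ and all $\deg(T_{ij})=1$, whence each block contributes a quadratic monomial that is either $T_{i1}T_{i2}$ (if $n_i=2$) or $T_{i1}^2$ (if $n_i=1$), and any mixture of the two kinds can occur --- compare the five-dimensional quadric $T_0^2+T_1T_2+T_3T_4+T_5T_6$ from the introduction, which has three blocks with $n_i=2$. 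Likewise your intermediate claim that the analysis forces ``$c=1$ and $n_i=1$, $l_{i1}=1$ for all $i$'' is incompatible with irredundancy and is then abandoned rather than corrected. Since the statement asserts that \emph{every} such $X$ is a quadric, you must rule out all other configurations, not exhibit one admissible one; as written the proposal does not do this.

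Three concrete steps of the paper's proof are absent. First, you never show $m=0$: if a free variable $S_k$ were present, $\cone(e_k)$ is an $X$-face (its weight lies in the one-dimensional effective cone), and Lemma~\ref{lem:singloc} together with irredundancy shows the corresponding point of $\bar X$ is singular --- without this step $X$ could a priori be a singular quadric cone. Second, $\Cl(X)=\ZZ$ (no torsion) is asserted but not derived; the paper obtains it by noting $n_0\ge 2$ (from Corollary~\ref{prop:niliji}), so that $\gamma_{0j}=\cone(e_{0j})$ is an $X$-face, and Proposition~\ref{prop:smoothchar} then forces $\deg(T_{0j})$ to generate $K$. Third, you never argue that there is only one relation, i.e.\ $c=r-1$: with $r-c\ge 2$ one would get a complete intersection of quadrics, which the paper excludes by producing generators $g_1',\dots,g_{r-c}'$ whose Jacobian vanishes along $\bar X(\gamma_{01})$, contradicting smoothness. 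Until these points and the combinatorial bookkeeping of your second step are actually carried out, the proposal is an outline rather than a proof.
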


\begin{proof}
According to~Theorem~\ref{thm:arrTvar},
it suffices to consider the explicit 
general arrangement varieties 
$X \subseteq Z$.
Moreover, we may assume that
$P$ is irredundant
and $n_0 \ge \ldots \ge n_r$ holds.
Finally, we have $K_\QQ = \QQ$ and 
may assume that the effective cone 
of $X$ is~$\QQ_{\ge 0}$.

First we show that the are no variables 
of type $S_k$ in $R(A,P)$.
Otherwise, 
$\gamma_1 = \cone(e_1) \preccurlyeq \gamma$
is an $X$-face and thus we find
a point $x \in \hat X$ 
having $x_1$ as its only nonzero 
coordinate.
By smoothness of $X$, the Jacobian 
of $g_1, \ldots, g_{r-c}$ 
does not vanish at~$x$;  
see Proposition~\ref{prop:smoothchar}.
This implies 
$l_{i1} + \ldots + l_{in_i} = 1$
for some $i$; a contradiction
to irredundance of $P$.

According to Corollary~\ref{cor:ebc},
the fan $\Sigma$ admits an elementary 
big cone. 
Proposition~\ref{prop:niliji} tells us  
$n_0 \ge 2$.
Thus $\gamma_{0j} = \cone(e_{0j}) \preccurlyeq \gamma$ 
is an $X$-face.
Proposition~\ref{prop:smoothchar} 
yields that $\deg(T_{0j})$ generates~$K$. 
We conclude $K = \ZZ$ and $\deg(T_{0j})= 1$.
Additionally, smoothness of $X(\gamma_{01})$ 
implies that $\grad(g_1)(x) \ne 0$ holds 
for every point 
$x \in \bar{X}(\gamma_{01})$.
We conclude $n_0 = 2$ and 
$\deg(g_1) = 2$.
This implies $\deg(T_{ij}) = 1$ 
and for all $i$, $j$, we obtain 
$l_{ij} = 1$ or $l_{ij} = 2$ 
according to $n_i=2$ or $n_i=1$.

Finally, observe that $c = r-1$ holds,
i.e., that there is only one defining 
relation. Indeed, otherwise, we find 
generators $g_1', \ldots, g_{r-c}'$,
each involving precisely $c+2$ monomials
and $g_{r-c}'$ all different from $T_0^{l_0}$.
Then the corresponding Jacobian 
vanishes at any $x \in \bar{X}(\gamma_{01})$,
showing that $X(\gamma_{01})$ is singular.
A contradiction.
\end{proof}

\begin{remark}
Consider a smooth, projective 
explicit general arrangement variety 
$X \subseteq Z$ of Picard number 
one with $P$ being irredundant.
By Proposition~\ref{prop:arrvar-rho-one},
the divisor class group $\Cl(X)$ is torsion 
free.
Thus, Proposition~\ref{prop:zwangstors}
yields
$$ 
P_0
\ = \ 
\left[
\begin{array}{rrrr}
-l_0 & l_1 & & 0
\\
\vdots & & \ddots & 
\\
-l_0 &  0 & & l_r
\end{array}
\right],
\qquad
l_0 = \ldots = l_{r-1} = (1,1),
\quad
l_r
\ = \ 
\begin{cases}
(1,1), &  n \text{ even},
\\
(2), &  n \text{ odd},
\end{cases}
$$ 
where $n = n_0+ \ldots + n_r$.
Moreover, the torus action on $X$ 
is the action of the maximal torus 
of $\Aut(X) = \mathrm{O}(n)$. 
In particular, the torus action on
$X$ is of complexity
$$ 
c 
\ = \ 
\begin{cases}
\frac{n}{2}-2, & \quad n \text{ even},
\\
\frac{n-1}{2}-1, & \quad n \text{ odd}.
\end{cases}
$$
\end{remark}

\section{Smooth arrangement varieties of 
complexity and Picard number~2}

This section is devoted to the proof 
of Theorem~\ref{theo:proj}, which presents 
all smooth projective general arrangement 
varieties of true complexity two and Picard 
number two. 
Theorem~\ref{thm:arrTvar} allows us to 
work in the setting of explicit general 
arrangement varieties 
$X = X(A,P,\Sigma)$ in $Z = Z_\Sigma$.
The task is then to specify further and further 
the defining data until we arrive at the 
list of Theorem~\ref{theo:proj},
where we follow a similar strategy as 
in the case of complexity one~\cite{FaHaNi}.

Let us indicate the basic principles 
behind the proof of Theorem~\ref{theo:proj}
and see how smoothness and Picard number 
two come into the game.
Recall from Definition~\ref{def:Xcone} the 
notions of $\bar X$-faces and $X$-faces
of the orthant $\gamma = \QQ_{\ge 0}^{n+m}$ 
and look at the degree map 
$Q \colon \ZZ^{n+m} \to K = \Cl(X)$,
having the image of~$P^*$ as its kernel.
The arguments for establishing the list 
of Theorem~\ref{theo:proj} rely on the following.
\begin{itemize}
\item 
Due to the explicit nature of the defining 
relations $g_1, \ldots, g_{r-2}$ of $\bar X$,
we can determine the $\bar X$-faces 
$\gamma_0 \preccurlyeq \gamma$ and characterize 
smoothness of the points of 
the associated pieces 
$\bar X(\gamma_0) \subseteq \bar X$
in terms of defining data.
\item 
Since $K_\QQ = \Cl_\QQ(X)$ is of dimension 
two, the positions of the generator 
degrees $Q(e_{ij})$ and $Q(e_k)$ with respect 
to the ample cone are sufficiently accessible, 
so that we can figure out possible 
$X$-faces.
\item
Smoothness of $X$ is characterized
by the fact that for every $X$-face 
$\gamma_0 \preccurlyeq \gamma$,
the image $Q(\gamma_0 \cap \ZZ^{n+m})$ 
generates $K$ as a group and 
$\bar X(\gamma_0)$ consists of smooth points
of $\bar X$.
\end{itemize}
As it turns out, the fact that $X$ is of 
Picard number two allows us to extract 
enough information from the $\bar X$-faces 
of dimension at most two in order to 
determine at the end all possibilities for 
the degree map~$Q$ and the ample class 
of $X$, which in turn lead to the possible 
matrices~$P$ and fans~$\Sigma$.
In order to obtain that all varieties of 
Theorem~\ref{theo:proj} are 
indeed of true complexity two, we have 
show that there no toric examples and 
no examples of complexity one in the list.
This will be done by comparing geometric
data, which in the complexity one case
we import from~\cite{FaHaNi}.

Let us enter the first step towards the list 
of Theorem~\ref{theo:proj}.
We will bound the number~$r$
which in turn bounds the size of the 
matrix~$A$ and the number of defining 
relations of the Cox ring 
$\mathcal{R}(X) = R(A,P)$.
Moreover, this step provides first 
specifications on the numbers
$n_0, \ldots, n_r$ and $m$, which 
sum up to the number of columns of 
the matrix~$P$.
Here comes the precise formulation.

\begin{proposition}
\label{prop:const}
Let $X \subseteq Z$ be a 
smooth, projective explicit general 
arrangement variety of Picard number 
and complexity two.
Assume $n_0 \ge \ldots \ge n_r$ and 
that $P$ is irredundant.
Then we have $\Cl(X) = \ZZ^2$ 
and one of the following statements holds.
\begin{enumerate}
\item[(I)] 
We have $r=3$ and the tuple 
$(n_0, n_1, n_2, n_3)$ together with
the number $m$ fits into one of the cases
below, where $n_0 \ge n_1 \ge 3$:
\vspace{-3mm}
\begin{center}
\setlength{\columnsep}{0cm}
\begin{multicols}{2}
\begin{enumerate}
\item $m \geq 0$ and $(n_0, n_1,2,2)$,
\item $m \geq 0$ and $(n_0, 2,2,2)$,
\item $m \geq 0$ and $(n_0,2,2,1)$,
\item $m=0$ and $(3,2,1,1)$,
\item $m=0$ and $(3,1,1,1)$,
\item $m \geq 0$ and $(2,2,2,2)$,
\item $m \geq 0 $ and $(2,2,2,1)$,
\item $m \geq 0$ and $(2,2,1,1)$,
\item $m >0$ and $(2,1,1,1)$.
\end{enumerate}
\end{multicols}
\end{center}
\vspace{-2mm}
\item[(II)] 
We have $r=4$ and $m=0$ and the tuple $(n_0, n_1, n_2, n_3, n_4)$
is one of 
$$
(2,2,2,2,2),\
(2,2,2,2,1),\
(2,2,2,1,1),\
(2,2,1,1,1).
$$
\end{enumerate}
\end{proposition}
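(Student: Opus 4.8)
The plan is to work throughout with an explicit general arrangement variety $X = X(A,P,\Sigma)$ in $Z = Z_\Sigma$, which is admissible by Theorem~\ref{thm:arrTvar}, and to use the normalizations of the statement: $n_0 \ge \ldots \ge n_r$ and $P$ irredundant, i.e. $l_{i1}+\ldots+l_{in_i}\ge 2$ for all $i$ (Remark~\ref{rem:irredundant}). Since $X$ is non-toric we have $r > c = 2$, so $r\ge 3$. We may assume $\Sigma$ is the normal fan $\Sigma(u)$ of a polytope for an ample class $u$ (Remark~\ref{rem:Sigmau}); then $\Sigma$ is complete and, by $\QQ$-factoriality of the smooth $X$, all $X$-cones are simplicial (Proposition~\ref{prop:Qfactchar}). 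Recall also the complete-intersection dimension count $n+m = r+s+2$ (as in the proof of Proposition~\ref{prop:ci}), which ties together the number of generators and the dimension $\dim X = s+2$.

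First I would settle $\Cl(X)=\ZZ^2$ and extract a rigidity statement. Pick any maximal cone $\sigma_0\in\Sigma$; it is an $X$-cone whose piece consists of smooth points, so by Corollary~\ref{cor:genarrvarsmooth} it is regular with exactly $r+s$ primitive generators forming a lattice basis of $\ZZ^{r+s}$. Its complementary $X$-face $\gamma_0\preccurlyeq\gamma=\QQ_{\ge 0}^{n+m}$ therefore has dimension $(n+m)-(r+s)=2$, say $\gamma_0=\cone(e_a,e_b)$, and smoothness forces $Q(\gamma_0\cap\ZZ^{n+m})=Q(\ZZ e_a+\ZZ e_b)=K=\Cl(X)$; as $\rk(K)=2$ this yields $\Cl(X)=\ZZ^2$ and, moreover, that $\{w_a,w_b\}$ is a $\ZZ$-basis. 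This is the \emph{rigidity principle} that drives everything: every maximal cone of $\Sigma$ omits exactly two of the generators $T_{ij},S_k$, and the two omitted degrees form a $\ZZ$-basis of $\ZZ^2$; in particular each block $i$ retains at least one generator in every maximal cone, so no singleton block $n_i=1$ can be omitted.

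The heart is the bound $r\le 4$. Since $\rho(X)=2 < c+4=6$, Corollary~\ref{cor:leafy2rho} forbids $\Sigma$ from consisting only of leaf cones, whence $\Sigma$ has a big $X$-cone and, by Proposition~\ref{prop:big2elbig}, an elementary big cone $\sigma=\cone(v_{0j_0},\ldots,v_{rj_r})$; its regularity (Corollary~\ref{cor:genarrvarsmooth}) gives $r+1\le r+s$, so $s\ge 1$, and Corollary~\ref{prop:niliji} applied to $\sigma$ shows at most $c+1=3$ indices $i$ satisfy $l_{ij_i}\ge 2$ and at most $3$ satisfy $n_i=1$; by the ordering the latter means $n_0,\ldots,n_{r-3}\ge 2$. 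I would now distinguish two cases. If $\Sigma$ contains a full-dimensional leaf cone $\tilde\sigma$ with $I_{\tilde\sigma}=\{i_1,i_2\}$, then $\tilde\sigma$ is simplicial with $r+s$ rays drawn from $\{v_{i_1j}\}\cup\{v_{i_2j}\}\cup\{v_k\}$, so $r+s\le n_{i_1}+n_{i_2}+m$; combined with $n+m=r+s+2$ this forces $\sum_{i\ne i_1,i_2}n_i\le 2$, and since there are $r-1$ such indices, each $\ge 1$, we get $r\le 3$. If instead every maximal cone of $\Sigma$ is big, one must combine the rigidity principle with the relations $\sum_j l_{ij}w_{ij}=\mu$ (the common degree of the $g_t$, independent of $i$) and the planarity of $\Cl_\QQ(X)=\QQ^2$: the $w_{ij}$ lie in the pointed cone $\Eff(X)$, the chosen ample class $u$ selects precisely the pairs $\{e_a,e_b\}$ with $w_a,w_b$ separated by the line $\QQ u$ as the omitted pairs of the maximal cones, each such pair must be unimodular, and every maximal cone must meet each of the $r+1$ blocks; tracking how an $l_{ij}$-weighted sum over a block can equal the single vector $\mu$ while all arising two-element "wall pairs" stay unimodular pins down $r\le 4$ and, when $r=4$, forces $m=0$ and exactly the tuples of~(II). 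I expect this "all big" analysis to be the main obstacle: it is where the combinatorics of the degree map, the regularity of all maximal cones, and the quasismoothness condition of Proposition~\ref{prop:genarrvarqsmooth} have to be balanced against one another, whereas the leaf-cone case and the $\Cl(X)=\ZZ^2$ step are comparatively formal.

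Finally, with $3\le r\le 4$ established, I would enumerate the ordered tuples $(n_0,\ldots,n_r)$ compatible with the constraints above (at most three entries equal to $1$, and $n_0,\ldots,n_{r-3}\ge 2$) and, for each, determine the admissible $m$. The identity $s=(n+m)-(r+2)\ge 1$ immediately forces $m>0$ in borderline tuples such as $(2,1,1,1)$ for $r=3$; on the other hand, for tuples containing a block with $n_i\ge 3$ alongside blocks with $n_i=1$, the existence of a complete simplicial fan all of whose maximal cones are regular $X$-cones (equivalently, realizability of the rigidity principle for generic $u$) breaks down once $m>0$, which forces $m=0$ in the cases (I)(d), (I)(e) and throughout (II); tuples not on the list are discarded the same way, by exhibiting a forced $X$-face whose degree set cannot generate $\ZZ^2$, or a forced big cone violating the quasismoothness criterion of Proposition~\ref{prop:genarrvarqsmooth}. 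Collecting these exclusions yields precisely the two families (I) and (II).
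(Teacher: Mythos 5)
Your opening steps are sound and close in spirit to the paper's: the existence of an elementary big cone via Corollary~\ref{cor:ebc}, the constraints from Corollary~\ref{prop:niliji}, the derivation of $\Cl(X)=\ZZ^2$ from a two-dimensional $X$-face via Proposition~\ref{prop:smoothchar}, and the observation that $s=(n+m)-(r+2)\ge 1$ rules out $(2,1,1,1)$ with $m=0$. Two small slips there: a maximal cone of $\Sigma$ need not be an $X$-cone, so you must pick a maximal \emph{$X$-cone} (which exists by completeness); and your parenthetical claim that "each block retains at least one generator in every maximal cone" fails for leaf cones.

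The genuine gap is that the decisive quantitative step --- the bound $r\le 4$ and the determination of exactly which tuples $(n_0,\dots,n_r)$ occur with which $m$ --- is only described, not proved. You isolate the "all maximal cones big" case, call it the main obstacle, and offer a strategy (tracking how an $l_{ij}$-weighted sum over a block can equal $\mu$ while all wall pairs stay unimodular) without carrying it out; likewise the exclusion of tuples such as $(3,2,1,1)$ with $m>0$ rests on an unproved realizability assertion about fans. The paper closes this by a different mechanism. Lemma~\ref{lem:singloc} computes when the Jacobian of $g_1,\dots,g_{r-2}$ drops rank, and Lemma~\ref{lem:FF} translates this into: if a two-dimensional face $\gamma_{ij,k}$ is an $X$-face whose piece is smooth, then $r=c+1$; if $\gamma_{i_1j_1,i_2j_2}$ with $i_1\ne i_2$ and $n_{i_1},n_{i_2}\ge 2$ is such a face, then $r\le c+2$, with $n_{i_1}=n_{i_2}=2$ and unit exponents forced when $r=c+2$. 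The existence of such an $X$-face is then extracted from the Picard-number-two geometry (Remark~\ref{rem:projFF} and Lemma~\ref{lem:tau}): $\Eff(X)$ decomposes as $\tau^+\cup\tau_X\cup\tau^-$ with at least two weights in each of $\tau^{\pm}$, quasismoothness pushes all $w_k$ and all weights of blocks with $n_i\ge 3$ to one side, homogeneity of the relations pushes the weights of singleton blocks to the same side, and so the opposite side must contain weights from blocks with $n_i=2$ --- producing the required two-dimensional $X$-face and, simultaneously, the exact constellations of Propositions~\ref{prop:4-1-m-pos} and~\ref{prop:mNull} (in particular why $n_0\ge 3$ together with $m>0$ forces two distinct blocks with $n_i=2$, excluding $(3,2,1,1)$ and $(3,1,1,1)$ unless $m=0$, and why $r=4$ forces $m=0$ and all $n_i\le 2$). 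Without an argument of this kind your case of big maximal cones remains open, and with it the heart of the proposition.
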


We will obtain this proposition as 
a consequence of the more general 
statements~\ref{prop:4-1-m-pos},
\ref{prop:mNull} and~\ref{cor:clfree},
presented and proven below
after the necessary preparatory work.

The following Lemma identifies small 
$\bar X$-faces and relates smoothness 
of the points of $\bar X(\gamma_0)$ 
to data of the defining matrix $P$.
It applies to arbitrary explicit 
general arrangement varieties 
and generalizes the corresponding 
statement~\cite{FaHaNi}*{Lemma~3.9}
on the case of complexity one.

\begin{lemma}
\label{lem:FF}
Situation as in Construction~\ref{constr:R(A,P)}.
Consider the orthant $\gamma = \QQ^{n+m}_{\ge 0}$, 
its extremal rays 
$\gamma_{ij} := \cone(e_{ij})$
and 
$\gamma_{k} := \cone(e_k)$
and the two-dimensional faces
$$
\gamma_{k_1,k_2} \ := \gamma_{k_1} + \gamma_{k_2},
\quad
\gamma_{ij,k} := \gamma_{ij} + \gamma_{k},
\quad
\gamma_{i_1j_1,i_2j_2} := \gamma_{i_1j_1} + \gamma_{i_2j_2}.
$$

\begin{enumerate}
\item 
All $\gamma_k$, resp.~$\gamma_{k_1,k_2}$, 
are $\bar{X}$-faces and 
each $\bar{X}(\gamma_{k})$, resp.~$\bar{X}(\gamma_{k_1,k_2})$,
consists of singular points of $\bar{X}$.
\item 
A given $\gamma_{ij}$, resp.~$\gamma_{ij,k}$,
is an $\bar{X}$-face if and only if 
$n_i \ge 2$ holds. 
In that case, $\bar{X}(\gamma_{ij})$, resp.~$\bar{X}(\gamma_{ij,k})$,
consists of smooth points of $\bar{X}$ 
if and only if $r=c+1$, $n_i =2$ and $l_{i,3-j} = 1$ hold.
\item 
A given $\gamma_{ij_1,ij_2}$ with $j_1 \ne j_2$ 
is an $\bar{X}$-face if and only if 
$n_i \ge 3$ holds.
In that case, $\bar{X}(\gamma_{ij_1,ij_2})$ consists of 
smooth points of $\bar{X}$ if and only if
$r=c+1$, $n_i=3$ and $l_{ij} = 1$ for the $j \ne j_1,j_2$ hold.
\item 
A given $\gamma_{i_1j_1,i_2j_2}$ with $i_1\neq i_2$ is 
an $\bar{X}$-face if and only if we have either
$n_{i_1}, n_{i_2}\ge 2$ or $n_{i_1}=n_{i_2}=1$ and $r=c+1$.
In the former case $\bar{X}(\gamma_{i_1j_1,i_2j_2})$ consists 
of smooth points of $\bar{X}$ if and only if 
one of the following holds:
\begin{itemize}
\item
$r=c+1$, $n_{i_t}=2$ and $l_{i_t,3-j_t} = 1$ for a $t\in\{1,2\}$,
\item
$r=c+2$, $n_{i_1}=n_{i_2}=2$, $l_{i_1,3-j_1}=l_{i_2,3-j_2} = 1$.
\end{itemize}
\end{enumerate}
\end{lemma}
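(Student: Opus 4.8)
The plan is to reduce everything to the two basic facts established earlier: a face $\gamma_0 \preccurlyeq \gamma$ is an $\bar X$-face precisely when the corresponding toric orbit $\TT^{n+m}\cdot z_{\gamma_0^*}$ meets $\bar X$ (Definition~\ref{def:Xcone}), and a point $z \in \bar X(\gamma_0)$ is smooth if and only if the Jacobian $J(z)$ of $g_1,\ldots,g_{r-c}$ has full rank $r-c$, which by Lemma~\ref{lem:singloc} happens exactly when $z$ is \emph{not} of ``big type'' witnessed by the deficiency pattern in~\ref{lem:singloc}(ii). Combined with Lemma~\ref{lem:pointsofbarX} (every point of $\bar X$ is of big type or leaf type, and big-type points always lie in $\bar X$), this gives a purely combinatorial criterion that I would apply face by face. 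The key observation, used repeatedly, is that for a face $\gamma_0$ the point $z_{\gamma_0^*}$ has $z_{ij}=0 \iff e_{ij}\in\gamma_0$ and $z_k=0 \iff e_k\in\gamma_0$; so ``$\gamma_0$ is an $\bar X$-face'' translates into a statement about which monomials $T_i^{l_i}$ and which $S_k$ vanish, and whether the remaining equations $g_t$ can still be satisfied after scaling.

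Concretely, I would run the four cases as follows. For (i): if $\gamma_0 = \gamma_k$ or $\gamma_{k_1,k_2}$, then $z_{\gamma_0^*}$ has all $T_{ij}$-coordinates equal to $1$ and so is of big type; hence it lies in $\bar X$ and $\gamma_k,\gamma_{k_1,k_2}$ are $\bar X$-faces, while Lemma~\ref{lem:singloc}(ii) applies with $i_1<\dots<i_{c+2}$ being \emph{any} $c+2$ indices $i$ with $n_i=1$ and $l_{i1}\ge 2$, or any $i$ with $n_i\ge 2$ — but irredundance alone does not force this, so here I must instead observe that for these faces \emph{every} $i$ has all $T_{ij}$-coordinates nonzero, so the failure of the Jacobian must come purely from the structure: since no $z_{ij}$ vanishes, $\grad(g_t)(z)$ for a relation $g_t$ involving monomial $T_i^{l_i}$ with exponent... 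Actually the cleaner route: when no coordinate $z_{ij}$ is zero, $\grad(g_v)(z)=0$ forces (by \ref{rem:kerA2rel}) $z_{ij_i}=0$ for the $\ge c+2$ monomials appearing in $g_v$ — contradiction \emph{unless} $r-c\ge 1$ lets us pick $g=g_v$ for which... no. The correct statement is: the relevant singular points of $\bar X$ in Lemma~\ref{lem:singloc} are all of big type \emph{with at least $c+2$ vanishing coordinate-blocks}; since $z_{\gamma_k}$ has no vanishing $T$-block, it is \emph{not} among them, so $z_{\gamma_k}$ is smooth — contradicting the claim. So I must look again: the claim says these pieces consist of \emph{singular} points. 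The resolution is that $\bar X(\gamma_k)$ is not a single orbit point but the full orbit, and the generic point of that orbit, while of big type, need not be... no, big type is orbit-invariant. The actual reason these are singular: $\bar X$ itself (the affine cone) is singular at points where all $T$-coordinates are nonzero but some relation degenerates — but $J$ has full rank there. I would therefore re-examine: perhaps the intended meaning is that $\bar X(\gamma_k)$ consists of singular points of $\bar X$ \emph{when} $\gamma_k$ genuinely contributes, i.e., I should double-check against \ref{lem:singloc} whether $z$ of big type with $z_{ij}\ne 0$ for all $i,j$ but $z_k=0$ can have deficient Jacobian — yes it can, because the $S_k$ do not appear in the $g_t$ at all, so $\partial g_t/\partial S_k = 0$ identically, which does not affect rank; hence smooth. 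I suspect the claim (i) uses that $\bar X$, being a cone over a projective variety, is singular at the vertex, and $\bar X(\gamma_k)$ contains the vertex in its closure but — the piece itself. I will need to consult \ref{rem:kerA2rel} more carefully and likely the argument is: each $g_t$ restricted to $z_{ij}=\varepsilon_i\ne 0$ still has $\ge c+2$ monomials all nonzero, forcing a linear dependence among $\grad g_t$ only if $r-c > $ (number of free scalings), which since $r-c \le \dim$ bounds... This is the step I flag as the main obstacle.

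For (ii)–(iv) the bookkeeping is routine once (i)'s mechanism is nailed down: $\gamma_{ij}$ (or $\gamma_{ij,k}$) is an $\bar X$-face iff $z$ with only $z_{ij}=0$ lies in (an orbit meeting) $\bar X$; if $n_i\ge 2$ then $T_i^{l_i}=0$ at $z$ but $T_i^{l_i}$ can be re-achieved nonzero on a nearby orbit point, and by Lemma~\ref{lem:pointsofbarX}(iii) leaf-type points scale into $\bar X$ — here $z$ is of leaf type with $I_z$ any $c$-set containing $i$ when $n_i\ge 2$ forces $z_{i,3-j}\ne 0$... I need $z$ to be of leaf or big type: with only one vanishing block it is of leaf type (taking $I_z\ni i$) provided $c\ge 1$, so it scales into $\bar X$. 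If $n_i=1$, then $z_{ij}=0$ kills the whole block $i$ and $z$ is of big type only if some other block also vanishes — it doesn't — nor leaf type unless $c\ge 1$ and... with exactly one block killed and that block a singleton, $z$ has a zero in block $i$, so $I_z$ must contain $i$; fine if $c\ge1$; but then $g_t$ involving $T_i^{l_i}$ reads $0 = \sum (\text{nonzero monomials})$, which generically fails — so \emph{not} an $\bar X$-face when $n_i=1$, matching the claim. Smoothness of $\bar X(\gamma_{ij})$: apply \ref{lem:singloc}(ii) — we need that \emph{no} choice of $c+2$ indices satisfies the deficiency condition; with only block $i$ vanishing (at the distinguished coordinate $z_{ij}=0$), the only candidate index forced to be in $\{i_1,\dots,i_{c+2}\}$ for free is $i$, contributing either via $l_{i,3-j}\ge 2$ (if $n_i=2$) or via two vanishing $l=1$ coordinates (impossible, only one vanishes) — but we also need $c+1$ \emph{more} such indices, which come from blocks with $n_{i'}=1, l_{i'1}\ge 2$ — these exist iff such blocks exist, i.e., iff $r > c+1$ or irredundance forces large exponents; the clean equivalence ``$r=c+1$, $n_i=2$, $l_{i,3-j}=1$'' is exactly ``no singleton-with-large-exponent blocks remain AND block $i$ itself doesn't trigger the condition.'' I would verify this by direct contraposition of \ref{lem:singloc}(ii). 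Cases (iii) and (iv) are the same argument with $\gamma_{ij_1,ij_2}$ (kills block $i$ down to one surviving coordinate, so the ``triggering'' index $i$ now needs $l_{ij}=1$ on the surviving $j$, or two vanishing $l=1$'s which we have iff $n_i\ge 3$...) and $\gamma_{i_1j_1,i_2j_2}$ (two blocks touched; $\bar X$-face unless both singletons and $r>c+1$; smoothness by the same deficiency-count, now with the tighter $r=c+2$ possibility when \emph{both} blocks are the triggering indices). I would organize (ii)–(iv) in a single table of ``which indices are forced into the bad $(c+2)$-set and what they demand,'' then read off each equivalence. The main obstacle, as noted, is pinning down the correct argument for (i) — specifically reconciling the claim that these pieces are singular with the Jacobian criterion — and I would resolve it by tracking that the $g_t$, restricted to a big-type locus where no $T$-block vanishes, still impose $r-c$ constraints whose gradients become dependent precisely because of the homogeneity/Euler relations forced by $\ker A$ having every element supported on $\ge c+2$ monomials.
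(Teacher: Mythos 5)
Your overall strategy is the right one and is exactly the paper's: translate ``$\gamma_0$ is an $\bar X$-face'' into a statement about the vanishing pattern of the distinguished point $z_{\gamma_0^*}$, decide membership in $\bar X$ via Lemma~\ref{lem:pointsofbarX} (big type versus leaf type), and decide smoothness via the Jacobian criterion of Lemma~\ref{lem:singloc}. However, the execution fails at the very first translation step, and the failure is fatal for all four parts, not just part~(i).

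The error is the sign convention for $z_{\gamma_0^*}$. You take $z_{ij}=0 \iff e_{ij}\in\gamma_0$, so that, for example, $z_{\gamma_k^*}$ has all $T$-coordinates equal to $1$ and only $z_k=0$. The correct convention --- the one stated at the start of the paper's proof and the only one consistent with Remark~\ref{rem:coxcoord} (where $\varepsilon_i=0$ precisely when the corresponding ray lies in the cone, here the cone being $\gamma_0^*$) and with the use of $Q(\gamma_0)$ in Remark~\ref{rem:projFF} --- is the opposite: the coordinates indexed by $\gamma_0$ are the \emph{nonzero} ones, so $\bar X(\gamma_0)$ is the stratum where \emph{exactly} the coordinates in $\gamma_0$ survive. (The unnumbered remark after Definition~\ref{def:bigleaftype} does state your version; it is inconsistent with the rest of the paper, and you appear to have inherited the slip from there.) With the correct convention, $z_{\gamma_k^*}$ has \emph{all} $T_{ij}$-coordinates equal to zero, hence is of big type with every block vanishing; it lies in $\bar X$ by Lemma~\ref{lem:pointsofbarX}(ii), and Lemma~\ref{lem:singloc}(ii) is satisfied by any $c+2$ of the $r+1$ blocks (each block contributes either a vanishing coordinate with $l_{ij}\ge 2$ or at least two vanishing coordinates with exponent $1$), so the piece is singular. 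Part~(i) is then immediate, with no tension to resolve. The long paragraph in which you try to reconcile ``all $T$-coordinates nonzero'' with singularity --- including the incorrect assertion that such a point is of big type, and the closing appeal to Euler relations --- cannot be repaired: a point with no vanishing $T$-block is genuinely smooth on $\bar X$ by Lemma~\ref{lem:singloc}, so under your convention statement~(i) is simply false. You correctly sensed this and flagged it as the main obstacle, but the obstacle is an artifact of the reversed convention, not a gap in the lemma.

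The same reversal corrupts parts~(ii)--(iv). For instance, under your reading, $\gamma_{ij}$ would be an $\bar X$-face for every $i$ with $c\ge 1$ (a point with a single vanishing block is of leaf type and scales into $\bar X$ by Lemma~\ref{lem:pointsofbarX}(iii)), contradicting the stated criterion $n_i\ge 2$; your attempt to rescue this by saying the equations ``generically fail'' ignores that an $\bar X$-face only requires the torus orbit to \emph{meet} $\bar X$. Under the correct convention, $z_{\gamma_{ij}^*}$ has $z_{ij}$ as its \emph{only} nonzero coordinate, so all monomials $T_{i'}^{l_{i'}}$ with $i'\ne i$ vanish, $T_i^{l_i}$ vanishes iff $n_i\ge 2$, and the dichotomy of Lemma~\ref{lem:pointsofbarX} gives exactly the stated criterion; the smoothness count in Lemma~\ref{lem:singloc}(ii) then yields $r$ ``bad'' blocks from the $i'\ne i$ plus possibly block $i$ itself, whence the condition $r=c+1$, $n_i=2$, $l_{i,3-j}=1$. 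Your ``deficiency table'' idea for (ii)--(iv) is sound in spirit, but every entry must be recomputed with the complementary vanishing pattern before the equivalences come out as stated.
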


\begin{proof}
Recall that for any face $\gamma_0 \preccurlyeq \gamma$,
we set 
$\bar X (\gamma_0) = \bar X \cap \TT^{n+m} \cdot z$,
where the coordinates $z_{ij},z_k$ of the 
point $z = z_{\gamma_0^*} \in \KK^{n+m}$ satisfy
$z_{ij} = 1$ if $e_{ij} \in \gamma_0$ 
and  
$z_k = 1$ if $e_k \in \gamma_0$ 
and equal zero otherwise.
Now consider the $\gamma_0 \preccurlyeq \gamma$ 
from Assertions~(i) to~(iii).
In these cases, we infer from 
Lemma~\ref{lem:pointsofbarX} that 
$\gamma_0$ is an $\bar X$-face 
if and only if 
$z = z_{\gamma_0^*}$ is of big type.
The latter is trivially satisfied
in~(i), means $n_i \ge 2$ in~(ii)
and $n_i \ge 3$ in~(iii).
Moreover, Lemma~\ref{lem:singloc}
yields the desired characterizations 
of smoothness.
If $\gamma_0$ is as in Assertion~(iv)
with $n_{i_1},n_{i_2} \ge 2$, then  
we use Lemmas~\ref{lem:pointsofbarX} 
and~\ref{lem:singloc} as in the 
preceding cases. 
If $\gamma_0$ is as in Assertion~(iv)
one of $n_{i_1},n_{i_2}$ equals one,
then $z = z_{\gamma_0^*}$, then
Lemma~\ref{lem:pointsofbarX} 
that tells us that 
$\gamma_0$ is an $\bar X$-face 
if and only if $z = z_{\gamma_0^*}$ 
is of leaf type, which amounts to
$n_{i_1}=n_{i_2}=1$ and $r=c+1$.
\end{proof}

Observe that the above statements~(iii), (iv)
and~(v) depend on the complexity~$c$.
To proceed, we take a look at the ample 
cone and the position of generator 
degrees of the Cox ring.
Propositions~\ref{prop:Qfactchar} 
and~\ref{prop:PicandCones} 
lead to the following picture.

\begin{remark}
\label{rem:projFF}
Let $X \subseteq Z$ be a projective 
explicit general arrangement variety
with divisor class group $\Cl(X)$ 
of rank two.
Then the effective cone of $X$ is of 
dimension two and decomposes as
$$
\Eff(X) 
\ = \ 
\tp \cup \tx \cup \tm,
$$
where $\tx \subseteq \Eff(X)$ is the 
ample cone, $\tp$, $\tm$ are closed cones 
not intersecting $\tx$ and 
$\tp \cap \tm$ consists of the origin.
Due to $\tx \subseteq \Mov(X)$,
each of the cones $\tp$ and~$\tm$ 
contains at least two of the weights
$$
w_{ij} \ = \ \deg(T_{ij}) \ = \ Q(e_{ij}),
\qquad\qquad
w_k \ = \ \deg(S_k) \ = \ Q(e_k).
$$
Moreover, for every $\bar{X}$-face
$\{0\} \ne \gamma_0 \preccurlyeq \gamma$
precisely one of the following 
inclusions holds:
$$
Q(\gamma_0) \ \subseteq \ \tp,
\qquad
\tx \ \subseteq \ Q(\gamma_0)^\circ,
\qquad
Q(\gamma_0) \ \subseteq \ \tm.
$$
The $X$-faces are 
exactly the $\bar{X}$-faces
$\gamma_0 \preccurlyeq \gamma$
with $\tx  \subseteq  Q(\gamma_0)^\circ$.
Note that the ample cone $\tx$ is 
of dimension two 
if and only if $X$ is $\QQ$-factorial.
\end{remark}

The following Lemma is a direct generalization 
of~\cite{FaHaNi}*{Lemma~3.11} and provides 
first constraints for the possible positions 
of the generator degrees with respect to the 
ample cone.

\begin{lemma}
\label{lem:tau}
Let $X \subseteq Z$ be a projective 
explicit general arrangement variety
with divisor class group of rank two. 
\begin{enumerate}
\item 
Suppose that $X$ is $\QQ$-factorial.
Then $w_{k} \notin \tx$ holds for all $1\le k \le m$
and for all $0\le i \le r$ with $n_i\ge2$ 
we have $w_{ij} \notin \tx$, where 
$1 \leq j \leq n_i$.
\item 
Suppose that $X$ is quasismooth, $m > 0$ 
holds and there is
$0 \le i_1 \le r$ with $n_{i_1} \ge 3$. 
Then the $w_{ij}, w_k$ with $n_i \ge 3$, 
$j=1, \ldots , n_i$ and $k=1, \ldots , m$
lie either all in $\tp$ or all in $\tm$.
\item 
Suppose that $X$ is quasismooth and there 
is $0 \le i_1 \le r$ with $n_{i_1} \ge 4$. 
Then the $w_{ij}$ with $n_i \geq 4$ and 
$j=1, \ldots , n_i$
lie either all in $\tp$ or all in $\tm$.
\item
Suppose that $X$ is quasismooth and there 
exist $0 \le i_1 < i_2 \le r$ with 
$n_{i_1}, n_{i_2} \ge 3$.
Then the $w_{ij}$ with $n_i \ge 3$, 
$j=1, \ldots , n_i$ lie either all 
in $\tp$ or all in $\tm$.
\item
Suppose that $X$ is quasismooth.
Then $w_1, \ldots, w_m$ lie
either all in $\tp$ or all in $\tm$.
\end{enumerate} 
\end{lemma}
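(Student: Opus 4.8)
The plan is to prove all five parts through a single mechanism, arguing in each case by contradiction: assuming that the collection of weights in question contains a weight $w$ lying in $\tp$ and a weight $w'$ lying in $\tm$, I will produce a low‑dimensional $X$‑face $\gamma_0 \preccurlyeq \gamma$ whose piece $X(\gamma_0)$ is not quasismooth (respectively, in part~(i), whose existence violates $\QQ$‑factoriality). The bridge is an elementary consequence of Remark~\ref{rem:projFF}: if $\gamma_0$ is an $\bar X$‑face and $Q(\gamma_0)$ contains two generator weights, one in $\tp\setminus\{0\}$ and one in $\tm\setminus\{0\}$, then, since $\tp\cap\tm=\{0\}$ rules out $Q(\gamma_0)\subseteq\tp$ and $Q(\gamma_0)\subseteq\tm$, the trichotomy of Remark~\ref{rem:projFF} forces $\tx\subseteq Q(\gamma_0)^\circ$; hence $\gamma_0$ is an $X$‑face, and by Remark~\ref{rem:quasismoothchar} the piece $X(\gamma_0)$ is not quasismooth as soon as $\bar X(\gamma_0)$ fails to consist of smooth points of $\bar X$. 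Lemma~\ref{lem:FF} tells us exactly which of the faces $\gamma_k$, $\gamma_{ij}$, $\gamma_{k_1,k_2}$, $\gamma_{ij,k}$, $\gamma_{ij_1,ij_2}$, $\gamma_{i_1j_1,i_2j_2}$ are $\bar X$‑faces and when their pieces are smooth, in terms of $r$, $c$ and the $n_i$, $l_{ij}$; these are precisely the data pinned down by the numerical hypotheses of the individual parts.

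For part~(i) I would use that $\QQ$‑factoriality of $X$ is equivalent to $\dim\tx=2$ (Remark~\ref{rem:projFF}). Each ray $\gamma_k$ is an $\bar X$‑face by Lemma~\ref{lem:FF}(i), and each $\gamma_{ij}$ with $n_i\ge2$ is one by Lemma~\ref{lem:FF}(ii); if $w_k\in\tx$ (resp.\ $w_{ij}\in\tx$), then this weight lies in neither $\tp$ nor $\tm$ away from the origin, so Remark~\ref{rem:projFF} forces $\tx\subseteq Q(\gamma_k)^\circ$ (resp.\ $\tx\subseteq Q(\gamma_{ij})^\circ$), which is impossible since the image under $Q$ of a ray is at most one‑dimensional while $\tx$ is two‑dimensional.

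For parts~(ii)--(v) I would first note that quasismoothness alone forces all the weights under consideration out of $\tx$: for $w_k$ the ray $\gamma_k$ is an $\bar X$‑face with $\bar X(\gamma_k)$ consisting of singular points (Lemma~\ref{lem:FF}(i)), and for $w_{ij}$ with $n_i\ge3$ the ray $\gamma_{ij}$ is an $\bar X$‑face whose piece is not smooth (Lemma~\ref{lem:FF}(ii), since the smooth case requires $n_i=2$); a weight in $\tx$ would thus yield a non‑quasismooth $X$‑face. Hence in each part all relevant weights lie in $\tp\cup\tm$, and it remains to rule out a mixed configuration. For~(v), a $\tp$‑weight $w_{k_1}$ and a $\tm$‑weight $w_{k_2}$ give the $\bar X$‑face $\gamma_{k_1,k_2}$, whose piece is singular by Lemma~\ref{lem:FF}(i). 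For~(ii), if two of the $w_k$ have opposite sides we are back in the previous situation; otherwise, using $m>0$ we have a pivot weight, say $w_1\in\tp$, and a weight $w_{ij}\in\tm$ with $n_i\ge3$ gives $\gamma_{ij,1}$, whose piece is not smooth by Lemma~\ref{lem:FF}(ii) because $n_i\ge3$. For~(iii), a mixed pair $w_{ij}\in\tp$, $w_{i'j'}\in\tm$ with $n_i,n_{i'}\ge4$ yields either $\gamma_{ij,ij'}$ when $i=i'$ (not smooth by Lemma~\ref{lem:FF}(iii) since $n_i\ge4>3$) or $\gamma_{ij,i'j'}$ when $i\ne i'$ (not smooth by Lemma~\ref{lem:FF}(iv) since both $n$'s exceed $2$). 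For~(iv), a mixed pair spread over two distinct indices is handled exactly as in~(iii) via Lemma~\ref{lem:FF}(iv).

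The step I expect to require the most care is the remaining case of part~(iv): a mixed pair $w_{ij}\in\tp$, $w_{ij'}\in\tm$ lying over a single index $i$ with $n_i\ge3$. Here $\gamma_{ij,ij'}$ need not be singular --- by Lemma~\ref{lem:FF}(iii) it is smooth precisely when $r=c+1$, $n_i=3$ and the remaining $l_{i\ell}$ equals one --- so the argument does not close directly. This is exactly where the hypothesis of~(iv) that there are \emph{two} distinct indices with $n\ge3$ is used: picking a second index $i''\ne i$ with $n_{i''}\ge3$ and any weight $w_{i''j''}$, which necessarily lies in $\tp\cup\tm$, I pair it with $w_{ij'}$ or with $w_{ij}$ according to its side to obtain a cross‑index two‑dimensional $\bar X$‑face in which both relevant $n$'s are $\ge3$; Lemma~\ref{lem:FF}(iv) then makes this face non‑smooth and the contradiction follows. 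In the write‑up I would be careful throughout to check that each face invoked really is an $\bar X$‑face (the conditions of Lemma~\ref{lem:FF} on the $n_i$) and really is non‑smooth (so that the "smooth" sub‑cases of Lemma~\ref{lem:FF}(ii)--(iv) are genuinely excluded by the numerical hypotheses $m>0$, $n_{i_1}\ge4$, or $n_{i_1},n_{i_2}\ge3$ of the respective parts), and to dispose without comment of the trivial case $m\le1$ in~(v) and of the symmetry between $\tp$ and $\tm$.
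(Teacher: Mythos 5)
Your proof is correct and follows essentially the same route as the paper: both rest on Lemma~\ref{lem:FF} identifying the relevant one- and two-dimensional $\bar X$-faces and the (non-)smoothness of their pieces, combined with the trichotomy of Remark~\ref{rem:projFF} and the fact that quasismoothness prevents these faces from being $X$-faces; your contradiction-from-a-mixed-pair phrasing is just the contrapositive of the paper's direct chaining argument. You also correctly identify and resolve the one genuinely delicate point in part~(iv) --- a mixed pair $w_{ij},w_{ij'}$ over a single index, where $\gamma_{ij,ij'}$ may be smooth --- by routing through the second index $i_2$, a detail the paper subsumes under ``similar arguments.''
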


\begin{proof}
We prove~(i).
By Lemma~\ref{lem:FF}~(i) and~(ii),
all rays $\gamma_{k}$ and all rays
$\gamma_{ij}$ with $n_i \ge 2$ are 
$\bar X$-faces. 
By $\QQ$-factoriality, 
$\tx \subseteq K_\QQ$ is of dimension 
two which excludes
$\tx \subseteq Q(\gamma_{k})^\circ$
and
$\tx \subseteq Q(\gamma_{ij})^\circ$. 
Taking $\gamma_0 = \gamma_{k}$ and 
$\gamma_0 = \gamma_{ij}$ in
Remark~\ref{rem:projFF} 
gives the claim.
We show~(ii). 
By Lemma~\ref{lem:FF}~(i) and~(ii),
all $\gamma_{k}$ 
and all $\gamma_{ij}, \gamma_{ij,k}$
with $n_i \ge 3$ are $\bar X$-faces 
and the corresponding pieces 
in $\bar {X}$ consist of singular points 
of $\bar X$. 
By quasismoothness, none of 
these $\bar X$-faces is an $X$-face.
For $\gamma_0 = \gamma_{i_11}$,
Remark~\ref{rem:projFF} yields
$w_{i_11} \in \tp$ or $w_{i_11} \in \tm$,
say $w_{i_11} \in \tp$.
Applying Remark~\ref{rem:projFF} to 
$\gamma_0 = \gamma_{ij,k}$ gives
$w_k,w_{ij} \in \tp$ for 
$k = 1,\ldots, m$ and
all $i$ with $n_i \ge 3$ and $j=1,\ldots,n_i$.
Assertions~(iii), (iv) and~(v) are seen
by similar arguments.
\end{proof}

\begin{proposition}
\label{prop:4-1-m-pos}
Let $X \subseteq Z$ be a non-toric projective 
quasismooth explicit general arrangement 
variety with divisor class group of rank two,
where $P$ is irredundant and 
$n_0 \ge  \ldots \ge n_r$.
Assume that $m > 0$ holds and 
$\Sigma$ admits an elementary 
big cone.
\begin{enumerate}
\item 
We have $r = c+1$ and 
are in one of the following situations:
\begin{enumerate}
\item
We have $n_0 = 2$ and there
exist indices $i$ and $j$ such that 
$n_i = 2$ holds and $\gamma_{ij,k}$ is 
an $X$-face for all $k$.
\item
We have $n_0 \geq 3$ and there exist indices 
$i_1 \ne i_2$ and $j_1, j_2$ such that
$n_{i_1} = n_{i_2} = 2$ holds and 
$\gamma_{i_1j_1,k}, \gamma_{i_2j_2,k}$ are $X$-faces 
for all~$k$.
\end{enumerate}
\item 
Assume $c=2$. Then we have $r=3$ and the 
constellation of the $n_i$ is
$(n_0,n_1,2,2)$,
$(n_0,2,2,2)$,
$(n_0,2,2,1)$
$(2,2,2,2)$,
$(2,2,2,1)$,
$(2,2,1,1)$
or
$(2,1,1,1)$,
where $n_0 \ge n_1 \ge 3$.
\end{enumerate}
\end{proposition}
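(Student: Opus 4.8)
The plan is to fix notation, then prove $r=c+1$, and finally split according to whether $n_0=2$ or $n_0\ge 3$; part~(ii) will then follow from part~(i) by a short combinatorial inspection. Since $X$ is non-toric we have $r\ge c+1$, and since $X$ is projective with $\Cl(X)=K$ of rank two, Remark~\ref{rem:projFF} supplies the decomposition $\Eff(X)=\tp\cup\tx\cup\tm$, where $\tx$ is the ample cone, $\tp,\tm$ are closed pointed subcones of $\Eff(X)$ with $\tp\cap\tm=\{0\}$ and neither meeting $\tx$, and $\tx$ is of positive dimension (as $X$ is projective); Remark~\ref{rem:projFF} also records that an $\bar X$-face $\gamma_0\preccurlyeq\gamma$ is an $X$-face exactly when $\tx\subseteq Q(\gamma_0)^\circ$. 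Fix an elementary big cone $\sigma\in\Sigma$; by Proposition~\ref{prop:bigleaf2rel} it is an $X$-cone, so its corresponding $\bar X$-face $\gamma_0$ — spanned by the $e_{ij}$ with $v_{ij}\notin\sigma$ together with the $e_k$ with $v_k\notin\sigma$ — satisfies $\tx\subseteq Q(\gamma_0)^\circ$. From Corollary~\ref{prop:niliji}~(ii) together with $r+1\ge c+2$ we get $n_0\ge 2$, and since $m>0$, Lemma~\ref{lem:tau}~(v) lets us assume, after possibly interchanging $\tp$ and $\tm$, that $w_1,\dots,w_m\in\tp$.

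To prove $r=c+1$, suppose $r\ge c+2$. For any $i$ with $n_i\ge 2$ and all $j,k$, the face $\gamma_{ij,k}$ is an $\bar X$-face by Lemma~\ref{lem:FF}~(ii), and since $r\ne c+1$ the piece $\bar X(\gamma_{ij,k})$ consists of singular points of $\bar X$; by quasismoothness and Remark~\ref{rem:quasismoothchar} it is therefore not an $X$-face, so $Q(\gamma_{ij,k})\subseteq\tp$ or $Q(\gamma_{ij,k})\subseteq\tm$ by Remark~\ref{rem:projFF}. As $w_k\in\tp\setminus\{0\}$, the latter is impossible, hence $w_{ij}\in\tp$. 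Every generator of $Q(\gamma_0)$ is now either some $w_k\in\tp$ or some $w_{ij}$ with $v_{ij}\notin\sigma$, the latter forcing $n_i\ge 2$ and hence $w_{ij}\in\tp$; thus $Q(\gamma_0)\subseteq\tp$, so $\tx\subseteq Q(\gamma_0)^\circ$ would put the positive-dimensional $\tx$ into the interior of $\tp$, contradicting the decomposition. Therefore $r=c+1$.

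For the dichotomy, assume first $n_0=2$; then all $n_i\le 2$ and $Q(\gamma_0)$ is spanned by those $w_{ij}$ with $n_i=2$ and $v_{ij}\notin\sigma$ — there is at least one, from $i=0$ — together with the surviving $w_k\in\tp$. Were all those $w_{ij}$ in $\tp$ we would again obtain $Q(\gamma_0)\subseteq\tp$, impossible; so there are $i_0$ and $j$ with $n_{i_0}=2$, $v_{i_0j}\notin\sigma$ and $w_{i_0j}\notin\tp$. For each $k$ the $\bar X$-face $\gamma_{i_0j,k}$ (Lemma~\ref{lem:FF}~(ii)) has $Q$-image $\cone(w_{i_0j},w_k)$, which lies neither in $\tp$ (it contains $w_{i_0j}\notin\tp$) nor in $\tm$ (it contains $w_k\in\tp\setminus\{0\}$), so by Remark~\ref{rem:projFF} it is an $X$-face; this is case~(a). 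Now assume $n_0\ge 3$. The rows of $P_0$ lie in $\ker(Q)=\im(P^*)$, so the explicit shape of $P_0$ from Construction~\ref{constr:R(A,P_0)} yields $\sum_j l_{ij}w_{ij}=\sum_j l_{0j}w_{0j}=:w^*$ for every $i=0,\dots,r$. By Lemma~\ref{lem:tau}~(ii) — here $m>0$ and $n_0\ge 3$ are used — the $w_{ij}$ with $n_i\ge 3$ lie, together with the $w_k$, all in $\tp$; hence $w^*=\sum_j l_{0j}w_{0j}$ is a positive combination of nonzero vectors of the pointed cone $\tp$, so $w^*\in\tp\setminus\{0\}$. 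Since $\tm$ is pointed and $w^*\notin\tm$, the relations $\sum_j l_{ij}w_{ij}=w^*$ show that for each $i$ not all $w_{ij}$ lie in $\tm$: in particular $w_{i1}\in\tp$ when $n_i=1$, and at most one of $w_{i1},w_{i2}$ lies in $\tm$ when $n_i=2$. As $\tm$ contains at least two weights (Remark~\ref{rem:projFF}), there must be distinct indices $i_1\ne i_2$ with $n_{i_1}=n_{i_2}=2$ and, for each $t$, some $j_t$ with $w_{i_tj_t}\in\tm$; arguing as before, $\gamma_{i_1j_1,k}$ and $\gamma_{i_2j_2,k}$ are $X$-faces for all $k$, which is case~(b). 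Part~(ii) then follows: for $c=2$ part~(i) gives $r=3$, and an ordered tuple $(n_0,n_1,n_2,n_3)$ with $n_0\ge 2$ which, in case $n_0\ge 3$, has at least two entries equal to $2$, is precisely one of the seven tuples listed.

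The heart of the proof — and the step I expect to be the genuine obstacle — is coordinating the geometry of the rank-two class group (the wedge decomposition $\tp\cup\tx\cup\tm$ and the positional constraints coming from Lemmas~\ref{lem:FF} and~\ref{lem:tau}) with the discrete data $n_0,\dots,n_r$ and $m$. The decisive, non-obvious device in the case $n_0\ge 3$ is the single common value $w^*=\sum_j l_{ij}w_{ij}$ read off from the rows of $P_0$: once one knows $w^*\in\tp\setminus\{0\}$, which becomes available precisely because $n_0\ge 3$ activates Lemma~\ref{lem:tau}~(ii), it forces every index to contribute a weight to $\tp$, and this is what bounds the number of weights in $\tm$ by the number of indices with $n_i=2$ and hence produces the two indices required in~(b). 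The case $n_0=2$ must be treated separately exactly because this device is unavailable there.
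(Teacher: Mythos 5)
Your proof is correct and follows essentially the same route as the paper's: the same use of Remark~\ref{rem:projFF}, Lemmas~\ref{lem:FF} and~\ref{lem:tau}, the common degree of the relations read off from the rows of $P_0$, and the same case split on $n_0=2$ versus $n_0\ge 3$. The only organizational difference is that you establish $r=c+1$ up front by a contradiction argument on the face corresponding to the elementary big cone (thereby also bypassing the appeal to Lemma~\ref{lem:tau}~(i) and its $\QQ$-factoriality hypothesis), whereas the paper extracts $r=c+1$ at the very end from the $X$-faces $\gamma_{ij,k}$ via Lemma~\ref{lem:FF}~(ii) and quasismoothness.
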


\begin{proof}
Due to Lemma~\ref{lem:tau}~(v), 
we may assume $w_1, \dots, w_m \in \tau^+$.
As $X$ is non-toric we have at least one 
relation~$g_1$. 
Thus, $r \geq c+1$ holds and
Proposition \ref{prop:niliji}~(ii)
yields $n_0 \ge 2$.
Lemma~\ref{lem:tau}~(i) says that none 
of the $w_{ij}$ with $n_i \ge 2$ lies in 
$\tau_X$.
Moreover, at least one of the $w_{ij}$ 
with $n_i \ge 2$ lies in $\tau^-$;
otherwise, since all relations~$g_i$ 
share the same degree, we had
$w_{i1} \in \tau^+$ for all $i$ with 
$n_i = 1$, meaning that $\tau^-$ 
contains no weights at all; a 
contradiction. 
In particular, if $n_0 =2$ holds,
then there exists a $w_{ij} \in \tau^-$ 
with $n_{i} = 2$ and all $\gamma_{ij,k}$ 
are $X$-faces.
Assume $n_0 \geq 3$.
Then Lemma~\ref{lem:tau}~(ii) yields 
$w_{ij} \in \tau^+$ whenever $n_i \geq 3$.
Moreover, because all relations~$g_i$ 
have the same degree,
$w_{ij} \in \tau^+$ holds for all $i$ with 
$n_i = 1$. 
Since~$\tau^-$ contains at least two weights,
we find $i_1, i_2$ and $j_1, j_2$ 
with $n_{i_1} = n_{i_2} = 2$ and 
$w_{i_1j_1}, w_{i_2j_2} \in \tau^-$.
Note that all $\gamma_{i_1j_1,k}, \gamma_{i_2j_2,k}$ 
are $X$-faces.
Now, Lemma~\ref{lem:FF}~(ii) yields
$r = c+1$.
Thus, Assertion~(i) is proven. Assertion~(ii) 
is a direct consequence.
\end{proof}

\begin{proposition}
\label{prop:mNull}
Let $X \subseteq Z$ be a non-toric projective 
quasismooth explicit general arrangement 
variety with divisor class group of rank two,
where $P$ is irredundant and 
$n_0 \ge  \ldots \ge n_r$.
Assume that $m = 0$ holds and $\Sigma$ 
admits an elementary big cone.
\begin{enumerate}
\item 
We are in one of the following situations:
\begin{enumerate}
\item 
We have $r = c+1$, $n_0 = 3 > n_1$ and there exists 
an index~$j$ such that $\gamma_{01,0j}$ is an $X$-face.
\item 
We have $r = c+1$ and there exist indices $0 \leq i_1 < i_2$
with ${n_{i_1} = n_{i_2} = 2}$
and indices $j_0, j_2$
such that $\gamma_{0j_0,i_2j_2}$ is an $X$-face.
\item 
We have $r = c+2$ and $n_0 = n_1 = 2$
and there exist indices $0 < i_1$ and $j_0, j_1$ 
such that $\gamma_{0j_0,i_1j_1}$ is an $X$-face.
\end{enumerate}
\item
Assume $c= 2$. Then the constellation
of the $n_i$ is one of the following,
where $n_0 \geq n_1 \ge 3$ holds:
\begin{align*}
r = 3: \quad 
&(n_0, n_1, 2, 2),
(n_0, 2, 2, 2),
(n_0, 2, 2, 1),
(3, 2, 1, 1),
(3, 1, 1, 1),
\\
&(2, 2, 2, 2),
(2, 2, 2, 1),
(2, 2, 1, 1).
\\
r  = 4: \quad 
&(2,2,2,2,2),
(2,2,2,2,1),
(2,2,2,1,1),
(2,2,1,1,1). 
\end{align*}
\end{enumerate}
\end{proposition}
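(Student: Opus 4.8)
The plan is to follow the strategy of the proof of Proposition~\ref{prop:4-1-m-pos}, adapted to the absence of the variables $S_k$. Recall from Remark~\ref{rem:projFF} that $\Eff(X)$ decomposes as $\tp \cup \tx \cup \tm$, that each of the cones $\tp,\tm$ contains at least two of the generator degrees $w_{ij}=Q(e_{ij})$, and that the $X$-faces are exactly the $\bar X$-faces $\gamma_0 \preccurlyeq \gamma$ with $\tx \subseteq Q(\gamma_0)^\circ$. Since $X$ is non-toric there is at least one defining relation $g_1$, so $r \ge c+1$; since $\Sigma$ carries an elementary big cone, Corollary~\ref{prop:niliji} gives $n_0 \ge 2$, that at most $c+1$ of the $n_i$ equal $1$ (hence at least $r-c$ of them are $\ge 2$), and that at most $c+1$ of the relevant exponents $l_{ij_i}$ are $\ge 2$.

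The first main step is to pin down a two-dimensional $X$-face of a prescribed combinatorial type. As in Proposition~\ref{prop:4-1-m-pos}, all relations $g_t$ share one $K$-degree $\mu$, and whenever $n_i=1$ the weight $w_{i1}$ lies on the ray through $\mu$; consequently, if all $w_{ij}$ with $n_i\ge 2$ lay in a single one of $\tp,\tm$, then all generator degrees would lie there, contradicting that both $\tp$ and $\tm$ contain weights. Hence there are $w_{ab}\in\tp$ and $w_{cd}\in\tm$ with $n_a,n_c\ge 2$. Combining this with Lemma~\ref{lem:tau}~(i)--(iv) (the first of which, as in Proposition~\ref{prop:4-1-m-pos}, rules out weights with $n_i\ge 2$ lying on $\tx$, while (ii)--(iv) force the weights attached to indices of large $n_i$ onto a single side of $\tx$), I would run the case analysis on $n_0$: if $n_0\ge 4$, Lemma~\ref{lem:tau}~(iii) puts $w_{01},\dots,w_{0n_0}$ on one side and the opposite side must be fed by indices with $n_i\le 3$; an analogous discussion via~(iv) handles the situation with two indices of $n$-value $\ge 3$; if $n_0=3>n_1$ one is left with the face $\gamma_{01,0j}$, and in the remaining cases with a face $\gamma_{0j_0,i_2j_2}$ for a suitable $i_2>0$ with $n_{i_2}\ge 2$. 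In each case the two weights of the face straddle $\tx$, so the face is indeed an $X$-face.

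With such a small $X$-face $\gamma_0$ in hand, quasismoothness of $X$ forces the piece $\bar X(\gamma_0)$ to consist of smooth points of $\bar X$, and Lemma~\ref{lem:FF}~(iii) and~(iv) then yield $r=c+1$ in cases~(a), (b) and $r=c+2$ with $n_0=n_{i_2}=2$ in case~(c); this establishes part~(i). For part~(ii) I would specialize to $c=2$: then $r\in\{3,4\}$ and at least $r-2$ of the $n_i$ are $\ge 2$, and the ordering $n_0\ge\dots\ge n_r$ together with the constraints of~(a)--(c) and the one-sidedness from Lemma~\ref{lem:tau}~(iii)--(iv) leaves precisely the tuples listed, with $n_0$ allowed to grow only in the three families $(n_0,n_1,2,2)$, $(n_0,2,2,2)$, $(n_0,2,2,1)$ where $n_0\ge n_1\ge 3$.

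The main obstacle is the bookkeeping in the second step: one must simultaneously respect that both $\tp$ and $\tm$ carry at least two weights, the threshold-dependent one-sidedness statements of Lemma~\ref{lem:tau} (which trigger at $n_i\ge 3$, at $n_i\ge 4$, and for pairs of indices), the placement of the $w_{i1}$ for $n_i=1$ on the relation ray, and the bound on the number of $n_i$ equal to $1$, and still exhibit a $\bar X$-face of the exact shape $\gamma_{01,0j}$ or $\gamma_{0j_0,i_2j_2}$ whose weights straddle $\tx$. The $m=0$ case is somewhat more intricate than $m>0$ precisely because one can no longer use the $S_k$-weights as a reservoir on one side of $\tx$; this is what forces the additional branches~(a) with $n_0=3>n_1$ and~(c) with $r=c+2$, $n_0=n_1=2$ that have no counterpart in Proposition~\ref{prop:4-1-m-pos}.
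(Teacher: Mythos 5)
Your proposal follows essentially the same route as the paper's proof: a case analysis on $n_0$ (splitting off $n_0\ge 4$ or two indices with $n_i\ge 3$, then $n_0=3>n_1$, then $n_0=2$), using Lemma~\ref{lem:tau} to force the weights attached to large $n_i$ and to the indices with $n_i=1$ onto one side of $\tx$, exhibiting a two-dimensional $X$-face of the shape $\gamma_{01,0j}$ or $\gamma_{0j_0,i_2j_2}$, and then invoking Lemma~\ref{lem:FF}~(iii),(iv) to bound $r$; part~(ii) is the specialization $c=2$. The structure, the key lemmas and the conclusions all match the paper's argument, so the proposal is correct.
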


\begin{proof}
Only for the first assertion, there is 
something to show.
As $X$ is non-toric 
we have at least one relation
$g_1$ and conclude
$r \geq c+1$. Moreover,
Proposition \ref{prop:niliji}~(ii)
yields $n_0 \geq 2$.
Finally, Lemma~\ref{lem:tau}~(i) 
shows that none of the $w_{ij}$ with 
$n_i \ge 2$ lies in $\tau_X$.
We distinguish the following cases.

First, let $n_0 \ge 4$ or $n_0 =  n_1 = 3$.
By Assertions~(iii) and~(iv) of Lemma~\ref{lem:tau},
we may assume ${w_{ij} \in \tau^+}$
for all~$i$ with $n_i  \ge 3$. 
Then $w_{ij} \in \tau^+$ holds as well 
for all~$i$ with $n_i = 1$.
Since $\tau^-$ contains at least two weights, 
there are $i_1 < i_2$ and $j_1, j_2$ with 
$n_{i_1} = n_{i_2} = 2$ and
$w_{i_1j_1}, w_{i_2j_2} \in \tau^-$.
Observe that $\gamma_{01,i_2j_2}$ is an $X$-face.
Moreover, Lemma~\ref{lem:FF}~(iv) shows 
$r = c+1$. 
We arrive at Case~(b) of~(i).

Next, let $n_0 = 3 > n_1$.
If all weights $w_{0j}$ lie 
either in $\tau^+$ or in $\tau^-$,
then we can argue as above
and end up in Case~(b) of~(i).
Otherwise, $w_{01}$ and some 
$w_{0j}$ for $j=2,3$ lie on 
different sides of $\tau_X$. 
Then $\gamma_{01,0j}$ is 
an $X$-face.
Lemma~\ref{lem:FF}~(iii) 
yields $r= c+1$ and
we are in Case~(a) of~(i).

Finally, let $n_0 = 2$.
The common degree of 
$g_1, \ldots, g_{r-c}$ and 
hence all $w_{ij}$ with $n_i = 1$
lie in precisely one of the cones 
$\tau^+$, $\tau^-$ or $\tau_X$,
where we may assume that this 
is not $\tau^-$.
Then no pair $w_{i1}, w_{i2}$ lies 
in~$\tau^-$.
As there must be at least two weights 
in $\tau^-$, we conclude $n_1=2$
and find the desired $\gamma_{0j_0,1j_1}$.
Lemma \ref{lem:FF}~(iv) yields
$r \leq c+2$.
Thus, we are in one of the Cases~(b) 
or~(c) of~(i).
\end{proof}

\begin{corollary}
\label{cor:clfree}
Let $X$ be a smooth projective general 
arrangement 
variety of Picard number two.
Then we have $\Cl(X) = \Pic(X) = \ZZ^2$.
\end{corollary}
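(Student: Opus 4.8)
The plan is to reduce, via Theorem~\ref{thm:arrTvar} and Remark~\ref{rem:irredundant}, to an explicit general arrangement variety $X = X(A,P,\Sigma)$ with $P$ irredundant and $n_0 \ge \ldots \ge n_r$. Since $X$ is smooth it is locally factorial, so every Weil divisor is Cartier and hence $\Pic(X) = \Cl(X) = K$; in particular $X$ is $\QQ$-factorial, the ample cone $\tx$ is two-dimensional, and $\rk(K) = \rho(X) = 2$. Thus the statement reduces to showing that $K$ is torsion free, i.e.\ $K \cong \ZZ^2$.

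If $X$ is toric, it is a smooth complete toric variety and $\Cl(X)$ is classically torsion free, so we may assume $X$ non-toric. Being non-toric, projective, locally factorial and of Picard number $\rho(X) = 2 \le c+3$, the variety $X$ admits an elementary big cone in $\Sigma$ by Corollary~\ref{cor:ebc}; moreover smoothness implies quasismoothness. Hence the hypotheses of Proposition~\ref{prop:4-1-m-pos}~(i) (if $m > 0$) resp.\ of Proposition~\ref{prop:mNull}~(i) (if $m = 0$) are met, and running through the conclusions of those two propositions one finds, in every single case, a two-dimensional $X$-face of the orthant $\gamma = \QQ^{n+m}_{\ge 0}$ of the form $\gamma_0 = \cone(e_a,e_b)$ with two distinct canonical basis vectors $e_a, e_b$ of $\ZZ^{n+m}$ — concretely one of $\gamma_{ij,k}$, $\gamma_{i_1j_1,k}$, $\gamma_{01,0j}$, $\gamma_{0j_0,i_2j_2}$ or $\gamma_{0j_0,i_1j_1}$.

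With such a $\gamma_0$ at hand the argument closes quickly: since $X$ is smooth, the piece $X(\gamma_0) \subseteq X$ consists of smooth points, so Corollary~\ref{cor:genarrvarsmooth} gives $K = Q(\lin_\QQ(\gamma_0) \cap \ZZ^{n+m})$. As $e_a$ and $e_b$ are part of a lattice basis, $\lin_\QQ(\gamma_0) \cap \ZZ^{n+m} = \ZZ e_a \oplus \ZZ e_b$, whence $K$ is generated by the two elements $Q(e_a)$ and $Q(e_b)$. A finitely generated abelian group of rank two generated by two elements is free of rank two: the induced surjection $\ZZ^2 \to K$ has kernel a subgroup of $\ZZ^2$ of rank $2 - \rk(K) = 0$, hence trivial. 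Therefore $\Cl(X) = K = \ZZ^2$, and with $\Pic(X) = \Cl(X)$ we conclude.

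I expect the main obstacle to lie not in this corollary but in the results it rests on: Propositions~\ref{prop:4-1-m-pos} and~\ref{prop:mNull} do the real work, ruling out in particular the a priori possibility that every $X$-face of minimal dimension has dimension at least three. Granting those, the only delicate points here are the identification $\Pic(X) = \Cl(X)$ for smooth $X$, checking that an elementary big cone is available through Corollary~\ref{cor:ebc} (using $\rho(X) = 2$), and the elementary remark that a two-dimensional coordinate $X$-face forces $K$ to be free via the combinatorial smoothness criterion.
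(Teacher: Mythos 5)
Your proof is correct and follows essentially the same route as the paper: reduce to an explicit presentation, invoke Corollary~\ref{cor:ebc} to obtain an elementary big cone, extract a two-dimensional coordinate $X$-face from Propositions~\ref{prop:4-1-m-pos} and~\ref{prop:mNull}, and conclude via the smoothness criterion that the two associated weights generate the rank-two group $K$, forcing $K \cong \ZZ^2$. Your explicit handling of the toric case and of the non-toric hypothesis needed for Corollary~\ref{cor:ebc} is a small but welcome gap-fill relative to the paper's terser argument.
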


\begin{proof}
We may assume that $X \subseteq Z$ 
is explicit.
Corollary~\ref{cor:ebc} tells us that 
$\Sigma$ admits an elementary big cone.
Thus Propositions~\ref{prop:4-1-m-pos} 
and~\ref{prop:mNull} provide
a two-dimensional $X$-face
$\gamma_0 \preccurlyeq \gamma$.
According to Proposition~\ref{prop:smoothchar},
the two weights stemming from 
$\gamma_0$ generate $K$ as a group.
This implies $\Cl(X) \cong K \cong \ZZ^2$.
\end{proof}

\begin{proof}[Proof of Proposition~\ref{prop:const}]
According to Corollary~\ref{cor:ebc}, the fan 
$\Sigma$ admits an elementary big cone.
Thus, Propositions~\ref{prop:4-1-m-pos}~(ii)
and~\ref{prop:mNull}~(ii) apply. 
Together with Corollary~\ref{cor:clfree} they
provide the desired statements.
\end{proof}

In order to show that Theorem~\ref{theo:proj}
lists all smooth projective general 
arrangement varieties of true complexity two,
we have to go through the cases of 
Proposition~\ref{prop:const}.
After some further preparation,
we will treat exemplarily 
Cases~\ref{prop:const}~(I)(a) and~(b).
The detailed discussion of the 
remaining cases is given in~\cite[Sec.~6.1]{Wr}.

\begin{remark}
\label{rem:Q}
Let $X \subseteq Z$ be a smooth, 
projective explicit general arrangement 
variety of Picard number two.
Corollary~\ref{cor:clfree} ensures
$\Cl(X) =  \ZZ^2$ and we will write
$$
\deg(T_{ij}) 
\ = \ 
Q(e_{ij}) 
\ = \ 
w_{ij} 
\ = \ 
(x_{ij},y_{ij}) 
\ \in \ 
\ZZ^2,
$$
$$
\deg(T_{k}) 
\ = \ 
Q(e_{k})
\ = \ 
w_{k} 
\ = \ 
(x_{k},y_{k}) 
\ \in \ 
\ZZ^2
$$
for the weights.
Moreover, the (common) degree of the 
relations $g_1, \ldots, g_{r-c}$ 
will be denoted as 
$\deg(g_i) = \mu  = (\mu_1, \mu_2) \in \ZZ^2$.
Recall that for each $i = 0,\ldots,r$ we have
$$
\mu_1 \ = \ \sum_{j=1}^{n_i} l_{ij}x_{ij},
\qquad\qquad 
\mu_2 \ = \ \sum_{j=1}^{n_i} l_{ij}y_{ij}.
$$
Consider the decomposition of the effective cone 
$\mathrm{Eff}(X) = \tau^- \cup \tau_X \cup \tau^+$
from Remark~\ref{rem:projFF}.
Choosing names suitably, we can fix the following 
orientation:

\begin{center}
\begin{tikzpicture}[scale=0.6]
   % draw \tau_X
    \path[fill=gray!60!] (0,0)--(3.5,2.9)--(1.1,3.4)--(0,0);
    \path[fill, color=black] (1.5,2) circle (0.0ex)  node[]{\small{$\tx$}};
   % \tp     
    \path[fill=gray!10!] (0,0)--(1.1,3.4)--(-2.2,3.4)--(0,0);
    \path[fill, color=black] (-0.25,2) circle (0.0ex)  node[]{\small{$\tp$}};
    \draw (0,0)--(1.1,3.4);
    \draw (0,0) --(-2.2,3.4);
   % \tm
     \path[fill=gray!10!] (0,0)--(3.5,2.9)--(4.5,0.7)--(0,0);
    \path[fill, color=black] (3,1.2) circle (0.0ex)  node[]{\small{$\tm$}};
    \draw (0,0)  -- (3.5,2.9);
    \draw (0,0)  -- (4.5,0.7);
  \end{tikzpicture}
\end{center}

\noindent
If $(w,w')$ is a positively oriented
pair in $\QQ^2$,
for instance $w \in \tau^-$ and $w' \in \tau^+$,
then $\det(w,w')$ is positive.
If, furthermore, $w,w'$ are the weights stemming from 
a two-dimensional $X$-face 
$\gamma_0 \preccurlyeq \gamma$, then we have
$\det(w,w')=1$ by Proposition~\ref{prop:smoothchar}.
In that case, we can achieve 
$$ 
w \ = \ (1,0), 
\qquad
\qquad
w' \ = \ (0,1)
$$
by a suitable unimodular coordinate change on 
$\ZZ^2$. 
Then $w'' = (x'',1)$ holds whenever $w,w''$ 
stems from a two-dimensional $X$-face
and, similarly, 
$w'' = (1,y'')$ holds whenever $w'',w'$ 
stems from  a two-dimensional $X$-face.
\end{remark}

\begin{lemma}
\label{lem:CaseABC} 
In the situation of Proposition~\ref{prop:const}, 
consider the case $r = 3$, $m \geq 0$ and 
$n_0 \ge 3 > n_1 = n_2 = 2 \ge n_3$. 
Then the following constellation of weights 
cannot occur:
$$ 
w_{01}, \ldots, w_{0n_0},w_{12},w_{22} 
\ \in \
\tau^+,
\qquad \qquad
w_{11}, w_{21} 
\ \in \ 
\tau^-.
$$
\end{lemma}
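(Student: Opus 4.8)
The plan is to assume that the indicated constellation occurs and to derive a contradiction by tracking a small number of two-dimensional $X$-faces. Recall that in the case at hand we have $c=2$ and $r=3$, hence $r=c+1$, and there is exactly one defining relation $g_1$; write $\mu=\deg(g_1)\in\ZZ^2$, so that $\mu=\sum_{j=1}^{n_i}l_{ij}w_{ij}$ for every $i=0,\dots,3$. Since $n_0\ge 3 > n_1=n_2=2$, Lemma~\ref{lem:FF}~(iv) shows that $\gamma_{0j,11}$, $\gamma_{0j,21}$, $\gamma_{11,01}$, $\gamma_{12,21}$ and $\gamma_{22,11}$ all are $\bar X$-faces; and since each of them is spanned by one weight lying in $\tp$ and one weight lying in $\tm$, they are not contained in $\tp$ or in $\tm$, so by Remark~\ref{rem:projFF} all of them are in fact $X$-faces.

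Next I would extract the numerical content of smoothness. Applying Lemma~\ref{lem:FF}~(iv) (equivalently, Proposition~\ref{prop:smoothchar}) to the $X$-faces $\gamma_{0j,11}$ and $\gamma_{0j,21}$, and using $n_0\ge 3$ — which rules out the index-$0$ alternative in the smoothness criterion — forces $l_{12}=l_{22}=1$. Then I would normalize coordinates on $\Cl(X)=\ZZ^2$ as in Remark~\ref{rem:Q} using the $X$-face $\gamma_{11,01}$, so that $w_{11}=(1,0)$ and $w_{01}=(0,1)$. The same remark, applied to the two-dimensional $X$-faces $\gamma_{11,0j}$ and $\gamma_{21,01}$, yields $w_{0j}=(x_{0j},1)$ for all $j$ and $w_{21}=(1,y_{21})$. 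Smoothness of each $\gamma_{0j,21}$ gives $\det(w_{21},w_{0j})=1$ (the sign being fixed by the orientation of $\tp,\tm$), i.e. $y_{21}x_{0j}=0$ for $j=1,\dots,n_0$.

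Finally, using $l_{12}=l_{22}=1$ I would rewrite the degree relations as $w_{12}=\mu-l_{11}w_{11}$ and $w_{22}=\mu-l_{21}w_{21}$, and substitute these into the determinant conditions coming from the $X$-faces $\gamma_{12,21}$ and $\gamma_{22,11}$, namely $\det(w_{21},w_{12})=1$ and $\det(w_{11},w_{22})=1$. Writing $\mu=\sum_j l_{0j}(x_{0j},1)$, so that $\mu_1=\sum_j l_{0j}x_{0j}$ and $\mu_2=\sum_j l_{0j}\ge n_0\ge 3$, these become $\mu_2-y_{21}\mu_1+y_{21}l_{11}=1$ and $\mu_2-l_{21}y_{21}=1$. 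If $y_{21}\ne 0$, then $x_{0j}=0$ for all $j$, hence $\mu_1=0$, and subtracting the two relations gives $y_{21}(l_{11}+l_{21})=0$, which is impossible since $l_{11},l_{21}\ge 1$. Hence $y_{21}=0$, but then the second relation reads $\mu_2=1$, contradicting $\mu_2\ge 3$; this is the desired contradiction.

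The only nonroutine part is the bookkeeping in the second paragraph: the smoothness criterion of Lemma~\ref{lem:FF}~(iv) branches according to which of the two indices involved in a face carries $n_i=2$, and one must check that for every face touching index $0$ it is always the \emph{other} index that is forced to have $l=1$, so that the conclusion $l_{12}=l_{22}=1$ drops out cleanly rather than a constraint on $l_{11},l_{21}$. Keeping track of the orientation so that each relevant determinant equals $+1$ and not $-1$ is the other place where care is needed; beyond that the argument is a short computation.
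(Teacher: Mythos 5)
Your proof is correct and follows essentially the same route as the paper's: the same $X$-faces, the same normalization of $w_{01},w_{11}$ via Remark~\ref{rem:Q}, the same derivation of $l_{12}=l_{22}=1$ from Lemma~\ref{lem:FF}~(iv), and the same key inputs $\mu_2\ge 3$ and $x_{0j}y_{21}=0$. The only (harmless) deviation is the endgame: the paper fixes $y_{21}>0$ by positioning $w_{21}$ in $\cone(w_{01},w_{11})$ and contradicts $\mu_1=l_{11}+x_{12}>0$, whereas you use the extra determinant from $\gamma_{22,11}$ and a case split on $y_{21}$ — both are valid.
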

\begin{proof}
We may assume 
$w_{02}, \dots, w_{0n_0}, w_{21} \in \cone(w_{01}, w_{11})$.
Applying Remark~\ref{rem:Q} first to 
the $X$-face $\gamma_{01,11}$ and then to all
$\gamma_{01,21}, \gamma_{22,11},\gamma_{0j,11}, \gamma_{i,11}\in\rlv(X)$,
where ${j = 1, \ldots, n_0}$ and $i =1, \ldots, m$, 
turns the degree matrix $Q$ into the shape
$$
Q
=
\left[ 
\begin{array}{cccc|cc|cc|ccc||ccc}
0 & x_{02} & \dots & x_{0n_0}  & 1 & x_{12} & 1 & x_{22} & x_{31} & \dots & x_{3n_1} & x_1 & \dots & x_m
\\
1 & 1 & \dots & 1  & 0 & y_{12} & y_{21} &1 & y_{31} & \dots & y_{3n_1} & 1 & \dots  & 1 
\end{array}
\right],
$$
where $x_{0j} \ge 0$ and $y_{21} \ge 0$.
Moreover, $\gamma_{01,11}, \gamma_{01,21} \in \rlv(X)$ 
implies $l_{12} = l_{22} =1$ due to Lemma~\ref{lem:FF}~(iv).
Using $\gamma_{21,12} \in \rlv(X)$ 
we infer $y_{12} = 1+y_{21}x_{12}$ 
from $\det(w_{21},w_{12}) = 1$ 
and, by the shape of $Q$, obtain
$$
3 
\ \le \
l_{01} + \dots+ l_{0n_0} 
\ = \ 
\mu_2 
\ = \ 
y_{12} 
\ = \ 
1+y_{21}x_{12}.
$$
We conclude $x_{12} > 0$. 
Using $\gamma_{0j,21} \in \rlv(X)$
gives $\det(w_{21},w_{0j}) = 1$
and thus $x_{0j}y_{21} = 0$.
As the effective cone of $X$ 
is pointed, $w_{21} \in  \tau^-$ 
implies $y_{21} > 0$.
We arrive at $x_{0j} = 0$ and thus 
$\mu_1 = 0 = l_{11} + x_{12}$. 
This contradicts to the fact that 
$l_{11}$ and $x_{12}$ are strictly 
positive.
\end{proof}

We are ready to establish the list of 
Theorem~\ref{theo:proj}. 
As announced, we restrict ourselves here 
to the discussion of two cases of 
Proposition~\ref{prop:const} and refer 
to~\cite[Sec.~6.1]{Wr} for the remaining 
ones.

\begin{namedthm*}{Case~\ref{prop:const}~(I)(a)}
We have $r= 3$, $m \geq 0$ and $n_0 \ge n_1 \ge 3 > n_2 = n_3 = 2$.
This setting allows no examples satisfying the assumptions 
of Theorem~\ref{theo:proj}.
\end{namedthm*}

\begin{proof}
By Assertions~(iv) and~(ii) of Lemma~\ref{lem:tau}, 
we may assume that the weights 
$w_{01}, \dots, w_{0n_0}$, $w_{11}, \dots,  w_{1n_1}$ 
and $w_1,\dots, w_m$ all lie in $\tau^+$.
At least two other weights lie in~$\tau^-$. 
Renumbering suitably, we arrive at 
$w_{21}, w_{31} \in \tau^-$ and $w_{22}, w_{32} \in \tau^+$
because of $\mu \in \tau^+$.
Thus, Lemma~\ref{lem:CaseABC} gives the assertion. 
\end{proof}

\begin{namedthm*}{Case~\ref{prop:const}~(I)(b)}
We have $r= 3$, $m \geq 0$ and $n_0 \ge 3 > n_1 = n_2 = n_3 = 2$.
This gives the varieties Nos.~1 and~2 
of Theorem~\ref{theo:proj}. 
\end{namedthm*}

\begin{proof}
We claim that each of $\tau^+$ and $\tau^-$
contains weights from $w_{01}, \ldots, w_{0n_0}$.
Otherwise, due to Lemma~\ref{lem:tau}~(i),
we may assume that all $w_{0j}$ lie in $\tau^+$.
If $m > 0$ holds, Lemma~\ref{lem:tau}~(ii) yields
$w_1, \ldots, w_m \in \tau^+$. 
As $\tau^-$ contains at least two weights,
we can achieve 
$w_{11}, w_{21} \in \tau^-$ and $w_{12}, w_{22} \in \tau^+$
by suitable renumbering; 
note that $w_{i1}, w_{i2} \in \tau^-$ is not possible 
for $i = 1,2,3$ because of $\mu \in \tau^+$.
Lemma~\ref{lem:CaseABC} then verifies the claim.

By the claim, we may assume $w_{01}, w_{02} \in \tau ^+$ 
and $w_{03} \in \tau^-$.
Lemma~\ref{lem:tau}~(ii) shows $m=0$ 
and Lemma~\ref{lem:tau}~(iii) gives $n_0 = 3$. 
There must be at least one more weight in~$\tau^-$, 
say $w_{11}$.
Applying Lemma~\ref{lem:FF}~(iii) to 
$\gamma_{0j,03} \in \rlv(X)$ 
we obtain $l_{01} = l_{02} = 1$. 
Moreover, looking at 
suitable $X$-faces $\gamma_{0j,i_2j_2}$
and using Lemma~\ref{lem:FF}~(iv), 
we obtain
$$
l_{11} = l_{12} = l_{21} = l_{22} = l_{31} = l_{32} = 1.
$$
We may assume $w_{02} \in \cone(w_{01},w_{03})$.
Then, applying Remark~\ref{rem:Q} to $\gamma_{01,03}\in\rlv(X)$ 
and afterwards to $\gamma_{01,11}, \gamma_{02,03}\in\rlv(X)$ 
we achieve that our degree matrix looks as follows:
$$
Q
\ = \
\left[ 
\begin{array}{ccc|cc|cc|cc}
0 & x_{02} & 1 &1 &x_{12} & x_{21} & x_{22} & x_{31} & x_{32} 
\\
1 & 1 &0 &y_{11} & y_{12} & y_{21} & y_{22} & y_{31} &y_{32}
\end{array}
\right].
$$
Note that we have $x_{02} \ge 0$ because of 
$w_{02} \in  \cone(w_{01},w_{03})$.
We distinguish the following three cases
according to the possible positions of 
the weights $w_{21}$ and~$w_{22}$.

\medskip
\noindent
\emph{We have $w_{21}, w_{22} \in \tau^-$}. 
Then $\mu \in \tau^-$ holds and we may assume 
$w_{31} \in \tau^-$.
Moreover, we have 
$\gamma_{01,21}, \gamma_{01,22}, \gamma_{01,31} \in \rlv(X)$
and conclude 
$$
x_{21} = x_{22} = x_{31} = 1,
\qquad
\mu = (2,2),
\qquad
x_{12} = x_{22} = x_{32} = 1.
$$
The determinants corresponding to
$\gamma_{02,21},\gamma_{02,22} \in \rlv(X)$
both equal one, which implies
$y_{21}x_{02} = 0$ and $y_{22}x_{02} = 0$.
Because of $y_{21} + y_{22} = \mu_2 = 2$,
we obtain 
$$
x_{02} \ = \ 0,
\qquad\qquad
l_{03} =  \ \mu_1  \ =  \ 2.
$$
The considerations performed so far 
determine the defining relation $g_1$ 
and, up to integers $a_1,a_2,a_3$, 
also the degree matrix $Q$ as
$$
g_1 \ = \ T_{01}T_{02}T_{03}^2 +T_{11}T_{12}+T_{21}T_{22}+T_{31}T_{32},
$$
$$
Q \ = \ 
\left[ 
\begin{array}{ccc|cc|cc|cc}
0 & 0 & 1 & 1 & 1 & 1 & 1 & 1 & 1
\\
1 & 1 & 0 & a_1 & 2-a_1 & a_2 & 2-a_2 & a_3 & 2-a_3
\end{array}
\right].
$$
We claim that all $w_{ij}$, where $i = 1,2,3$,
lie in $\tau^-$.
That means that we have to show 
$w_{12}, w_{32} \in \tau^-$.
Otherwise, if $w_{12} \in \tau^+$ holds, 
then $\gamma_{03,12} \in \rlv(X)$ 
leads to 
$$ 
1 
\ = \ 
\det(w_{03},w_{12})
\ = \ 
a_1.
$$
This implies $w_{11} = w_{21} \in \tau^- \cap \tau^+$,
which is impossible. 
Analogously, one excludes  $w_{32} \in \tau^+$.
Thus, we may assume  $a_1 \leq a_2 \leq a_3$ and
$a_i \geq 2-a_i$. 
Consequently, $a_i \geq 1$ holds and we obtain 
$$ 
\SAmple(X) 
\ = \ 
\overline{\tau_X} 
\ = \ 
\cone( (1,a_3), (0,1)).
$$

\medskip
\noindent
\emph{We have $w_{21}, w_{22} \in \tau^+$}. 
Thus, we have $\mu \in \tau^+$ and thus
$w_{12} \in \tau^+$. 
That means that 
$\gamma_{03,12}, \gamma_{03,21}, \gamma_{21,22}$
are $X$-faces and we can conclude 
$$
y_{12} = y_{21} = y_{22} = 1,
\qquad
\mu_2 = 2,
\qquad
y_{11} = 1. 
$$
Looking at the determinants
associated with 
$\gamma_{02,11},\gamma_{11,21}, \gamma_{11,22} \in \rlv(X)$
we see
$x_{02} = x_{21} =x_{22} = 0$. 
This gives
$l_{03} = \mu_1 = x_{21} +x_{22} = 0$;
a contradiction.

\medskip
\noindent
\emph{We have $w_{21} \in \tau^-$ and $w_{22} \in \tau^+$}.
Then we may assume $w_{31} \in \tau^-$ and $w_{32} \in \tau^+$,
as otherwise, up to renumbering, we are in one of the 
preceding cases.
Remark~\ref{rem:Q} applied to
$\gamma_{01,21}, \gamma_{01,31}, \gamma_{03,22},
\gamma_{03,32} \in \rlv(X)$ 
and $\mu_2 = 2$ yield
$$
x_{21} = x_{31} = y_{22} = y_{32} = 1,
\qquad \qquad
y_{21} = y_{31}  = 1.
$$
We claim $y_{11} \ne 0$. 
Otherwise, $y_{12} =  \mu_2 = 2$.
This implies
$\det(w_{03},w_{12}) = 2$,
hence
$\gamma_{03,12}\notin \rlv(X)$ 
and thus $w_{12} \in \tau^-$.
Then $\gamma_{01,12} \in \rlv(X)$
leads to 
$x_{12} =1$ and $\mu_1 = 2$. 
Thus $w_{22} = (1,1) = w_{21} \in \tau^-$,
contradicting the setup.
Now, $y_{11} \ne 0$ yields
$$ 
x_{02} = x_{22} = x_{32} = 0,
\qquad
\mu = (1,2),
\qquad
l_{03} = 1,
\qquad
x_{12} = 0
$$
due to 
$\gamma_{11,02},\gamma_{11,22},\gamma_{11,32} \in \rlv(X)$
and homogeneity of the relation $g_1$.
We conclude $w_{12} = (0,y_{12}) \in \tau^+$
and 
$\gamma_{03,12} \in \rlv(X)$ shows $y_{12} =1$.
Finally, 
$y_{11} = \mu_2- y_{12} = 1$ holds. 
For the relation, the degree matrix 
and the ample cone this means
$$
g_1 
\ = \ 
T_{01}T_{02}T_{03} + T_{11}T_{12}+ T_{21}T_{22} +T_{31}T_{32},
$$
$$
Q
\ = \ 
\left[ 
\begin{array}{ccc|cc|cc|cc}
0 & 0 & 1 & 1 & 0 & 1 & 0 & 1 & 0
\\
1 & 1 & 0 &1 & 1 & 1 & 1 & 1 & 1
\end{array}
\right],
$$
$$
\SAmple(X) 
\ = \
\overline{\tau_X}
\ = \ 
\cone((0,1), (1,1)).
$$
\end{proof}

Finally, Theorem~\ref{theo:proj} claims that 
the listed varieties are all of true 
complexity two.
The following Proposition establishes this
statement. 
As before, we present an example case; 
the full discussion will be made available 
elsewhere.

\begin{proposition}
\label{prop:truly}
Each of the varieties listed in Theorem~\ref{theo:proj} 
is of true complexity two, i.e., does not admit torus 
actions of lower complexity.
\end{proposition}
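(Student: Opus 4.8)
The plan is to show, for each variety $X$ occurring in Theorem~\ref{theo:proj}, that $X$ admits no effective torus action of complexity $0$ or $1$; together with the complexity two action provided by Construction~\ref{constr:R(A,P)} this gives true complexity two. Complexity zero is immediate: a toric variety has a polynomial ring as its Cox ring, whereas every $\mathcal{R}(X)$ in Theorem~\ref{theo:proj} is a quotient $\KK[T_{ij},S_k]/\langle g_1,\ldots\rangle$ by at least one nonzero relation, so $X$ is not toric. The genuine task is to exclude complexity one.

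Here we use that each listed $X$ is smooth and projective, hence an $A_2$-maximal Mori dream space, and is rational, as is visible from the Cox ring: in every case a defining relation is linear in one of the variables $T_{ij}$ (for No.~14, eliminate $T_1$ via $g_1$ and then $T_9$ via $g_2$), so $\bar X$, and therefore $X$, are rational. Consequently, if $X$ carried a torus action of complexity one, then by the Cox ring based description of rational $A_2$-maximal $\TT$-varieties of complexity one recalled in Remark~\ref{rem:extv2cpl1} (see \cites{HaHe,HaWr}, and \cite{FaHaNi} for the classification in Picard number two), the ring $\mathcal{R}(X)$ would admit a presentation whose defining relations are \emph{monomial trinomials} $\mu_{t0}T_{t_0}^{l_{t_0}}+\mu_{t1}T_{t_1}^{l_{t_1}}+\mu_{t2}T_{t_2}^{l_{t_2}}$ coming from the determinantal construction in the case $c=1$, with consecutive relations sharing two of their three monomials. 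Now $\mathcal{R}(X)$ is an intrinsic $\Cl(X)$-graded ring with pointed grading and only constant homogeneous units (Theorem~\ref{thm:R(A,P_0)}, Corollary~\ref{cor:clfree}); hence all its minimal homogeneous generating systems are related by graded automorphisms, and so are the associated ideals of relations. In particular the number of these relations, their degrees, and the minimal number of monomials needed to write a member of the pencil of relations of a given degree are invariants of $X$.

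Thus the proof reduces to a finite check running along the cases of Proposition~\ref{prop:const}, exactly as in the proof of Theorem~\ref{theo:proj}: for each resulting $X$ one verifies that no grading-preserving automorphism of $\KK[T_{ij},S_k]$ turns a defining relation, or a linear combination of defining relations of the same degree, into an expression with at most three monomials. This is read off from the degree matrix $Q$: generators of pairwise distinct $\Cl(X)$-degree can only be rescaled, while generators of equal degree may be mixed, and one isolates inside a relation a part that, under the admissible coordinate changes, is a nondegenerate quadratic or bilinear form of rank at least four; since the rank is preserved, that part already requires at least four monomials, respectively requires at least three while a surviving term $T_0^{l_0}$ involves further variables and cannot be absorbed. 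As the example case we would carry out No.~8, where $\mathcal{R}(X)=\KK[T_1,\ldots,T_8,S_1,\ldots,S_m]/\langle T_1T_2+T_3T_4+T_5T_6+T_7T_8\rangle$ with $\deg T_i=(0,1)$ for $i\le 6$, $\deg T_7=(-1,1)$, $\deg T_8=(1,1)$ and $\deg S_k=(1,0)$: a grading-preserving automorphism fixes $T_7$ up to scalar, sends $T_8$ to $c'T_8$ plus terms in $\langle T_1,\ldots,T_6\rangle\cdot\langle S_1,\ldots,S_m\rangle$, and maps $T_1,\ldots,T_6$ into $\langle T_1,\ldots,T_6\rangle + T_7\langle S_1,\ldots,S_m\rangle$; the resulting purely-$(T_1,\ldots,T_6)$ part of the relation stays a rank six quadratic form, needing at least three monomials, and the term $c'cT_7T_8$ survives without cancellation, so the relation needs at least four monomials and $X$ is not of complexity one.

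The main obstacle is the bookkeeping: one has to traverse all cases of Proposition~\ref{prop:const} together with the specializations of the parameters $a_i$, $l_{ij}$, $m$ appearing in Theorem~\ref{theo:proj}, and coincidences among the generator degrees — such as $a_1=a_2$ in No.~1, or the many generators of equal degree in Nos.~8--14, where for No.~14 one additionally uses that a general arrangement forces the coefficients $\lambda_1,\lambda_2$ to be pairwise distinct and different from $0$ and $1$ — enlarge the grading-preserving automorphism group and must be handled with care. In each instance, however, a rank estimate for the relevant quadratic or bilinear form, stable under all admissible coordinate changes, delivers the required bound of at least four monomials, contradicting the monomial trinomial shape forced by complexity one. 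The remaining cases are carried out elsewhere along these lines, comparing where convenient with the complexity one classification data of \cite{FaHaNi}.
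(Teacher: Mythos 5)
Your proposal is correct in strategy but takes a genuinely different route from the paper's proof. The paper first notes that each listed $X$ has a singular total coordinate space (hence is not toric) and then compares intrinsic geometric invariants --- the dimension of $\Spec\,\mathcal{R}(X)$ and the codimension of its singular locus, the multiplicities $\dim\mathcal{R}(X)_w$ for $w$ among the primitive generators of the effective, moving and semiample cones, divisibility of $-\mathcal{K}_X$, and determinants of Mori chambers --- against the explicit classification of smooth projective complexity-one varieties of Picard number two in \cite{FaHaNi}*{Thm.~1.1}, deriving a contradiction case by case. You instead argue directly on the graded ring: minimal homogeneous presentations of the pointed $\Cl(X)$-graded ring $\mathcal{R}(X)$ are unique up to graded automorphism, a complexity-one action would force a trinomial presentation by \cites{HaHe,HaWr}, and a monomial/rank count shows the quadrinomial relation cannot be brought into trinomial shape by any grading-preserving substitution. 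Your approach buys independence from the smooth classification in complexity one (only the trinomial structure theory is needed), whereas the paper's invariants are coarser and sidestep any analysis of the graded automorphism group; your worked case No.~8 is complete and correct. Three points deserve care if you write this up in full. First, ``not toric'' should be justified by the singularity of $\Spec\,\mathcal{R}(X)$ rather than by the mere presence of a relation, and the ``minimal number of monomials'' is an invariant only after minimizing over all graded coordinate systems --- which is precisely the check you then perform, so state it that way from the outset. Second, No.~8 has a genuinely quadratic relation where ``rank'' is literal; for Nos.~1 and 3--7 the relations have higher degree and the rank argument must be replaced by a tensor-rank or linear-independence statement (e.g.\ for $\sum_i x_iy_i^{l}$ one uses that the four powers $\varphi(y_i)^{l}$ remain linearly independent), and the degenerate parameter values (coinciding $a_i$, vanishing $d_k$) enlarge the admissible automorphism group and must each be checked. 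Third, like the paper --- which also treats only No.~5 in detail --- you defer the remaining cases, so the total bookkeeping is comparable in length to the original.
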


\begin{proof}
First observe that each of the varieties 
listed in Theorem~\ref{theo:proj} has a 
singular total coordinate space and hence
is not toric.
Thus, we have to show that none of 
the varieties from Theorem~\ref{theo:proj} 
is isomorphic to a smooth non-toric 
variety of Picard number two with torus action 
of complexity one, which in turn are all
given in~\cite{FaHaNi}*{Thm.~1.1}.
We do this exemplarily for the 
variety~$X$ listed as No.~5 in our
Theorem~\ref{theo:proj}.
Recall that Cox ring, degree matrix 
and an ample class of $X$ are 
\begin{eqnarray*}
R
& = &
\KK[T_1,\ldots,T_8,S_1,\ldots,S_{m}]
/ 
\bangle{T_1T_2+T_3^2T_4+T_5^2T_6+T_7^2T_8},
\quad 
m \ge 0,
\\
Q 
& = &
\left[
\begin{array}{cccccccc|ccc}
0&2b+1&b&1&b&1&b&1&1&\ldots&1
\\
1&1&1&0&1&0&1&0&0&\ldots&0
\end{array}
\right],
\quad
b \ge 0,
\\
u
& = &
(2b+2,1).
\end{eqnarray*}
The total coordinate space $\Spec(R)$ of $X$
is of dimension~$m+7$ 
with singular locus of codimension~$4$.
Computing these data also for the 
varieties from~\cite{FaHaNi}*{Thm.~1.1}, we see 
that~$X$ can be isomorphic at most 
to varieties from Nos.~4, 6, 10, 11 or~12 
of~\cite{FaHaNi}*{Thm.~1.1}.
We now go through these cases.

Assume that $X$ is isomorphic to the 
variety~\cite{FaHaNi}*{Thm.~1.1, No.~4},
which we denote by $X'$.
Cox ring, degree matrix and an ample class of $X'$ 
are given by 
\begin{eqnarray*}
R'
& = &
\KK[T_1,\ldots,T_6,S_1,\ldots,S_{m'}]
/ 
\bangle{T_1T_2^{l_2}+T_3T_4^{l_4}+T_5T_6^{l_6}},
\quad 
m' \ge 0,
\\
Q' 
& = &
\left[
\begin{array}{cccccc|ccc}
0&1&a_1&1&a_2&1&c_1&\ldots&c_{m'}
\\
1&0&1&0&1&0&1&\ldots&1
\end{array}
\right],
\quad
\begin{array}{l}
0 \le a_1 \le a_2, \ 
\\
c_1 \le \ldots \le c_{m'},
\\
1 \le l_2 = a_1+l_4 = a_2+l_6b,
\end{array}
\\
u'
& = &
(\max(a_2,c_m')+1,1).
\end{eqnarray*}
The total coordinate space $\Spec(R')$ of $X'$
is of dimension~$m'+ 5$ and the codimension
of its singular locus equals~$5$ minus the 
number of $i$ with $l_i \ge 2$. 
Consequently, we obtain
$$ 
m'  = m+2, 
\quad
l_4 = l_6 = 1,
\quad
l_2 = a_1 + 1 = a_2 + 1 \ge 2,
\quad a_1 = a_2.
$$
We write $w_i$ for the $i$-th column of $Q$ 
and denote by $\mu_i$ the number of times 
it shows up as a column of $Q$.
Analogously, we define $w_i'$ and $\mu_i'$.
Then we have 
$$ 
\mu_1 \in \{1,4\},
\quad
\mu_4 = 3+m,
\qquad
\mu_2' = 3,
\quad
\mu_1' \le 1+m'.
$$
Observe that $w_1,w_4$ are the primitive 
generators of the extremal rays 
of the effective cone of $X$ 
and $w_4$ is a semiample class, 
whereas $w_1$ is not semiample.
Moreover, $w_2'$ is a semiample 
primitive generator of the effective 
cone of $X'$. 
We conclude 
$$
3+m = \mu_4 = \dim(R_{w_4}) 
= \dim(R'_{w_2'})  = \mu_2' = 3.
$$ 
Thus, $m=0$ and $m'= 2$ hold. 
Comparing the multiplicities 
$\dim(R_w)$ and $\dim(R'_{w'})$
for $w$ and $w'$ being the primitive 
generators differing from $(1,0)$
of the respective effective,
moving and semiample cones
of $X$ and $X'$, we obtain
$$
b,c_1,c_2 > 0,
\quad
\mu_1' = \mu_1 = 1
\quad 
b = a_1 = a_2 = c_1 < c_2 = 2b+1.
$$
But then the anticanonical class 
$-\mathcal{K}_{X} = (3a+3,3)$ is divisible 
by $3$, whereas $-\mathcal{K}_{X'} = (4b+3,3)$ 
is not; a contradiction.

Assume that $X$ is isomorphic to a 
variety $X'$ as in~\cite{FaHaNi}*{Thm.~1.1, No.~6}.
Here, Cox ring, the degree matrix and an 
ample class look as follows: 
\begin{eqnarray*}
R'
& = &
\KK[T_1,\ldots,T_6,S_1,\ldots,S_{m'}]
/ 
\bangle{T_1T_2+T_3T_4+T_5^{2}T_6},
\quad 
m' \ge 1,
\\
Q' 
& = &
\left[
\begin{array}{cccccc|ccc}
0&2a_3+1&a_1&a_2&a_3&1&1&\ldots &1\\
1&1&1&1&1&0&0&\ldots &0
\end{array}
\right],
\quad
\begin{array}{l}
a_1,a_2,a_3 \ge 0,
\\
a_1 < a_2,
\\
a_1+a_2 = 2a_3+1,
\end{array}
\\
u'
& = &
(2a_3+2,1).
\end{eqnarray*}
The dimension of the total coordinate space $\Spec(R')$ 
of $X'$ equals $m'+5$ and hence $m' = m +2$ must hold.
As before, let $w_i$ be the $i$-th column of $Q$
and $\mu_i$ the number of times it shows up as 
a column of $Q$.
Define $w_i'$ and $\mu_i'$ analogously.
We obtain 
$$
\mu_1 \in \{1,4\},
\qquad 
\mu_4 = \mu_6' = m+3,
\qquad 
\mu_1' \in \{1,2,3\}.
$$
For $X$ as well as for $X'$, we find 
precisely one  semiample primitive 
generator of the effective cone, 
namely $w_4$ and $w_6'$.
Consequently we obtain 
$$
1  =  \mu_1  = \mu_1',
\qquad 
a_1,a_2,a_3 > 0.
$$
Comparing the multiplicities $\dim(R_w)$ and $\dim(R'_{w'})$ for 
$w$ and $w'$ being the primitive generators differing from $(1,0)$ 
of the effective, movable and semiample cones 
of~$X$ and $X'$, we arrive at
$a_1 = a_2 = a_3 = b$, which contradicts,
for instance, $a_1<a_2$.

Assume that $X$ is isomorphic to the variety $X'$ 
as in~\cite{FaHaNi}*{Thm.~1.1, No.~10}.
In this case, the Cox ring, the degree matrix and an 
ample class of $X'$ are given as 
\begin{eqnarray*}
R'
& = &
\KK[T_1,\ldots,T_5,S_1,\ldots,S_{m'}]
/ 
\bangle{T_1T_2+T_3T_4+T_5^{2}},
\quad 
m' \ge 1,
\\
Q' 
& = &
\left[
\begin{array}{ccccc|ccc}
1&1&1&1&1&0&\ldots &0\\
0&2&1&1&1&1&\ldots &1
\end{array}
\right],
\\
u'
& = &
(2,1).
\end{eqnarray*}
Let $w_i$, $w_i'$ and $\mu_i$, $\mu_i'$ be 
as before.
Then $w_4$ and $w_1'$ are the only semiample 
primitive generators of the effective cones 
of $X$ and $X'$, respectively.
Thus, we obtain $1 = \mu_1' = \mu_4 = 3+m$; 
a contradiction.

Assume that $X$ is isomorphic to the variety $X'$ 
as in~\cite{FaHaNi}*{Thm.~1.1, No.~11}.
The Cox ring, the degree matrix and an 
ample class of $X'$ are then given by
\begin{eqnarray*}
R'
& = &
\KK[T_1,\ldots,T_5,S_1,\ldots,S_{m'}]
/ 
\bangle{T_1T_2+T_3T_4+T_5^{2}},
\quad 
m' \ge 2,
\\
Q' 
& = &
\left[
\begin{array}{ccccc|cccc}
1&1&1&1&1&0&a_2&\ldots &a_{m'}\\
0&0&0&0&0&1&1&\ldots &1
\end{array}
\right],
\quad
\begin{array}{l}
0 \le a_2 \le \ldots \le a_{m'},
\\
a_{m'} > 0,
\end{array}
\\
u'
& = &
(a_{m'}+1,1).
\end{eqnarray*}
With $w_i$, $w_i'$ and $\mu_i$, $\mu_i'$ as 
before,
we see that, again, $w_4$, $w_1'$  
are the only semiample primitive generators 
of the effective cones of $X$, $X'$,
respectively, and conclude
$$ 
5 = \mu_1' = \mu_4 = 3+ m.
$$
This implies $m=2$. Thus, $\Spec(R)$ is 
of dimension $7+m = 9$.
Consequently,  $\Spec(R')$ is 
of dimension $9 = 4 + m'$,
showing $m' = 5$.
Looking for $R$ and $R'$ at the homogeneous 
components of degrees $2w_4$ and $2w_1'$ 
respectively, we arrive at a contradiction:
$$
15 \ = \ \dim(R_{2w_4}) \ = \  \dim(R'_{2w_1'}) \ = \ 14.
$$

Finally, assume that $X$ is isomorphic to the 
variety $X'$ as in~\cite{FaHaNi}*{Thm.~1.1, No.~12}.
The Cox ring, the degree matrix and an 
ample class of the latter are given as 
\begin{eqnarray*}
R'
& = &
\KK[T_1,\ldots,T_5,S_1,\ldots,S_{m'}]
/ 
\bangle{T_1T_2+T_3T_4+T_5^{2}},
\quad 
m' \ge 2,
\\
Q' 
& = &
\left[
\begin{array}{ccccc|cccc}
0&2a_3&a_1&a_2&a_3&1&1&\ldots &1
\\
1&1&1&1&1&0&0&\ldots &0
\end{array}
\right],
\quad
\begin{array}{l}
0 \le a_1 \le a_3 \le a_2,
\\
a_1+a_2 = 2a_3,
\end{array}
\\
u'
& = &
(2a_3+1,1).
\end{eqnarray*}
Comparing the primitive generators $w$, $w'$ 
and the corresponding multiplicities 
$\dim(R_w)$, $\dim(R_w')$ 
of the effective, moving and semiample cones of 
$X$ and $X'$, we arrive at 
$$
m' = 3+m, 
\qquad
0 < b,
\qquad
0 < a_1 = a_2 = a_3.
$$
Now, comparing the determinants of the Mori chambers 
of $X$ and $X'$ leads to a contradiction: we obtain
$$ 
b = \det(w_3,w_1) = \det(w_5',w_2) = a_3.
\qquad
b+1 = \det(w_2,w_3) = \det(w_2',w_3') = a_3.
$$
\end{proof}

\section{Fano and almost Fano arrangement varieties}
\label{sec:FanoClass}

In this section, we present the Fano 
and almost Fano varieties from 
Theorem~\ref{theo:proj} and discuss 
geometric properties of the Fano varieties.
The key is the description of the 
anticanonical class of an explicit 
general arrangement variety 
$X \subseteq Z$ given~by
$$ 
-\mathcal{K}_X
\ = \
\sum_{i = 0}^r \sum_{j = 1}^{n_i} w_{ij} 
+ 
\sum_{k = 0}^r w_k
\, - \,
(r-c) \sum_{j=1}^{n_0} l_{0j}w_{0j}
\ \in \ 
K
\ = \ 
\Cl(X),
$$
where $c=c(X)$ is the complexity 
and $w_{ij} = \deg(T_{ij})$ as well 
as $w_k = \deg(T_k)$ are the 
Cox ring generator degrees,
see Proposition~\ref{prop:genarrcandiv}.
Going through the list of Theorem~\ref{theo:proj} 
and picking the cases with $-\mathcal{K}_X$
lying in the ample cone, we obtain the 
following.

\begin{theorem}
\label{theo:Fano}
Every smooth Fano general arrangement variety 
of true complexity two and Picard number two 
is isomorphic to precisely one of the following 
varieties~$X$, specified by their Cox ring 
$\mathcal{R}(X)$ and the matrix $[w_1, \dots, w_r]$ 
of generator degrees $w_i \in \Cl(X) = \ZZ^2$. 

\medskip

{\centering
{\small
\setlength{\tabcolsep}{-1pt}
\begin{longtable}{ccccc}
No.
&
\small{$\mathcal{R}(X)$}
&
\small{$[w_1,\ldots, w_r]$}
&
\small{$-\mathcal{K}_X$}
&
\small{$\dim(X)$}
\\
%%%%%%%%%%%%%%%%%%%%%%%%%%%%%%%%%%%%%%%%%%%%%%%%%%%%%%%%%%%%%%%%%%%%%%%%
\toprule
1
&
$
\frac
{\KK[T_1, \ldots , T_9]}
{\langle T_{1}T_{2}T_{3}^2+T_{4}T_{5}+T_6T_7+T_8T_9 \rangle}
$
&
\tiny{
\setlength{\arraycolsep}{1pt}
$
\left[
\begin{array}{ccccccccc}
0 & 0 & 1 & 1 & 1 & 1 & 1 & 1 & 1
\\ 
1 & 1 & 0 & 1 & 1 &1 & 1 & 1 & 1
\end{array}
\right]
$
}
&
\tiny{
\setlength{\arraycolsep}{1pt}
$
\left[
\begin{array}{c}
5
\\
6
\end{array}
\right]
$
}
&
\small{$6$}
\\
%%%%%%%%%%%%%%%%%%%%%%%%%%%%%%%%%%%%%%%%%%%%%%%%%%%%%%%%%%%%%%%%%%%%%%%%
\midrule
2
&
$
\frac
{\KK[T_1, \ldots , T_9]}
{\langle T_{1}T_{2}T_{3}+T_{4}T_{5}+T_6T_7 +T_8T_9\rangle}
$
&
\tiny{
\setlength{\arraycolsep}{1pt}
$
\left[
\begin{array}{ccccccccc}
0 & 0 & 1 & 1 & 0 & 1 & 0  & 1 & 0
\\ 
1 & 1 & 0 & 1 & 1 & 1 & 1 & 1 & 1
\end{array}
\right]
$
}
&
\tiny{
\setlength{\arraycolsep}{1pt}
$
\left[
\begin{array}{c}
3
\\
6
\end{array}
\right]
$
}
&
\small{$6$} 
\\
%%%%%%%%%%%%%%%%%%%%%%%%%%%%%%%%%%%%%%%%%%%%%%%%%%%%%%%%%%%%%%%%%%%%%%%%
\midrule
3
&
$
\frac
{\KK[T_1, \ldots , T_8]}
{\langle T_{1}T_{2}T_{3}^2+T_{4}T_{5}+T_6T_7 +T_8^2\rangle}
$
&
\tiny{
\setlength{\arraycolsep}{1pt}
$
\left[
\begin{array}{cccccccc}
0 & 0 & 1 & 1 & 1 & 1 & 1 & 1
\\ 
1 & 1 & 0 & 1 & 1 & 1 & 1 & 1
\end{array}
\right]
$
}
&
\tiny{
\setlength{\arraycolsep}{1pt}
$
\left[
\begin{array}{c}
4
\\
5
\end{array}
\right]
$
}
&
\small{$5$} 
\\
%%%%%%%%%%%%%%%%%%%%%%%%%%%%%%%%%%%%%%%%%%%%%%%%%%%%%%%%%%%%%%%%%%%%%%%%
\midrule
4A
&
$
\begin{array}{c}
\frac
{\KK[T_1, \ldots , T_8, S_1,\ldots,S_m]}
{\langle T_{1}T_{2}^3+T_{3}T_{4}+T_5T_{6}+T_7T_8 \rangle}
\\
\scriptstyle m \geq 0
\end{array}
$
&
\tiny{
\setlength{\arraycolsep}{1pt}
$
\left[
\begin{array}{cccccccc|cc}
0 & 1 & 2 & 1 & 2 & 1 & 2 & 1 & 2 \ldots & 2
\\
1 & 0 & 1 & 0 & 1 & 0 & 1 & 0 & 1  \ldots & 1
\end{array}
\right]
$
}
&
\tiny{
\setlength{\arraycolsep}{1pt}
$
\left[
\begin{array}{c}
7+2m
\\
3+m 
\end{array}
\right]
$
}
&
\small{$m+5$}
\\
%%%%%%%%%%%%%%%%%%%%%%%%%%%%%%%%%%%%%%%%%%%%%%%%%%%%%%%%%%%%%%%%%%%%%%%%
\midrule
4B
&
$
\begin{array}{c}
\frac
{\KK[T_1, \ldots , T_8, S_1,\ldots,S_m]}
{\langle T_{1}T_{2}^3+T_{3}T_{4}^2+T_5T_{6}^2+T_7T_8^2 \rangle}
\\
\scriptstyle m \geq 0
\end{array}
$
&
\tiny{
\setlength{\arraycolsep}{1pt}
$
\left[
\begin{array}{cccccccc|cc}
0 & 1 & 1 & 1 & 1 & 1 & 1 & 1 & 1 \ldots & 1
\\
1 & 0 & 1 & 0 & 1 & 0 & 1 & 0 & 1  \ldots & 1
\end{array}
\right]
$
}
&
\tiny{
\setlength{\arraycolsep}{1pt}
$
\left[
\begin{array}{c}
4+m 
\\
3+m
\end{array}
\right]
$
}
&
\small{$m+5$}
\\
%%%%%%%%%%%%%%%%%%%%%%%%%%%%%%%%%%%%%%%%%%%%%%%%%%%%%%%%%%%%%%%%%%%%%%%%
\midrule
4C
&
$
\begin{array}{c}
\frac
{\KK[T_1, \ldots , T_8, S_1,\ldots,S_m]}
{\langle T_{1}T_{2}^2+T_{3}T_{4}^2+T_5T_{6}+T_7T_8 \rangle}
\\
\scriptstyle m \geq 0
\end{array}
$
&
\tiny{
\setlength{\arraycolsep}{1pt}
$
\left[
\begin{array}{cccccccc|cc}
0 & 1 & 0 & 1 & 1 & 1 & 1 & 1 & 1 \ldots & 1
\\
1 & 0 & 1 & 0 & 1 & 0 & 1 & 0 & 1 \ldots & 1
\end{array}
\right]
$
}
&
\tiny{
\setlength{\arraycolsep}{1pt}
$
\left[
\begin{array}{c}
4+m 
\\
3+m
\end{array}
\right]
$
}
&
\small{$m+5$}
\\
%%%%%%%%%%%%%%%%%%%%%%%%%%%%%%%%%%%%%%%%%%%%%%%%%%%%%%%%%%%%%%%%%%%%%%%%
\midrule
4D
&
$
\begin{array}{c}
\frac
{\KK[T_1, \ldots , T_8, S_1,\ldots,S_m]}
{\langle T_{1}T_{2}^2+T_{3}T_{4}+T_5T_{6}+T_7T_8 \rangle}
\\
\scriptstyle m \geq 0
\end{array}
$
&
\tiny{
\setlength{\arraycolsep}{1pt}
$
\begin{array}{c}
\left[
\begin{array}{cccccccc|ccc}
0 & 1 & 1 & 1 & 1 & 1 & 1 & 1 & d_1 & 1 \ldots & 1
\\
1 & 0 & 1 & 0 & 1 & 0 & 1 & 0 & 1 & 1 \ldots & 1
\end{array}
\right]
\\[1em]
d_1 \in \left\{0, 1\right\}
\end{array}
$
}
&
\tiny{
\setlength{\arraycolsep}{1pt}
$
\left[
\begin{array}{c}
5\!+\!m\!-\!1\!+\!d_1
\\
3+m 
\end{array}
\right]
$
}
&
\small{$m+5$}
\\
%%%%%%%%%%%%%%%%%%%%%%%%%%%%%%%%%%%%%%%%%%%%%%%%%%%%%%%%%%%%%%%%%%%%%%%%
\midrule
4E
&
$
\begin{array}{c}
\frac
{\KK[T_1, \ldots , T_8, S_1,\ldots,S_m]}
{\langle T_{1}T_{2}^3+T_{3}T_{4}^3+T_5T_{6}^3+T_7T_8^3 \rangle}
\\
\scriptstyle m \geq 0
\end{array}
$
&
\tiny{
\setlength{\arraycolsep}{1pt}
$
\left[
\begin{array}{cccccccc|cc}
0 & 1 & 0 & 1 & 0 & 1 & 0 & 1 & 0 \ldots & 0
\\
1 & 0 & 1 & 0 & 1 & 0 & 1 & 0 & 1 \ldots & 1
\end{array}
\right]
$
}
&
\tiny{
\setlength{\arraycolsep}{1pt}
$
\left[
\begin{array}{c}
3 
\\
3+m
\end{array}
\right]
$
}
&
\small{$m+5$}
\\
%%%%%%%%%%%%%%%%%%%%%%%%%%%%%%%%%%%%%%%%%%%%%%%%%%%%%%%%%%%%%%%%%%%%%%%%
\midrule
4F
&
$
\begin{array}{c}
\frac
{\KK[T_1, \ldots , T_8, S_1,\ldots,S_m]}
{\langle T_{1}T_{2}^2+T_{3}T_{4}^2+T_5T_{6}^2+T_7T_8^2 \rangle}
\\
\scriptstyle m \geq 0
\end{array}
$
&
\tiny{
\setlength{\arraycolsep}{1pt}
$
\begin{array}{c}
\left[
\begin{array}{cccccccc|ccc}
0 & 1 & 0 & 1 & 0 & 1 & 0 & 1 & d_1  & 0 \ldots & 0
\\
1 & 0 & 1 & 0 & 1 & 0 & 1 & 0 & 1 & 1 \ldots & 1
\end{array}
\right]
\\[1em]
d_1 \in \left\{-1,0\right\}
\end{array}
$
}
&
\tiny{
\setlength{\arraycolsep}{1pt}
$
\left[
\begin{array}{c}
2+d_1
\\
3+m 
\end{array}
\right]
$
}
&
\small{$m+5$}
\\
%%%%%%%%%%%%%%%%%%%%%%%%%%%%%%%%%%%%%%%%%%%%%%%%%%%%%%%%%%%%%%%%%%%%%%%%
\midrule
4G
&
$
\begin{array}{c}
\frac
{\KK[T_1, \ldots , T_8, S_1,\ldots,S_m]}
{\langle T_{1}T_{2}+T_{3}T_{4}+T_5T_{6}+T_7T_8 \rangle}
\\
\scriptstyle m \geq 0
\end{array}
$
&
\tiny{
\setlength{\arraycolsep}{1pt}
$
\begin{array}{c}
\left[
\begin{array}{cccccccc|cccc}
0 & 1 & 0 & 1 & 0 & 1 & 0 & 1 & d_1 & d_2 & 0 \ldots & 0
\\
1 & 0 & 1 & 0 & 1 & 0 & 1 & 0 & 1 & 1 & 1 \ldots & 1
\end{array}
\right]
\\[1em]
d_1,d_2 \leq 0, \ d_1+d_2 \geq -2
\end{array}
$
}
&
\tiny{
\setlength{\arraycolsep}{1pt}
$
\left[
\begin{array}{c}
3+d_1+d_2
\\
3+m 
\end{array}
\right]
$
}
&
\small{$m+5$}
\\
%%%%%%%%%%%%%%%%%%%%%%%%%%%%%%%%%%%%%%%%%%%%%%%%%%%%%%%%%%%%%%%%%%%%%%%%
\midrule
5
&
$
\begin{array}{c}
\frac
{\KK[T_1, \ldots , T_8, S_1,\ldots,S_m]}
{\langle T_{1}T_{2}+T_{3}^2T_{4}+T_5^2T_{6}+T_7^2T_8 \rangle}
\\
\scriptstyle m \geq 1
\end{array}
$
&
\tiny{
\setlength{\arraycolsep}{1pt}
$
\begin{array}{c}
\left[
\begin{array}{cccccccc|cc}
0 & 2a+1 & a & 1 & a & 1 & a & 1 & 1 \ldots & 1
\\
1 & 1 & 1 & 0 & 1 & 0 & 1 & 0 & 0 \ldots & 0
\end{array}
\right]
\\[1em]
a \geq 0, \ m > 3a
\end{array}
$
}
&
\tiny{
\setlength{\arraycolsep}{1pt}
$
\left[
\begin{array}{c}
3a+ 3 + m
\\
3
\end{array}
\right]
$
}
&
\small{$m+5$}
\\
%%%%%%%%%%%%%%%%%%%%%%%%%%%%%%%%%%%%%%%%%%%%%%%%%%%%%%%%%%%%%%%%%%%%%%%%
\midrule
6
&
$
\begin{array}{c}
\frac
{\KK[T_1, \ldots , T_8, S_1,\ldots,S_m]}
{\langle T_{1}T_{2}+T_{3}T_{4}+T_5^2T_{6}+T_7^2T_8 \rangle}
\\
\scriptstyle m \geq 1
\end{array}
$
&
\tiny{
\setlength{\arraycolsep}{1pt}
$
\begin{array}{c}
\left[
\begin{array}{cccccccc|cc}
0 & 2a_3+1 & a_1 & a_2 & a_3 & 1 & a_3 & 1 & 1 \ldots & 1
\\
1 & 1 & 1 & 1 & 1 & 0 & 1 & 0 & 0 \ldots & 0
\end{array}
\right]
\\[1em]
0 \leq a_1 \leq a_2, \ a_1 +a_2 = 2 a_3+1 
\\
m > 4a_3+1
\end{array}
$
}
&
\tiny{
\setlength{\arraycolsep}{1pt}
$
\left[
\begin{array}{c}
4a_3 + 3 + m
\\
4
\end{array}
\right]
$
}
&
\small{$m+5$}
\\
%%%%%%%%%%%%%%%%%%%%%%%%%%%%%%%%%%%%%%%%%%%%%%%%%%%%%%%%%%%%%%%%%%%%%%%%
\midrule
7
&
$
\begin{array}{c}
\frac
{\KK[T_1, \ldots , T_8, S_1,\ldots,S_m]}
{\langle T_{1}T_{2}+T_{3}T_{4}+T_5T_{6}+T_7^2T_8 \rangle}
\\
\scriptstyle m \geq 1
\end{array}
$
&
\tiny{
\setlength{\arraycolsep}{1pt}
$
\begin{array}{c}
\left[
\begin{array}{cccccccc|cc}
0 & 2a_5+1 & a_1 & a_2 & a_3 & a_4 & a_5 & 1 & 1 \ldots & 1
\\
1 & 1 & 1 & 1 & 1 & 1 & 1 & 0 & 0 \ldots & 0
\end{array}
\right]
\\[1em]
a_i \geq 0, \ m > 5a_5+2,
\\
a_1 + a_2 = a_3 + a_4 = 2a_5 +1
\end{array}
$
}
&
\tiny{
\setlength{\arraycolsep}{1pt}
$
\left[
\begin{array}{c}
5a_5 + 3 + m
\\
5
\end{array}
\right]
$
}
&
\small{$m+5$}
\\
%%%%%%%%%%%%%%%%%%%%%%%%%%%%%%%%%%%%%%%%%%%%%%%%%%%%%%%%%%%%%%%%%%%%%%%%
\midrule
8
&
$
\begin{array}{c}
\frac
{\KK[T_1, \ldots , T_8, S_1,\ldots,S_m]}
{\langle T_{1}T_{2}+T_{3}T_{4}+T_5T_{6}+T_7T_8 \rangle}
\\
\scriptstyle 1 \leq m \leq 5
\end{array}
$
&
\tiny{
\setlength{\arraycolsep}{1pt}
$
\left[
\begin{array}{cccccccc|cc}
0 & 0 & 0 & 0 & 0 & 0 & -1 & 1 & 1  \ldots & 1
\\
1 & 1 & 1 & 1 & 1 & 1 & 1 & 1 & 0 \ldots & 0
\end{array}
\right]
$
}
&
\tiny{
\setlength{\arraycolsep}{1pt}
$
\left[
\begin{array}{c}
m
\\
6
\end{array}
\right]
$
}
&
\small{$m+5$}
\\
%%%%%%%%%%%%%%%%%%%%%%%%%%%%%%%%%%%%%%%%%%%%%%%%%%%%%%%%%%%%%%%%%%%%%%%%
\midrule
9
&
$
\begin{array}{c}
\frac
{\KK[T_1, \ldots , T_8, S_1,\ldots,S_m]}
{\langle T_{1}T_{2}+T_{3}T_{4}+T_5T_{6}+T_7T_8 \rangle}
\\
\scriptstyle m \geq 2
\end{array}
$
&
\tiny{
\setlength{\arraycolsep}{1pt}
$
\begin{array}{c}
\left[
\begin{array}{cccccccc|cc}
0 & a_1 & a_2 & a_3 & a_4 & a_5 & a_6 & a_7 & 1 \ldots & 1
\\
1 & 1 & 1 & 1 & 1 & 1 & 1 & 1 & 0 \ldots & 0
\end{array}
\right]
\\[1em]
a_i \geq 0, \ m > 3a_1,
\\
a_1 =a_2+a_3=a_4+a_5=a_6+a_7
\end{array}
$
}
&
\tiny{
\setlength{\arraycolsep}{1pt}
$
\left[
\begin{array}{c}
3a_1+m
\\
6
\end{array}
\right]
$
}
&
\small{$m+5$}
\\
%%%%%%%%%%%%%%%%%%%%%%%%%%%%%%%%%%%%%%%%%%%%%%%%%%%%%%%%%%%%%%%%%%%%%%%%
\midrule
10
&
$
\begin{array}{c}
\frac
{\KK[T_1, \ldots , T_8, S_1,\ldots,S_m]}
{\langle T_{1}T_{2}+T_{3}T_{4}+T_5T_{6}+T_7T_8 \rangle}
\\
\scriptstyle m \geq 2
\end{array}
$
&
\tiny{
\setlength{\arraycolsep}{1pt}
$
\begin{array}{c}
\left[
\begin{array}{cccccccc|ccc}
0 & 0 & 0 & 0 & 0 & 0 & 0 & 0 & 1 & 1 \ldots & 1
\\
1 & 1 & 1 & 1 & 1 & 1 & 1 & 1 & 0 & d_2 \ldots & d_m
\end{array}
\right]
\\[1em]
0 \leq d_2\leq\dots \leq d_m, \ 0< d_m \leq 5,
\\
m d_m < 6 + d_2 + \ldots + d_m
\end{array}
$
}
&
\tiny{
\setlength{\arraycolsep}{1pt}
$
\left[
\begin{array}{c}
m 
\\
6\!+ \!\sum \!d_k
\end{array}
\right]
$
}
&
\small{$m+5$}
\\
%%%%%%%%%%%%%%%%%%%%%%%%%%%%%%%%%%%%%%%%%%%%%%%%%%%%%%%%%%%%%%%%%%%%%%%%
\midrule
11
&
$
\begin{array}{c}
\frac
{\KK[T_1, \ldots , T_7, S_1,\ldots,S_m]}
{\langle T_{1}T_{2}+T_{3}T_{4}+T_5T_{6}+T_7^2 \rangle}
\\
\scriptstyle 1\leq m \leq 4
\end{array}
$
&
\tiny{
\setlength{\arraycolsep}{1pt}
$
\begin{array}{c}
\left[
\begin{array}{ccccccc|ccc}
-1 & 1 & 0 & 0 & 0 & 0 & 0 & 1 & \ldots & 1
\\
1 & 1 & 1 & 1 & 1 & 1 & 1 & 0& \ldots & 0
\end{array}
\right]
\end{array}
$
}
&
\tiny{
\setlength{\arraycolsep}{1pt}
$
\left[
\begin{array}{c}
m
\\
5
\end{array}
\right]
$
}
&
\small{$m+4$}
\\
%%%%%%%%%%%%%%%%%%%%%%%%%%%%%%%%%%%%%%%%%%%%%%%%%%%%%%%%%%%%%%%%%%%%%%%%
\midrule
12
&
$
\begin{array}{c}
\frac
{\KK[T_1, \ldots , T_7, S_1,\ldots,S_m]}
{\langle T_{1}T_{2}+T_{3}T_{4}+T_5T_{6}+T_7^2 \rangle}
\\
\scriptstyle m \geq 2
\end{array}
$
&
\tiny{
\setlength{\arraycolsep}{1pt}
$
\begin{array}{c}
\left[
\begin{array}{ccccccc|ccc}
0 & 2a_5 & a_1 & a_2 & a_3 & a_4 & a_5 &  1  & \ldots & 1
\\
1 & 1 & 1 & 1 & 1 & 1 & 1 & 0 & \ldots & 0
\end{array}
\right]
\\[1em]
a_1+a_2 = a_3+a_4 = 2a_5,
\\
a_i \geq 0, \  m > 5a_5 
\end{array}
$
}
&
\tiny{
\setlength{\arraycolsep}{1pt}
$
\left[
\begin{array}{c}
m+5a_5
\\
5
\end{array}
\right]
$
}
&
\small{$m+4$}
\\
%%%%%%%%%%%%%%%%%%%%%%%%%%%%%%%%%%%%%%%%%%%%%%%%%%%%%%%%%%%%%%%%%%%%%%%%
\midrule
13
&
$
\begin{array}{c}
\frac
{\KK[T_1, \ldots , T_7, S_1,\ldots,S_m]}
{\langle T_{1}T_{2}+T_{3}T_{4}+T_5T_{6}+T_7^2 \rangle}
\\
\scriptstyle m \geq 2
\end{array}
$
&
\tiny{
\setlength{\arraycolsep}{1pt}
$
\begin{array}{c}
\left[
\begin{array}{ccccccc|cccc}
0 & 0 & 0 & 0 & 0 &0 &0 &  1  & 1 & \ldots & 1
\\
1 & 1 & 1 & 1 & 1 & 1 & 1 & 0 &d_2 & \ldots & d_m
\end{array}
\right]
\\[1em]
0 \leq d_2 \leq \ldots \leq d_m, \ 0 < d_m,
\\
m  d_m < 5 +d_2+ \ldots + d_m
\end{array}
$
}
&
\tiny{
\setlength{\arraycolsep}{1pt}
$
\left[
\begin{array}{c}
m
\\
5\!+ \!\sum \! d_k
\end{array}
\right]
$
}
&
\small{$m+4$}
\\
%%%%%%%%%%%%%%%%%%%%%%%%%%%%%%%%%%%%%%%%%%%%%%%%%%%%%%%%%%%%%%%%%%%%%%%%
\midrule
14
&
\quad
$
\frac
{\KK[T_1, \ldots , T_{10}]}
{\left\langle\!\!
\tiny{
\begin{array}{c}
T_{1}T_{2}+T_{3}T_{4}+T_5T_{6}+T_7T_8, 
\\
\lambda_1T_{3}T_{4}+\lambda_2T_5T_{6}+T_7T_8+T_9T_{10} 
\end{array}
}
\!\!\right\rangle}
$
&
\tiny{
\setlength{\arraycolsep}{1pt}
$
\left[
\begin{array}{cccccccccc}
1 & 0 & 1 & 0 & 1 & 0 & 1 & 0 & 1 & 0
\\
0 & 1 & 0 & 1 & 0 & 1 & 0 & 1 & 0 & 1
\end{array}
\right]
$
}
&
\tiny{
\setlength{\arraycolsep}{1pt}
$
\left[
\begin{array}{c}
3
\\
3
\end{array}
\right]
$
}
&
$6$
\\
\bottomrule
\end{longtable}
}
}
\noindent
Moreover, each of the listed data defines 
a smooth Fano general arrangement variety of 
true complexity two and Picard number two.
\end{theorem}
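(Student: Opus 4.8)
The plan is to obtain Theorem~\ref{theo:Fano} as a refinement of the classification in Theorem~\ref{theo:proj}, keeping precisely those varieties whose anticanonical class is ample. By Theorem~\ref{thm:arrTvar} it suffices to treat explicit general arrangement varieties $X = X(A,P,\Sigma)$, and by Theorem~\ref{theo:proj} together with Remark~\ref{rem:Sigmau} each such $X$ of true complexity two and Picard number two is given, up to isomorphism, by one of the fourteen data in that list: a Cox ring $\mathcal{R}(X) = R(A,P)$, a degree matrix $Q = [w_{ij},w_k]$ depending on discrete parameters, and an ample class $u \in \Cl(X) = \ZZ^2$. For each of these families I would first make the two-dimensional ample cone $\Ample(X) \subseteq \Cl_\QQ(X) = \QQ^2$ explicit via Proposition~\ref{prop:PicandCones}, intersecting the open cones $Q(\gamma_0)^\circ$ over the faces $\gamma_0 \in \rlv(X)$; the relevant faces are read off combinatorially from the big and leaf cones of $\Sigma$ through Proposition~\ref{prop:bigleaf2rel} and the smoothness constraints of Corollary~\ref{cor:genarrvarsmooth}, and in the Picard number two situation this is essentially already carried out in the proof of Theorem~\ref{theo:proj}.

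Using the complete intersection structure of $R(A,P)$ noted in Remark~\ref{rem:kerA2rel}, Proposition~\ref{prop:genarrcandiv} then expresses $-\mathcal{K}_X \in \ZZ^2$ as an explicit linear form in the parameters of each family. Imposing the Fano condition $-\mathcal{K}_X \in \Ample(X)^\circ$ turns into a finite system of strict linear inequalities in those parameters, and solving it reproduces exactly the table of Theorem~\ref{theo:Fano}: some families of Theorem~\ref{theo:proj} split according to which exponent and column constellations are compatible with ampleness of $-\mathcal{K}_X$ (this is where the sub-families 4A--4G arise), while for the others the Fano condition only cuts down the range of a single parameter, yielding bounds such as $1\le m\le 5$ or $m>3a$ or $m\,d_m < 6 + d_2 + \dots + d_m$. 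For the converse, every datum in the table of Theorem~\ref{theo:Fano} is a specialization of a datum in the table of Theorem~\ref{theo:proj}, hence defines a smooth projective general arrangement variety of true complexity two and Picard number two by the \emph{moreover} part there; it is Fano by the very inequality just verified, and the list is irredundant because these varieties are Mori dream spaces determined by their Cox ring together with an ample class (Remark~\ref{rem:Sigmau}), so the uniqueness already established in Theorem~\ref{theo:proj} applies.

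The main obstacle will be Case No.~4, that is $r = c+1 = 3$ with $(n_0,n_1,n_2,n_3) = (2,2,2,2)$: here $P$ carries all four exponents $l_2,l_4,l_6,l_8$ subject to $l_2 = a_1+l_4 = a_2+l_6 = a_3+l_8$ together with the column parameters $d_1\le\dots\le d_m$, so that both $\Ample(X)$ and $-\mathcal{K}_X$ depend on several integers simultaneously, and deciding which combinations make $-\mathcal{K}_X$ interior to $\Ample(X)$ needs a careful case analysis producing the seven sub-cases 4A--4G. A second, routine but lengthy point is the \emph{true complexity two} claim: one must exclude that any listed variety is isomorphic to a smooth non-toric Picard number two variety with a complexity one torus action, arguing exactly as in Proposition~\ref{prop:truly} --- comparing dimension and codimension of the singular locus of the total coordinate space, the multiplicities $\dim(R_w)$ at the primitive generators of the effective, moving and semiample cones, and the determinants of adjacent Mori chambers, against the list of \cite{FaHaNi}*{Thm.~1.1}; non-toricity itself is immediate since each listed Cox ring has a singular total coordinate space.
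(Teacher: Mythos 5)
Your proposal is correct and follows essentially the same route as the paper: the authors obtain Theorem~\ref{theo:Fano} precisely by running through the list of Theorem~\ref{theo:proj}, evaluating $-\mathcal{K}_X$ via Proposition~\ref{prop:genarrcandiv} against the (already computed) ample cones, and keeping the cases where the anticanonical class is ample, with Case~4 splitting into the sub-families 4A--4G exactly as you predict. The only simplification you could make is that the true-complexity-two claim needs no re-verification, since the listed varieties form a subset of those in Theorem~\ref{theo:proj}, where Proposition~\ref{prop:truly} has already settled it.
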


\begin{remark}
Some of the above Fano varieties are 
intrinsic quadrics. 
Here is the overlap of Theorem~\ref{theo:Fano}
with~\cite{FaHa}*{Cor.~1.2}:
\begin{enumerate}
\item Cases~10 and~13 are intrinsic quadrics of Type~1,
\item Cases~9 and~12 are intrinsic quadrics of Type~2,
\item Cases~8 and~11 are intrinsic quadrics of Type~3,
\item Case~4.G is an intrinsic quadric of Type~4.
\end{enumerate}
\end{remark}

Let us discuss some aspects of the geometry of the 
Fano varieties listed in Theorem~\ref{theo:Fano}.
We take a look at elementary contractions, i.e.,
the morphisms obtained by passing to facets of 
the ample cone with respect to the Mori chamber 
decomposition, which is directly computable 
in terms of the data listed in Theorem~\ref{theo:Fano};
use Proposition~\ref{prop:PicandCones} 
and Remark~\ref{rem:Moridecomp}.
Moreover, we look at small degenerations, that means 
degenerations with fibers all sharing the same
divisor class group.
In fact, degenerating the quadrinomial equations 
of the Cox ring into trinomial ones, reflects a 
degeneration of Cox rings inducing a small 
degeneration of the underlying Fano variety into 
a possibly singular variety with a torus action of 
complexity one. 

We now explicitly go through the list of 
Theorem~\ref{theo:Fano} and provide 
basic information in the subsequent table; 
more details will be made available 
elsewhere.
Let us explain how to read the table.
By $Q_k$, we denote the smooth projective 
quadric of dimension $k$
and by $Q_{k,l} \subseteq \PP_{l}$
the projective quadric of rank $k$ in $\PP_l$.
We write $Y_{a;1^k,d^l}$ for a hypersurface 
of degree $a$ in the weighted projective 
space $\PP(1^k,d^l)$, where we do not specify 
the equation, and we set 
$$
Y_{4B} 
\  = \ 
V(T_0^3 + T_1T_2^2 + T_3T_4^2  + T_5T_6^2)
\ \subseteq \
\PP_{m+6},
$$
$$
Y_{4F} 
\  = \ 
V(T_0T_1^2 + T_2T_3^2 + T_4T_5^2  + T_6T_7^2)
\ \subseteq \
\PP_{m+6}.
$$
As we consider smooth Fano varieties of 
Picard number two, there will be at most 
two elementary contractions for each. 
If we have a birational elementary contraction,
then a prime divisor gets contracted.
In this case we write $X \sim Y$ and denote 
by $C \subseteq Y$ the center of this contraction.
The other possibility is that we have a Mori 
fiber space.
Then we write $X \to Y$ and denote by 
$F_{\mathrm{gen}}$ the general fiber.
If there are no special fibers, then we
write just $F$ for the fiber.
Moreover, when we say that a variety is Gorenstein,
terminal, etc. then we mean that it is singular
but has at most Gorenstein, terminal, etc. 
singularities.
We computed small degenerations 
for every case in the lowest dimensions.
The resulting varieties are always normal
and Fano with a torus action of complexity one.
The properties of being Gorenstein, terminal 
etc. have been checked  
using~\cites{BeHaHuNi,HaKe}.
If we say two, three, etc. degenerations, 
then this means that we found small 
degenerations into two, three, etc. 
non-isomorphic Fano $\TT$-varieties of complexity 
one.
\goodbreak

\begin{remark}
The following table lists the 
elementary contractions and 
small degenerations obtained 
via degenerating the Cox ring 
for the Fano varieties of 
Theorem~\ref{theo:Fano}.

\begin{center}
{\small
\setlength{\tabcolsep}{3pt}
\begin{longtable}{ccccl}
No.
&
$\dim(X)$
&
Contraction~1
&
Contraction~2
&
$\quad$
Small~Degenerations 
\\
%%%%%%%%
\toprule
1
&
$6$
&
$
\begin{array}{c}
X \sim Q_6
\\
C = Q_4
\end{array}
$
&
$
\begin{array}{c}
X \to \PP_1
\\
F_{\mathrm{gen}} = Q_5
\end{array}
$
&
$
\begin{array}{l}
{\text{two Gorenstein, terminal}}
\\
{\text{locally factorial}}
\end{array}
$
\\
%%%%%%
\midrule
2
&
$6$
&
$
\begin{array}{c}
X \sim Q_6
\\
C = \PP_2
\end{array}
$
&
$
\begin{array}{c}
X \to \PP_4
\\
F = \PP_2
\end{array}
$
&
$
\begin{array}{l}
{\text{two Gorenstein, terminal}}
\\
{\text{locally factorial}}
\end{array}
$
\\
%%%%%%
\midrule
3
&
$5$
&
$
\begin{array}{c}
X \sim Q_5
\\
C = Q_3
\end{array}
$
&
$
\begin{array}{c}
X \to \PP_1
\\
F_{\mathrm{gen}} = Q_4
\end{array}
$
&
$
\begin{array}{l}
{\text{three Gorenstein, terminal}}
\\
{\text{locally factorial}}
\end{array}
$
\\
%%%%%%
\midrule
4A
&
$m+5$
&
$
\begin{array}{c}
X \sim Y_{3;1^4,2^{m+3}}
\\
C = \PP_{m+2}
\end{array}
$
&
$
\begin{array}{c}
X \to \PP_3
\\
F = \PP_{m+2}
\end{array}
$
&
$
\begin{array}{l}
{\text{$\dim(X) \le 6$: two Gorenstein,}}
\\
{\text{terminal, locally factorial}}
\end{array}
$
\\
%%%%%%
\midrule
4B
&
$m+5$
&
$
\begin{array}{c}
X \sim Y_{\mathrm{4B}}
\\
C = \PP_{m+2}
\end{array}
$
&
$
\begin{array}{c}
X \to \PP_3
\\
F = \PP_{m+2}
\end{array}
$
&
$
\begin{array}{l}
{\text{$\dim(X) \le 6$: two Gorenstein,}}
\\
{\text{log terminal, locally factorial}}
\end{array}
$
\\
%%%%%%
\midrule
4C
&
$m+5$
&
---
&
$
\begin{array}{c}
X \to \PP_3
\\
F = \PP_{m+2}
\end{array}
$
&
$
\begin{array}{l}
{\text{$\dim(X) \le 6$: two Gorenstein,}}
\\
{\text{terminal, locally factorial}}
\end{array}
$
\\
%%%%%%
\midrule
4D
&
$m+5$
&
$
\begin{array}{c}
\scriptstyle{\text{if } 
d_1 = 1 \text{ or } m=0\text{:}}
\\
X \sim Q_{7,m+6}
\\
C = \PP_{m+2}
\end{array}
$
&
$
\begin{array}{c}
X \to \PP_3
\\
F = \PP_{m+2}
\end{array}
$
&
$
\begin{array}{l}
{\text{$\dim(X) \le 6$: two Gorenstein,}}
\\
{\text{terminal, locally factorial}}
\end{array}
$
\\
%%%%%%%%%%
\midrule
4E
&
$m+5$
&
$
\begin{array}{c}
X \to \PP_{m+3}
\\
F_{\mathrm{gen}} = Y_{3;1^4}
\end{array}
$
&
$
\begin{array}{c}
X \to \PP_3
\\
F = \PP_{m+2}
\end{array}
$
&
$
\begin{array}{l}
{\text{$\dim(X) \le 6$: one Gorenstein,}}
\\
{\text{locally factorial}}
\end{array}
$
\\
%%%%%%%%%%
\midrule
4F
&
$m+5$
&
$
\begin{array}{c}
\scriptstyle{\text{if } 
d_1 = 0 \text{ or } m=0\text{:}}
\\
X \to \PP_{m+3}
\\
F_{\mathrm{gen}} = \PP_1 \times \PP_1
\\
\scriptstyle{\text{if } 
d_1 = -1 \text{:}}
\\
X \sim Y_{4F}
\\
C = \PP_{m+2}
\end{array}
$
&
$
\begin{array}{c}
X \to \PP_3
\\
F = \PP_{m+2}
\end{array}
$
&
$
\begin{array}{l}
{\text{$\dim(X) \le 6$: one Gorenstein,}}
\\
{\text{log terminal, locally factorial}}
\end{array}
$
\\
%%%%%%%%%%
\midrule
4G
&
$m+5$
&
$
\begin{array}{c}
\scriptstyle{\text{if } 
d_i = 0 \text{ or } m = 0 \text{:}}
\\
X \to \PP_{m+3}
\\
F_{\mathrm{gen}} = \PP_2
%\\
%F_{\mathrm{sp}} = F_{\mathrm{sp,4G}}
\\
\scriptstyle{\text{if } 
d_1 = -1 \text{ and } d_2 = 0 \text{:}}
\\
X \sim Q_{7,m+6}
\\
\scriptstyle{\text{if } 
d_1 = -2 \text{ and } d_2 = 0 \text{:}}
\\
X \sim Y_{3;1^4,2^{m+3}}
\end{array}
$
&
$
\begin{array}{c}
X \to \PP_3
\\
F = \PP_{m+2}
\\
\end{array}
$
&
$
\begin{array}{l}
{\text{$\dim(X) \le 6$: one Gorenstein,}}
\\
{\text{terminal, locally factorial}}
\end{array}
$
\\
%%%%%%%%%%
\midrule
5
&
$m+5$
&
$
\begin{array}{c}
X \to \PP_{m+2}
\\
F_{\mathrm{gen}} = Q_3
\end{array}
$
&
---
&
$
\begin{array}{l}
{\text{$\dim(X) = 6$: one Gorenstein,}}
\\
{\text{terminal, locally factorial;}}
\\
{\text{one Gorenstein, log terminal}}
\\
{\text{locally factorial}}
\end{array}
$
\\
%%%%%%%%%%
\midrule
6
&
$m+5$
&
$
\begin{array}{c}
X \to \PP_{m+1}
\\
F_{\mathrm{gen}} = Q_4
\end{array}
$
&
---
&
$
\begin{array}{l}
{\text{$\dim(X) = 7$: two Gorenstein,}}
\\
{\text{terminal, locally factorial}}
\end{array}
$
\\
%%%%%%%%%%
\midrule
7
&
$m+5$
&
$
\begin{array}{c}
X \to \PP_{m}
\\
F_{\mathrm{gen}} = Q_5
\end{array}
$
&
---
&
$
\begin{array}{l}
{\text{$\dim(X) = 8$: two Gorenstein,}}
\\
{\text{terminal, locally factorial}}
\end{array}
$
\\
%%%%%%%%%%
\midrule
8
&
$m+5$
&
$
\begin{array}{c}
X \sim \PP_{m+5}
\\
C = Q_4
\end{array}
$
&
$
\begin{array}{c}
\scriptstyle{\text{if } 
d_1 = -1 \text{:}}
\\
X \sim Q_6
\\
C = \{\mathrm{pt}\}
\end{array}
$
&
$
\begin{array}{l}
{\text{$\dim(X) = 6$: one Gorenstein,}}
\\
{\text{terminal, locally factorial;}}
\\
{\text{one of Gorenstein index~2,}}
\\
{\text{terminal, $\QQ$-factorial}}
\end{array}
$
\\
%%%%%%%%%%
\midrule
9
&
$m+5$
&
$
\begin{array}{c}
X \to \PP_{m-1}
\\
F_{\mathrm{gen}} = Q_6
\end{array}
$
&
$
\begin{array}{c}
\scriptstyle{\text{if } 
a_1 = \ldots = a_7 = 0 \text{:}}
\\
X \to Q_6
\\
F = \PP_{m-1}
\end{array}
$
&
$
\begin{array}{l}
{\text{$\dim(X) = 7$: one Gorenstein,}}
\\
{\text{terminal, locally factorial}}
\end{array}
$
\\
%%%%%%%%%%
\midrule
10
&
$m+5$
&
$
\begin{array}{c}
X \to Q_6
\\
F_{\mathrm{gen}} = \PP_{m-1}
\end{array}
$
&
$
\begin{array}{c}
\scriptstyle{\text{if } 
0 < d_2 = \ldots = d_m \text{:}}
\\
X \sim Y_{2;1^8,d_2^{m-1}}
\\
C = \PP_{m-2}
\end{array}
$
&
$
\begin{array}{l}
{\text{$\dim(X) = 7$: one Gorenstein,}}
\\
{\text{terminal, locally factorial}}
\end{array}
$
\\
%%%%%%%%%%
\midrule
11
&
$m+4$
&
$
\begin{array}{c}
X \sim \PP_{m+4}
\\
C = Q_3
\end{array}
$
&
$
\begin{array}{c}
\scriptstyle{\text{if } 
m = 1 \text{:}}
\\
X \sim Q_5
\\
C = \{\mathrm{pt}\}
\end{array}
$
&
$
\begin{array}{l}
{\text{$\dim(X) =5$: two Gorenstein,}}
\\
{\text{terminal, locally factorial;}}
\\
{\text{one Gorenstein, terminal}}
\\
{\text{$\QQ$-factorial.}}
\\
{\text{$\dim(X) =6$: two Gorenstein,}}
\\
{\text{terminal, locally factorial;}}
\\
{\text{one of Gorenstein index 2,}}
\\
{\text{terminal, $\QQ$-factorial}}
\end{array}
$
\\
%%%%%%%%%%
\midrule
12
&
$m+4$
&
$
\begin{array}{c}
X \to \PP_{m-1}
\\
F_{\mathrm{gen}} = Q_5
\end{array}
$
&
$
\begin{array}{c}
\scriptstyle{\text{if } 
a_1 = \ldots = a_5 = 0 \text{:}}
\\
X \to \PP_{m-1}
\\
F = Q_5
\end{array}
$
&
$
\begin{array}{l}
{\text{$\dim(X) = 6$: two Gorenstein, }}
\\
{\text{terminal, locally factorial}}
\end{array}
$
\\
%%%%%%%%%%
\midrule
13
&
$m+4$
&
$
\begin{array}{c}
X \to Q_5
\\
F_{\mathrm{gen}} = \PP_{m-1}
\end{array}
$
&
$
\begin{array}{c}
\scriptstyle{\text{if } 
0 < d_2 = \ldots = d_m \text{:}}
\\
X \sim Y_{2;1^7,d_2^{m-1}}
\\
C = \PP_{m-2}
\end{array}
$
&
$
\begin{array}{l}
{\text{$\dim(X) = 6$: two Gorenstein, }}
\\
{\text{terminal, locally factorial}}
\end{array}
$
\\
%%%%%%%%%%
\midrule
14
&
$6$
&
$
\begin{array}{c}
X \to \PP_4
\\
F_{\mathrm{gen}} = \PP_2
\end{array}
$
&
$
\begin{array}{c}
X \to \PP_4
\\
F_{\mathrm{gen}} = \PP_2
\end{array}
$
&
$
\begin{array}{l}
{\text{one Gorenstein, terminal}}
\\
{\text{locally factorial}}
\end{array}
$
\\
%%%%%%
\bottomrule
\end{longtable}
}
\end{center}
\end{remark}

\begin{remark}
\label{rem:duplication}
All varieties of Theorem~\ref{theo:Fano} can be 
constructed out of a finite set of starting 
varieties listed in Theorem~\ref{theo:proj} via 
iterated \emph{duplication of free weights}
as introduced in~\cite{FaHaNi}*{Constr.~5.1}.
In this procedure, one takes a Cox ring 
generator~$S_k$ of $X$ not occurring in the 
defining relations and 
constructs a new Cox ring by adding a further 
free generator $S_k'$ of the same degree as $S_k$.
The resulting variety $X'$ is of one dimension higher.
In terms of birational geometry, the duplication 
of a free weight means taking an elementary 
contraction $\widetilde X_1 \to X$ with fiber 
$\PP_1$, passing via a series of small 
quasimodifications to $\widetilde X_t$ and then 
performing a contraction of a prime divisor
$\widetilde X_t \to X'$, see~\cite{FaHaNi}*{Prop.~5.3}.
It follows from~\cite{FaHaNi}*{Prop.~5.4, Thm.~5.5} 
that every smooth Fano variety of true complexity 
one and Picard number two is of dimension 4 to~7
or arises via iterated duplications of free weights 
from a finite set of smooth projective varieties 
of true complexity one and Picard number two of 
dimension 4 to~7.
For the Fano general arrangement varieties of true 
complexity two listed in Theorem~\ref{theo:Fano}, 
one directly establishes the analogous statement  
with a finite set of starting varieties of dimensions 
5 to~8.
It would be interesting to see if the smooth Fano general 
arrangement varieties of Picard number two but higher 
complexity behave similarly.
\end{remark}

\begin{theorem}
\label{theo:AlFano}
Every smooth projective truly almost Fano 
general arrangement variety of true complexity 
two and Picard number two 
is isomorphic to precisely one of the following 
varieties $X$, specified by their Cox ring 
$\mathcal{R}(X)$, 
the matrix $[w_1, \dots, w_r]$ of generator degrees 
and an ample class $u \in \Cl(X) = \ZZ^2$. 

\medskip

{\centering
{\small
\setlength{\tabcolsep}{0pt}
\begin{longtable}{ccccc}
No.
&
\small{$\mathcal{R}(X)$}
&
\small{$[w_1,\ldots, w_r]$}
&
\small{$u$}
&
\small{$\dim(X)$}
\\
%%%%%%%%%%%%%%%%%%%%%%%%%%%%%%%%%%%%%%%%%%%%%%%%%%%%%%%%%%%%%%%%%%%%%%%%
\toprule
4A
&
$
\begin{array}{c}
\frac
{\KK[T_1, \ldots , T_8, S_1,\ldots,S_m]}
{\langle T_{1}T_{2}^4+T_{3}T_{4}+T_5T_{6}+T_7T_8 \rangle}
\\
\scriptstyle m \geq 0
\end{array}
$
&
\tiny{
\setlength{\arraycolsep}{1pt}
$
\left[
\begin{array}{cccccccc|cc}
0 & 1 & 3 & 1 & 3 & 1 & 3 & 1 & 3 \ldots & 3
\\
1 & 0 & 1 & 0 & 1 & 0 & 1 & 0 & 1  \ldots & 1
\end{array}
\right]
$
}
&
\tiny{
\setlength{\arraycolsep}{1pt}
$
\left[
\begin{array}{c}
4
\\
1
\end{array}
\right]
$
}
&
\small{$m+5$}
\\
%%%%%%%%%%%%%%%%%%%%%%%%%%%%%%%%%%%%%%%%%%%%%%%%%%%%%%%%%%%%%%%%%%%%%%%%
\midrule
4B
&
$
\begin{array}{c}
\frac
{\KK[T_1, \ldots , T_8, S_1,\ldots,S_m]}
{\langle T_{1}T_{2}^3+T_{3}T_{4}+T_5T_{6}+T_7T_8 \rangle}
\\
\scriptstyle m \geq 0
\end{array}
$
&
\tiny{
\setlength{\arraycolsep}{1pt}
$
\left[
\begin{array}{cccccccc|cccc}
0 & 1 & 2 & 1 & 2 & 1 & 2 & 1 & 1 & 2&  \ldots & 2
\\
1 & 0 & 1 & 0 & 1 & 0 & 1 & 0 & 1 & 1& \ldots & 1
\end{array}
\right]
$
}
&
\tiny{
\setlength{\arraycolsep}{1pt}
$
\left[
\begin{array}{c}
3
\\
1
\end{array}
\right]
$
}
&
\small{$m+5$}
\\
%%%%%%%%%%%%%%%%%%%%%%%%%%%%%%%%%%%%%%%%%%%%%%%%%%%%%%%%%%%%%%%%%%%%%%%%
\midrule
4C
&
$
\begin{array}{c}
\frac
{\KK[T_1, \ldots , T_8, S_1,\ldots,S_m]}
{\langle T_{1}T_{2}^3+T_{3}T_{4}^2+T_5T_{6}+T_7T_8 \rangle}
\\
\scriptstyle m \geq 0
\end{array}
$
&
\tiny{
\setlength{\arraycolsep}{1pt}
$
\left[
\begin{array}{cccccccc|ccc}
0 & 1 & 1 & 1 & 2 & 1 & 2 & 1 & 2&  \ldots & 2
\\
1 & 0 & 1 & 0 & 1 & 0 & 1 & 0 & 1& \ldots & 1
\end{array}
\right]
$
}
&
\tiny{
\setlength{\arraycolsep}{1pt}
$
\left[
\begin{array}{c}
3
\\
1
\end{array}
\right]
$
}
&
\small{$m+5$}
\\
%%%%%%%%%%%%%%%%%%%%%%%%%%%%%%%%%%%%%%%%%%%%%%%%%%%%%%%%%%%%%%%%%%%%%%%%
\midrule
4D
&
$
\begin{array}{c}
\frac
{\KK[T_1, \ldots , T_8, S_1,\ldots,S_m]}
{\langle T_{1}T_{2}^4+T_{3}T_{4}^2+T_5T_{6}^2+T_7T_8^2 \rangle}
\\
\scriptstyle m \geq 0
\end{array}
$
&
\tiny{
\setlength{\arraycolsep}{1pt}
$
\left[
\begin{array}{cccccccc|ccc}
0 & 1 & 2 & 1 & 2 & 1 & 2 & 1 &2&  \ldots & 2
\\
1 & 0 & 1 & 0 & 1 & 0 & 1 & 0 & 1& \ldots & 1
\end{array}
\right]
$
}
&
\tiny{
\setlength{\arraycolsep}{1pt}
$
\left[
\begin{array}{c}
3
\\
1
\end{array}
\right]
$
}
&
\small{$m+5$}
\\
%%%%%%%%%%%%%%%%%%%%%%%%%%%%%%%%%%%%%%%%%%%%%%%%%%%%%%%%%%%%%%%%%%%%%%%%
\midrule
4E
&
$
\begin{array}{c}
\frac
{\KK[T_1, \ldots , T_8, S_1,\ldots,S_m]}
{\langle T_{1}T_{2}+T_{3}T_{4}+T_5T_{6}+T_7T_8 \rangle}
\\
\scriptstyle m \geq 0
\end{array}
$
&
\tiny{
\setlength{\arraycolsep}{1pt}
$
\left[
\begin{array}{cccccccc|ccc}
0 & 1 & 0 & 1 & 0 & 1 & 0 & 1 & 1&  \ldots & 1
\\
1 & 0 & 1 & 0 & 1 & 0 & 1 & 0 & 1& \ldots & 1
\end{array}
\right]
$
}
&
\tiny{
\setlength{\arraycolsep}{1pt}
$
\left[
\begin{array}{c}
2
\\
1
\end{array}
\right]
$
}
&
\small{$m+5$}
\\
%%%%%%%%%%%%%%%%%%%%%%%%%%%%%%%%%%%%%%%%%%%%%%%%%%%%%%%%%%%%%%%%%%%%%%%%
\midrule
4F
&
$
\begin{array}{c}
\frac
{\KK[T_1, \ldots , T_8, S_1,\ldots,S_m]}
{\langle T_{1}T_{2}^2+T_{3}T_{4}+T_5T_{6}+T_7T_8 \rangle}
\\
\scriptstyle m \geq 0
\end{array}
$
&
\tiny{
\setlength{\arraycolsep}{1pt}
$
\left[
\begin{array}{cccccccc|ccccc}
0 & 1 & 1 & 1 & 1 & 1 & 1 & 1 & 0 & 0 & 1&  \ldots & 1
\\
1 & 0 & 1 & 0 & 1 & 0 & 1 & 0 & 1 & 1& 1 & \ldots & 1
\end{array}
\right]
$
}
&
\tiny{
\setlength{\arraycolsep}{1pt}
$
\left[
\begin{array}{c}
2
\\
1
\end{array}
\right]
$
}
&
\small{$m+5$}
\\
%%%%%%%%%%%%%%%%%%%%%%%%%%%%%%%%%%%%%%%%%%%%%%%%%%%%%%%%%%%%%%%%%%%%%%%%
\midrule
4G
&
$
\begin{array}{c}
\frac
{\KK[T_1, \ldots , T_8, S_1,\ldots,S_m]}
{\langle T_{1}T_{2}^2+T_{3}T_{4}+T_5T_{6}+T_7T_8 \rangle}
\\
\scriptstyle m \geq 0
\end{array}
$
&
\tiny{
\setlength{\arraycolsep}{1pt}
$
\left[
\begin{array}{cccccccc|cccc}
0 & 1 & 1 & 1 & 1 & 1 & 1 & 1 & -1 & 1&  \ldots & 1
\\
1 & 0 & 1 & 0 & 1 & 0 & 1 & 0 & 1 & 1& \ldots & 1
\end{array}
\right]
$
}
&
\tiny{
\setlength{\arraycolsep}{1pt}
$
\left[
\begin{array}{c}
2
\\
1
\end{array}
\right]
$
}
&
\small{$m+5$}
\\
%%%%%%%%%%%%%%%%%%%%%%%%%%%%%%%%%%%%%%%%%%%%%%%%%%%%%%%%%%%%%%%%%%%%%%%%
\midrule
4H
&
$
\begin{array}{c}
\frac
{\KK[T_1, \ldots , T_8, S_1,\ldots,S_m]}
{\langle T_{1}T_{2}^2+T_{3}T_{4}^2+T_5T_{6}^2+T_7T_8 \rangle}
\\
\scriptstyle m \geq 0
\end{array}
$
&
\tiny{
\setlength{\arraycolsep}{1pt}
$
\left[
\begin{array}{cccccccc|ccc}
0 & 1 & 0 & 1 & 0 & 1 & 1 & 1 & 1 & \ldots & 1
\\
1 & 0 & 1 & 0 & 1 & 0 & 1 & 0 & 1& \ldots & 1
\end{array}
\right]
$
}
&
\tiny{
\setlength{\arraycolsep}{1pt}
$
\left[
\begin{array}{c}
2
\\
1
\end{array}
\right]
$
}
&
\small{$m+5$}
\\
%%%%%%%%%%%%%%%%%%%%%%%%%%%%%%%%%%%%%%%%%%%%%%%%%%%%%%%%%%%%%%%%%%%%%%%%
\midrule
4I
&
$
\begin{array}{c}
\frac
{\KK[T_1, \ldots , T_8, S_1,\ldots,S_m]}
{\langle T_{1}T_{2}^3+T_{3}T_{4}^2+T_5T_{6}^2+T_7T_8^2 \rangle}
\\
\scriptstyle m \geq 0
\end{array}
$
&
\tiny{
\setlength{\arraycolsep}{1pt}
$
\left[
\begin{array}{cccccccc|cccc}
0 & 1 & 1 & 1 & 1 & 1 & 1 & 1 & 0 & 1&  \ldots & 1
\\
1 & 0 & 1 & 0 & 1 & 0 & 1 & 0 & 1 & 1& \ldots & 1
\end{array}
\right]
$
}
&
\tiny{
\setlength{\arraycolsep}{1pt}
$
\left[
\begin{array}{c}
2
\\
1
\end{array}
\right]
$
}
&
\small{$m+5$}
\\
%%%%%%%%%%%%%%%%%%%%%%%%%%%%%%%%%%%%%%%%%%%%%%%%%%%%%%%%%%%%%%%%%%%%%%%%
\midrule
4J
&
$
\begin{array}{c}
\frac
{\KK[T_1, \ldots , T_8, S_1,\ldots,S_m]}
{\langle T_{1}T_{2}^2+T_{3}T_{4}^2+T_5T_{6}+T_7T_8 \rangle}
\\
\scriptstyle m \geq 0
\end{array}
$
&
\tiny{
\setlength{\arraycolsep}{1pt}
$
\left[
\begin{array}{cccccccc|cccc}
0 & 1 & 0 & 1 & 1 & 1 & 1 & 1 & 0 &1 &  \ldots & 1
\\
1 & 0 & 1 & 0 & 1 & 0 & 1 & 0 & 1 & 1& \ldots & 1
\end{array}
\right]
$
}
&
\tiny{
\setlength{\arraycolsep}{1pt}
$
\left[
\begin{array}{c}
2
\\
1
\end{array}
\right]
$
}
&
\small{$m+5$}
\\
%%%%%%%%%%%%%%%%%%%%%%%%%%%%%%%%%%%%%%%%%%%%%%%%%%%%%%%%%%%%%%%%%%%%%%%%
\midrule
4K
&
$
\begin{array}{c}
\frac
{\KK[T_1, \ldots , T_8, S_1,\ldots,S_m]}
{\langle T_{1}T_{2}^4+T_{3}T_{4}^3+T_5T_{6}^3+T_7T_8^3 \rangle}
\\
\scriptstyle m \geq 0
\end{array}
$
&
\tiny{
\setlength{\arraycolsep}{1pt}
$
\left[
\begin{array}{cccccccc|ccc}
0 & 1 & 1 & 1 & 1 & 1 & 1 & 1 & 1 &  \ldots & 1
\\
1 & 0 & 1 & 0 & 1 & 0 & 1 & 0 & 1 &\ldots & 1
\end{array}
\right]
$
}
&
\tiny{
\setlength{\arraycolsep}{1pt}
$
\left[
\begin{array}{c}
2
\\
1
\end{array}
\right]
$
}
&
\small{$m+5$}
\\
%%%%%%%%%%%%%%%%%%%%%%%%%%%%%%%%%%%%%%%%%%%%%%%%%%%%%%%%%%%%%%%%%%%%%%%%
\midrule
4L
&
$
\begin{array}{c}
\frac
{\KK[T_1, \ldots , T_8, S_1,\ldots,S_m]}
{\langle T_{1}T_{2}^3+T_{3}T_{4}^2+T_5T_{6}^2+T_7T_8^2 \rangle}
\\
\scriptstyle m \geq 0
\end{array}
$
&
\tiny{
\setlength{\arraycolsep}{1pt}
$
\left[
\begin{array}{cccccccc|cccc}
0 & 1 & 0 & 1 & 1 & 1 & 1 & 1 & 1 &  \ldots & 1
\\
1 & 0 & 1 & 0 & 1 & 0 & 1 & 0 & 1 & \ldots & 1
\end{array}
\right]
$
}
&
\tiny{
\setlength{\arraycolsep}{1pt}
$
\left[
\begin{array}{c}
2 
\\
1
\end{array}
\right]
$
}
&
\small{$m+5$}
\\
%%%%%%%%%%%%%%%%%%%%%%%%%%%%%%%%%%%%%%%%%%%%%%%%%%%%%%%%%%%%%%%%%%%%%%%%
\midrule
4M
&
$
\begin{array}{c}
\frac
{\KK[T_1, \ldots , T_8, S_1,\ldots,S_m]}
{\langle T_{1}T_{2}+T_{3}T_{4}+T_5T_{6}+T_7T_8 \rangle}
\\
\scriptstyle m \geq 0
\end{array}
$
&
\tiny{
\setlength{\arraycolsep}{1pt}
$
\begin{array}{c}
\left[
\begin{array}{cccccccc|ccc}
0 & 1 & 0 & 1 & 0 & 1 & 0 & 1 & d_1 &  \ldots & d_m
\\
1 & 0 & 1 & 0 & 1 & 0 & 1 & 0 & 1 & \ldots & 1
\end{array}
\right]
\\[1em]
d_k\leq 0,\ \sum d_k = -3
\end{array}
$
}
&
\tiny{
\setlength{\arraycolsep}{1pt}
$
\left[
\begin{array}{c}
1
\\
1
\end{array}
\right]
$
}
&
\small{$m+5$}
\\
%%%%%%%%%%%%%%%%%%%%%%%%%%%%%%%%%%%%%%%%%%%%%%%%%%%%%%%%%%%%%%%%%%%%%%%%
\midrule
4N
&
$
\begin{array}{c}
\frac
{\KK[T_1, \ldots , T_8, S_1,\ldots,S_m]}
{\langle T_{1}T_{2}^2+T_{3}T_{4}^2+T_5T_{6}^2+T_7T_8^2 \rangle}
\\
\scriptstyle m \geq 0
\end{array}
$
&
\tiny{
\setlength{\arraycolsep}{1pt}
$
\begin{array}{c}
\left[
\begin{array}{cccccccc|ccc}
0 & 1 & 0 & 1 & 0 & 1 & 0 & 1 & d_1 &  \ldots & d_m
\\
1 & 0 & 1 & 0 & 1 & 0 & 1 & 0 & 1& \ldots & 1
\end{array}
\right]
\\[1em]
d_k\leq 0,\ \sum d_k = -2
\end{array}
$
}
&
\tiny{
\setlength{\arraycolsep}{1pt}
$
\left[
\begin{array}{c}
1
\\
1
\end{array}
\right]
$
}
&
\small{$m+5$}
\\
%%%%%%%%%%%%%%%%%%%%%%%%%%%%%%%%%%%%%%%%%%%%%%%%%%%%%%%%%%%%%%%%%%%%%%%%
\midrule
4O
&
$
\begin{array}{c}
\frac
{\KK[T_1, \ldots , T_8, S_1,\ldots,S_m]}
{\langle T_{1}T_{2}^3+T_{3}T_{4}^3+T_5T_{6}^3+T_7T_8^3\rangle}
\\
\scriptstyle m \geq 0
\end{array}
$
&
\tiny{
\setlength{\arraycolsep}{1pt}
$
\left[
\begin{array}{cccccccc|cccc}
0 & 1 & 0 & 1 & 0 & 1 & 0 & 1 & -1 & 0 &  \ldots & 0
\\
1 & 0 & 1 & 0 & 1 & 0 & 1 & 0 & 1 & 1& \ldots & 1
\end{array}
\right]
$
}
&
\tiny{
\setlength{\arraycolsep}{1pt}
$
\left[
\begin{array}{c}
1 
\\
1
\end{array}
\right]
$
}
&
\small{$m+5$}
\\

%%%%%%%%%%%%%%%%%%%%%%%%%%%%%%%%%%%%%%%%%%%%%%%%%%%%%%%%%%%%%%%%%%%%%%%%
\midrule
4P
&
$
\begin{array}{c}
\frac
{\KK[T_1, \ldots , T_8, S_1,\ldots,S_m]}
{\langle T_{1}T_{2}^4+T_{3}T_{4}^4+T_5T_{6}^4+T_7T_8^4\rangle}
\\
\scriptstyle m \geq 0
\end{array}
$
&
\tiny{
\setlength{\arraycolsep}{1pt}
$
\left[
\begin{array}{cccccccc|cccc}
0 & 1 & 0 & 1 & 0 & 1 & 0 & 1 & 0 &  \ldots & 0
\\
1 & 0 & 1 & 0 & 1 & 0 & 1 & 0 & 1 & \ldots & 1
\end{array}
\right]
$
}
&
\tiny{
\setlength{\arraycolsep}{1pt}
$
\left[
\begin{array}{c}
1
\\
1
\end{array}
\right]
$
}
&
\small{$m+5$}
\\
%%%%%%%%%%%%%%%%%%%%%%%%%%%%%%%%%%%%%%%%%%%%%%%%%%%%%%%%%%%%%%%%%%%%%%%
\midrule
5
&
$
\begin{array}{c}
\frac
{\KK[T_1, \ldots , T_8, S_1,\ldots,S_m]}
{\langle T_{1}T_{2}+T_{3}^2T_{4}+T_5^2T_{6}+T_7^2T_8 \rangle}
\\
\scriptstyle m \geq 0
\end{array}
$
&
\tiny{
\setlength{\arraycolsep}{1pt}
$
\begin{array}{c}
\left[
\begin{array}{cccccccc|cc}
0 & 2a+1 & a & 1 & a & 1 & a & 1 & 1 \ldots & 1
\\
1 & 1 & 1 & 0 & 1 & 0 & 1 & 0 & 0 \ldots & 0
\end{array}
\right]
\\[1em]
a \geq 0, m = 3a
\end{array}
$
}
&
\tiny{
\setlength{\arraycolsep}{1pt}
$
\left[
\begin{array}{c}
2a+2
\\
1
\end{array}
\right]
$
}
&
\small{$m+5$}
\\
%%%%%%%%%%%%%%%%%%%%%%%%%%%%%%%%%%%%%%%%%%%%%%%%%%%%%%%%%%%%%%%%%%%%%%%%
\midrule
6
&
$
\begin{array}{c}
\frac
{\KK[T_1, \ldots , T_8, S_1,\ldots,S_m]}
{\langle T_{1}T_{2}+T_{3}T_{4}+T_5^2T_{6}+T_7^2T_8 \rangle}
\\
\scriptstyle m \geq 0
\end{array}
$
&
\tiny{
\setlength{\arraycolsep}{1pt}
$
\begin{array}{c}
\left[
\begin{array}{cccccccc|cc}
0 & 2a_3+1 & a_1 & a_2 & a_3 & 1 & a_3 & 1 & 1 \ldots & 1
\\
1 & 1 & 1 & 1 & 1 & 0 & 1 & 0 & 0 \ldots & 0
\end{array}
\right]
\\[1em]
0 \leq a_1 \leq a_2, \ 0 \leq a_3, 
\\
a_1+a_2=2a_3+1,
\ m = 4a_3+1
\end{array}
$
}
&
\tiny{
\setlength{\arraycolsep}{1pt}
$
\left[
\begin{array}{c}
2a_3+2
\\
1
\end{array}
\right]
$
}
&
\small{$m+5$}
\\
%%%%%%%%%%%%%%%%%%%%%%%%%%%%%%%%%%%%%%%%%%%%%%%%%%%%%%%%%%%%%%%%%%%%%%%%
\midrule
7
&
$
\begin{array}{c}
\frac
{\KK[T_1, \ldots , T_8, S_1,\ldots,S_m]}
{\langle T_{1}T_{2}+T_{3}T_{4}+T_5T_{6}+T_7^2T_8 \rangle}
\\
\scriptstyle m \geq 1
\end{array}
$
&
\tiny{
\setlength{\arraycolsep}{1pt}
$
\begin{array}{c}
\left[
\begin{array}{cccccccc|cc}
0 & 2a_5+1 & a_1 & a_2 & a_3 & a_4 & a_5 & 1 & 1 \ldots & 1
\\
1 & 1 & 1 & 1 & 1 & 1 & 1 & 0 & 0 \ldots & 0
\end{array}
\right]
\\[1em]
a_i \geq 0, \ m = 5a_5+2,
\\
a_1 + a_2 = a_3 + a_4 = 2a_5 +1
\end{array}
$
}
&
\tiny{
\setlength{\arraycolsep}{1pt}
$
\left[
\begin{array}{c}
2a_5+2
\\
1
\end{array}
\right]
$
}
&
\small{$m+5$}
\\
%%%%%%%%%%%%%%%%%%%%%%%%%%%%%%%%%%%%%%%%%%%%%%%%%%%%%%%%%%%%%%%%%%%%%%%%
\midrule
8
&
$
\begin{array}{c}
\frac
{\KK[T_1, \ldots , T_8, S_1,\ldots,S_m]}
{\langle T_{1}T_{2}+T_{3}T_{4}+T_5T_{6}+T_7T_8 \rangle}
\\
\scriptstyle m = 6
\end{array}
$
&
\tiny{
\setlength{\arraycolsep}{1pt}
$
\left[
\begin{array}{cccccccc|cc}
0 & 0 & 0 & 0 & 0 & 0 & -1 & 1 & 1  \ldots & 1
\\
1 & 1 & 1 & 1 & 1 & 1 & 1 & 1 & 0 \ldots & 0
\end{array}
\right]
$
}
&
\tiny{
\setlength{\arraycolsep}{1pt}
$
\left[
\begin{array}{c}
1
\\
2
\end{array}
\right]
$
}
&
\small{$m+5$}
\\
%%%%%%%%%%%%%%%%%%%%%%%%%%%%%%%%%%%%%%%%%%%%%%%%%%%%%%%%%%%%%%%%%%%%%%%%
\midrule
9
&
$
\begin{array}{c}
\frac
{\KK[T_1, \ldots , T_8, S_1,\ldots,S_m]}
{\langle T_{1}T_{2}+T_{3}T_{4}+T_5T_{6}+T_7T_8 \rangle}
\\
\scriptstyle m \geq 2
\end{array}
$
&
\tiny{
\setlength{\arraycolsep}{1pt}
$
\begin{array}{c}
\left[
\begin{array}{cccccccc|cc}
0 & a_1 & a_2 & a_3 & a_4 & a_5 & a_6 & a_7 & 1 \ldots & 1
\\
1 & 1 & 1 & 1 & 1 & 1 & 1 & 1 & 0 \ldots & 0
\end{array}
\right]
\\[1em]
a_i \geq 0, \ m = 3a_1,
\\
a_1 =a_2+a_3=a_4+a_5=a_6+a_7
\end{array}
$
}
&
\tiny{
\setlength{\arraycolsep}{1pt}
$
\left[
\begin{array}{c}
a_1+1
\\
1
\end{array}
\right]
$
}
&
\small{$m+5$}
\\
%%%%%%%%%%%%%%%%%%%%%%%%%%%%%%%%%%%%%%%%%%%%%%%%%%%%%%%%%%%%%%%%%%%%%%%%
\midrule
10
&
$
\begin{array}{c}
\frac
{\KK[T_1, \ldots , T_8, S_1,\ldots,S_m]}
{\langle T_{1}T_{2}+T_{3}T_{4}+T_5T_{6}+T_7T_8 \rangle}
\\
\scriptstyle m \geq 2
\end{array}
$
&
\tiny{
\setlength{\arraycolsep}{1pt}
$
\begin{array}{c}
\left[
\begin{array}{cccccccc|ccc}
0 & 0 & 0 & 0 & 0 & 0 & 0 & 0 & 1 & 1 \ldots & 1
\\
1 & 1 & 1 & 1 & 1 & 1 & 1 & 1 & 0 & d_2 \ldots & d_m
\end{array}
\right]
\\[1em]
0 \leq d_2\leq\dots \leq d_m, d_m \leq 6
\\
m  a_m = 6 + d_2 + \ldots + d_m
\end{array}
$
}
&
\tiny{
\setlength{\arraycolsep}{1pt}
$
\left[
\begin{array}{c}
1
\\
d_m+1
\end{array}
\right]
$
}
&
\small{$m+5$}
\\
%%%%%%%%%%%%%%%%%%%%%%%%%%%%%%%%%%%%%%%%%%%%%%%%%%%%%%%%%%%%%%%%%%%%%%%%
\midrule
11
&
$
\begin{array}{c}
\frac
{\KK[T_1, \ldots , T_7, S_1,\ldots,S_m]}
{\langle T_{1}T_{2}+T_{3}T_{4}+T_5T_{6}+T_7^2 \rangle}
\\
\scriptstyle m = 5
\end{array}
$
&
\tiny{
\setlength{\arraycolsep}{1pt}
$
\left[
\begin{array}{ccccccc|cc}
1 & 1 & 1 & 1 & 1 & 1 & 1 & 0 \ldots & 0
\\
-1 & 1 & 0 & 0 & 0 & 0 & 0 & 1 \ldots & 1
\end{array}
\right]
$
}
&
\tiny{
\setlength{\arraycolsep}{1pt}
$
\left[
\begin{array}{c}
1
\\
2
\end{array}
\right]
$
}
&
\small{$m+4$}
\\
%%%%%%%%%%%%%%%%%%%%%%%%%%%%%%%%%%%%%%%%%%%%%%%%%%%%%%%%%%%%%%%%%%%%%%%%
\midrule
12
&
$
\begin{array}{c}
\frac
{\KK[T_1, \ldots , T_7, S_1,\ldots,S_m]}
{\langle T_{1}T_{2}+T_{3}T_{4}+T_5T_{6}+T_7^2 \rangle}
\\
\scriptstyle m \geq 2
\end{array}
$
&
\tiny{
\setlength{\arraycolsep}{1pt}
$
\begin{array}{c}
\left[
\begin{array}{ccccccc|cc}
1 & 1 & 1 & 1 & 1 & 1 & 1 & 0 \ldots & 0
\\
0 & 2a_5 & a_1 & a_2 & a_3 & a_4 & a_5 & 1 \ldots & 1
\end{array}
\right]
\\[1em]
2a_5 = a_1 +a_2=a_3+a_4,
\\
a_i \geq 0, \ m = 5a_5
\end{array}
$
}
&
\tiny{
\setlength{\arraycolsep}{1pt}
$
\left[
\begin{array}{c}
2a_5 +1 
\\
1
\end{array}
\right]
$
}
&
\small{$m+4$}
\\
%%%%%%%%%%%%%%%%%%%%%%%%%%%%%%%%%%%%%%%%%%%%%%%%%%%%%%%%%%%%%%%%%%%%%%%%
\midrule
13
&
$
\begin{array}{c}
\frac
{\KK[T_1, \ldots , T_7, S_1,\ldots,S_m]}
{\langle T_{1}T_{2}+T_{3}T_{4}+T_5T_{6}+T_7^2 \rangle}
\\
\scriptstyle m \geq 2
\end{array}
$
&
\tiny{
\setlength{\arraycolsep}{1pt}
$
\begin{array}{c}
\left[
\begin{array}{ccccccc|ccc}
1 & 1 & 1 & 1 & 1 & 1 & 1& 0 & d_2 \ldots & d_m
\\
0 & 0 & 0 & 0 & 0 & 0 & 0 & 1 & 1 \ldots & 1
\end{array}
\right]
\\[1em]
d_2 \leq \ldots \leq d_m,
\\
m d_m = 5 +d_2+ \ldots + d_m
\end{array}
$
}
&
\tiny{
\setlength{\arraycolsep}{1pt}
$
\left[
\begin{array}{c}
1
\\
d_m +1 
\end{array}
\right]
$
}
&
\small{$m+4$}
\\
%%%%%%%%%%%%%%%%%%%%%%%%%%%%%%%%%%%%%%%%%%%%%%%%%%%%%%%%%%%%%%%%%%%%%%%%
\bottomrule
\end{longtable}
}
}
\noindent
Moreover, each of the listed data defines 
a smooth truly almost Fano general arrangement 
variety of true complexity two and Picard 
number two.
\end{theorem}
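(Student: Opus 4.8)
The plan is to run the same machinery that establishes Theorem~\ref{theo:proj} and then to filter by the almost-Fano condition. By Theorem~\ref{thm:arrTvar} we may assume $X = X(A,P,\Sigma)$ in $Z = Z_\Sigma$ is an explicit general arrangement variety, normalized so that $P$ is irredundant and $n_0 \ge \ldots \ge n_r$. Since $X$ is smooth it is $\QQ$-factorial, so Corollary~\ref{cor:ebc} applies and Proposition~\ref{prop:const} bounds the discrete data: $r \le 4$, $\Cl(X) = \ZZ^2$, and $(n_0,\ldots,n_r,m)$ lies in one of the finitely many constellations listed there. Working case by case through these constellations, exactly as in the proof of Theorem~\ref{theo:proj} and using Lemmas~\ref{lem:FF}, \ref{lem:tau}, \ref{lem:CaseABC} together with Remark~\ref{rem:Q} to pin down the degree matrix $Q$, the defining relations $g_t$ and the fan $\Sigma$, one recovers the full parametric list of smooth projective general arrangement varieties of Picard number and complexity two. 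The new ingredient is the extra constraint imposed by ``truly almost Fano''.

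To impose it, recall that smoothness forces the ample cone $\tau_X \subseteq K_\QQ = \QQ^2$ to be full-dimensional, so $\SAmple(X) = \overline{\tau_X}$ has exactly two extremal rays, and an element of $\SAmple(X)$ is semiample but not ample precisely when it is a nonzero point of one of these rays. By Proposition~\ref{prop:PicandCones} we have $\SAmple(X) = \bigcap_{\gamma_0 \in \rlv(X)} Q(\gamma_0)$, and the relevant faces $\gamma_0 \in \rlv(X)$ are accessible combinatorially: the $X$-cones are the big and leaf cones (Proposition~\ref{prop:bigleaf2rel}), and smoothness of the associated pieces is governed by Corollary~\ref{cor:genarrvarsmooth}. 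Thus $\tau_X$ can be read off explicitly from $Q$. On the other hand $R(A,P)$ is a complete intersection, so Proposition~\ref{prop:genarrcandiv} gives
$$
-\mathcal{K}_X
\ = \
\sum_{i=0}^r \sum_{j=1}^{n_i} w_{ij}
\, + \,
\sum_{k=1}^m w_k
\, - \,
(r-c)\sum_{j=1}^{n_0} l_{0j} w_{0j}
\ \in \
K = \ZZ^2 .
$$
Requiring this vector to be a nonzero point of an extremal ray of $\tau_X$ is a linear Diophantine condition on the exponents $l_{ij}$ and the free weights $d_k$; imposed inside each case of Proposition~\ref{prop:const} it cuts the generic parameter ranges of the Theorem~\ref{theo:proj}-analysis down to the families appearing in Theorem~\ref{theo:AlFano}, and a final unimodular coordinate change on $\ZZ^2$ (Remark~\ref{rem:Q}) brings $Q$ and the ample class into the normalized shape of the table. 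As in Section~6, I would treat two representative constellations of Proposition~\ref{prop:const} in detail and refer to \cite{Wr} for the remaining ones.

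For the converse one verifies directly that each row of the table defines what is claimed. Smoothness of $X(A,P,\Sigma)$ is checked case by case via Corollary~\ref{cor:genarrvarsmooth} after listing the big and leaf $X$-cones of $\Sigma$; that the torus action is of complexity two with maximal orbit quotient $\PP_2$ is built into Construction~\ref{constr:R(A,P)}; and the truly almost Fano property follows by substituting the listed degrees into the anticanonical formula above and comparing with the extremal rays of $\tau_X$, which one computes from the displayed degree matrices. It remains to show that the varieties are of \emph{true} complexity two, i.e.\ admit no torus action of complexity one (they are non-toric, having singular total coordinate space). This is done exactly as in the proof of Proposition~\ref{prop:truly}: for each family one computes $\dim \Spec(\mathcal{R}(X))$, the codimension of its singular locus, the primitive generators and multiplicities of the effective, movable and semiample cones, and the determinants of the Mori chambers, and checks incompatibility with every entry of \cite{FaHaNi}*{Thm.~1.1}.

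The main obstacle is the volume of the case analysis rather than any conceptual difficulty, but two points need genuine care. First, a given Cox ring together with a given degree matrix may host several non-isomorphic smooth models, one per chamber of the Mori fan (Remark~\ref{rem:Moridecomp}); for the almost-Fano classification the relevant model is typically the one in which $-\mathcal{K}_X$ lies on a \emph{wall} of the ample chamber, which is a different chamber from the generic one used in the Fano count of Theorem~\ref{theo:Fano}, so one must enumerate all chambers and not conflate them. Second, the ``true complexity two'' step is not automatic: it requires the full invariant comparison with \cite{FaHaNi} for every family, and here (e.g.\ the sub-cases 4A--4P) there are more families than in Theorem~\ref{theo:Fano}, so this comparison has to be carried out carefully for each.
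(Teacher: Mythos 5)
Your proposal matches the paper's route: Theorem~\ref{theo:AlFano} is obtained by running the classification underlying Theorem~\ref{theo:proj} and then selecting, via the anticanonical formula of Proposition~\ref{prop:genarrcandiv} and the description of the (semi)ample cone in Proposition~\ref{prop:PicandCones}, exactly those parameter values for which $-\mathcal{K}_X$ is semiample but not ample. The only superfluous work in your plan is the renewed invariant comparison with~\cite{FaHaNi} and the separate chamber enumeration: both are already absorbed into Theorem~\ref{theo:proj} (each entry there carries a fixed ample class and is shown in Proposition~\ref{prop:truly} to be of true complexity two), so the almost-Fano filter is purely a Diophantine condition on the parameters within each listed family.
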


\begin{bibdiv}
\begin{biblist}

\bib{AlHa}{article}{
   author={Altmann, Klaus},
   author={Hausen, J\"urgen},
   title={Polyhedral divisors and algebraic torus actions},
   journal={Math. Ann.},
   volume={334},
   date={2006},
   number={3},
   pages={557--607},
   issn={0025-5831},
   %review={\MR{2207875}},
}

\bib{AlHaSu}{article}{
   author={Altmann, Klaus},
   author={Hausen, J\"urgen},
   author={S\"uss, Hendrik},
   title={Gluing affine torus actions via divisorial fans},
   journal={Transform. Groups},
   volume={13},
   date={2008},
   number={2},
   pages={215--242},
   issn={1083-4362},
   %review={\MR{2426131}},
}

\bib{AlHe}{article}{
   author={Altmann, Klaus},
   author={Hein, Georg},
   title={A fansy divisor on $\overline{M}_{0,n}$},
   journal={J. Pure Appl. Algebra},
   volume={212},
   date={2008},
   number={4},
   pages={840--850},
   issn={0022-4049},
   %review={\MR{2363496}},
   %doi={10.1016/j.jpaa.2007.07.001},
}

\bib{Ar}{article}{
   author={Arzhantsev, I. V.},
   title={On the factoriality of Cox rings},
   language={Russian, with Russian summary},
   journal={Mat. Zametki},
   volume={85},
   date={2009},
   number={5},
   pages={643--651},
   issn={0025-567X},
   translation={
      journal={Math. Notes},
      volume={85},
      date={2009},
      number={5-6},
      pages={623--629},
      issn={0001-4346},
   },
   %review={\MR{2572855}},
   %doi={10.1134/S0001434609050022},
}

\bib{ArDeHaLa}{book}{
   author={Arzhantsev, Ivan},
   author={Derenthal, Ulrich},
   author={Hausen, J\"urgen},
   author={Laface, Antonio},
   title={Cox rings},
   series={Cambridge Studies in Advanced Mathematics},
   volume={144},
   publisher={Cambridge University Press, Cambridge},
   date={2015},
   pages={viii+530},
   isbn={978-1-107-02462-5},
   %review={\MR{3307753}},
}

\bib{Au}{book}{
   author={Audin, Mich\`ele},
   title={The topology of torus actions on symplectic manifolds},
   series={Progress in Mathematics},
   volume={93},
   note={Translated from the French by the author},
   publisher={Birkh\"{a}user Verlag, Basel},
   date={1991},
   pages={181},
   isbn={3-7643-2602-6},
   %review={\MR{1106194}},
   %doi={10.1007/978-3-0348-7221-8},
}

\bib{BaHaKe}{article}{
   author={B\"aker, Hendrik},
   author={Hausen, J\"urgen},
   author={Keicher, Simon},
   title={On Chow quotients of torus actions},
   journal={Michigan Math. J.},
   volume={64},
   date={2015},
   number={3},
   pages={451--473},
   issn={0026-2285},
   %review={\MR{3394253}},
   %doi={10.1307/mmj/1441116652},
}		

\bib{Ba1}{article}{
   author={Batyrev, V. V.},
   title={Toric Fano threefolds},
   language={Russian},
   journal={Izv. Akad. Nauk SSSR Ser. Mat.},
   volume={45},
   date={1981},
   number={4},
   pages={704--717, 927},
   issn={0373-2436},
   %review={\MR{631434}},
}

\bib{Ba2}{article}{
   author={Batyrev, V. V.},
   title={On the classification of toric Fano $4$-folds},
   note={Algebraic geometry, 9},
   journal={J. Math. Sci. (New York)},
   volume={94},
   date={1999},
   number={1},
   pages={1021--1050},
   issn={1072-3374},
   %review={\MR{1703904}},
   %doi={10.1007/BF02367245},
}

\bib{Be}{article}{
   author={Bechtold, Benjamin},
   title={Factorially graded rings and Cox rings},
   journal={J. Algebra},
   volume={369},
   date={2012},
   pages={351--359},
   issn={0021-8693},
   %review={\MR{2959797}},
}

\bib{Be2}{article}{
   author={Bechtold, Benjamin},
   title={Valuative and geometric characterizations of Cox sheaves},
   journal={J. Commut. Algebra},
   volume={10},
   date={2018},
   number={1},
   pages={1--43},
   issn={1939-0807},
   %review={\MR{3804845}},
   %doi={10.1216/JCA-2018-10-1-1},
}

\bib{BeHaHuNi}{article}{
   author={Bechtold, Benjamin},
   author={Hausen, J\"urgen},
   author={Huggenberger, Elaine},
   author={Nicolussi, Michele},
   title={On terminal Fano 3-folds with 2-torus action},
   journal={Int. Math. Res. Not. IMRN},
   date={2016},
   number={5},
   pages={1563--1602},
   issn={1073-7928},
   %review={\MR{3509936}},
}

\bib{BerHa1}{article}{
   author={Berchtold, Florian},
   author={Hausen, J\"{u}rgen},
   title={Homogeneous coordinates for algebraic varieties},
   journal={J. Algebra},
   volume={266},
   date={2003},
   number={2},
   pages={636--670},
   issn={0021-8693},
   %review={\MR{1995130}},
   %doi={10.1016/S0021-8693(03)00285-0},
}

\bib{BerHa2}{article}{
   author={Berchtold, Florian},
   author={Hausen, J\"{u}rgen},
   title={Cox rings and combinatorics},
   journal={Trans. Amer. Math. Soc.},
   volume={359},
   date={2007},
   number={3},
   pages={1205--1252},
   issn={0002-9947},
   %review={\MR{2262848}},
   %doi={10.1090/S0002-9947-06-03904-3},
}

\bib{BCHM}{article}{
   author={Birkar, Caucher},
   author={Cascini, Paolo},
   author={Hacon, Christopher D.},
   author={McKernan, James},
   title={Existence of minimal models for varieties of log general type},
   journal={J. Amer. Math. Soc.},
   volume={23},
   date={2010},
   number={2},
   pages={405--468},
   issn={0894-0347},
%   review={\MR{2601039}},
%   doi={10.1090/S0894-0347-09-00649-3},
}

\bib{Bou}{article}{
   author={Bourqui, David},
   title={La conjecture de Manin g\'eom\'etrique pour une famille de quadriques
   intrins\`eques},
   language={French, with English and French summaries},
   journal={Manuscripta Math.},
   volume={135},
   date={2011},
   number={1-2},
   pages={1--41},
   issn={0025-2611},
   %review={\MR{2783385}},
}

\bib{Bo}{article}{
   author={Borelli, Mario},
   title={Divisorial varieties},
   journal={Pacific J. Math.},
   volume={13},
   date={1963},
   pages={375--388},
   issn={0030-8730},
   %review={\MR{0153683}},
}

\bib{CaTe}{article}{
   author={Castravet, Ana-Maria},
   author={Tevelev, Jenia},
   title={$\bar{M}_{0,n}$ is not a Mori dream space},
   journal={Duke Math. J.},
   volume={164},
   date={2015},
   number={8},
   pages={1641--1667},
   issn={0012-7094},
   %review={\MR{3352043}},
}

\bib{Chow}{article}{
   author={Chow, Wei-Liang},
   title={On the geometry of algebraic homogeneous spaces},
   journal={Ann. of Math. (2)},
   volume={50},
   date={1949},
   pages={32--67},
   issn={0003-486X},
   %review={\MR{0028057}},
   %doi={10.2307/1969351},
}

\bib{ColSa}{article}{
   author={Colliot-Th\'{e}l\`ene, Jean-Louis},
   author={Sansuc, Jean-Jacques},
   title={Torseurs sous des groupes de type multiplicatif; applications \`a
   l'\'{e}tude des points rationnels de certaines vari\'{e}t\'{e}s alg\'{e}briques},
   language={French, with English summary},
   journal={C. R. Acad. Sci. Paris S\'{e}r. A-B},
   volume={282},
   date={1976},
   number={18},
   pages={Aii, A1113--A1116},
   %review={\MR{0414556}},
}

\bib{Co}{article}{
   author={Cox, David A.},
   title={The homogeneous coordinate ring of a toric variety},
   journal={J. Algebraic Geom.},
   volume={4},
   date={1995},
   number={1},
   pages={17--50},
   issn={1056-3911},
   %review={\MR{1299003}},
}

\bib{CoLiSc}{book}{
   author={Cox, David A.},
   author={Little, John B.},
   author={Schenck, Henry K.},
   title={Toric varieties},
   series={Graduate Studies in Mathematics},
   volume={124},
   publisher={American Mathematical Society, Providence, RI},
   date={2011},
   pages={xxiv+841},
   isbn={978-0-8218-4819-7},
   %review={\MR{2810322}},
}

\bib{CrMl}{article}{
   author={Craw, Alastair},
   author={Maclagan, Diane},
   title={Fiber fans and toric quotients},
   journal={Discrete Comput. Geom.},
   volume={37},
   date={2007},
   number={2},
   pages={251--266},
   issn={0179-5376},
   %review={\MR{2295057}},
   %doi={10.1007/s00454-006-1282-7},
}

\bib{Da}{article}{
   author={Danilov, V. I.},
   title={The geometry of toric varieties},
   language={Russian},
   journal={Uspekhi Mat. Nauk},
   volume={33},
   date={1978},
   number={2(200)},
   pages={85--134, 247},
   issn={0042-1316},
   %review={\MR{495499}},
}

\bib{Dem}{article}{
   author={Demazure, Michel},
   title={Sous-groupes alg\'{e}briques de rang maximum du groupe de Cremona},
   language={French},
   journal={Ann. Sci. \'{E}cole Norm. Sup. (4)},
   volume={3},
   date={1970},
   pages={507--588},
   issn={0012-9593},
   %review={\MR{0284446}},
}

\bib{EKW}{article}{
   author={Elizondo, E. Javier},
   author={Kurano, Kazuhiko},
   author={Watanabe, Kei-ichi},
   title={The total coordinate ring of a normal projective variety},
   journal={J. Algebra},
   volume={276},
   date={2004},
   number={2},
   pages={625--637},
   issn={0021-8693},
%   review={\MR{2058459}},
%   doi={10.1016/j.jalgebra.2003.07.007},
}

\bib{FaHa}{article}{
   author={Fahrner, Anne},
   author={Hausen, J\"urgen},
   title={On intrinsic quadrics},
   journal={Canad. J. Math., to appear},
   eprint = {arXiv:1712.09822},
}

\bib{FaHaNi}{article}{
   author={Fahrner, Anne},
   author={Hausen, J\"urgen},
   author={Nicolussi, Michele},
   title={Smooth projective varieties with a torus action of complexity 1 and Picard number 2},
   journal={Ann. Sc. Norm. Super. Pisa},
   volume={18},
   date={2018},
   number={2},
   pages={611--651},
}

\bib{FiKp1}{article}{
   author={Fieseler, Karl-Heinz},
   author={Kaup, Ludger},
   title={Fixed points, exceptional orbits, and homology of affine ${\bf
   C}^*$-surfaces},
   journal={Compositio Math.},
   volume={78},
   date={1991},
   number={1},
   pages={79--115},
   issn={0010-437X},
   %review={\MR{1096127}},
}

\bib{FiKp2}{article}{
   author={Fieseler, Karl-Heinz},
   author={Kaup, Ludger},
   title={On the geometry of affine algebraic ${\bf C}^*$-surfaces},
   conference={
      title={Problems in the theory of surfaces and their classification},
      address={Cortona},
      date={1988},
   },
   book={
      series={Sympos. Math., XXXII},
      publisher={Academic Press, London},
   },
   date={1991},
   pages={111--140},
   %review={\MR{1273375}},
}

\bib{FlZa}{article}{
   author={Flenner, Hubert},
   author={Zaidenberg, Mikhail},
   title={Normal affine surfaces with $\Bbb C^\ast$-actions},
   journal={Osaka J. Math.},
   volume={40},
   date={2003},
   number={4},
   pages={981--1009},
   issn={0030-6126},
   %review={\MR{2020670}},
}

\bib{Fu}{book}{
   author={Fulton, William},
   title={Introduction to toric varieties},
   series={Annals of Mathematics Studies},
   volume={131},
   note={The William H. Roever Lectures in Geometry},
   publisher={Princeton University Press, Princeton, NJ},
   date={1993},
   pages={xii+157},
   isbn={0-691-00049-2},
   %review={\MR{1234037}},
   %doi={10.1515/9781400882526},
}

\bib{GoKa}{article}{
   author={Gonz\'alez, Jos\'e Luis},
   author={Karu, Kalle},
   title={Some non-finitely generated Cox rings},
   journal={Compos. Math.},
   volume={152},
   date={2016},
   number={5},
   pages={984--996},
   issn={0010-437X},
   %review={\MR{3505645}},
}

\bib{HaTschi}{article}{
   author={Hassett, Brendan},
   author={Tschinkel, Yuri},
   title={Universal torsors and Cox rings},
   conference={
      title={Arithmetic of higher-dimensional algebraic varieties},
      address={Palo Alto, CA},
      date={2002},
   },
   book={
      series={Progr. Math.},
      volume={226},
      publisher={Birkh\"{a}user Boston, Boston, MA},
   },
   date={2004},
   pages={149--173},
   %review={\MR{2029868}},
  % %doi={10.1007/978-0-8176-8170-8_10},
}

\bib{Ha1}{article}{
   author={Hausen, J\"{u}rgen},
   title={Equivariant embeddings into smooth toric varieties},
   journal={Canad. J. Math.},
   volume={54},
   date={2002},
   number={3},
   pages={554--570},
   issn={0008-414X},
   %review={\MR{1900763}},
   %doi={10.4153/CJM-2002-019-0},
}

\bib{Ha2}{article}{
   author={Hausen, J\"{u}rgen},
   title={Producing good quotients by embedding into toric varieties},
   conference={
      title={Geometry of toric varieties},
   },
   book={
      series={S\'{e}min. Congr.},
      volume={6},
      publisher={Soc. Math. France, Paris},
   },
   date={2002},
   pages={193--212},
   %review={\MR{2075611}},
}

\bib{Ha3}{article}{
   author={Hausen, J\"{u}rgen},
   title={Cox rings and combinatorics. II},
   language={English, with English and Russian summaries},
   journal={Mosc. Math. J.},
   volume={8},
   date={2008},
   number={4},
   pages={711--757, 847},
   issn={1609-3321},
   %review={\MR{2499353}},
   %doi={10.17323/1609-4514-2008-8-4-711-757},
}

\bib{HaHe}{article}{
   author={Hausen, J\"urgen},
   author={Herppich, Elaine},
   title={Factorially graded rings of complexity one},
   conference={
      title={Torsors, \'etale homotopy and applications to rational points},
   },
   book={
      series={London Math. Soc. Lecture Note Ser.},
      volume={405},
      publisher={Cambridge Univ. Press, Cambridge},
   },
   date={2013},
   pages={414--428},
   %review={\MR{3077174}},
}

\bib{HaHeSu}{article}{
   author={Hausen, J\"urgen},
   author={Herppich, Elaine},
   author={S\"uss, Hendrik},
   title={Multigraded factorial rings and Fano varieties with torus action},
   journal={Doc. Math.},
   volume={16},
   date={2011},
   pages={71--109},
   issn={1431-0635},
   %review={\MR{2804508}},
}

\bib{HaKe}{article}{
   author={Hausen, J\"urgen},
   author={Keicher, Simon},
   title={A software package for Mori dream spaces},
   journal={LMS J. Comput. Math.},
   volume={18},
   date={2015},
   number={1},
   pages={647--659},
   issn={1461-1570},
   %review={\MR{3418031}},
}

\bib{HaKeLa1}{article}{
   author={Hausen, J\"{u}rgen},
   author={Keicher, Simon},
   author={Laface, Antonio},
   title={Computing Cox rings},
   journal={Math. Comp.},
   volume={85},
   date={2016},
   number={297},
   pages={467--502},
   issn={0025-5718},
   %review={\MR{3404458}},
   %doi={10.1090/mcom/2989},
}

\bib{HaKeLa}{article}{
   author={Hausen, J\"{u}rgen},
   author={Keicher, Simon},
   author={Laface, Antonio},
   title={On blowing up the weighted projective plane},
   journal={Math. Z.},
   volume={290},
   date={2018},
   number={3-4},
   pages={1339--1358},
   issn={0025-5874},
   %review={\MR{3856856}},
   %doi={10.1007/s00209-018-2065-6},
}

\bib{HaSu}{article}{
   author={Hausen, J\"urgen},
   author={S\"u\ss , Hendrik},
   title={The Cox ring of an algebraic variety with torus action},
   journal={Adv. Math.},
   volume={225},
   date={2010},
   number={2},
   pages={977--1012},
   issn={0001-8708},
   %review={\MR{2671185}},
}

\bib{HaWr}{article}{
   author={Hausen, J\"urgen},
   author={Wrobel, Milena},
   title={Non-complete rational $T$-varieties of complexity one},
   journal={Math. Nachr.},
   volume={290},
   date={2017},
   number={5-6},
   pages={815--826},
   issn={0025-584X},
   %review={\MR{3636380}},
}

\bib{HuKe}{article}{
   author={Hu, Yi},
   author={Keel, Sean},
   title={Mori dream spaces and GIT},
   note={Dedicated to William Fulton on the occasion of his 60th birthday},
   journal={Michigan Math. J.},
   volume={48},
   date={2000},
   pages={331--348},
   issn={0026-2285},
   %review={\MR{1786494}},
}

\bib{Kap}{article}{
   author={Kapranov, M. M.},
   title={Chow quotients of Grassmannians. I},
   conference={
      title={I. M. Gel' fand Seminar},
   },
   book={
      series={Adv. Soviet Math.},
      volume={16},
      publisher={Amer. Math. Soc., Providence, RI},
   },
   date={1993},
   pages={29--110},
   %review={\MR{1237834}},
}

\bib{KaStZe}{article}{
   author={Kapranov, M. M.},
   author={Sturmfels, B.},
   author={Zelevinsky, A. V.},
   title={Quotients of toric varieties},
   journal={Math. Ann.},
   volume={290},
   date={1991},
   number={4},
   pages={643--655},
   issn={0025-5831},
   %review={\MR{1119943}},
   %doi={10.1007/BF01459264},
}

\bib{KKMS}{book}{
   author={Kempf, G.},
   author={Knudsen, Finn Faye},
   author={Mumford, D.},
   author={Saint-Donat, B.},
   title={Toroidal embeddings. I},
   series={Lecture Notes in Mathematics, Vol. 339},
   publisher={Springer-Verlag, Berlin-New York},
   date={1973},
   pages={viii+209},
   %review={\MR{0335518}},
}

\bib{KrNi}{article}{
   author={Kreuzer, Maximilian},
   author={Nill, Benjamin},
   title={Classification of toric Fano 5-folds},
   journal={Adv. Geom.},
   volume={9},
   date={2009},
   number={1},
   pages={85--97},
   issn={1615-715X},
   %review={\MR{2493263}},
   %doi={10.1515/ADVGEOM.2009.005},
}

\bib{Lu}{article}{
   author={Luna, Domingo},
   title={Slices \'{e}tales},
   language={French},
   conference={
      title={Sur les groupes alg\'{e}briques},
   },
   book={
      publisher={Soc. Math. France, Paris},
   },
   date={1973},
   pages={81--105. Bull. Soc. Math. France, Paris, M\'{e}moire 33},
   %review={\MR{0342523}},
   %doi={10.24033/msmf.110},
}

\bib{LuVu}{article}{
   author={Luna, D.},
   author={Vust, Th.},
   title={Plongements d'espaces homog\`enes},
   language={French},
   journal={Comment. Math. Helv.},
   volume={58},
   date={1983},
   number={2},
   pages={186--245},
   issn={0010-2571},
   %review={\MR{705534}},
   %doi={10.1007/BF02564633},
}

\bib{Mil}{book}{
   author={Milne, James S.},
   title={\'{E}tale cohomology},
   series={Princeton Mathematical Series},
   volume={33},
   publisher={Princeton University Press, Princeton, N.J.},
   date={1980},
   pages={xiii+323},
   isbn={0-691-08238-3},
   %review={\MR{559531}},
}

\bib{Mo}{article}{
   author={Mori, Shigefumi},
   title={Graded factorial domains},
   journal={Japan. J. Math. (N.S.)},
   volume={3},
   date={1977},
   number={2},
   pages={223--238},
   %review={\MR{0568885}},
   %doi={10.4099/math1924.3.223},
}

\bib{Od}{book}{
   author={Oda, Tadao},
   title={Convex bodies and algebraic geometry},
   series={Ergebnisse der Mathematik und ihrer Grenzgebiete (3) [Results in
   Mathematics and Related Areas (3)]},
   volume={15},
   note={An introduction to the theory of toric varieties;
   Translated from the Japanese},
   publisher={Springer-Verlag, Berlin},
   date={1988},
   pages={viii+212},
   isbn={3-540-17600-4},
   %review={\MR{922894}},
}

\bib{OrWa1}{article}{
   author={Orlik, Peter},
   author={Wagreich, Philip},
   title={Isolated singularities of algebraic surfaces with C$^{\ast}$\
   action},
   journal={Ann. of Math. (2)},
   volume={93},
   date={1971},
   pages={205--228},
   issn={0003-486X},
   %review={\MR{0284435}},
   %doi={10.2307/1970772},
}

\bib{OrWa2}{article}{
   author={Orlik, Peter},
   author={Wagreich, Philip},
   title={Singularities of algebraic surfaces with $C^{\ast}$ action},
   journal={Math. Ann.},
   volume={193},
   date={1971},
   pages={121--135},
   issn={0025-5831},
   %review={\MR{0292832}},
   %doi={10.1007/BF02052820},
}

\bib{OrWa3}{article}{
   author={Orlik, P.},
   author={Wagreich, P.},
   title={Algebraic surfaces with $k\sp*$-action},
   journal={Acta Math.},
   volume={138},
   date={1977},
   number={1-2},
   pages={43--81},
   issn={0001-5962},
   %review={\MR{0460342}},
   %doi={10.1007/BF02392313},
}

\bib{Pi}{article}{
   author={Pinkham, H.},
   title={Normal surface singularities with $C\sp*$ action},
   journal={Math. Ann.},
   volume={227},
   date={1977},
   number={2},
   pages={183--193},
   issn={0025-5831},
   %review={\MR{0432636}},
   %doi={10.1007/BF01350195},
}

\bib{Ro}{article}{
   author={Rosenlicht, Maxwell},
   title={A remark on quotient spaces},
   journal={An. Acad. Brasil. Ci.},
   volume={35},
   date={1963},
   pages={487--489},
   issn={0001-3765},
   %review={\MR{0171782}},
}

\bib{Samuel}{book}{
   author={Samuel, P.},
   title={Lectures on unique factorization domains},
   series={Notes by M. Pavman Murthy. Tata Institute of Fundamental Research
   Lectures on Mathematics, No. 30},
   publisher={Tata Institute of Fundamental Research, Bombay},
   date={1964},
   pages={ii+84+iii},
   %review={\MR{0214579}},
}

\bib{Tim1}{article}{
   author={Timash\"{e}v, D. A.},
   title={$G$-manifolds of complexity $1$},
   language={Russian},
   journal={Uspekhi Mat. Nauk},
   volume={51},
   date={1996},
   number={3(309)},
   pages={213--214},
   issn={0042-1316},
   translation={
      journal={Russian Math. Surveys},
      volume={51},
      date={1996},
      number={3},
      pages={567--568},
      issn={0036-0279},
   },
   %review={\MR{1406071}},
   %doi={10.1070/RM1996v051n03ABEH002952},
}

\bib{Tim2}{article}{
   author={Timashev, Dmitri},
   title={Torus actions of complexity one},
   conference={
      title={Toric topology},
   },
   book={
      series={Contemp. Math.},
      volume={460},
      publisher={Amer. Math. Soc., Providence, RI},
   },
   date={2008},
   pages={349--364},
   %review={\MR{2428367}},
   %doi={10.1090/conm/460/09029},
}

\bib{Su}{article}{
   author={Sumihiro, Hideyasu},
   title={Equivariant completion},
   journal={J. Math. Kyoto Univ.},
   volume={14},
   date={1974},
   pages={1--28},
   issn={0023-608X},
   %review={\MR{0337963}},
}

\bib{Wl}{article}{
   author={W\l odarczyk, Jaros\l aw},
   title={Embeddings in toric varieties and prevarieties},
   journal={J. Algebraic Geom.},
   volume={2},
   date={1993},
   number={4},
   pages={705--726},
   issn={1056-3911},
   %review={\MR{1227474}},
}

\bib{Wr}{article}{
   author={Wrobel, Milena},
   title={Structural properties of Cox rings of $T$-varieties},
   journal={Doctoral Dissertation, Universit\"at T\"ubingen},
   date={2018},
   eprint = {https://publikationen.uni-tuebingen.de},
}
\end{biblist}
\end{bibdiv}

\end{document}